\numberwithin{equation}{section}
\numberwithin{table}{section}
\declaretheorem[style=plain,parent=section]{theorem}
\declaretheorem[style=plain,sibling=theorem]{corollary}
\declaretheorem[style=plain,sibling=theorem]{lemma}
\declaretheorem[style=plain,sibling=theorem]{proposition}
\declaretheorem[style=plain,sibling=theorem]{conjecture}
\declaretheorem[style=plain]{question}
\declaretheorem[style=definition,sibling=theorem]{definition}
\declaretheorem[style=definition, qed=\hfill $\diamond$, sibling=definition]{example}
\declaretheorem[style=remark,sibling=theorem]{remark}
\theoremstyle{definition}
\newtheorem*{theorem*}{Theorem}
\newtheorem*{exampleth}{Proof}
\newtheoremstyle{named}{}{}{\itshape}{}{\bfseries}{.}{.5em}{\thmnote{#3 }#1}
\theoremstyle{named}
\newtheorem{namedconjecture}{Conjecture}
\DeclareMathOperator{\Trop}{Trop} 
\DeclareMathOperator{\ptrop}{Trop_{>0}}
\DeclareMathOperator{\Hom}{Hom}
\newcommand{\Q}{\mathbb{Q}}
\newcommand{\R}{\mathbb{R}}
\newcommand{\CC}{\mathbb{C}}
\newcommand{\Z}{\mathbb{Z}}
\newcommand{\N}{\mathbb{N}}
\newcommand{\D}{\mathbb{D}}
\newcommand{\bS}{\mathbb{S}}
\newcommand{\cC}{\mathcal{C}}
\newcommand{\cD}{\mathcal{D}}
\newcommand{\cF}{\mathcal{F}}
\newcommand{\cM}{\mathcal{M}}
\newcommand{\cN}{\mathcal{N}}
\newcommand{\cO}{\mathcal{O}}
\newcommand{\cS}{\mathcal{S}}
\newcommand \cY {\mathcal{Y}}
\newcommand{\tv}{\mathcal{X}}
\newcommand{\wtuv}[2]{\ensuremath{\ell_{#1,#2}}} 
\newcommand{\du}[1]{\ensuremath{d_{#1}}} 
\newcommand{\valv}[1]{\ensuremath{\delta_{#1}}} 
\newcommand{\roottree}{r}
\newcommand{\nodesT}[1]{\ensuremath{\mathring{V}({#1})}} 
\newcommand{\leavesT}[1]{\ensuremath{\partial\,#1}} 
\newcommand{\nodesTevRoot}[1]{\ensuremath{\partial_{#1}\Gamma}}
\newcommand{\starT}[2]{\ensuremath{\operatorname{Star}_{#1}(#2)}} 
\newcommand{\wtNve}[2]{\ensuremath{m_{#1,#2}}} 
\newcommand{\wtNveL}[3]{\ensuremath{m_{#1,#2,#3}}} 
\newcommand{\zu}{\ensuremath{\underline{z}}}
\newcommand{\zexp}[1]{\ensuremath{\zu^{#1}}} 
\newcommand{\fvi}[2]{\ensuremath{f_{#1,#2}}} 
\newcommand{\fvibar}[2]{\ensuremath{\overline{f}_{#1,#2}}} 
\newcommand{\gvi}[2]{\ensuremath{g_{#1,#2}}} 
\newcommand{\fgvi}[2]{\ensuremath{F_{#1,#2}}} 
\newcommand{\fgvibar}[2]{\ensuremath{\overline{F}_{#1,#2}}} 
\newcommand{\cvei}[3]{\ensuremath{c_{#1,#2,#3}}} 
\newcommand{\cvi}[2]{\ensuremath{c_{#1,#2}}}
\newcommand{\sG}[1]{\ensuremath{\mathcal{S}(#1)}} 
\newcommand{\dG}[1]{\ensuremath{\mathcal{D}(#1)}} 
\newcommand{\wtG}[1]{\ensuremath{\widetilde{#1}}}
\newcommand{\init}{\ensuremath{\operatorname{in}}}
\newcommand{\initwf}[2]{\ensuremath{\init_{#1}(#2)}} 
\newcommand{\Nw}[1]{\ensuremath{N(\partial #1)}} 
\newcommand{\Mw}[1]{\ensuremath{M(\partial #1)}} 
\newcommand{\Nwp}[1]{\ensuremath{N(#1)}} 
\newcommand{\Mwp}[1]{\ensuremath{M(#1)}} 
\newcommand{\wu}[1]{\ensuremath{w_{#1}}} 
\newcommand{\wubar}[1]{\ensuremath{\overline{w}_{#1}}} 
\newcommand{\Mwbar}[1]{\ensuremath{\overline{M}({#1})}}
\newcommand{\Nwbar}[1]{\ensuremath{\overline{N}({#1})}}
\newcommand{\codim}{\operatorname{codim}}
\newcommand{\ZHS}{\ensuremath{\Z}\!\operatorname{HS}}
\newcommand \Rp {\ensuremath{\R_{\geq 0}}}
\begin{document}
\title[The Milnor fiber conjecture and an overview of its proof]{The Milnor fiber conjecture of Neumann and Wahl, \\and an overview of its proof}
\author[M.A. Cueto, P. Popescu-Pampu, D. Stepanov]{Maria Angelica Cueto, Patrick Popescu-Pampu${}^{\S}$ and Dmitry Stepanov\\ }
\thanks{${\S}$ \emph{Corresponding author}}
\keywords{Complete intersection singularities, integral homology spheres, 
    Kato-Nakayama spaces, local tropicalization, log geometry, 
   Milnor fibers, Newton non-degeneracy, real oriented blowups, rounding, Seifert fibrations, 
   surface singularities, splice type singularities, toric geometry, 
     toroidal varieties, tropical geometry}
\subjclass[2020]{\emph{Primary}:  14B05, 14T90, 32S05; \emph{Secondary}: 14M25, 57M15}
\date{29 November 2022}

\begin{abstract}
  \emph{Splice type surface singularities}, introduced  
   in 2002 by Neumann and Wahl, provide all 
   examples known so far of integral homology spheres which appear as links of 
   complex isolated complete intersections of dimension two. 
   They are determined, up to a form of equisingularity, 
   by decorated trees called \emph{splice diagrams}.  
   In {2005}, Neumann and Wahl formulated their \emph{Milnor fiber conjecture}, 
   stating that any choice of an internal edge of a splice diagram determines a special kind of 
   decomposition into pieces of the Milnor fibers of the associated singularities. {These}    
   pieces are constructed from the Milnor fibers of the splice type singularities 
   determined by the subdiagrams on both sides {of} the chosen edge. 
   \emph{In this paper we give an overview of this conjecture and a detailed 
   outline of its proof}, based on techniques from tropical geometry and {log geometry} in the 
   sense of Fontaine and Illusie. The crucial {log  geometric} ingredient is the operation 
   of {rounding} of a complex logarithmic space introduced in 1999 by Kato and Nakayama. 
   It is a functorial generalization of the operation of real oriented blowup. The use of the latter  
   to study Milnor fibrations was pioneered by A'Campo in 1975. 
 \end{abstract}
 
 \dedicatory{To Norbert A'Campo, on the occasion of his 80\textsuperscript{th}~birthday.}

\maketitle

\vspace{-1cm}

\tableofcontents

\section{Introduction}   \label{sec:introd}

Let $(X,o)$ be an irreducible germ of a complex analytic surface with isolated singularity at $o$, meaning 
that there exists a representative of it which is smooth outside $o$, and also possibly at $o$. We will 
say simply that $(X,o)$ is an \emph{isolated surface singularity}.
Denote by $\partial(X,o)$ its \emph{link}\index{link of a singularity}, 
obtained by intersecting a representative 
$X$ embedded in $\CC^n$ with a \emph{Milnor sphere}\index{Milnor!sphere}, that is, a sphere centered 
at $o$ of radius $r_0 > 0$,  such that any sphere 
centered at $o$ of smaller radius is transversal to $X$. 
The link $\partial(X,o)$ is a closed connected three-manifold, canonically 
oriented as the boundary of $(X \setminus \{o\})\cap \mathbb{B}(o,r_0)$, where  
$\mathbb{B}(o,r_0)$ denotes  the ball of radius $r_0$.

The classical works \cite{V 44}, \cite{M 61}, \cite{G 62} of Du Val, Mumford and Grauert 
show that oriented three-manifolds appearing 
as links of isolated surface singularities are exactly the graph manifolds which may be described 
by a negative definite and connected plumbing graph. Such oriented 
three-manifolds have canonical \emph{fillings} (that is, compact 
oriented four-manifolds having them as boundaries), given by the minimal good resolutions of 
$(X,o)$. Indeed, by work of Neumann~\cite[Theorem 2]{N 81}, the oriented topological type of the link
determines the oriented topological type of the minimal good resolution. 

Whenever $(X,o)$ is a complete intersection 
singularity, the link $\partial(X,o)$ admits another privileged filling, namely, the \emph{Milnor fiber} of 
any {\em smoothing}\index{smoothing} $f : (Y,o)\to (\CC,0)$ of $(X,o)$ 
(i.e., $f$ is the germ of a holomorphic map with smooth generic fibers 
and special fiber identified with $(X,o)$), a notion originating in Milnor's seminal 
book~\cite{M 68}. 
Indeed, by Tyurina's \cite[Theorem 8.1]{T 70}, the miniversal deformation 
of a complete intersection $(X,o)$ has an irreducible smooth base, 
therefore all the smoothings of $(X,o)$ have diffeomorphic 
Milnor fibers.

The following topological questions remain open:

\begin{question}\label{q:whichGraphsAreLinks}   
    Which oriented three-dimensional graph manifolds occur as links of hypersurface or 
    complete intersection isolated surface singularities?
\end{question}

\begin{question}\label{q:whichMlfdAreMilnorFibers}  
    Which oriented four-dimensional manifolds occur as Milnor fibers of smoothings of hypersurface,  
    complete intersection or arbitrary isolated surface singularities?
\end{question}

Most notably, very few conjectures have been proposed to address these questions. Among the most fruitful is the following one formulated by Neumann and Wahl in 1990~\cite{NW 90}:

\smallskip

\begin{namedconjecture}[\textbf{Casson Invariant}]\index{conjecture!Casson invariant}
  {If $(X,o)$ is a complete intersection isolated surface 
     singularity and its 
   link $\partial(X,o)$ is an integral homology sphere, then the Casson invariant of  
   $\partial(X,o)$ is equal to one-eighth of the signature of the Milnor fiber of $(X,o)$.}
\end{namedconjecture}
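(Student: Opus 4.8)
The plan is to prove the conjecture by induction on the combinatorial complexity of the link, reducing to the single-node case and exploiting \emph{two} additivity principles that are designed to match term by term. As recalled in the abstract, every integral homology sphere presently known to bound a complete intersection isolated surface singularity is the link of a splice type singularity of Neumann and Wahl; so, with the present state of knowledge, it suffices to treat splice type singularities, where $\partial(X,o)$ carries a preferred splice diagram $\Gamma$ and the induction can be run on the number of nodes of $\Gamma$.

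\emph{Base case.} If $\Gamma$ has a single node, $(X,o)$ is a Brieskorn-type complete intersection whose link is a Seifert integral homology sphere $\Sigma(a_1,\dots,a_n)$. The Casson invariant of such a manifold is given by the closed formula of Fintushel--Stern and Neumann--Wahl in terms of the Seifert invariants, and the signature of the Milnor fiber is computed from the same data --- for instance through lattice-point counts in the relevant Newton polytope, or through the $\eta$-invariant of the link --- so that the identity $\sigma(F) = 8\lambda(\partial(X,o))$ can be checked directly. This is the content of \cite{NW 90} in the Brieskorn case.

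\emph{Inductive step.} Pick an internal edge $e$ of $\Gamma$, cut it into subdiagrams $\Gamma_1$ and $\Gamma_2$, and let $(X_i,o)$ be the associated splice type singularities, with links $\Sigma_i$ and Milnor fibers $F_i$; write $F$ for the Milnor fiber of $(X,o)$. On the topological side, $\partial(X,o)$ is the splice of $\Sigma_1$ and $\Sigma_2$ along the knots attached to $e$, and by the splice additivity of the Casson invariant (a key result of \cite{NW 90}) one has $\lambda(\partial(X,o)) = \lambda(\Sigma_1) + \lambda(\Sigma_2)$. On the analytic side, the Milnor fiber conjecture --- the main result surveyed here --- decomposes $F$ into pieces glued along the Milnor fibrations attached to $e$ and built functorially from $F_1$ and $F_2$; the goal is to extract from this decomposition the signature identity $\sigma(F) = \sigma(F_1) + \sigma(F_2)$. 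Granting both, the inductive hypothesis $\sigma(F_i) = 8\lambda(\Sigma_i)$ gives $\sigma(F) = 8\lambda(\partial(X,o))$, and the induction closes.

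The main obstacle is the second additivity. Novikov additivity of the signature applies to a gluing of $4$-manifolds along a whole boundary component, whereas the pieces produced by the Milnor fiber conjecture are glued along codimension-one loci with corners that fiber over the circle with prescribed monodromy; a priori Wall's non-additivity term --- a Maslov-type index of three Lagrangian subspaces --- can intervene, and one must show it vanishes (there being no analogous correction on the Casson side). Proving that the decomposition furnished by the Milnor fiber conjecture is ``clean'' in this sense --- that the geometry of the gluing loci, controlled by the local tropicalization and by the Kato--Nakayama rounding, forces the correction to disappear --- is the delicate heart of the argument.
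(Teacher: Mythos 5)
Your statement is an \emph{open conjecture} in this paper: the authors give no proof of it, and only record that the splice-type case was settled by N\'emethi and Okuma \cite{NO 09} (via geometric genus computations on resolutions), and that Neumann and Wahl proved in \cite[Section 6]{NW 05} that the splice-type case follows from the Milnor fiber conjecture, whose proof is what this survey outlines. Your proposal essentially reconstructs that second route, but its opening move is not a valid reduction. The conjecture quantifies over \emph{all} complete intersection isolated surface singularities with integral homology sphere link; ``every presently known example is of splice type'' is an empirical observation, not a mathematical hypothesis you may use. Even when the link $\Sigma(\Gamma)$ is realized by some splice type singularity, the given germ $(X,o)$ need not itself be of splice type, and the signature of the Milnor fiber is an invariant of the singularity (through its smoothings), not of the link alone, so the identity cannot be transferred from a splice-type model with the same link to $(X,o)$. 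The claim that every complete intersection with $\ZHS$ link admits splice-type equations is itself a separate, open conjecture of Neumann and Wahl. So what you outline is at best a proof of the splice-type case --- precisely the case that is already known --- and the conjecture as stated remains untouched.

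Even within the splice-type case the decisive step is left as a gap. The splice additivity of the Casson invariant and the Seifert/Brieskorn--Hamm--Pham base case are indeed in \cite{NW 90}, but the signature identity $\sigma(F)=\sigma(F_a)+\sigma(F_b)$ is exactly what must be extracted from the four-dimensional splicing of \autoref{def:fourdimsplice}, and you explicitly defer it (``one must show the Wall correction vanishes''). Note that this splicing produces \emph{three} pieces, the middle one being $G_a\times G_b$, so one needs both that this central piece contributes zero to the signature and that the two gluings along $G_a\times\bS^1$ and $\bS^1\times G_b$ generate no Wall non-additivity term; these computations are the content of Neumann and Wahl's deduction in \cite[Section 6]{NW 05} and are not supplied by invoking the Milnor fiber conjecture. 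Your closing suggestion that the tropical and Kato--Nakayama rounding techniques of this paper would control that correction term is misplaced: in the paper those tools serve only to prove the Milnor fiber conjecture itself, and play no role in the signature bookkeeping.
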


\smallskip

In \cite{NW 90}, Neumann and Wahl confirmed several instances of this conjecture, including its validity for all weighted homogeneous singularities, all suspension 
hypersurface singularities and a particular family of singularities in $\CC^4$. What hindered further progress was the lack of other examples of complete intersections with integral homology sphere links.

Fifteen years later, Neumann and Wahl made a breakthrough in this 
direction, by introducing a wide class of  examples, which they called 
\emph{splice type singularities}~\cite{NW 05}. This name is motivated by their construction. These singularities are defined by 
systems of equations whose structure is governed by special types of decorated trees, called \emph{splice diagrams}, which were introduced by Siebenmann 
in~\cite{S 80} to encode graph manifolds which are integral homology spheres. 

The term  \emph{splicing} was coined by Siebenmann to indicate a cut-and-paste operation introduced by Dehn in~\cite{D 07} to build new three-dimensional integral homology spheres from old ones. 
When splicing, solid tori are removed from two oriented integral homology spheres, and the resulting boundary 2-tori are then glued together by the unique isotopy class of diffeomorphisms which produces a new oriented integral homology sphere. Siebenmann proved that  any 
integral homology sphere graph manifold can be obtained by iterating this operation, starting from Seifert 
fibered integral homology spheres and always removing tubular neighborhoods of 
fibers (which can be special or not). The starting Seifert-fibered manifolds are encoded by weighted 
star-shaped trees. Their edges correspond to the special fibers and to the fibers used during 
the splicing process, and the weights record the orders of holonomies around those fibers. 
The splicing is recorded by joining the corresponding edges of the two trees involved. The resulting  
weighted tree is Siebenmann's splice diagram. 

Neumann and Wahl's splice type singularities are given by explicit systems of equations (see \autoref{def:splicesystem})  associated to splice diagrams which satisfy 
supplementary constraints (see Definitions \ref{def:edgedet} and \ref{def:semgpcond}).

In~\cite[Section 6]{NW 05}, Neumann and Wahl proposed an inductive approach for  
proving the Casson invariant conjecture for splice type singularities. The base case 
involved the so-called Brieskorn-Hamm-Pham complete intersections 
with integral homology sphere links which they had already established in~\cite{NW 90}. 
The inductive step would be achieved by an explicit description of the topology of Milnor fibers 
in terms of splicing. To this end, they proposed the following conjecture (formulated precisely in~\autoref{conj:MFC}):

 \begin{namedconjecture}[\textbf{Milnor Fiber}]\index{conjecture!Milnor fiber}
  { Let $(X,o)$ be a splice type singularity with an integral 
   homology sphere link. Assume that its splice diagram $\Gamma$ is the result of  splicing two other splice diagrams $\Gamma_a$ and $\Gamma_b$. Then, the Milnor fiber of $(X,o)$ is obtained by a four-dimensional splicing operation 
   from the Milnor fibers associated to $\Gamma_a$ and $\Gamma_b$.} 
\end{namedconjecture}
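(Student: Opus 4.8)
The plan is to recast the topological statement in the language of logarithmic geometry and then to read off the claimed decomposition from the combinatorics of the splice diagram. The starting point is A'Campo's principle that the Milnor fiber of a holomorphic germ is recovered from a real oriented blowup of a good model of the degeneration along its special fiber; in the functorial reformulation due to Kato and Nakayama, if $\widetilde{f}\colon\widetilde{Y}\to\D$ is a log smooth, vertical model of a smoothing $f\colon(Y,o)\to(\CC,0)$ of $(X,o)$ over a small disk $\D$, with reduced special fiber $\widetilde{Y}_0$, then the Kato--Nakayama space of $\widetilde{Y}_0$ (endowed with the log structure induced by the embedding $\widetilde{Y}_0\hookrightarrow\widetilde{Y}$) carries a natural map to the circle $S^1$ whose fibers are, up to homeomorphism, the Milnor fiber of $f$, in a way that realizes the geometric monodromy. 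Since $(X,o)$ is a complete intersection, Tyurina's theorem quoted above lets us work with whichever smoothing is most convenient.

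The first substantive step is to produce such a log smooth model from the splice-type equations attached to $\Gamma$ (as in \autoref{def:splicesystem}). One embeds $(X,o)$, and then a smoothing $(Y,o)$, into the toric variety $\mathcal{X}$ naturally dictated by $\Gamma$, and applies local tropicalization: the conditions of Definitions~\ref{def:edgedet} and~\ref{def:semgpcond}, together with the non-degeneracy of the leading coefficients, make the splice-type system Newton non-degenerate relative to $\mathcal{X}$, so that the local tropicalization $\Trop Y$ together with the tropicalization of $f$ is a rational polyhedral complex whose combinatorial structure mirrors $\Gamma$ --- nodes contributing cones, edges contributing ``tube'' directions. A toric modification of $\mathcal{X}$ subordinate to a sufficiently fine subdivision of $\Trop Y$, followed if necessary by a base change $t\mapsto t^N$ and normalization, then yields the sought log smooth vertical model $\widetilde{f}\colon\widetilde{Y}\to\D$, whose special fiber is a reduced toroidal divisor stratified according to $\Gamma$ and whose Kato--Nakayama space therefore computes the Milnor fiber of $(X,o)$.

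Now suppose that $\Gamma$ is the splice of $\Gamma_a$ and $\Gamma_b$ along an internal edge $e$. Removing the interior of $e$ separates $\Gamma$, and this separation lifts to a decomposition of $\Trop Y$ into an ``$a$-part'', a ``$b$-part'', and a one-dimensional neck carried by the cone of $e$; geometrically the neck is a toric stratum which rounds to a collar attached to the $a$- and $b$-pieces along $3$-manifolds and restricting over the link $\partial(X,o)$ to the torus $T^2$ along which Siebenmann's splicing is performed. By the functoriality of local tropicalization and of the toric modification with respect to this splitting, the ``$a$-part'' and the ``$b$-part'' are canonically the analogous models for $\Gamma_a$ and $\Gamma_b$; here one uses that each subdiagram, completed by a new leaf at the cut, again satisfies the hypotheses of Definitions~\ref{def:edgedet} and~\ref{def:semgpcond} and so defines a splice-type singularity in its own right. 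Rounding the whole decomposition then exhibits the Milnor fiber of $(X,o)$ as the union of suitable truncations of the Milnor fibers of $\Gamma_a$ and $\Gamma_b$ glued along the neck, and one concludes by checking that the gluing $3$-manifolds, their Seifert structures, and the identification diffeomorphism of the boundary tori are precisely those prescribed by the intended four-dimensional splicing operation, thereby proving \autoref{conj:MFC}.

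The main obstacle lies in this last comparison: not merely that the Milnor fiber of $(X,o)$ decomposes, but that the pieces are \emph{canonically} the truncated Milnor fibers of the subdiagram singularities and that the gluing realizes the specific four-dimensional splicing of the conjecture. Establishing this rigorously calls for a restriction and functoriality statement for the entire pipeline --- local tropicalization, toroidal modification, base change, and Kato--Nakayama rounding --- with respect to cutting a splice diagram at an internal edge, together with a careful analysis of how the semigroup and edge-determinacy conditions propagate to the two subdiagrams and to the new leaves created by the cut. A secondary but delicate point is the log-geometric bookkeeping needed to guarantee that the model has \emph{reduced} special fiber, i.e.\ controlling the multiplicities produced by the toric modification and hence the base change $t\mapsto t^N$, since the identification of the Kato--Nakayama space's fibers with the Milnor fiber, and the correctness of the monodromy, both depend on it.
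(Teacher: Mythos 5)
Your overall pipeline---local tropicalization of a smoothing, a toroidal/toric modification subordinate to the tropical fan, Kato--Nakayama rounding, and a local triviality theorem---is indeed the one the paper follows, but as written the proposal has a genuine gap exactly where the content lies: the choice of smoothing and the structure of the ``neck''. Tyurina's theorem only guarantees that all smoothings have the same Milnor fiber; it does not hand you a model whose special fiber decomposes as the conjecture requires. The proof must construct a very particular smoothing, the \emph{edge deformation} of \autoref{def:deformation}: one subdivides $[a,b]$ by a new vertex $\roottree$ with weights $k_a,k_b$ constrained by the edge-determinant inequalities of \autoref{lm:refineG}, and adds monomials $c_{v,i}\,z_0^{D\ell_{\roottree,v}}$ to the splice equations for a triple $(k_a,k_b,D)$ adapted to $[a,b]$. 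It is precisely this choice that makes $\Trop Y$ acquire an extra ray for $\roottree$ and makes the exceptional divisor $\partial_{\roottree}\tilde Y$ over that ray a cartesian product of two projective curves. Without specifying the smoothing, your claimed dictionary between $\Trop Y$ and $\Gamma$, and the ensuing decomposition, are assertions rather than consequences.

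Relatedly, describing the middle piece as a neck that ``rounds to a collar'' misses what has to be proved: the four-dimensional splicing of \autoref{def:fourdimsplice} inserts the product $G_a\times G_b$ of two pushed-in open-book pages between the cut Milnor fibers, whereas a collar gluing would identify $\partial_{long}N_{F_a}(G_a)$ with $\partial_{long}N_{F_b}(G_b)$ directly and in general produces a different manifold. In the actual argument the product structure of the central piece is extracted from the reducedness of $Z(\tilde f)$ together with the Achinger--Ogus result applied over $\partial_{\roottree}\tilde Y$, which is a product of curves only because of the edge deformation. Moreover, the identification of the $a$- and $b$-pieces with the \emph{cut} Milnor fibers of $X_a$ and $X_b$---which you correctly flag as the main obstacle---is not a soft functoriality statement: it requires the explicit monomial $a$-side morphism $\varphi_a$ (normalizing the torus-translated toric variety cut out by the initial forms of the $b$-side system), the $a$-side deformation $Y_a$ obtained by pullback through $\varphi_a$, a toric embedding $\Phi_a$ of part of the $a$-side fan into the fan of $Y$, and strictness of the induced log morphisms so that rounding commutes with these restrictions. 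Finally, the base change $t\mapsto t^N$ you invoke is neither needed nor wanted here: Nakayama--Ogus applies to quasi-toroidalizations with relatively coherent divisorial log structures, the special fiber of the edge deformation is already reduced, and semistable reduction would destroy the explicit identification of the pieces with the subdiagram singularities.
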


When restricted to the boundaries, Neumann and Wahl's \emph{four-dimensional splicing operation} becomes  Dehn's three-dimensional splicing. It resembles it in that it requires one 
to remove tubular neighborhoods of proper surfaces $G_a$ and $G_b$ embedded 
in the Milnor fibers $F_a$ and $F_b$ associated to $\Gamma_a$ and $\Gamma_b$, 
but it differs from it in that one does not glue directly the resulting 
four-dimensional manifolds with corners.  
Instead, they are glued to parts of the boundary of a third manifold with corners, namely 
the cartesian product $G_a \times G_b$.

The Casson invariant conjecture for splice type singularities with integral homology sphere links was proven by N\'emethi and Okuma \cite{NO 09} by rephrasing it as a statement about the geometric genus of the singularity $(X,o)$. Their proof involved explicit computations with resolutions of $(X,o)$, with no analysis of the Milnor fiber. As a result, the Milnor fiber conjecture remained open, only verified  by Neumann and Wahl themselves for suspension 
hypersurface singularities (see~\cite[Section 8]{NW 05}) and by Lamberson~\cite{L 09} for iterated suspensions.

\medskip

\textbf{In this  article, we present a step-by-step strategy for proving the Milnor fiber conjecture in full generality.} Technical details will appear in forthcoming work by the three authors. 
Our proof combines tools from both \emph{tropical geometry} and \emph{logarithmic geometry, 
  in the sense of Fontaine and Illusie}, and it is outlined in~\autoref{sec:stepsproof}. Central to our arguments is the concept of {\em rounding of a complex log structure} in the sense of Kato and Nakayama, which can be viewed as a generalization of A'Campo's 
{\em real oriented blowup}\index{real oriented blow up!A'Campo}. Roundings allow us to find good representatives for Milnor fibrations without the need to work with tubular neighborhoods. In addition, rather than requiring good resolutions for our constructions, we broaden the setting and work with toric modifications involving toric varieties whose associated fans are not regular. This extended setting facilitates the transition from the tropical to the logarithmic category, since it allows us to work with natural fans subdividing the local tropicalizations of our germs, without the need to further refine them into regular fans.

Described very concisely, our proof involves the following stages, starting from a 
splice type singularity $(X,o)$ defined by a splice type 
system associated to a splice diagram $\Gamma$:

\begin{enumerate}[(i)]
    
\item   \label{stage i}
   We define a particular deformation of the splice type system,  
   associated to a fixed  internal edge $[a,b]$ of $\Gamma$. 
   We let $(Y,0)$ be the three-dimensional germ obtained as the total space of this deformation. 
   We prove that the deformation is a smoothing $(Y,o)\to (\D,0)$ of $(X,o)$, 
   where  $\D$ denotes a compact two-dimensional disk with center $0$.

\item  \label{stage ii}
     Analogously, we define  $a$-side and $b$-side deformations of $a$-side 
      and $b$-side splice type singularities associated to the starting system, 
      by performing special monomial changes of variables in the previous deformed system, 
      that is, by taking pullbacks through special affine toric morphisms. We let $(Y_a, o)$ 
      and $(Y_b, o)$ denote their total spaces. 
   
    \item  \label{stage iii}
        We describe explicit fans subdividing the local tropicalizations of the deformations $(Y,o)$, 
       $(Y_a, o)$ and $(Y_b, o)$, which are compatible with the local tropicalizations 
       of the corresponding splice type singularities. As a preliminary step, 
       we describe an explicit fan subdividing the local tropicalization of $(X,o)$: topologically 
       it is a cone over the corresponding splice diagram. 
       This gives the first tropical interpretation of splice diagrams (see \autoref{rem:tropNND2D}).
       
     \item  \label{stage iv}
       We consider toric birational morphisms defined by these three fans and the corresponding
      strict transforms  of $(Y,o)$,   $(Y_a, o)$ 
      and $(Y_b, o)$, which we denote by $(\tilde{Y},o)$,   $(\tilde{Y}_a, o)$ and $(\tilde{Y}_b, o)$, 
      respectively. We show that the induced morphism from each strict transform to the 
      corresponding germ is a modification\index{modification}, that is, a proper bimeromorphic morphism.
      
   \item  \label{stage v}
      We  consider the associated morphisms $(\tilde{Y}, \cD) \to (\D, 0)$, 
     $(\tilde{Y}_a, \cD_a) \to (\D, 0)$, $(\tilde{Y}_b, \cD_b) \to (\D, 0)$, where $\cD$, $\cD_a$ 
     and $\cD_b$ are the preimages of $0$ under the previous modifications. 
      This allows us to apply a local triviality theorem of Nakayama and Ogus to the roundings 
        of the associated logarithmic morphisms, yielding representatives of the Milnor fibrations 
        of $(Y,o)\to (\D,0)$, $(Y_a,o)\to (\D,0)$ and $(Y_b,o)\to (\D,0)$ canonically associated 
        to the previous modifications. 
      
    \item  \label{stage vi}
       We show that the toric morphisms used to define the $a$-side and $b$-side 
       deformations induce embeddings of suitable log enriched exceptional divisors 
       of $\tilde{Y}_{a} \to Y_{a}$ and $\tilde{Y}_{b} \to Y_{b}$  into a similar enrichment 
       of the exceptional divisor of $\tilde{Y} \to Y$. 
       This implies analogous results for their roundings.
        
    \item  \label{stage vii}
       These facts, combined with the knowledge that one of the components of the exceptional divisor 
        of $\tilde{Y} \to Y$ is a cartesian product of two curves, establish the conjecture.
\end{enumerate}

As a direct consequence of the proposed proof, we uncover an unknown property of splice type singularities (see~\autoref{thm:invMF}):

\begin{theorem*} {\em The diffeomorphism type of the Milnor fiber of  a splice type singularity with integral homology sphere link depends solely on the underlying splice diagram.}
\end{theorem*}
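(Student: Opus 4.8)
The plan is to deduce this from the Milnor Fiber Conjecture --- proven via the strategy of stages~(i)--(vii) --- applied recursively, the base of the recursion being the Brieskorn-Hamm-Pham complete intersections. (Note that the oriented diffeomorphism type of the link $\partial(X,o)$ is already the graph manifold encoded by $\Gamma$, so only the interior of the Milnor fiber is at issue.) Concretely, I would induct on the number $n$ of nodes of the splice diagram $\Gamma$, establishing the slightly stronger statement that the diffeomorphism type of the Milnor fiber $F$ of $(X,o)$ \emph{together with its distinguished proper surfaces} --- the surfaces cut out on $F$ by the log-enriched exceptional divisors of stages~(vi)--(vii), one associated to each leaf of $\Gamma$ --- depends only on $\Gamma$.

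For the base case $n=1$ the diagram is star-shaped and $(X,o)$ is a Brieskorn-Hamm-Pham complete intersection whose exponents are recorded by the leaf weights of $\Gamma$, as treated in~\cite{NW 90}. The coefficient systems permitted by the splice type construction form a connected Zariski-open subset of an affine space; over it the associated Milnor fibrations, equipped with their distinguished surfaces (the traces on $F$ of the coordinate hyperplanes), form a locally trivial family by Ehresmann's theorem, so their diffeomorphism type is independent of the coefficients.

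For the inductive step, with $n\geq 2$, choose an internal edge $[a,b]$ of $\Gamma$ and form the splice diagrams $\Gamma_a$, $\Gamma_b$ obtained by truncating at $[a,b]$: since $b\notin\Gamma_a$ and $a\notin\Gamma_b$, each has strictly fewer nodes than $\Gamma$, still encodes an integral homology sphere by~\cite{S 80}, and inherits the edge-determinacy and semigroup conditions, so it carries splice type singularities $(X_a,o)$, $(X_b,o)$ with Milnor fibers $F_a$, $F_b$ whose diffeomorphism types (with distinguished surfaces) depend only on $\Gamma_a$, $\Gamma_b$ by the induction hypothesis. The Milnor Fiber Conjecture then presents $F$ as the four-dimensional splice of $F_a$ and $F_b$ along $[a,b]$: one removes tubular neighborhoods $\nu(G_a)$, $\nu(G_b)$ of proper surfaces $G_a\subset F_a$, $G_b\subset F_b$ --- precisely the distinguished surfaces of $F_a$, $F_b$ at the new leaves created by the truncations --- and glues $F_a\setminus\nu(G_a)$ and $F_b\setminus\nu(G_b)$ to the two families of boundary faces of the manifold with corners $G_a\times G_b$. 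To close the induction I would check that every ingredient is combinatorially rigid: the pairs $(F_a,G_a)$, $(F_b,G_b)$ are controlled by the induction hypothesis; the product $G_a\times G_b$ depends only on the genera and boundary data of $G_a$, $G_b$, which are read from the fans of stage~(iii); and the gluing identifications with $\partial(G_a\times G_b)$ are those induced through stage~(vi) by the toric morphisms defining the $a$- and $b$-side deformations, hence are fixed up to isotopy once one knows that the roundings of stage~(v) turn the relevant log smooth morphisms into topological fiber bundles governed purely by the combinatorics of the fans.

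The hard part will be this last rigidity input. The complex structures of the curves $G_a$, $G_b$ and the analytic types of $(X_a,o)$, $(X_b,o)$ genuinely vary with the chosen splice type coefficients, so one must show that each such variation is absorbed by an ambient diffeomorphism: that over the connected parameter space of admissible coefficient systems the family of Milnor fibrations, equipped compatibly with its distinguished surfaces and with the toric modifications of stages~(iii)--(vi), is locally trivial, so that Thom--Ehresmann applies uniformly. Making this precise in the logarithmic/tropical setting, and verifying its compatibility with the rounding construction of stage~(v), is the technical core; granting it, the statement follows formally from the Milnor Fiber Conjecture by the induction above. (Equivalently, and without explicit recourse to the recursion, one may observe directly that stages~(i)--(vii) manufacture a representative of the Milnor fibration of $(X,o)$ as the rounding of a log smooth morphism whose underlying combinatorial data --- the fans subdividing the local tropicalizations --- depends only on $\Gamma$, and invoke the same topological rigidity of roundings.)
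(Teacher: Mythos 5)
Your overall route---deducing the statement from the machinery that proves the Milnor fiber conjecture, reducing to the star-shaped case, and then removing the residual dependence on the defining equations by a connectedness-plus-local-triviality argument---is the route the paper takes, and your closing parenthetical (that the rounding representative of the Milnor fibration is governed by combinatorial data, namely the tropicalizing fans, depending only on $\Gamma$) is in fact the paper's actual mechanism. However, as written, your base case has a genuine gap. A splice type singularity with star-shaped diagram is \emph{not} just a Pham-Brieskorn-Hamm system: by \autoref{def:splicesystem} it carries arbitrary convergent higher-order perturbations $g_{v,i}$ subject only to the weight inequality \eqref{eq:gviConditions}. These perturbations do not form a Zariski-open subset of a finite-dimensional affine space, and the perturbed systems are no longer quasi-homogeneous, so there is no uniform Milnor radius over that family and Ehresmann's theorem cannot be applied to it as you state. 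The paper disposes of the higher-order terms first, by a different argument: the canonical representative of the Milnor fibration produced by rounding the quasi-toroidalization depends only on the tropicalizing fan and the initial forms, hence only on the Pham-Brieskorn-Hamm part. Only then does the coefficient matrix remain, and there the missing ingredient in your Ehresmann application is exactly quasi-homogeneity: it guarantees that one Euclidean ball centered at the origin is a Milnor ball simultaneously for all matrices satisfying the Hamm determinant condition, which, together with the connectedness of the Hamm locus, is what legitimizes the locally trivial family.

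In the inductive step you correctly identify the unproven ``rigidity'' as the technical core, but you leave it as an assumption; the paper does not resolve it by a Thom-Ehresmann statement over the space of splice coefficients, but by the fact that the $a$-side and $b$-side cut pieces and the central product piece of the Milnor fiber are obtained by rounding log morphisms whose defining data (the fans subdividing the local tropicalizations, the exceptional divisors and their divisorial log structures) are determined by $\Gamma$ alone, so that the curves $G_a$, $G_b$ enter only through their diffeomorphism type and the identifications along $\partial_{top}G_a\times G_b$ and $G_a\times \partial_{top}G_b$ are canonical up to isotopy. In other words, your alternative direct argument is the one that actually closes the proof, while the induction-with-distinguished-surfaces scaffolding, although reasonable, still requires the base case to be repaired along the lines above.
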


The combined use of tropical and logarithmic geometry techniques to study  the topology of Milnor fibers is rather new. Since the inception of the research discussed in this paper and conference talks given by the second author on this subject, several articles applying logarithmic geometry to the study of problems about Milnor fibrations of singularities have appeared, including  works of 
Cauwbergs \cite{C 16}, 
Bultot and Nicaise \cite{BN 20}, Campesato, Fichou and  Parusi\'nski \cite{CFP 21}, and 
 Fern\'andez de Bobadilla and Pelka \cite{FP 22}. 
By presenting an overview of our techniques, we hope that this mainly expository  
article will help researchers apply similar ideas to address other questions involving 
the topological structure of Milnor fibers of smoothings of singularities.

\smallskip

The rest of the paper is organized as follows. In~\autoref{sec:splicetypeMF} 
we introduce background results leading to Neumann and Wahl's notion of \emph{splice type singularities}. \autoref{ssec:SeifZHS} surveys their genesis by reviewing  a presentation of the structure of \emph{Seifert fibered integral homology spheres} and the way they appear as links of isolated complete intersections of Brieskorn-Pham hypersurface singularities. \autoref{ssec:splicingtosing} provides detailed explanations on the three-dimensional \emph{splicing operation} and \emph{splice type integral homology spheres}. \autoref{ssec:spltypesing} reviews the construction of \emph{splice type singularities}.
Finally,~\autoref{ssec:mfconj} presents Neumann and Wahl's \emph{four-dimensional splicing operation}  and the precise formulation of their \emph{Milnor fiber conjecture}.

In~\autoref{sec:princproof} we discuss the main ideas of the proof of this conjecture.   \autoref{ssec:canMiln}  shows that A'Campo's operation of \emph{real oriented blowup} yields canonical representatives of the Milnor fibration over the circle of a smoothing, provided we are given an embedded resolution of the smoothing. 
In our context we do not work with embedded resolutions, but with more general 
morphisms which we call \emph{quasi-toroidalizations}. In~\autoref{ssec:ourusequasitor} we present a general theorem of  Nakayama and Ogus, stating the local triviality of a continuous map obtained by \emph{rounding}  in the sense of Kato and Nakayama of  suitable \emph{logarithmic morphisms} 
in the sense of Fontaine and Illusie. 
Finally,~\autoref{ssec:loctrop} shows how to build the aforementioned  
quasi-toroidalizations through explicit fan structures on the local tropicalizations 
of suitable deformations of splice type systems, combined with the Newton 
non-degeneracy property of these deformations.

\autoref{sec:logingred} presents detailed accounts of the logarithmic tools used to prove 
the Milnor fiber conjecture. In~\autoref{ssec:quasitor} we introduce 
the notions of {\em boundary-transversality}, of \emph{quasi-toroidal subboundary} 
and of \emph{quasi-toroidalization of a smoothing}. 
In \autoref{ssec:loggeomround} we lead the reader to the notion of log structure through 
a reformulation in a coordinate-independent way of the classical passage to polar coordinates. 
In~\autoref{ssec:logspacesmorph} we explain basic facts about the category of morphisms of 
complex \emph{log spaces} in the sense of Fontaine and Illusie. 
In \autoref{ssec:chartslogstruct} we list various kinds of {\em monoids} needed in 
the sequel, as well as the associated log structures, defined in terms of {\em charts}.  
In~\autoref{ssec:KNrounding} we define Kato and Nakayama's \emph{rounding operation} 
on complex log spaces and we explain some of its basic properties. 
In~\autoref{ssec:loctrivquasitor} we revisit 
Nakayama and Ogus' local triviality theorem and apply it in the context of
quasi-toroidalizations of smoothings. 

In~\autoref{sec:loctropNND} we introduce  the tropical ingredients of our proof: the notion of 
\emph{local tropicalization} of an analytic germ  contained in $(\CC^n,0)$ and the notion of 
\emph{Newton non-degeneracy}. 

\autoref{sec:edgedeform} presents the explicit deformations of splice type systems 
appearing in Stage \ref{stage ii} above. 

The paper concludes with~\autoref{sec:stepsproof}, 
 in which we give a detailed proof outline in $28$ steps 
 of the six stages discussed earlier to establish the Minor fiber conjecture.

\vfill

\pagebreak

\section{Splicing, splice type singularities and the Milnor fiber conjecture} \label{sec:splicetypeMF}

The operation of splicing and the construction of splice type singularities are central components of the Milnor fiber conjecture. In this section, we review 
Seifert's classification of Seifert fibered integral homology spheres, henceforth denoted by $\ZHS$'s 
(see \autoref{prop:seifzhs}), Neumann's realization of those  $\ZHS$'s as links of isolated complete intersections of \emph{Pham-Brieskorn-Hamm type} (see \autoref{prop:isolsingBH}) and the cut-and-paste operation of \emph{splicing} of $\ZHS$'s along knots (see \autoref{def:splicing}).
In addition, we recall the genesis of \emph{splice diagrams} (see \autoref{def:splicediag}) 
as graphs introduced by Siebenmann to encode $\ZHS$'s which are graph manifolds (see \autoref{def:splicediagZHS}), and Eisenbud and Neumann's characterization of splice diagrams encoding all singularity links which are $\ZHS$'s (see \autoref{thm:charZHSlinks}). We describe how these results motivated Neumann and Wahl to define \emph{splice type systems} and \emph{splice type singularities} associated to splice diagrams which satisfy the so-called \emph{determinant} and \emph{semigroup conditions} (see \autoref{ssec:spltypesing}).  The section concludes 
with a discussion of
Neumann and Wahl's \emph{four-dimensional splicing} operation  (see \autoref{def:fourdimsplice}) 
and with the statement of their \emph{Milnor fiber conjecture} (see \autoref{conj:MFC}).

\medskip
\subsection{Seifert fibered integral homology spheres}  \label{ssec:SeifZHS}$\:$ 
\medskip

We start this subsection by explaining the notions of {\em integral homology sphere} 
(or $\ZHS$, see \autoref{def:ZHS}), 
of {\em meridian} and {\em longitude} of a knot in a $\ZHS$ (see \autoref{def:merparzhs}) 
and of {\em Seifert fibration} (see \autoref{def:seifert}). 
In addition, we discuss various results  that predate the 
notion of \emph{splice type singularity}, from the appearance of 
\emph{Poincar\'e's homology sphere} as the link of the $E_8$ surface singularity 
(see \autoref{ex:quasihomE8})  to \emph{Seifert's classification of Seifert fibered 
links of singularities which are moreover integral homology spheres} (see \autoref{prop:seifzhs}). 
\medskip

In the sequel, we denote by $\boxed{\partial_{top} W}$ the {\em boundary} 
of a smooth or topological manifold 
with boundary. In contrast, we use  $\boxed{\partial W}$ to denote the 
{\em algebro-geometric boundary} of a toroidal variety $(W, \partial W)$ (see~\autoref{def:toroimorph}).  
If $V \hookrightarrow W$ is a properly embedded submanifold with boundary of a manifold 
with boundary, we use $\boxed{N_W(V)}$ to denote a topologically 
closed {\em tubular neighborhood}  
of $V$ in $W$. Note that $N_W(V)$  has the structure of a disk bundle over $V$, whose fibers have dimension equal to the 
codimension  of $V$ in $W$, which we denote by $\boxed{\codim_{W} (V)}$.
Its intersection with $\partial_{top} W$ is a tubular neighborhood of the boundary 
$\partial_{top} V$ inside $\partial_{top} W$.

The next class of three-dimensional manifolds is central to this paper:

\begin{definition}   \label{def:ZHS}
    An \textbf{integral homology sphere}\index{integral homology sphere}, 
    briefly written $\boxed{\ZHS}$, 
    is a closed smooth three-manifold  
    which has the total integral homology group of a three-dimensional 
    sphere. A $\ZHS$ is called \textbf{trivial} if, and only if,  it is homeomorphic to the unit three-dimensional sphere  $\bS^3$. 
\end{definition}

\begin{remark}\label{rm:firstPropertiesZHS} 
   Important properties follow from the $\ZHS$ condition.
   Indeed, if $M$ is a $\ZHS$, then $H_0(M, \Z) \simeq \Z$. Thus, $M$ must be connected. 
   In addition, as $ H_3(M, \Z) \simeq \Z$, we see that $M$ is also orientable. 
\end{remark}

\begin{remark}\label{rm:characterizationOfZHS}
     Fixing an orientation on a closed orientable three-manifold $M$ determines a  well-defined     
     Poincar\'e duality isomorphism $H_2(M, \Z) \simeq H^1(M, \Z)$.
     Since $H^1(M, \Z)$  $\simeq  \Hom(H_1(M, \Z), \Z)$,   
     by the universal coefficients theorem,  we conclude that 
     $M$ is a $\ZHS$ if, and only if,  it is a connected and orientable three-manifold with 
     $H_1(M, \Z)=0$. 
\end{remark}

Throughout, we assume that all integral homology spheres are {\em oriented}, i.e., 
they are endowed with fixed orientations.  Such manifolds $M$ admit a well-defined notion of 
{\bf linking number}\index{linking number} $\boxed{lk(K_1, K_2)}$ between 
any two disjoint oriented knots 
$K_1, K_2$ on them: it is the intersection number between $K_1$ and an oriented 
surface $S_2 \hookrightarrow M$ with boundary $K_2$. The fact that such a surface exists and 
that this intersection number is independent of the choice of $S_2$ is a direct consequence of 
the vanishing of $H_1(M, \Z)$. The 
linking number is symmetric in its two arguments, a property which we will frequently exploit.

Given any integral homology sphere, the boundary of a tubular neighborhood of a knot in it 
is canonically trivialized, up to isotopy, as the next result shows. 
For more details, we refer to~\cite[page 21]{EN 85} and~\cite[\S 6]{S 80}. 

\begin{proposition}  \label{prop:trivbd}
   Let $M$ be an oriented $\ZHS$ and let $K$ be an oriented knot in $M$. Let $N_M (K)$ 
   be a tubular neighborhood of $K$ in $M$. Then, there exist embedded oriented circles $\mu$ 
   and $\lambda$ on $\partial_{top} N_M (K)$, well-defined up to isotopy, such that $lk(\mu, K) =1$, 
   $lk(\lambda, K)=0$ and the homology classes of $\lambda$ and $K$ in $H_1(N_M (K), \Z)$ 
   coincide. Moreover, the classes of $\mu$ and $\lambda$ in $H_1(\partial_{top} N_M (K), \Z)$ 
   form a basis of this lattice. 
\end{proposition}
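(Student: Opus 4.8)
The plan is to exploit the $\ZHS$ hypothesis twice — once on the solid torus $N_M(K)$ and once on its exterior $E := M \setminus \operatorname{int} N_M(K)$ — and to extract $\mu$ and $\lambda$ as generators of the kernels of the two maps induced on first homology by the inclusions of the boundary torus $T := \partial_{top} N_M(K)$. The only topological input beyond homological algebra that I will invoke is the classical fact that on a $2$-torus the oriented isotopy classes of essential simple closed curves correspond bijectively to the primitive classes in $H_1(T, \Z) \cong \Z^2$; in particular every primitive class is realised by an embedded oriented circle, unique up to isotopy. So it suffices to single out two primitive classes with the prescribed linking properties and then to check that together they form a $\Z$-basis.

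First I would produce the meridian. Since $N_M(K)$ is a solid torus, the inclusion $i \colon T \hookrightarrow N_M(K)$ induces a surjection $i_* \colon H_1(T, \Z) \twoheadrightarrow H_1(N_M(K), \Z) \cong \Z$; its kernel is a rank-one direct summand, and I let $\mu$ be the embedded oriented circle realising a generator of it, i.e.\ a curve bounding a meridian disk. Next, for the longitude, I would run Mayer--Vietoris for $M = N_M(K) \cup E$ (after the usual slight thickening to an open cover), with overlap homotopy equivalent to $T$. Using $H_2(M, \Z) = H_1(M, \Z) = 0$ — the $\ZHS$ condition, cf.\ \autoref{rm:characterizationOfZHS} — the sequence collapses to an isomorphism
\[
(i_*,\, -j_*) \colon H_1(T, \Z) \;\xrightarrow{\ \sim\ }\; H_1(N_M(K), \Z) \oplus H_1(E, \Z),
\]
where $j \colon T \hookrightarrow E$ is the inclusion. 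Comparing ranks and torsion then forces $H_1(E, \Z) \cong \Z$; moreover $j_*$ is onto and $\ker j_*$ is again a rank-one direct summand, and I let $\lambda$ be the embedded oriented circle realising a generator of it. Here $E$ is connected because $M \setminus K$ is, which keeps the Mayer--Vietoris computation clean.

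It then remains to fix orientations and to verify the three conditions. A meridian disk is an oriented surface meeting $K$ transversally in one point, so by the definition of linking number and its symmetry $lk(\mu, K) = \pm 1$; I fix the orientation of $\mu$ so that this equals $+1$. Since $K$ is null-homologous in $M$ it has a Seifert surface $S$, and $[\gamma] \mapsto \gamma \cdot S$ defines the homomorphism $lk(\,\cdot\,, K) \colon H_1(E, \Z) = H_1(M \setminus K, \Z) \to \Z$; it is onto (it hits $lk(\mu, K) = 1$), hence an isomorphism, so $lk(\lambda, K) = 0$ is equivalent to $[\lambda] \in \ker j_*$, which holds by construction. I fix the orientation of $\lambda$ so that $[\lambda] = [K]$ in $H_1(N_M(K), \Z)$, which is possible because $i_*[\lambda]$ is a generator of $H_1(N_M(K), \Z) \cong \Z$ and $[K]$ is such a generator. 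Finally, under the displayed isomorphism $[\mu]$ and $[\lambda]$ map to $(0, j_*[\mu])$ and $(i_*[\lambda], 0)$, with $j_*[\mu]$ a generator of $H_1(E, \Z)$ and $i_*[\lambda]$ a generator of $H_1(N_M(K), \Z)$; hence these images form a basis of $H_1(N_M(K), \Z) \oplus H_1(E, \Z)$, so $\{[\mu], [\lambda]\}$ is a basis of $H_1(T, \Z)$. Independence of the chosen curves inside their isotopy classes is the torus fact recalled above, and independence of the tubular neighborhood itself follows from uniqueness of tubular neighborhoods up to ambient isotopy.

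I do not expect a serious conceptual obstacle: the argument is classical, compare \cite[p.~21]{EN 85} and \cite[\S 6]{S 80}. The one point that requires genuine care — and where one could easily make a sign error — is the bookkeeping of orientation conventions: the orientations of $M$, of $K$ and of $T$, together with the sign conventions built into $lk(\,\cdot\,,\,\cdot\,)$ and into the Mayer--Vietoris connecting map, must be tracked so that the homology classes of $\lambda$ and $K$ end up \emph{equal} rather than opposite, and so that $lk(\mu, K) = +1$ for the chosen orientation of $\mu$.
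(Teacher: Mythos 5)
Your proof is correct. The paper itself gives no argument for this proposition — it only points to \cite[page 21]{EN 85} and \cite[\S 6]{S 80} — and your Mayer--Vietoris computation for $M = N_M(K) \cup E$, with $\mu$ and $\lambda$ taken as generators of $\ker i_*$ and $\ker j_*$ and realized by embedded curves via primitivity, is exactly the standard argument underlying those references; the homological bookkeeping (surjectivity of $i_*$ and $j_*$, $H_1(E)\cong\Z$, identification of $lk(\cdot,K)$ with the map to $H_1(E)$, and the basis claim) is all sound. One small remark, which is a looseness of the statement rather than a gap in your proof: the listed conditions characterize $\lambda$ but not $\mu$ (any curve in the class $[\mu]+n[\lambda]$ also links $K$ once), so the well-definedness of $\mu$ up to isotopy really rests on your construction of $[\mu]$ as the generator of $\ker i_*$, i.e.\ the curve bounding a meridian disk — which is indeed the intended meridian.
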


The previous statement determines the notions of meridian and longitude of oriented knots in integral homology spheres, which we now recall:

\begin{definition}   \label{def:merparzhs}
   Let $M$ be an oriented $\ZHS$ and let $K$ be an oriented knot in $M$. The oriented curves 
   $\mu$ and $\lambda$ characterized in~\autoref{prop:trivbd} are called a 
   \textbf{meridian}\index{meridian} and a \textbf{longitude}\index{longitude} of $K$, respectively. 
\end{definition}

Replace the previous paragraph with the following one:
Sometimes, any oriented simple closed curve on $\partial_{top} N_M (K)$ whose homology class gives a basis of $H_1(\partial_{top} N_M (K), \Z)$ when completed by that of 
$\mu$ is called a {\em longitude} of $K$ (the curves characterized in \autoref{def:merparzhs} 
being then called  {\em topologist's longitudes}). As we will not consider these more general types of 
longitudes, we refrain from using this terminology.

Meridians and longitudes are essential to defining three-dimensional \emph{splicings}, 
as seen in~\autoref{prop:splicezhs} and~\autoref{def:splicing} below. 
They are denoted by $m_i$ and $\ell_i$ in~\autoref{fig:3DSplicing}. 

\medskip
The first example of non-trivial $\ZHS$ was given by Poincar\'e in his 1904 paper \cite{P 04}: it is the famous \textbf{Poincar\'e homology sphere}\index{Poincar\'e homology sphere}. He defined it using a Heegaard diagram. Notably, it can also be defined as the \emph{link}  of the $\boxed{E_8}$ surface singularity, i.e., the germ at the origin of the complex affine surface in $\CC^3_{x,y,z}$ 
defined by the equation
  \begin{equation*}   
      x^2 + y^3 + z^5 =0.
  \end{equation*}
  It is not at all obvious that these two three-manifolds are homeomorphic. 
  This was established only after the introduction of \emph{Seifert fibered} three-manifolds 
  by Seifert in his 1933 paper \cite{S 33}. The crux of the proof is to show that both manifolds 
  are  Seifert fibered integral homology spheres and that their Seifert fibrations have the same 
  numerical invariants (see \autoref{ex:quasihomE8}). For further details on 
  the first studies of the Poincar\'e homology sphere, we refer to Gordon's work \cite[Section 6]{G 99}.  
  For other characterizations, the reader may consult
  Kirby and Scharlemann's paper~\cite{KS 79}, or  Saint Gervais' website~\cite{SG 17}. 
  
Next, we review the notion of  \emph{Seifert fibration} on closed oriented three-manifolds. For further details, the reader may consult Orlik's book~\cite{O 72} or Neumann and Raymond's paper \cite{NR 78}.
  
  \begin{definition}  \label{def:seifert}
 A \textbf{Seifert fibration}\index{Seifert fibration} on  a closed oriented three-manifold  is an 
         orientable foliation by circles. Its \textbf{base} is the space of leaves endowed 
         with the quotient topology. Its \textbf{fibre map} is the quotient map. A manifold 
         endowed with a Seifert fibration is called \textbf{Seifert fibered}. 
  \end{definition}  

\begin{remark}\label{rem:SeifertFibrations} 
   It can be shown that the base $S$ of a Seifert fibration on a closed oriented 
   three-manifold $M$ is an orientable closed surface and that the fibre map $\psi\colon M \to S$ 
   is a locally trivial  circle bundle away from  a finite set of points of $S$. Those points correspond 
   to the so-called \textbf{special fibers}  of the Seifert fibration. 
\end{remark}

\begin{remark}\label{rem:originalDefSeifertFib}
   Following Seifert's original approach from \cite[Section 1]{S 33}, 
   Seifert fibrations are often  defined as  maps $\psi \colon M \to S$ which are locally trivial 
   on $S$ away from  the neighborhood of a finite set of points and which  have prescribed 
   models in the  neighborhoods of the special fibers  (see, for instance,~\cite[Section 5.2]{O 72}). 
  These models can be described using the  holonomy  of the foliation along a special fiber $C$. 
  Turning once around $C$ yields a  diffeomorphism of a transversal slice, which is isomorphic 
  to a finite-order rotation of a disk.  
  Such a rotation may be encoded by a rational number $q/p \in (0, 1) \cap \Q$, 
  with $p$ and $q$ coprime. The integer $p \geq 2$ is the order of the rotation, that is,  
  the degree of  the quotient map $\psi$ restricted to a transversal slice of $C$. For this reason, 
  we call it the \textbf{degree} of the point $\psi(C) \in S$. It can also be interpreted  
  as the number  of times the leaves situated in the neighborhood of $C$ turn around  $C$. 
\end{remark}
 
The basic numerical invariants of a Seifert fibration are the pairs $(p,q)$ associated  to its special fibers and the topological type of the base surface $S$. These, combined with the 
{\em rational Euler number} of the fibration 
(see, e.g.,~\cite[Section 1]{NR 78}, \cite[Section 1]{N 83} or \cite[Section I.3]{JN 83}),  
determine the fibration up to a homeomorphism of $M$  preserving the foliation and the orientation.   
For general Seifert fibrations, the Euler number is rational and it changes sign  if the orientation 
on $M$ is reversed. When $\psi \colon M \to S$ is a locally trivial circle bundle, we have 
no special fibers and the rational Euler number of the fibration agrees with 
the usual  Euler number of the bundle; thus, it is an integer.

Seifert fibrations of non-trivial integral homology spheres are well-understood, as the following 
theorem of Seifert confirms. For details, we refer the reader to  
works of Seifert \cite[Theorem 12]{S 33}, Neumann and Raymond \cite[Section 4]{NR 78}  or 
Eisenbud and Neumann \cite[Chapter II.7]{EN 85}): 
 
 \begin{proposition}  \label{prop:seifzhs}
   If  $M$ is a non-trivial $\ZHS$ that admits a Seifert fibration, then this fibration is unique up to  isotopy. Furthermore, its base is a two-dimensional sphere and it has at least three special fibers, with pairwise coprime degrees. Conversely, given $n\geq 3$ and a sequence $(p_1, \dots, p_n)$ of pairwise coprime positive integers with $p_i\geq 2$ for all $i$, there exists a unique Seifert fibered $\ZHS$ up to homeomorphisms, whose base is a two-dimensional sphere and whose special fibers have degrees $p_1, \dots, p_n$. With either orientation, the Euler number of this fibration is non-zero.   
 \end{proposition}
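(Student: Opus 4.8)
The plan is to deduce everything from one computation — the order of $H_1(M,\Z)$ in terms of the Seifert invariants of the fibration — combined with the classification of Seifert fibered $3$-manifolds by those invariants, using \autoref{rm:characterizationOfZHS} to rephrase "$M$ is a $\ZHS$" as "$M$ is connected, orientable and $H_1(M,\Z)=0$". First I would fix notation. A closed oriented Seifert fibered $M$ has orientable base a closed surface of genus $g$ and finitely many special fibers of degrees $p_1,\dots,p_n\ge 2$ (as in \autoref{rem:originalDefSeifertFib}); one attaches to it \emph{normalized Seifert invariants} $(g;\,b;\,(p_1,q_1),\dots,(p_n,q_n))$ with $0<q_i<p_i$, $\gcd(p_i,q_i)=1$, $b\in\Z$, and the standard presentation of $\pi_1(M)$ on generators $a_1,b_1,\dots,a_g,b_g,x_1,\dots,x_n,h$ with $h$ central, relations $x_i^{p_i}h^{q_i}=1$ for each $i$, and $\prod_{j=1}^g[a_j,b_j]\,\prod_{i=1}^n x_i = h^{b}$ (see \cite{O 72}, \cite{NR 78}, \cite{EN 85}). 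The rational Euler number is $e=-(b+\sum_i q_i/p_i)$.

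Abelianizing the presentation, the generators $a_j,b_j$ acquire no relations at all, so $\operatorname{rank} H_1(M,\Z)\ge 2g$; hence if $M$ is a $\ZHS$ then $g=0$ and the base is a sphere. In that case $H_1(M,\Z)=\operatorname{coker}(A)$ for the $(n+1)\times(n+1)$ integer matrix $A$ whose $i$-th row ($1\le i\le n$) has $p_i$ in column $i$ and $q_i$ in column $n+1$, and whose last row is $(1,\dots,1,-b)$. Expanding the determinant along the last column gives $\det A=-\bigl(b\,p_1\cdots p_n+\sum_{i=1}^n q_i\prod_{j\neq i}p_j\bigr)=p_1\cdots p_n\cdot e$. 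Consequently $|H_1(M,\Z)|=p_1\cdots p_n\,|e|$ when $e\neq 0$, and $H_1(M,\Z)$ is infinite when $e=0$. This single identity carries almost all of the statement.

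From the forward direction: if $M$ is a $\ZHS$ then $e\neq 0$ (otherwise $H_1$ is infinite) and $p_1\cdots p_n\,|e|=1$, so $|e|=1/(p_1\cdots p_n)$, which stays nonzero under orientation reversal (where $e$ changes sign) — this is the last sentence. If a prime $d$ divided two distinct $p_i$, it would divide every summand of $b\,p_1\cdots p_n+\sum_i q_i\prod_{j\neq i}p_j=\pm1$, impossible; hence the degrees are pairwise coprime. For $n\ge 3$ I would invoke the other classical input: a Seifert fibration over $S^2$ with at most two special fibers has total space a lens space or $S^1\times S^2$ (equivalently $H_1$ cyclic of order $|p_1\cdots p_n\,e|$, infinite when $e=0$), whose only member with $H_1=0$ is $\bS^3$, which is excluded because $M$ is non-trivial. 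Conversely, given $n\ge 3$ and pairwise coprime $p_i\ge 2$, pairwise coprimality gives $\gcd\bigl(\prod_{j\neq 1}p_j,\dots,\prod_{j\neq n}p_j\bigr)=1$, so one can solve $\sum_i q_i\prod_{j\neq i}p_j+b\,p_1\cdots p_n=1$ in integers; reducing this equation modulo $p_i$ shows $q_i$ is a unit mod $p_i$, and since replacing $(q_i,b)$ by $(q_i+p_i,\,b-1)$ leaves the equation unchanged we may normalize $0<q_i<p_i$. These are the Seifert invariants over $S^2$ of a manifold with $H_1=0$ (by the boxed identity) and $e=-1/(p_1\cdots p_n)\neq 0$, which is not $\bS^3$ since it has at least three special fibers, hence is a non-trivial $\ZHS$. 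For uniqueness of this manifold up to homeomorphism: any such $M$ has normalized invariants $(0;b;(p_i,q_i))$ with $|e|=1/(p_1\cdots p_n)$; because the $p_i$ are pairwise coprime, the Chinese remainder theorem shows that the class of $-e=b+\sum q_i/p_i$ in $\Q/\Z$ determines every $q_i\bmod p_i$, and then $b$ is forced — so once $e$ is fixed all the invariants are determined, and the two sign choices for $e$ yield manifolds exchanged by an orientation-reversing homeomorphism. By the classification of Seifert fibered spaces through their invariants (\cite{O 72}, \cite{NR 78}, \cite{EN 85}), this pins down $M$ up to homeomorphism.

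The one genuinely topological point — and the step I expect to be the main obstacle, for which I would cite rather than reprove, using \cite[Theorem 12]{S 33} (or \cite[Chapter II.7]{EN 85}, \cite{NR 78}) — is the uniqueness of the Seifert fibration on a fixed non-trivial $\ZHS$ up to isotopy. The algebra above constrains which Seifert invariants can occur but does not by itself forbid a given $M$ from being Seifert fibered in two essentially inequivalent ways; this indeed fails for $\bS^3$ and for lens spaces, and holds here exactly because such an $M$ is a "large" Seifert fibered space, so that any fibre-preserving self-homeomorphism is isotopic to one acting trivially on the base — the content of the rigidity results of Seifert and of Waldhausen for these manifolds.
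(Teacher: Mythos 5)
The paper does not actually prove this proposition --- it states it as a classical theorem and refers to Seifert \cite{S 33}, Neumann--Raymond \cite{NR 78} and Eisenbud--Neumann \cite{EN 85} --- and your argument is essentially the standard one found in those sources: the presentation of $\pi_1$ of a Seifert fibered space, the determinant identity $|H_1(M,\Z)|=p_1\cdots p_n\,|e|$, the arithmetic (Bézout/CRT) pinning down the normalized invariants, and the uniqueness of the fibration quoted from the literature, so your route matches the one the paper relies on. The only nitpick is attributional: for $\Sigma(2,3,5)$ the total space has finite fundamental group and is not Haken, so the uniqueness-up-to-isotopy step is not covered by Waldhausen-type rigidity for ``large'' Seifert fibered spaces but by the classification of Seifert fibrations of spherical space forms; since you, like the paper, cite this step rather than prove it, this does not affect correctness.
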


The previous proposition allows to define integral homology spheres from sequences of pairwise coprime positive integers:

\begin{definition}   \label{def:notationseifzhs}
   Fix $n\geq 3$ and let $(p_1, \dots, p_n)$ be a sequence of pairwise coprime positive integers 
   with $p_i\geq 2$ for all $i\in \{1,\ldots, n\}$ . The oriented 
    three-dimensional manifold $\boxed{\Sigma(p_1, \dots, p_n)}$ is the unique oriented 
    $\ZHS$ which admits a Seifert fibration with a \emph{negative} Euler number and whose sequence 
    of degrees of special fibers is $(p_1, \dots, p_n)$, up to permutation. 
\end{definition}

\begin{remark}\label{rem:pisCanBeOne}
         Note that in both~\autoref{prop:seifzhs} and~\autoref{def:notationseifzhs} 
         we assume $p_i\geq 2$ for all $i\in \{1,\ldots, n\}$. We can extend~\autoref{def:notationseifzhs} 
         to allow for $p_i \geq 1$, by simply removing all terms of the sequence with value one, 
         defining the corresponding Seifert fibered $3$-manifold and identifying indices $i$ 
         with $p_i=1$ with non-special fibers of the fibration. For instance, 
         $\Sigma(1, 1, 3, 8, 35) = \Sigma(3, 8, 35)$  and  the first two elements of 
         the sequence $(1, 1, 3, 8, 35)$  witness  two non-special fibers  
         of $\Sigma(3, 8, 35)$  (see~\autoref{rem:PBHAreSeifertFibered} below). Allowing 
         some $p_i$'s to take value $1$ is important 
         in the construction of integral homology spheres from splice diagrams 
         (see~\autoref{def:splicediagZHS} below). 
\end{remark}

\begin{example} \label{ex:quasihomE8}
    Consider the polynomial $f:= x^2 + y^3 + z^5$ defining the complex 
    surface $X$ whose germ at the origin is the $E_8$ singularity.
    The polynomial $f$ is homogeneous relative to the weight vector 
    $w := (3 \cdot 5, 2 \cdot 5, 2 \cdot 3)$. Therefore,  the surface $X$ is invariant under 
    the following natural action of the group $(\CC^*_t, \cdot)$ on $\CC^3_{x,y,z}$:
       \begin{equation}   \label{eq:invsurfE8}
             t \cdot (x,y,z) := (t^{3 \cdot 5} x, t^{2 \cdot 5} y , t^{2 \cdot 3} z) . 
       \end{equation}
   Thus, it is invariant under the action of  
    the circle $(\bS^1, \cdot)$ of $(\CC^*_t, \cdot)$. Similarly, 
    all Euclidean spheres $\bS^5_{\varepsilon}$ centered at the 
    origin of $\CC^3_{x,y,z}$ (of radius $\varepsilon>0$) are invariant under this action 
    of the circle. As a consequence, the intersections $X \cap \bS^5_{\varepsilon}$     
    are also invariant. 
Note that the manifolds $X \setminus \{0\}$ and $\bS^5_{\varepsilon}$  
 intersect transversally, as the orbits of 
the $(\R_t^{*}, \cdot)$-action on $X$ induced by the above $(\CC_t^{*}, \cdot)$-action  
are transversal 
to the spheres $\bS^5_{\varepsilon}$. Therefore, these  intersections are representatives 
of the link $\partial(X,0)$ of the singularity $(X,0)$. In particular, this 
 shows that there exists an action  of $(\bS^1, \cdot)$ on this link with no fixed points.
Its orbits  determine a Seifert fibration on $\partial(X,0)$.

A closer look at the action~\eqref{eq:invsurfE8} confirms that  the previous Seifert  fibration 
has exactly three special fibers (the intersections with the planes of coordinates), 
    with degrees  $2, 3$ and $5$. Moreover, $\partial(X,0)$ is a $\ZHS$. 
    This fact may be proved in several ways:

     \begin{itemize}
        \item  By seeing it as a ramified cover of $\bS^3$ of degree $5$, ramified over the 
             trefoil knot, and using Seifert's characterization 
              \cite[Addendum to Theorem 17, page 413 of the 
              English version of Seifert and Threlfall's book]{S 33}) of such covers which 
              are integral homology spheres. 
         \item   By using Brieskorn's criterion \cite[Satz 1, page 6]{B 66} 
            (see also Dimca's \cite[Theorem 4.10, page 94]{D 92}), described first 
           in a letter of Milnor to Nash (see \cite[page 47]{B 00}), allowing to determine 
           when the link of a Pham-Brieskorn hypersurface singularity of arbitrary dimension 
           (see \autoref{rem:PBH}) is an integral homology sphere.
         \item By computing the weighted dual graph of the minimal good resolution of $(X,0)$, 
             which is a tree of components of genus zero (it is the so-called $E_8$-tree 
             of Lie groups theory), 
            and by proving that the associated intersection form is unimodular, which implies 
            that the link is indeed an integral homology sphere (see \cite[Proposition 3.4, page 52]{D 92}). 
            The weighted dual graph may be computed either using the Jung-Hirzebruch method, 
            as explained by Laufer \cite[pages 23--27]{L 71} or using the $(\CC_t^{*}, \cdot)$-action, 
            as explained by Orlik and Wagreich in \cite[Section 3]{OW 71} (see also 
            \cite[pages 64--67]{D 92} and \cite[Theorem 4.2]{M 00}). 
         \item By using the facts that the intersections $Z(x|_X), Z(y|_X), Z(z|_X)$  
             of $X$ with the three coordinate planes 
            are irreducible germs of curves and that their strict transforms by the 
            minimal good resolution $\pi: \tilde{X} \to X$ of $X$ 
            intersect transversally the exceptional divisor $E$ at its components 
            associated with the leaves of the 
            dual tree. Then, Neumann and Wahl's \cite[Proposition 5.1]{NW 05bis} 
            implies that the duals of those components (in the intersection lattice 
            $H_2(E, \Z)$ of $E$ endowed with its intersection form inside $\tilde{X}$) 
            generate the discriminant group $H_2(E, \Z)^{\vee}/H_2(E, \Z)$ of $E$, which identifies 
            canonically to $H_1(\partial(X,0), \Z)$. The irreducibility and 
            the transversality properties mentioned above imply that those duals are equal to 
            the opposites of the exceptional parts of the total transforms of 
            $Z(x|_X), Z(y|_X), Z(z|_X)$ by $\pi$. 
            Thus they have integral coefficients, that is, they belong to $H_2(E, \Z)$.  
            This implies that the discriminant group is trivial,  therefore $H_1(\partial(X,0), \Z)$ is also 
            trivial. 
        \end{itemize}

    \autoref{prop:seifzhs} ensures now that  $\partial(X,0)$ 
    is the Seifert fibered integral homology sphere  $\Sigma(2,3,5)$. 
\end{example}

Work of~Neumann \cite{N 77} characterizes the integral homology sphere 
$\Sigma(p_1, \dots, p_n)$ as a singularity link:

\begin{theorem}   \label{thm:hammbrieslinks}
     Fix $n\geq 3$ and let $(p_1, \dots, p_n)$ be a sequence of pairwise coprime positive integers with $p_k\geq 2$ for all $k\in \{1,\ldots, n\}$. Let $(c_{i,j})_{1 \leq i \leq n-2, 1 \leq j \leq n}$ be a matrix of complex numbers 
     all of whose maximal minors are non-zero. Then, the subspace of $\CC^n$ 
     defined by the system of equations:
         \begin{equation} \label{eq:BHsyst} 
               \left\{ \begin{array}{cccccc}
                    c_{1,1} z_1^{p_1} &  + &  \cdots   &  + &  c_{1,n} z_n^{p_n} & =0, \\
                      \vdots &   \vdots & \vdots & \vdots & \vdots & \vdots \\
                    c_{n-2,1} z_1^{p_1} &  + &  \cdots   &  + &  c_{n-2,n} z_n^{p_n} & =0,
              \end{array}  \right.
         \end{equation}
      is an irreducible surface with an isolated singularity at $0$ whose link, oriented as the boundary of a neighborhood of $0$, is (orientation-preserving) homeomorphic to the integral homology sphere $\Sigma(p_1, \dots, p_n)$. 
\end{theorem}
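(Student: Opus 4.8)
The plan is to reproduce, in the complete-intersection setting, the argument sketched for the $E_8$ singularity in \autoref{ex:quasihomE8}: exhibit an explicit Seifert fibration on the link coming from a $\CC^*$-action, identify its special fibres and their degrees, prove that the link is a $\ZHS$, and then conclude via \autoref{prop:seifzhs}, pinning down the orientation through the sign of the Euler number as required by \autoref{def:notationseifzhs}. The relevant case of this theorem is already used in the paper (it is the base case of Neumann and Wahl's induction), and the argument is due to Neumann~\cite{N 77}, building on Brieskorn, Hamm and Orlik--Wagreich.

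First I would settle the algebro-geometric claims. Put $p := p_1\cdots p_n$ and assign to $z_i$ the weight $w_i := p/p_i\in\Z_{>0}$; then every monomial $z_i^{p_i}$ has weighted degree $p$, so the system \eqref{eq:BHsyst} is quasi-homogeneous and the variety $X\subset\CC^n$ it defines is invariant under the action $t\cdot(z_1,\dots,z_n)=(t^{w_1}z_1,\dots,t^{w_n}z_n)$ of $(\CC^*,\cdot)$. The Jacobian of \eqref{eq:BHsyst} at $z$ has $(\ell,j)$-entry $p_j\,c_{\ell,j}\,z_j^{p_j-1}$; since every maximal minor of $(c_{\ell,j})$ is nonzero, this matrix has full rank $n-2$ whenever at most one coordinate $z_j$ vanishes. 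A direct look at \eqref{eq:BHsyst} shows that a point of $X$ with two vanishing coordinates must be the origin: setting $z_i=z_k=0$ leaves $n-2$ linear equations in the $n-2$ quantities $z_j^{p_j}$ ($j\neq i,k$) with a non-singular coefficient matrix, hence all remaining $z_j$ vanish too. Therefore $X\setminus\{0\}$ is smooth of pure dimension $2$, so $X$ is a surface with at worst an isolated singularity at $0$ (and genuinely singular there, since the equations have no linear part); being a positive-dimensional complete intersection it is connected, hence irreducible.

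Next I would treat the link. Exactly as in \autoref{ex:quasihomE8}, the orbits of the induced action of the real points of $(\CC^*,\cdot)$ are transversal to all spheres $\bS^{2n-1}_\varepsilon$ centred at $0$ (the squared norm is strictly monotone along them), so $\partial(X,0):=X\cap\bS^{2n-1}_\varepsilon$ is a representative of the link and the circle $\bS^1\subset\CC^*$ acts on it with finite stabilisers, the orbits forming an orientable foliation by circles, i.e.\ a Seifert fibration in the sense of \autoref{def:seifert}. Pairwise coprimality of the $p_j$ gives $\gcd(w_1,\dots,w_n)=1$, so a point of the link with all coordinates nonzero lies on a generic fibre, whereas a point with exactly $z_i=0$ has stabiliser of order $\gcd\{w_j:j\neq i\}=p_i$; moreover $X\cap\{z_i=0\}\setminus\{0\}$ is a single $\CC^*$-orbit (the kernel of the matrix obtained from $(c_{\ell,j})$ by deleting the $i$-th column is one-dimensional with all entries nonzero, and the monodromy permuting the $p_j$-th roots is transitive precisely because the $p_j$ are pairwise coprime). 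Hence the Seifert fibration has exactly $n$ special fibres, of degrees $p_1,\dots,p_n$, which are pairwise coprime; in particular it is nontrivial since $n\geq 3$.

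It remains to prove that $\partial(X,0)$ is a $\ZHS$, i.e.\ that $H_1(\partial(X,0),\Z)=0$ (\autoref{rm:characterizationOfZHS}), and to fix the orientation. For the first point I would invoke the Brieskorn--Hamm criterion for links of isolated complete intersection singularities of the form \eqref{eq:BHsyst} — the complete-intersection extension of the criterion quoted in the second bullet of \autoref{ex:quasihomE8} — whose output equals $1$ precisely because the $p_j$ are pairwise coprime; alternatively, following Orlik--Wagreich one computes from the $\CC^*$-action the star-shaped, $n$-armed dual graph of the minimal good resolution and checks that its intersection form is unimodular, exactly as in the last two bullets of \autoref{ex:quasihomE8} (the curve sections $Z(z_i|_X)=\overline{X\cap\{z_i=0\}}$ being irreducible germs whose strict transforms meet the leaf components transversally, so that \cite[Proposition~5.1]{NW 05bis} applies). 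Once $H_1(\partial(X,0),\Z)=0$ is known, \autoref{prop:seifzhs} forces the base of the Seifert fibration to be $\bS^2$ and identifies $\partial(X,0)$, up to homeomorphism, with the unique Seifert fibered $\ZHS$ whose special fibres have degrees $p_1,\dots,p_n$, with nonzero Euler number. Finally, to upgrade this to an orientation-preserving homeomorphism onto $\Sigma(p_1,\dots,p_n)$ as defined in \autoref{def:notationseifzhs}, I would verify that the Euler number of the fibration is \emph{negative} for the canonical orientation of the link as the boundary of $(X\setminus\{0\})\cap\mathbb{B}(0,\varepsilon)$ — e.g.\ by relating it to the negative-definite intersection form of the minimal good resolution. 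The hard part is exactly these last two substantive points — proving $H_1=0$, which is the arithmetic core and the genuine use of the pairwise-coprimality hypothesis, and fixing the sign of the Euler number; the quasi-homogeneity, the Jacobian rank count, and the identification of the special fibres and their degrees are routine once the $\CC^*$-action is written down.
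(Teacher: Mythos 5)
The paper offers no proof of this statement — it is quoted directly from Neumann~\cite{N 77} — and your proposal correctly reconstructs the standard argument along exactly the lines the paper itself sketches in \autoref{rem:PBHAreSeifertFibered} and \autoref{ex:quasihomE8}: the maximal-minor hypothesis gives the isolated complete intersection structure (cf.\ \autoref{prop:isolsingBH}), the weighted $\CC^*$-action gives a Seifert fibration of the link whose special fibres have degrees $p_1,\dots,p_n$, and \autoref{prop:seifzhs} together with the negativity of the Euler number identifies the link with $\Sigma(p_1,\dots,p_n)$ as an oriented manifold. The two substantive points you delegate to citation — the vanishing of $H_1$ of the link (Brieskorn--Hamm, or the Orlik--Wagreich resolution graph plus unimodularity) and the sign of the Euler number (negative-definiteness of the resolution) — are precisely the content of the classical results the paper relies on, so your route matches the intended one.
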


The condition that all the maximal minors of the matrix of coefficients are non-zero is equivalent to the condition that the previous system defines an isolated complete intersection singularity at the origin of $\CC^n$. This is a direct consequence of the following more general result of 
Hamm (see~\cite[\S 5]{H 69} and \cite{H 72}):

\begin{proposition}   \label{prop:isolsingBH}
     Fix $n\geq 3$ and let $(p_1, \dots, p_n)$ be a sequence of positive integers with $p_i\geq 2$ for all $i\in \{1,\ldots, n\}$, and 
   fix $k \in \{1, \dots, n-1 \}$. 
     Consider a $k\times n$-matrix $(c_{i,j})_{i,j}$ 
       with complex entries.
     Then, the system of equations
         \begin{equation} \label{eq:BHsystgen} 
               \left\{ \begin{array}{cccccc}
                    c_{1,1} z_1^{p_1} &  + &  \cdots   &  + &  c_{1,n} z_n^{p_n} & =0, \\
                      \vdots &   \vdots & \vdots & \vdots & \vdots & \vdots \\
                    c_{k,1} z_1^{p_1} &  + &  \cdots   &  + &  c_{k,n} z_n^{p_n} & =0,
              \end{array}  \right.
         \end{equation}
      defines an isolated complete intersection singularity at $0$ in $\CC^n$ if, and only if,  all maximal minors of the input matrix $(c_{i,j})_{i,j}$ are non-zero.
\end{proposition}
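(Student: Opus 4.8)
The plan is to prove both implications of \autoref{prop:isolsingBH} by analyzing the Jacobian criterion for the complete intersection defined by \eqref{eq:BHsystgen}. Write $g_\ell := \sum_{j=1}^n c_{\ell,j} z_j^{p_j}$ for $\ell = 1, \dots, k$, and let $V \subseteq \CC^n$ be the common zero locus. The key observation is that $V$ has an isolated complete intersection singularity at $0$ if and only if $V$ has dimension $n-k$ and is smooth away from $0$; equivalently, the Jacobian matrix $\left(\partial g_\ell / \partial z_j\right)_{\ell,j} = \left(c_{\ell,j}\, p_j\, z_j^{p_j-1}\right)_{\ell,j}$ has rank $k$ at every point of $V \setminus \{0\}$.

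First I would treat the "if" direction. Assume every maximal (that is, $k\times k$) minor of $(c_{\ell,j})_{\ell,j}$ is non-zero. Given a point $z = (z_1,\dots,z_n) \in V \setminus \{0\}$, let $S := \{\, j : z_j \neq 0 \,\}$, so $S$ is non-empty; since all $p_j \geq 2$, the factor $p_j z_j^{p_j-1}$ is non-zero precisely for $j \in S$. The Jacobian at $z$ thus has the same rank as the submatrix $\left(c_{\ell,j}\right)_{\ell \in \{1,\dots,k\},\, j \in S}$ (after scaling columns by the non-zero numbers $p_j z_j^{p_j-1}$). If $|S| \geq k$, the non-vanishing of all maximal minors of $(c_{\ell,j})$ forces this submatrix to have full rank $k$, so $z$ is a smooth point of the correct dimension. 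The remaining case is $|S| \leq k-1$: here I claim $z \notin V$. Indeed, restricting the equations $g_\ell(z)=0$ to the coordinates in $S$ gives $k$ linear relations $\sum_{j \in S} c_{\ell,j}\, w_j = 0$ (with $w_j := z_j^{p_j} \neq 0$) among the at most $k-1$ quantities $w_j$; the non-vanishing of maximal minors of $(c_{\ell,j})$ implies the columns indexed by any $\leq k$ elements are linearly independent, so the only solution is $w_j = 0$ for all $j \in S$, contradicting $z_j \neq 0$. Hence $V \setminus \{0\}$ is smooth of pure dimension $n - k$, and $V$ is an isolated complete intersection singularity at $0$ (it is non-empty and of the expected dimension since, e.g., intersecting with a generic coordinate plane gives solutions).

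For the "only if" direction, suppose some maximal minor of $(c_{\ell,j})$ vanishes, say the one on column set $T$ with $|T| = k$. Then the columns $\{(c_{\ell,j})_{\ell}\}_{j \in T}$ are linearly dependent, so there is a non-zero vector $(a_j)_{j \in T}$ with $\sum_{j \in T} c_{\ell,j}\, a_j = 0$ for all $\ell$. Choose complex numbers $z_j$ for $j \in T$ with $z_j^{p_j} = a_j$ (possible over $\CC$), and set $z_j = 0$ for $j \notin T$; this point $z$ lies in $V$ and is non-zero (as $(a_j) \neq 0$). I would then compute the Jacobian at $z$: only the columns indexed by $j \in T$ with $a_j \neq 0$ survive, and these are at most $k$ columns of the matrix $(c_{\ell,j})$ whose column set $T$ was already rank-deficient, so the Jacobian has rank $< k$ at $z$. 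Thus $z$ is either a singular point of $V$ away from $0$, or a point where the local dimension exceeds $n-k$; in either case $V$ fails to be an isolated complete intersection singularity at $0$. (One should note that if $k = n$ the statement is vacuous or degenerate, and the relevant range is $k \leq n-1$, as assumed; also, to deduce \autoref{thm:hammbrieslinks} one takes $k = n-2$.)

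The main obstacle is the bookkeeping in the "if" direction when $z$ has few non-zero coordinates: one must carefully separate the case $|S| \geq k$ (smoothness, via non-vanishing minors) from $|S| < k$ (the point is not on $V$ at all, again via non-vanishing minors applied to an overdetermined linear system in the $z_j^{p_j}$). Everything else is a routine application of the Jacobian criterion together with the elementary fact that, over an algebraically closed field, one can always extract $p_j$-th roots — this is where the hypothesis $p_j \geq 2$ (rather than $p_j \geq 1$) is actually harmless, since $p_j = 1$ would only make the corresponding variable linear and the argument still goes through, but stating it for $p_j \geq 2$ matches the intended application. I would also remark that this is exactly the classical computation behind Hamm's theorem, so one could alternatively cite \cite{H 69, H 72} directly; the self-contained argument above is included for completeness.
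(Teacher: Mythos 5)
The paper itself offers no proof of this proposition: it is attributed to Hamm with references, so there is no argument of the paper to compare yours against; your Jacobian-criterion computation is the natural self-contained route, and your ``if'' direction is essentially complete and correct (the case split on $S=\{j: z_j\neq 0\}$, with $|S|\le k-1$ forcing $z\notin V$ and $|S|\ge k$ forcing full rank, is exactly the right bookkeeping, and the dimension count follows from Krull's height theorem plus smoothness of $V\setminus\{0\}$).

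There is, however, a genuine gap in your ``only if'' direction. Being an isolated complete intersection singularity \emph{at $0$} is a condition on the germ, i.e.\ on arbitrarily small representatives, whereas you produce a single point $z\in V\setminus\{0\}$ at which the Jacobian of the chosen equations is rank-deficient; as written, the sentence ``in either case $V$ fails to be an isolated complete intersection singularity at $0$'' does not follow, since a degenerate point far from the origin is compatible with the germ at $0$ being a perfectly good ICIS. What rescues the argument is the weighted homogeneity of the system: each $g_\ell$ is homogeneous for the $\CC^*$-action $t\cdot(z_1,\dots,z_n)=(t^{m/p_1}z_1,\dots,t^{m/p_n}z_n)$ with $m=\lcm(p_1,\dots,p_n)$ (the same action the paper exploits in \autoref{ex:quasihomE8} and \autoref{rem:PBHAreSeifertFibered}), so $V$ is invariant and the Jacobian columns at $t\cdot z$ differ from those at $z$ by the non-zero factors $t^{\,m-m/p_j}$; hence the whole orbit $t\cdot z$, which converges to $0$ as $t\to 0$, consists of points of $V\setminus\{0\}$ where the Jacobian has rank $<k$, and this contradicts the criterion in every neighborhood of $0$. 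Relatedly, your opening equivalence should carry a ``near $0$'' qualifier and should be read scheme-theoretically, i.e.\ in terms of the map $(g_1,\dots,g_k)$ having no critical points on its zero fiber off the origin: when a maximal minor vanishes the zero \emph{scheme} can even be non-reduced with smooth reduction (e.g.\ $k=1$, $n=2$, $g=z_1^2$), so the statement is false for the reduced germ reading, and it is precisely the Jacobian/critical-point formulation you use that makes both implications true. With these two adjustments the proof is complete.
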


\begin{remark}   \label{rem:PBH}
  Notice that each equation of~\eqref{eq:BHsystgen} defines a so-called 
\textbf{Pham-Brieskorn hypersurface singularity}\index{singularity!Pham-Brieskorn} 
(see Bries\-korn's paper \cite[pages 47--49]{B 00} 
for an explanation of this terminology). For this reason, isolated complete intersection singularities (ICIS) defined by these systems are sometimes called  
\textbf{Pham-Brieskorn-Hamm singularities}\index{singularity!Pham-Brieskorn-Hamm}.
\end{remark}

\begin{remark}\label{rem:ExoticSpheres}
    If a Pham-Brieskorn-Hamm singularity of complex dimension at least three has an 
    integral homology sphere link (i.e., its link has the integral homology of  a sphere of the 
    same dimension), then this link is homeomorphic to a sphere. Indeed, as proved by Milnor 
    \cite[Theorem 5.2]{M 68} for isolated singularities of hypersurfaces 
    and extended by Hamm \cite[Kor. 1.3]{H 71} to ICIS, their links are simply connected. 
    In turn, by a  theorem of Smale \cite{S 61}, a simply connected integral homology sphere 
    of dimension  at least five is homeomorphic to a sphere. Brieskorn discovered in \cite{B 66} 
    (see also \cite{B 00}) that for hypersurfaces, such links could be exotic spheres. 
    Subsequent work by Hamm \cite{H 72} extended the study of such exotic spheres to all 
    Pham-Brieskorn-Hamm singularities. 
\end{remark}

\begin{remark}\label{rem:PBHAreSeifertFibered} 
       Notably, the link of a Pham-Brieskorn-Hamm surface singularity  $(X,0)$ defined by the system
       \eqref{eq:BHsyst} is always Seifert-fibered, even when it is not an integral 
       homology sphere. This fact can be proven using the same group-action methods    
       from~\autoref{ex:quasihomE8}. Indeed, the surface $X$ is invariant under 
       the  action of $(\CC^*, \cdot)$ on $\CC^n_{z_1, \dots, z_n}$ given by  
          \begin{equation}\label{eq:groupAction}
                \CC^* \times \CC^n  \to \CC^n \qquad               
               (t, (z_1, \dots, z_n))\mapsto   (t^{p_2 \cdots p_n} z_1, \dots, t^{p_1 \cdots p_{n-1}} z_n) .
          \end{equation}
       Furthermore, the special fibers are obtained as  the intersections 
       of  $X \cap  \bS^{2n-1}_{\varepsilon}$ with some hyperplanes of coordinates.
        If the integers $p_i$ are pairwise coprime (as required for~\autoref{thm:hammbrieslinks}), 
        it follows that the degree of the  fiber $X \cap  \bS^{2n-1}_{\varepsilon} \cap Z(z_i)$ 
        equals $p_i$, for every $i \in \{1, \dots, n\}$. In particular, we see that this fiber is special if, 
        and only if,  $p_i >1$. 
\end{remark}

\medskip
\subsection{From three-dimensional splicing to splice type singularities}  
   \label{ssec:splicingtosing}$\:$ 
\medskip

In this subsection we explain how to build new integral homology spheres from old ones 
by \emph{splicing} them along oriented knots (see \autoref{def:splicing}). 
Then, we introduce \emph{splice diagrams} (see \autoref{def:splicediag}), 
which are particular decorated trees encoding the result of successive splicings 
of Seifert fibered integral homology spheres along some of their fibers 
(see \autoref{def:splicediagZHS}). We continue by explaining Eisenbud and Neumann's characterization 
of splice diagrams describing the $\ZHS$ which appear as links of isolated complex surface 
singularities (see \autoref{thm:charZHSlinks}). This characterization uses the notion of 
{\em edge determinant condition} (see \autoref{def:edgedet}). We conclude 
by explaining Neumann and Wahl's \emph{semigroup condition} on the decorations 
of splice diagrams (see \autoref{def:semgpcond}), which we use in~\autoref{ssec:spltypesing}  
to define  \emph{splice type singularities}.

\medskip
\autoref{prop:seifzhs} above characterizes the non-trivial integral homology spheres which are 
Seifert fibered. A natural question arises: are there other $\ZHS$'s? It turns out that there 
are many more! In order to explain  
this fact it is useful to introduce the following terminology:

\begin{definition}   \label{def:cutmanif}
    Let $M$ be a compact manifold (with or without boundary) and let 
    $K \hookrightarrow M$ be a properly embedded submanifold. 
    A \textbf{classical cut\index{classical cut} $\boxed{\cC_{K} M}$ of $M$ along $K$}   is the closure 
    inside $M$ of the complement of a compact tubular neighborhood of $K$ in $M$. 
\end{definition}

\begin{remark}\label{eq:cutsThroughKnots} 
       Note that if $K$ is a knot in a three-manifold $M$, then the boundary 
      $\partial_{top} (\cC_{K} M)$ of $\cC_{K} M$ is a two-dimensional torus. 
      If $M$ is moreover an oriented $\ZHS$, 
      then $\partial_{top} (\cC_{K} M)$ contains preferred isotopy classes of curves, namely, those of 
      the meridians and longitudes of $K$ in the sense of~\autoref{def:merparzhs}. 
\end{remark}

As stated in the next proposition, new $\ZHS$'s can be obtained from 
a pair of $\ZHS$'s with prescribed embedded knots by  gluing  the corresponding 
classical cuts appropriately, as seen in~\autoref{fig:3DSplicing}. For details, we refer 
the reader to~\cite[Section 1.1]{EN 85}.
\begin{proposition}  \label{prop:splicezhs}
     Let $M_1$ and $M_2$ be two oriented $\ZHS$'s and let 
     $K_i \subset M_i$ be oriented knots in them. Consider classical cuts 
     $\cC_{K_i} M_i$ of $M_i$ along $K_i$ in the sense of~\autoref{def:cutmanif} 
     and let $M$ be  the manifold obtained by gluing $\cC_{K_1} M_1$ and $\cC_{K_2} M_2$ 
     through a diffeomorphism of the tori $\partial_{top} (\cC_{K_i} M_i)$  (for $i=1,2$)
     which permutes their meridians and longitudes. Then, the manifold $M$ is also a $\ZHS$. 
\end{proposition}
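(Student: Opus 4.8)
The plan is to compute $H_*(M,\Z)$ directly by Mayer--Vietoris and to conclude by~\autoref{rm:characterizationOfZHS}. Observe first that $M$ is a closed smooth three-manifold: it is obtained by gluing two compact manifolds with boundary along a diffeomorphism of their entire boundaries, since each $\partial_{top}(\cC_{K_i}M_i)$ is the single torus $T$ that gets glued (by~\autoref{eq:cutsThroughKnots}), so $\partial_{top}M=\emptyset$. Write $Y_i:=\cC_{K_i}M_i$ and let $T\subset M$ be the image of the glued tori, so that $M=Y_1\cup_T Y_2$ and $Y_1\cap Y_2=T$. Thickening $T$ to a bicollar, Mayer--Vietoris yields $\cdots\to H_n(T,\Z)\xrightarrow{\iota_n}H_n(Y_1,\Z)\oplus H_n(Y_2,\Z)\to H_n(M,\Z)\xrightarrow{\partial}H_{n-1}(T,\Z)\to\cdots$, and essentially everything will hinge on identifying $\iota_1$.

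First I would compute the homology of a knot exterior $Y_i$ in a $\ZHS$. Put $N:=N_{M_i}(K_i)$, a solid torus with $\partial_{top}N=T$ and $M_i=Y_i\cup N$. Since $Y_i$ is a compact three-manifold with non-empty boundary, $H_3(Y_i,\Z)=0$; and the Mayer--Vietoris connecting map $H_3(M_i,\Z)\to H_2(T,\Z)$ carries the fundamental class $[M_i]$ to $\pm[T]$, hence is an isomorphism $\Z\xrightarrow{\sim}\Z$, which together with $H_2(N,\Z)=0$ forces $H_2(Y_i,\Z)=0$. In degree one the sequence reads $0\to H_1(T,\Z)\to H_1(Y_i,\Z)\oplus H_1(N,\Z)\to H_1(M_i,\Z)=0$; since $H_1(N,\Z)=\Z\langle[K_i]\rangle$ receives the longitude $\lambda_i$ with multiplicity $1$ and the meridian $\mu_i$ trivially, one reads off that $H_1(Y_i,\Z)$ is infinite cyclic, that $\mu_i$ is a generator, and that $\lambda_i$ is null-homologous in $Y_i$ — the last fact being exactly the content of $lk(\lambda_i,K_i)=0$ from~\autoref{prop:trivbd} and~\autoref{def:merparzhs}. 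Write $g_i:=[\mu_i]$, a generator of $H_1(Y_i,\Z)$.

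Now I would feed this into the sequence for $M$. The gluing diffeomorphism interchanges meridians and longitudes; choosing the identification $T\cong\partial_{top}Y_1$ so that $H_1(T,\Z)$ has basis $\mu_1,\lambda_1$, this means that under the inclusion $T\hookrightarrow Y_2$ the classes $\mu_1,\lambda_1$ go to the classes of $\lambda_2,\mu_2$ respectively. Hence $\iota_1$ sends $\mu_1\mapsto([\mu_1],[\lambda_2])=(g_1,0)$ and $\lambda_1\mapsto([\lambda_1],[\mu_2])=(0,g_2)$, so $\iota_1\colon H_1(T,\Z)\to H_1(Y_1,\Z)\oplus H_1(Y_2,\Z)$ is an isomorphism $\Z^2\xrightarrow{\sim}\Z^2$. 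Since $H_2(Y_i,\Z)=H_3(Y_i,\Z)=0$, the sequence now collapses: in degree three it gives $H_3(M,\Z)\xrightarrow{\sim}H_2(T,\Z)\cong\Z$, so (as $M$ is connected, being the union of the connected sets $Y_1,Y_2$ along the connected set $T$) $M$ is a closed connected orientable three-manifold, which we orient so as to make it an oriented $\ZHS$; in degree two it gives $0\to H_2(M,\Z)\xrightarrow{\partial}H_1(T,\Z)\xrightarrow{\iota_1}H_1(Y_1,\Z)\oplus H_1(Y_2,\Z)$, whence $H_2(M,\Z)=\ker\iota_1=0$; and since $\iota_1$ is onto, the map $H_1(Y_1,\Z)\oplus H_1(Y_2,\Z)\to H_1(M,\Z)$ is zero, so $H_1(M,\Z)$ injects into $H_0(T,\Z)\cong\Z$, which in turn injects into $H_0(Y_1,\Z)\oplus H_0(Y_2,\Z)$, forcing $H_1(M,\Z)=0$. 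Thus $H_*(M,\Z)\cong H_*(\bS^3,\Z)$ and $M$ is a $\ZHS$.

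The main obstacle is not conceptual but a matter of bookkeeping and conventions: one must check that $\mu_i$ genuinely \emph{generates} $H_1(Y_i,\Z)$ and that $\lambda_i$ dies there — this is precisely where the characterizations $lk(\mu_i,K_i)=1$ and $lk(\lambda_i,K_i)=0$ of~\autoref{prop:trivbd} enter — and that the various Mayer--Vietoris connecting maps carry fundamental classes to fundamental classes (a Poincar\'e--Lefschetz duality statement). One must also be careful about orientations: the gluing diffeomorphism should be chosen orientation-reversing on $T$ so that the two given orientations of $Y_1,Y_2$ match and endow $M$ with the orientation making it an \emph{oriented} integral homology sphere. All of this is classical; for the details one may consult~\cite[Section~1.1]{EN 85}.
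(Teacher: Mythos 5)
Your proof is correct. The paper offers no argument of its own for this proposition -- it simply defers to \cite[Section 1.1]{EN 85} -- and your Mayer--Vietoris computation is precisely the standard argument behind that reference: you correctly isolate the two key inputs, namely that $H_1(\cC_{K_i}M_i,\Z)\cong\Z$ is generated by the meridian while the longitude is null-homologous there (coming from $lk(\mu_i,K_i)=1$ and $lk(\lambda_i,K_i)=0$ in \autoref{prop:trivbd}), and that the exchange of meridians and longitudes is exactly what makes the map $H_1(T,\Z)\to H_1(\cC_{K_1}M_1,\Z)\oplus H_1(\cC_{K_2}M_2,\Z)$ an isomorphism, after which the vanishing of $H_1(M,\Z)$ and $H_2(M,\Z)$ falls out of the exact sequence as you describe.
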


\begin{figure}[tb]
  \includegraphics[scale=0.65]{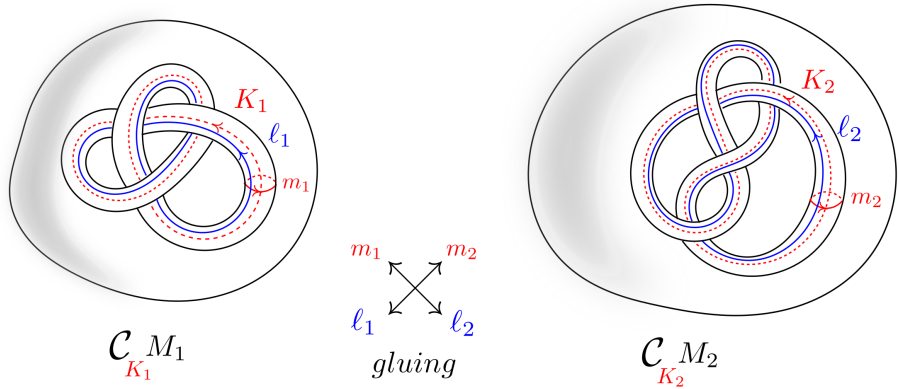}
  \caption{Splicing the integral homology spheres $M_1$ and $M_2$ along the knots 
      $K_1$ and $K_2$ (see \autoref{def:splicing}).\label{fig:3DSplicing}}
  \end{figure}

As mentioned by Gordon in~\cite[Section 6]{G 99}, 
the previous property had been noticed by Dehn in his 1907 paper \cite{D 07} for a pair of three-dimensional spheres. 
The following terminology  describing
the operation performed in~\autoref{prop:splicezhs} is due to 
Siebenmann \cite{S 80}:

\begin{definition}   \label{def:splicing}
       Let $M_1$ and $M_2$ be two oriented $\ZHS$'s and let 
     $K_i \subset M_i$ be oriented knots in them.  Then, the oriented three-manifold 
     $\boxed{(M_1, K_1) \oplus (M_2, K_2)}$  obtained by the procedure described 
     in~\autoref{prop:splicezhs} is called the \textbf{splice of $M_1$ and $M_2$ 
     along  the knots $K_1$ and $K_2$}\index{splicing!three-dimensional}. 
\end{definition}

The splicing operation is sketched in~\autoref{fig:3DSplicing}. The meridians are denoted by $m_1$ and $m_2$, whereas the longitudes are indicated by $\ell_1$ and $\ell_2$. The curves
in the figure are schematic, i.e., they should not  be interpreted as  
linear projections of knots in the standard sphere. Otherwise, the knot $l_i$ 
would not be a longitude of $K_i$, as their linking number would not be zero.
\smallskip

In the same article~\cite{S 80} in which he had introduced the {\em splicing} terminology, 
  Siebenmann considered the special class of 
$\ZHS$'s obtained from several Seifert-fibered 
ones by splicing them recursively along fibers of their respective Seifert fibrations. 
He encoded the resulting oriented $\ZHS$'s by special types of decorated trees   
called \emph{splice diagrams}, which we now discuss. We start by recalling 
some standard terminology from graph theory:

\begin{definition} \label{def:trees}
          A \textbf{tree}\index{tree} $T$ is a finite acyclic connected graph. 
          The \textbf{valency}\index{valency of a vertex}  of a vertex $v$ 
          is the number of edges incident to it, which we denote by $\boxed{\valv{v}}$. 
          When $T$ has at least two vertices, 
          the \textbf{leaves}\index{tree!leaf of} of $T$ are those vertices of valency one, 
          and the \textbf{nodes}\index{tree!node of}\index{node!of a tree}
          of $T$ are the remaining vertices. If $T$ is a singleton, 
          its unique vertex is taken to be a leaf. An edge joining two nodes is called 
          \textbf{internal}\index{tree!internal edge of}. 
\end{definition}

\begin{definition}  \label{def:splicediag}
          A \textbf{splice diagram}\index{splice diagram}\index{tree!splice diagram} 
          is a finite tree without vertices of valency two, 
          such that for each node $v$, every incident edge $e$ is decorated by a positive integer 
          $\boxed{\du{v,e}}$ in the neighborhood of $v$ 
          and such that around each node, the integers decorating adjacent  
          edges are pairwise coprime. 
          A \textbf{star-shaped splice diagram}\index{splice diagram!star-shaped} 
          is a splice diagram with a single node. 
\end{definition}

Siebenmann's work~\cite{S 80} associates an oriented integral homology sphere 
to every splice diagram by an explicit procedure, which we now recall:

\begin{definition}   \label{def:splicediagZHS}
          Let $\Gamma$ be a splice diagram. Its \textbf{associated oriented integral homology sphere} 
          $\boxed{\Sigma(\Gamma)}$ is constructed as follows:
             \begin{itemize}
                  \item For each node $v$ of $\Gamma$, let $\boxed{\Gamma^v}$ 
                      be the star-shaped splice diagram obtained by taking the union 
                      of the compact edges of $\Gamma$ containing $v$ and by keeping 
                      only their decorations around $v$.
                  
                  \item  If $(p_1(v), \dots, p_{\valv{v}}(v))$ is the sequence of decorations 
                     on the edges of $\Gamma^v$ (arbitrarily ordered), consider the 
                     Seifert-fibered integral homology sphere 
                     $\boxed{\Sigma(\Gamma^v)}:= \Sigma(p_1(v), \dots, p_{\valv{v}}(v))$ 
                     (see~\autoref{def:notationseifzhs} and~\autoref{rem:pisCanBeOne}), 
                     with its fibers oriented arbitrarily, but in a continuous way. 
                     The manifold $\Sigma(\Gamma^v)$ has a set of $\valv{v}$  distinguished 
                     fibers in bijection with the set of edges of $\Gamma$ adjacent to $v$. 
                    
                  \item Given two adjacent nodes  $u$ and $v$ of $\Gamma$,  splice  
                      $\Sigma(\Gamma^{u})$ and $\Sigma(\Gamma^{v})$  along the oriented 
                      fibers  corresponding to the unique edge of $\Gamma$ joining $u$ and $v$. 
                     
                  \item Perform the previous splicing simultaneously on the disjoint union of all 
                      oriented Seifert-fibered integral homology spheres with oriented fibers 
                      $\Sigma(\Gamma^{v})$, indexed by all nodes  $v$ of $\Gamma$. 
                      
                  \item The resulting oriented integral homology sphere is $\Sigma(\Gamma)$. 
             \end{itemize}
\end{definition}

\begin{remark} 
  Siebenmann's construction is more general and allows negative edge weights on splice diagrams.    
  We focus on the case of positive weights since this restriction 
   is enough for describing the singularity links which 
  are $\ZHS$'s (see~\autoref{thm:charZHSlinks}). 
\end{remark}

\autoref{thm:hammbrieslinks} shows that all  integral homology spheres associated to star-shaped splice diagrams occur as  links of normal surface singularities. 
The notion of edge determinant, introduced formally by Neumann and Wahl 
in \cite[Section 1]{NW 05} (although it appears already in~\cite[page 82]{EN 85}), 
 allows to characterize which integral homology spheres may be realized 
 as such links (see \autoref{thm:charZHSlinks}):

\begin{definition}   \label{def:edgedet}
   Let $\Gamma$ be a splice diagram. If $u$ and $v$ are two adjacent nodes of $\Gamma$,    
   then the \textbf{edge determinant}\index{edge!determinant} of the edge $[u,v]$ is the number 
   obtained by subtracting from the 
   product of the two decorations on $[u,v]$  the product of the remaining 
   decorations in the neighborhoods of $u$ and $v$. 
   We say that $\Gamma$ 
   \textbf{satisfies the edge determinant condition}\index{condition!edge determinant} 
   if the edge determinant of every internal edge of $\Gamma$ is positive. 
\end{definition}

We illustrate this definition with the running example from~\cite[Section 1]{NW 05}:
\begin{figure}[tb]  
         \includegraphics[scale=0.75]{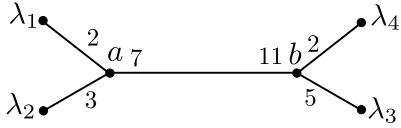}
         \caption{The splice diagram used in 
           {Examples}~\ref{ex:basicdetcond}  and~\ref{ex:semigpsatisf}.
             \label{fig:simpleExampleOverview}
}               
  \end{figure}

  \begin{example}   \label{ex:basicdetcond}
      Consider the splice diagram with two nodes and four leaves seen in 
    ~\autoref{fig:simpleExampleOverview}. It satisfies the edge determinant condition, as 
     the edge determinant of its single internal edge is $7 \cdot 11 - (2 \cdot 3) \cdot (5 \cdot 2) = 11 > 0$. 
 \end{example}

In~\cite[Theorem 9.4]{EN 85}, Eisenbud and Neumann  
gave an explicit description of all integral homology spheres that can be realized as surface 
singularity links. Here is the precise statement:

\begin{theorem}   \label{thm:charZHSlinks}
    The links of normal surface singularities which are $\ZHS$'s     
    are precisely the oriented three-manifolds $\Sigma(\Gamma)$ associated     
    to  splice diagrams which satisfy the edge determinant condition. Moreover, 
    the diagram $\Gamma$ is completely determined by the link if for every non-internal 
    edge $e$ of $\Gamma$ joining a node $v$ to a leaf $\lambda$ we have $d_{v,e}\geq 2$.
\end{theorem}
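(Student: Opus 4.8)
The plan is to transport the statement into the language of plumbed three-manifolds, where both realizability and uniqueness reduce to bookkeeping around Neumann's plumbing calculus. By the classical results of Du Val, Mumford and Grauert recalled in~\autoref{sec:introd}, a closed oriented three-manifold is the link of a normal surface singularity if and only if it is a graph manifold carrying a connected negative definite plumbing graph, which may then be taken to be the dual graph of the minimal good resolution; by~\autoref{rm:characterizationOfZHS} such a link is a $\ZHS$ precisely when $H_1=0$, equivalently when the intersection form of the plumbing is unimodular. So the task is to match, through the mutual conversion between plumbing graphs and splice diagrams --- chains of vertices corresponding to edge decorations via continued fractions, vertices of valency $\geq 3$ to nodes --- the negative definite unimodular plumbing \emph{trees} with the diagrams $\Sigma(\Gamma)$ of~\autoref{def:splicediagZHS} that satisfy the edge determinant condition.

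For realizability I would argue both directions. Given $\Gamma$ with the edge determinant condition, replace each node $v$ by the star-shaped plumbing graph of the Seifert fibered $\ZHS$ encoded by $\Gamma^v$ --- each leaf decoration near $v$ becoming a continued-fraction chain, the central self-intersection being read off from the Seifert invariants --- and glue these stars along the internal edges of $\Gamma$. The resulting plumbing has boundary $\Sigma(\Gamma)$, a $\ZHS$ by construction as an iterated splice (\autoref{def:splicing}); once the plumbing is shown negative definite, Grauert's theorem realizes $\Sigma(\Gamma)$ as a singularity link. Conversely, starting from a $\ZHS$ link $M$, take the dual graph of its minimal good resolution: it is negative definite, and $H_1(M,\Z)=0$ forces it to be unimodular and its underlying graph to be a tree (a cycle would survive in $H_1$). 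Neumann's algorithm converts this plumbing tree into a splice diagram $\Gamma$ with $\Sigma(\Gamma)\cong M$ as oriented manifolds, the pairwise coprimality of the decorations around each node being exactly the $\ZHS$ condition on the corresponding Seifert piece (\autoref{prop:seifzhs}); negative definiteness then forces the edge determinant condition, as discussed next.

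The crux --- and what I expect to be the main obstacle --- is the equivalence ``the plumbing assembled from $\Gamma$ is negative definite $\iff$ every internal edge of $\Gamma$ has positive edge determinant''. For necessity one tracks how the intersection matrix transforms under contraction of the chains along an internal edge $[u,v]$: the relevant Schur-complement $2\times 2$ block has determinant governed, up to sign, by the edge determinant $d_{u,e}\,d_{v,e}-\bigl(\textstyle\prod\text{other weights at }u\bigr)\bigl(\textstyle\prod\text{other weights at }v\bigr)$, so negative definiteness of the full form forces that quantity to be positive. For sufficiency one argues by induction over the tree of nodes, peeling off a node adjacent to only one other node, using that a star-shaped plumbing of a Seifert fibered $\ZHS$ is negative definite and that positivity of the edge determinants controls the signs of the Schur complements created as the glued tori are contracted. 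This determinant-and-continued-fraction accounting, organized through the plumbing calculus, is the genuinely technical step; the remainder is formal once the dictionary is fixed.

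It remains to establish uniqueness. The splice diagram determines $\Sigma(\Gamma)$ by~\autoref{def:splicediagZHS}. Conversely, by Neumann's theorem \cite[Theorem 2]{N 81} the oriented link $M$ determines its minimal good resolution graph, which via the reverse conversion determines $\Gamma$ up to the single ambiguity of a leaf $\lambda$ attached to a node $v$ with $d_{v,\lambda}=1$: by~\autoref{rem:pisCanBeOne} such a leaf records a regular (non-special) fiber of the Seifert piece at $v$, which may be introduced or erased without changing $\Sigma(\Gamma)$ and is already absorbed in the minimal good resolution. The hypothesis $d_{v,e}\geq 2$ for every non-internal edge $e$ is precisely what removes this ambiguity, and under it every decoration of $\Gamma$ is forced by the Seifert invariants of the pieces of $M$; hence $\Gamma$ is determined by the link.
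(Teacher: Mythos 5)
The paper does not actually prove this statement: it is imported verbatim from Eisenbud and Neumann, cited as \cite[Theorem 9.4]{EN 85}, so the only meaningful comparison is with that classical proof. Your outline follows the same route that Eisenbud and Neumann take: the dictionary between splice diagrams and plumbing trees (chains of valency-two vertices encoding the edge decorations via continued fractions), the characterization of singularity links as boundaries of connected negative definite plumbings together with the observation of \autoref{rm:characterizationOfZHS} that the $\ZHS$ condition is unimodularity of the intersection form, the translation of negative definiteness into positivity of all edge determinants, and uniqueness via \cite[Theorem 2]{N 81} plus the erasability of weight-one leaves (\autoref{rem:pisCanBeOne}), which is exactly what the hypothesis $d_{v,e}\geq 2$ on non-internal edges rules out. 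So the strategy is the right one and matches the source.

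That said, as written this is an outline rather than a proof: the two decisive ingredients are asserted or only sketched. First, the equivalence ``assembled plumbing negative definite $\iff$ every internal edge determinant positive'' is precisely the technical heart of \cite{EN 85} (it rests on the identification of splice-diagram weights with absolute values of determinants of subgraphs of the plumbing tree, from which the edge determinant is expressed as a ratio of such determinants, and the sign analysis under Schur complements/contraction of chains); your induction ``peeling off a node'' is plausible but not carried out, and one must also be careful that in the converse direction ``singularity link'' only means bounding \emph{some} negative definite plumbing, so one needs Neumann's calculus to pass to the normal form before reading off the edge determinants, which your use of the minimal good resolution implicitly does but should be made explicit. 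Second, your realizability direction silently uses that plumbing the star-shaped pieces together along the connecting chains produces a manifold orientation-preservingly homeomorphic to $\Sigma(\Gamma)$ of \autoref{def:splicediagZHS}, i.e.\ that splicing in the sense of \autoref{def:splicing} is realized by plumbing along a chain with the correct meridian--longitude exchange; this ``splicing = plumbing'' compatibility, together with the sign convention that the Seifert pieces $\Sigma(\Gamma^v)$ carry negative Euler number (\autoref{def:notationseifzhs}, \autoref{prop:seifzhs}), is again a nontrivial chapter of \cite{EN 85}. With those two points supplied, your argument is the standard proof; without them it is a correct roadmap with the hard steps deferred.
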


Note that~\autoref{thm:hammbrieslinks}  shows that the Seifert fibered  $\ZHS$'s 
are not only surface singularity links, but they occur
as links of isolated complete intersection singularities. This observation leads to the following natural analog of~\autoref{q:whichGraphsAreLinks} stated in~\autoref{sec:introd}:

\begin{question}\label{q:qhichZHSAreLinks}
         Which integral homology spheres of the form $\Sigma(\Gamma)$ can occur as links 
         of isolated complete intersection singularities?
\end{question}

Although a complete answer to this question remains unknown, a partial answer was given by Neumann and Wahl in~\cite{NW 05bis}. Indeed, they showed that $\Sigma(\Gamma)$ is the link  of an isolated complete intersection singularity whenever $\Gamma$ satisfies a supplementary hypothesis called the \emph{semigroup condition}, which we now recall. We start with the auxiliary notions of 
{\em linking number between two vertices}  and of {\em degree of a node} of $\Gamma$:

\begin{definition}  \label{def:linkingNumberDegree}  
      Let $\Gamma$ be a splice diagram. 
      For every pair of vertices $u$ and $v$ of $\Gamma$,   
      their \textbf{linking number}\index{linking number} $\boxed{\wtuv{u}{v}}$ 
      is the product of edge weights 
      adjacent to the shortest path $[u, v]$ on $\Gamma$ joining $u$ and $v$. In particular, 
      the \textbf{degree}\index{node!degree of}  of a node $v$ is the product 
      $\boxed{\du{v}}:=\wtuv{v}{v}$ of all edge decorations adjacent to $v$. 
\end{definition}

\begin{remark}\label{rem:whyLinkingName}   
The name \emph{linking number} used for the integers $\wtuv{u}{v}$ is motivated by the fact 
  that they agree with  the linking numbers (inside the integral homology sphere 
  $\Sigma(\Gamma)$ from~\autoref{def:splicediagZHS}) of the knots corresponding to the generic fibers of the 
  Seifert fibered manifolds $\Sigma(\Gamma^u)$ and $\Sigma(\Gamma^v)$. For more details, we refer to~\cite[Theorem 10.1]{EN 85}. Note that the edge determinant of an internal edge $[u,v]$ is positive if and only if $\du{u} \du{v} > \wtuv{u}{v}^2$.
\end{remark}

\begin{definition}      \label{def:semgpcond} 
   Let $\Gamma$ be a splice diagram. Fix a node $v$ and 
  an edge $e$ of $\Gamma$ adjacent to it.
   We say that $\Gamma$ \textbf{satisfies the semigroup condition at $v$ in the direction of $e$} if 
     $\du{v}$ belongs to the subsemigroup of $(\N, +)$ generated by the positive integers     
      $\wtuv{v}{\lambda}$, where $\lambda$ varies among the leaves of $\Gamma$ 
       seen from $v$ in the direction of $e$ (i.e., such that $e$ lies in the shortest 
      path $[v,\lambda]$).  
      If this condition is verified for all pairs $(v,e)$, then we say that $\Gamma$ 
      \textbf{satisfies the semigroup condition}\index{condition!semigroup}. 
\end{definition}

\begin{remark}\label{rem:homogeneity}
The semigroup condition is essential to extend the construction of Brieskorn-Pham-Hamm systems from star-shaped diagrams to arbitrary ones: it specifies how to replace the power of a single variable (indexed by the corresponding leaf) by a monomial in the variables indexed by leaves seen from $v$ in the direction of $e$. Polynomials constructed in this way will be  homogeneous relative to suitable 
weight vectors, described in~\autoref{rm:HomogConditions}~(\ref{item:whomog}) below. 
\end{remark}

\begin{example}\label{ex:semigpsatisf}
  As in~\autoref{ex:basicdetcond}, we consider the splice diagram $\Gamma$ from~\autoref{fig:simpleExampleOverview}. Note that 
  $\du{a} = 2 \cdot 3 \cdot 7 = 42$, $\wtuv{a}{\lambda_3}= 2 \cdot 3 \cdot 2=12$, 
$\wtuv{a}{\lambda_4} = 2 \cdot 3 \cdot 5=30$,  
$\du{b} = 2 \cdot 5 \cdot 11=110$, $\wtuv{b}{\lambda_1}= 2 \cdot 5 \cdot 3=30$,  and
$\wtuv{b}{\lambda_2} = 2 \cdot 5 \cdot 2=20$. Therefore, $\Gamma$ satisfies the  semigroup condition, as $\du{a} \in \N\langle \wtuv{a}{\lambda_3}, \wtuv{a}{\lambda_4}\rangle$ 
and $\du{b} \in \N\langle \wtuv{b}{\lambda_1}, \wtuv{b}{\lambda_2}\rangle$. More precisely, 
\begin{equation}\label{eq:ExampleSemigroupID}           
        \du{a} =  42 =  12 + 30 = \wtuv{a}{\lambda_3} +  \wtuv{a}{\lambda_4}  \quad \text{ and } \quad 
           \du{b} = 110 =  30 + 4 \cdot 20 = 
           \wtuv{b}{\lambda_1} + 4 \, \wtuv{b}{\lambda_2}. \qedhere
\end{equation}
Note that the semigroup condition is always satisfied at a node $v$ in the direction of an edge 
   joining $v$ to a leaf. For instance, $ \du{a} =  42 = 2 \cdot (3 \cdot 7) = 
      2 \cdot \wtuv{a}{\lambda_1} \in \N\langle \wtuv{a}{\lambda_1}\rangle$. 
  \end{example}

The following result is due to Neumann and Wahl   
(as a consequence of~\cite[Theorems 2.6 and 7.2]{NW 05}):   

\begin{theorem}   \label{thm:semgrpsplice}
     Let $\Gamma$ be a splice diagram with $n$ leaves which satisfies the determinant 
     and  semigroup conditions. 
    Then, there exists an isolated complete intersection singularity 
    embedded in $\CC^n$    whose oriented link is orientation-preserving homeomorphic to  $\Sigma(\Gamma)$. 
\end{theorem}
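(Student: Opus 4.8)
The plan is to construct explicitly the required isolated complete intersection singularity directly from the combinatorial data of $\Gamma$, following the recipe that Neumann and Wahl developed in~\cite{NW 05}, and then to identify its link. First I would fix a node $v$ of $\Gamma$ of valency $m=\valv{v}$, with incident edges $e_1,\dots,e_m$ carrying decorations $d_{v,e_1},\dots,d_{v,e_m}$, and the $n$ leaves $\lambda_1,\dots,\lambda_n$ of $\Gamma$. For each leaf $\lambda_j$ introduce a variable $z_j$, and to each pair $(v,e_i)$ associate a monomial $M_{v,i}=\prod_{\lambda} z_\lambda^{\alpha_\lambda}$ in the variables indexed by the leaves seen from $v$ in the direction of $e_i$, where the exponents $\alpha_\lambda$ are chosen using the semigroup condition at $v$ in the direction of $e_i$: since $\du{v}$ lies in the subsemigroup of $(\N,+)$ generated by the linking numbers $\wtuv{v}{\lambda}$, one can write $\du{v}=\sum_\lambda \alpha_\lambda \wtuv{v}{\lambda}$, and this is exactly what makes the resulting monomial have the correct weighted degree. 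Assembling, at each node $v$ one writes $\valv{v}-2$ generic linear combinations of the $\valv{v}$ associated monomials $M_{v,1},\dots,M_{v,\valv{v}}$; taking the union over all nodes produces a system of $\sum_v(\valv{v}-2)=n-2$ equations in $n$ variables, a \emph{splice type system} in the sense of~\autoref{ssec:spltypesing}. The key homogeneity point, which I would verify carefully, is that with these exponent choices every monomial occurring in the equations attached to a node $v$ has the same degree with respect to the weight vector whose $\lambda$-th coordinate is $\wtuv{v}{\lambda}$; this is the mechanism already flagged in~\autoref{rem:homogeneity}.

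Next I would invoke the results of Neumann and Wahl guaranteeing that, for a \emph{generic} choice of the coefficients in the linear combinations and under the determinant condition, this system defines an isolated complete intersection singularity of dimension two at the origin of $\CC^n$ — this is precisely the content of~\cite[Theorem 2.6]{NW 05}, which the excerpt allows me to cite. The determinant condition enters here to ensure that the $n-2$ hypersurfaces meet properly, with isolated singularity, exactly as the maximal-minor condition did in~\autoref{prop:isolsingBH} for the star-shaped (Pham-Brieskorn-Hamm) case; indeed when $\Gamma$ is star-shaped the construction specializes to~\eqref{eq:BHsyst} and~\autoref{thm:hammbrieslinks} recovers $\Sigma(\Gamma)$ as the link. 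For the general case the inductive structure of the construction over the tree $\Gamma$ is what organizes the verification.

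The heart of the proof is the identification of the oriented link of this singularity with $\Sigma(\Gamma)$ as defined combinatorially in~\autoref{def:splicediagZHS}, and this is where I expect the main obstacle to lie. The strategy, due to Neumann and Wahl in~\cite[Theorem 7.2]{NW 05}, is to produce a resolution of the singularity, or more precisely enough of its resolution graph, by exploiting the many $(\CC^*)$-quasi-homogeneity actions: the homogeneity relative to the weight vector $(\wtuv{v}{\lambda})_\lambda$ at each node $v$ gives a partial torus action, and near the toric strata corresponding to the edges of $\Gamma$ one can read off Seifert-fibered pieces matching the $\Sigma(\Gamma^v)$, glued along fibers exactly according to the splicing prescribed by the internal edges. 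One then checks, as in~\autoref{ex:quasihomE8}, that the resulting plumbed three-manifold has the numerical invariants (degrees of special fibers, Euler numbers, splicing data) dictated by $\Gamma$, and appeals to Eisenbud–Neumann's~\autoref{thm:charZHSlinks} and the uniqueness in~\autoref{prop:seifzhs} to conclude the orientation-preserving homeomorphism with $\Sigma(\Gamma)$. The delicate points are the genericity arguments — showing that the coefficient choices that make the system an ICIS also make the link computation go through, and that different generic choices give homeomorphic links — together with keeping track of orientations throughout, so that the link acquires the negative-Euler-number normalization built into~\autoref{def:notationseifzhs}.
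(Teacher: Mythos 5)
Your proposal follows essentially the same route as the paper: build the splice type system from admissible monomials furnished by the semigroup condition with generic (Hamm-type) coefficients, and then quote Neumann and Wahl's results — the paper itself deduces \autoref{thm:semgrpsplice} exactly as a consequence of \cite[Theorems 2.6 and 7.2]{NW 05}, restated explicitly as \autoref{thm:splicesingzhs}. Your sketch of the link identification via quasi-homogeneous pieces and plumbing is consistent with that reference, so the argument is correct and not materially different from the paper's.
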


In fact, Neumann and Wahl's result referenced above is more general, since it concerns splice diagrams whose edge weights around vertices are not necessarily pairwise coprime.  
The reader interested in learning more about them and the 
associated \emph{splice quotient singularities} may consult Wahl's  
surveys~\cite{W 06, W 22}. 

The proof of~\autoref{thm:semgrpsplice} is constructive. Indeed, given any splice diagram $\Gamma$ 
satisfying the semigroup condition, Neumann and Wahl build a family of systems of formal power series in $n$ variables  which define equisingular 
isolated complete intersection singularities with link $\Sigma(\Gamma)$ 
(see \autoref{thm:splicesingzhs}).
The explicit construction of such {splice type systems} and 
the associated {splice type singularities} will be discussed in~\autoref{ssec:spltypesing} below. 
The largest  class known up to date of complete intersection isolated surface singularities with integral homology sphere links remains that of splice type.

\medskip
\subsection{Splice type singularities}   
\label{ssec:spltypesing}
$\:$ 
\medskip

In this subsection we recall Neumann and Wahl's construction
of \emph{splice type systems}~\cite{NW 05,NW 05bis} associated to 
splice diagrams satisfying both the determinant and semigroup conditions 
(see {Definitions}~\ref{def:splicediag},~\ref{def:edgedet} and~\ref{def:semgpcond}). 
Such systems define the so-called \emph{splice type singularities} 
(see \autoref{def:splicesystem}). For a description of how Neumann and Wahl were led to this
construction, we refer the reader to Wahl's paper \cite{W 22}. 
\medskip 

Let $\boxed{\nodesT{\Gamma}}$ be the set of nodes of 
the splice diagram $\Gamma$ and $\boxed{\leavesT{\Gamma}}$ be its set of leaves. 
We denote by $\boxed{n}$ the number of leaves of $\Gamma$. 
Following~\autoref{def:splicediagZHS} we let $\boxed{\Gamma^{v}}$ be the \textbf{star} 
of a vertex $v$ of $\Gamma$, i.e. the collection of all edges adjacent to $v$, 
with inherited weights around $v$. It contains precisely $\valv{v}$ edges, 
i.e., as many as the valency of $v$.
In addition to the notion of linking number between pairs of vertices  
introduced in~\autoref{def:linkingNumberDegree}, it will often be convenient to work 
with the following related notion, first introduced in \cite[Section 1]{NW 05}:

\begin{definition}  
   The \textbf{reduced linking number}\index{linking number!reduced} $\boxed{\wtuv{v}{u}'}$ 
    is defined as the product of all weights adjacent to the path $[u,v]$ excluding those 
    around $u$ and $v$. In particular, $\wtuv{v}{v}' =1$ for each node $v$ of $\Gamma$.
\end{definition}

  \begin{remark}   \label{rem:reducedlink}
        Given a node $v$ and a leaf $\lambda$ of $\Gamma$, it is immediate to check that 
        $\wtuv{v}{\lambda} \,{\du{v,\lambda}} = \wtuv{v}{\lambda}'\,\du{v}$. This implies that 
        the semigroup condition from~\autoref{def:semgpcond} for the pair $(v,e)$ 
        is satisfied if, and only if,  $\du{v,e}$ belongs to the subsemigroup of $(\N, +)$ generated 
        by $\wtuv{v}{\lambda}'$, where $\lambda$ varies among the leaves of $\Gamma$ 
        which are seen from $v$ in the direction of $e$. 
  \end{remark}

In what follows, we recall some standard notations from toric geometry. 
They are not required to  define splice type singularities 
(and were not used in the foundational papers of  Neumann and Wahl)
but they are essential for our proof of the Milnor fiber conjecture.   

Each leaf $\lambda$ of $\Gamma$ yields a variable $\boxed{z_{\lambda}}$. Let $\boxed{\Mwp{\leavesT{\Gamma}}}$ be the  
lattice of exponent vectors of monomials in the variables $z_{\lambda}$. 
We denote by $\boxed{\Nwp{\leavesT{\Gamma}}}$ its dual lattice of weight vectors  
of the variables $z_{\lambda}$. We write the associated pairing using dot product notation, 
i.e. $\boxed{\wu{}\cdot m} \in \Z$ whenever $\wu{} \in \Nwp{\leavesT{\Gamma}}$ and $m\in \Mwp{\leavesT{\Gamma}}$. The canonical basis $\{\boxed{\wu{\lambda}}: \lambda \in \leavesT{\Gamma}\}$ 
of $\Nwp{\leavesT{\Gamma}}$ and the dual basis $\{\boxed{m_{\lambda}}: \lambda \in \leavesT{\Gamma}\}$ 
of  $\Mwp{\leavesT{\Gamma}}$ identify both lattices with $\Z^{n}$.   
Each node $v$ of $\Gamma$ has an associated  weight vector
    \begin{equation}\label{eq:wu}
          \boxed{\wu{v}} :=\sum_{\lambda \in \leavesT{\Gamma}} \wtuv{v}{\lambda} \, \wu{\lambda} 
         \in     \Nwp{\leavesT{\Gamma}}. 
     \end{equation}

If $v$ is a node of $\Gamma$ and $e \in \Gamma^{v}$, we 
denote by $\boxed{\nodesTevRoot{v,e}}$ the set of leaves $\lambda$ 
of $\Gamma$ seen from $v$ in the direction of $e$. 
The diagram  $\Gamma$ satisfies the semigroup condition if, and only if, for each node $v$, edge $e\in \Gamma^v$ and leaf $\lambda \in \nodesTevRoot{v,e}$, 
  there exists  $\boxed{\wtNveL{v}{e}{\lambda}}\in \N$ such that: 
\begin{equation}\label{eq:PowersadmisibleMon}
  \du{v} = \sum_{\lambda \in \nodesTevRoot{v,e}} \wtNveL{v}{e}{\lambda} \, \wtuv{v}{\lambda}\,, \quad \text{ or equivalently } \quad \du{v,e} = \sum_{\lambda \in \nodesTevRoot{v,e}} \wtNveL{v}{e}{\lambda} \, \wtuv{v}{\lambda}'.
\end{equation}
This last equivalence is a direct consequence of~\autoref{rem:reducedlink}.

We use the coefficients from~\eqref{eq:PowersadmisibleMon} to define an  element of $\Mwp{\leavesT{\Gamma}}$ for each pair $(v,e)$:
\begin{equation}\label{eq:admissibleMonExp}
     \boxed{\wtNve{v}{e}} := \sum_{\lambda \in \nodesTevRoot{v,e}}
    \wtNveL{v}{e}{\lambda} \,m_{\lambda} \in  \Mwp{\nodesTevRoot{v,e}}\subset \Mwp{\leavesT{\Gamma}}.
\end{equation}
Following~\cite{NW 05}, we refer to it as an 
\textbf{admissible exponent vector}\index{admissible!exponent vector} 
for $(v,e)$. By~\eqref{eq:PowersadmisibleMon}, it  satisfies
\begin{equation}\label{eq:admMonwuvalue} 
       \wu{v}\cdot \wtNve{v}{e} = \du{v} \quad \text{ for each edge }\; e \in \Gamma^{v}.
\end{equation}
In turn, each admissible exponent vector  $\wtNve{v}{e}$ defines an 
\textbf{admissible monomial}\index{admissible!monomial}:  
\begin{equation}\label{eq:admissibleMon}
      \boxed{\zexp{\wtNve{v}{e}}} := \prod_{\lambda \in \nodesTevRoot{v,e}}
      z_{\lambda}^{\wtNveL{v}{e}{\lambda}}.
\end{equation}

The next definition is a reformulation of a notion introduced by Neumann 
and Wahl   in \cite[Section 2]{NW 05}:

\begin{definition}   \label{def:splicesystem}
      Let $\Gamma$ be a splice diagram which satisfies both the determinant and 
       semigroup conditions, and assume that the set of $n$ leaves of $\Gamma$ is totally ordered. 
       For each node $v$ and adjacent edge $e$ of it, fix an admissible exponent vector 
       $\wtNve{v}{e}\in \Mw{\Gamma} $ defined in~\eqref{eq:admissibleMonExp}. 
        \begin{itemize}
             \item A \textbf{strict splice type system}\index{splice type!system!strict} 
                  for $\Gamma$ is a finite family  
                 of $(n-2)$ polynomials of the form
                \begin{equation}\label{eq:surface}
                         \boxed{ \fvi{v}{i}(\zu)}   :=\sum_{e\in {\starT{
                               }{v}}} \!\!\!\!    \boxed{\cvei{v}{e}{i}}  \; \zexp{\wtNve{v}{e}} \quad \text{for all }  
                                   i\in \{1,\dots, \valv{v}-2\} \, \text{ and } \text{ each node } v\text{ of }\Gamma.
                 \end{equation}                  
                    We  require the coefficients $\cvei{v}{e}{i}$ to  satisfy the 
                    \textbf{Hamm determinant condition}\index{condition!Hamm determinant}.     
                   Namely, for any node $v\in \Gamma$, and any fixed ordering 
                   of the  edges in $\Gamma^{v}$,  all the maximal minors of the matrix 
                   of coefficients $(\cvei{v}{e}{i})_{e,i} \in \CC^{\valv{v} \times (\valv{v}-2)}$ 
                   must be non-zero.

             \item A \textbf{splice type system}\index{splice type!system} $ \boxed{\sG{\Gamma}} $  
             associated to $\Gamma$ is a finite family  of power series  of the form 
                \begin{equation}\label{eq:surfaceSeries}
                     \boxed{ \fgvi{v}{i}(\zu)}   := \fvi{v}{i}(\zu)  + \gvi{v}{i}(\zu)
                       \quad \text{for all }   i\in \{1,\dots, \valv{v}-2\} \, \text{ and any fixed  node } 
                       v\text{ of }\Gamma,
                 \end{equation}
                where the collection $(\fvi{v}{i})_{v,i}$ is a strict splice type system 
                for $\Gamma$ and each $\gvi{v}{i}$ is a convergent power series 
                near the origin satisfying the following condition for each exponent vector $m$ 
                in its support:
                       \begin{equation}\label{eq:gviConditions}
                              \wu{v}\cdot m > \du{v}. 
                        \end{equation}
       \item A \textbf{splice type singularity}\index{splice type!singularity} 
          associated to  $\Gamma$ is the subgerm of 
          $(\CC^n, 0)$ defined  by a splice type system $\sG{\Gamma}$.   
     \end{itemize}  
\end{definition}

\begin{remark}\label{rm:HomogConditions} 
     The following observations regarding~\autoref{def:splicesystem} are in order:
\begin{enumerate}
   \item   \label{item:whomog}
        By equations \eqref{eq:admMonwuvalue} and \eqref{eq:gviConditions}, 
        each polynomial $\fvi{v}{i}$ is 
       $\wu{v}$-homogeneous, where 
       $\wu{v}$ is the weight vector from \eqref{eq:wu}, and each monomial appearing 
       in $\gvi{v}{i}$ has higher $\wu{v}$-weight.   
      
   \item  The first appearance of splice type systems can be traced back to~\cite{NW 02}. 
        In that paper, the edge weights around nodes were not assumed to be pairwise coprime, 
        but the edge determinant and semigroup conditions were still required. 
       Neumann and Wahl proved that under a supplementary condition 
       (called the \emph{congruence condition}), 
      it is possible to pick the series $\fgvi{v}{i}(\zu)$ in an equivariant way  under the action of certain
       finite abelian groups. This construction then leads to defining \emph{splice quotient singularities} 
       as the quotients 
       of the associated splice type singularities by those abelian groups. 
       These singularities and their defining systems are studied thoroughly in \cite{NW 05bis} 
       (see also Wahl's surveys~\cite{W 06, W 22}).

   \item Neumann and Wahl proved in \cite{NW 05bis} that the set of splice-type subgerms 
      of $\mathbb{C}^n$ corresponding to a given splice diagram satisfying the determinant 
      and the semigroup condition is independent of the choice of admissible exponents. 
      For a detailed proof, we refer the reader to~\cite[Theorem 9.1]{CPS 21}. 
   \end{enumerate}
          \end{remark}

The following two examples illustrate~\autoref{def:splicesystem}:

\begin{example}\label{ex:splicesystbasic}
     Consider the splice diagram from~\autoref{fig:simpleExampleOverview}. As shown 
     in   {Examples}~\ref{ex:basicdetcond} and~\ref{ex:semigpsatisf}, 
     $\Gamma$ satisfies the determinant 
     and semigroup conditions. The explicit semigroup membership identities 
     from~\eqref{eq:ExampleSemigroupID} yield the following associated strict splice type system:
         \begin{equation*}\label{eq:numExampleSurface1}
               f_a:= z_1^2 - z_2^3 + z_3 z_4 \quad  \text{ and } \quad  
              f_b:=    z_1 z_2^4 + z_3^5 - z_4^2.\qedhere
         \end{equation*}
      Another possible choice for the $\wu{b}$-homogeneous function $f_b$ is 
      $z_1^3 z_2 + z_3^5 - z_4^2$, obtained by replacing the admissible monomial  
      $z_1 z_2^4$ by the other possible admissible monomial $z_1^3 z_2$ for $(b, [b, a])$.  
      This second monomial is admissible because 
      $\du{b} = 110 =  3 \cdot 30 + 1 \cdot 20 = 
           3 \cdot \wtuv{b}{\lambda_1} + 1 \cdot \wtuv{b}{\lambda_2}$). 
 \end{example}

 \begin{figure}[tb]
    \includegraphics[scale=0.75]{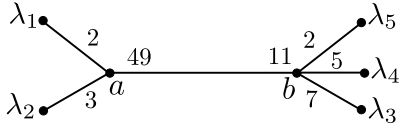}
        \caption{The splice diagram of Examples \ref{ex:2NodesNW} and \ref{ex:2NodesNW3fold}. 
         \label{fig:numExampleSimpleEndCurve} }
  \end{figure}

\begin{example}\label{ex:2NodesNW}
  Consider the splice diagram  $\Gamma$ from~\autoref{fig:numExampleSimpleEndCurve} 
  with nodes $a$ and $b$.
  Then, $\du{a} = 294$, $\du{b}= 770$ and $\wtuv{a}{b} = 420$, so $\du{a}\,\du{b}>\wtuv{a}{b}^2$. 
  Therefore, the edge determinant 
  condition holds for $[a,b]$ by~\autoref{rem:whyLinkingName}. Furthermore, 
      \begin{equation*}
               49 =   0 \cdot   (2 \cdot 5) +  1 \cdot  ( 2 \cdot 7)   +  1 \cdot (5 \cdot 7) \quad \text{ and } \quad
               11  =   1  \cdot (3) + 4 \cdot  (2) = 3  \cdot (3) + 1 \cdot  (2),
      \end{equation*}
so  the semigroup condition is also satisfied. Associated admissible exponent vectors 
  are $\wtNve{a}{[a,b]} = (0,0,0,1,1)$ and  $\wtNve{b}{[a,b]} = (1,4,0,0,0)$ or $(3,1,0,0,0)$. The following polynomials determine a strict splice type system for $\Gamma$:
  \begin{equation*}\label{eq:numExampleSurface}
    \begin{cases}  
         \fvi{a}{1}:= \;\;\;\, z_1^{2}\;\; \;-\;\;\;2\; z_2^3\; +\;\;\; z_4\,z_5,  \\
         \fvi{b}{1}:=\;\;\;\; z_1z_2^4 +  z_3^7 + \;\; z_4^5\, - \;2155\; z_5^2,    \\
         \fvi{b}{2}:=  33\, z_1z_2^4 +  z_3^7 + 2\,z_4^5 - \; 2123 \, z_5^2. \,  
    \end{cases}
  \end{equation*}
 An alternative system is obtained by replacing  
 the admissible monomial $z_1z_2^4$ in $\fvi{b}{1}$ and $\fvi{b}{2}$ with $z_1^3z_2$.
\end{example}

   \begin{remark}   \label{rem:tropNND2D}
    Our paper \cite{CPS 21} describes standard tropicalizing 
    fans of splice type singularities 
    in the sense of~\autoref{def:tropfans}, and shows that splice type systems are 
    Newton non-degenerate complete intersection 
    presentations of them in the sense of~\autoref{def:NNDcomplint}. These results 
    are essential tools to prove analogous facts for their edge-deformations, introduced 
    in~\autoref{sec:edgedeform}  (see also~\autoref{rem:tropNND3D}). 
    The weight vectors $(\wu{v})_v$ indexed by the nodes of $\Gamma$
    generate the positive rays of the 
    standard tropicalizing fan of the splice type singularity. Its remaining rays 
    are generated by the basis vectors 
    $(\wu{\lambda})_{\lambda}$ of $\Nwp{\leavesT{\Gamma}}$. 
    Moreover, {\em the associated splice diagram   
    appears as a transversal section of the 
    local tropicalization of a splice type singularity} (see \cite[Theorem 1.2]{CPS 21}). 
    This gives the first tropical interpretation 
    of splice diagrams, whenever they  satisfy the determinant and 
    semigroup conditions. 
    We make use of this fact in Step~(\ref{item:tropfanX}) of~\autoref{sec:stepsproof}. 
  \end{remark}

  \begin{remark}   \label{rem:irredinters}
      Let $(X,o)$ be a splice type singularity associated to the splice diagram $\Gamma$. 
      For each leaf $\lambda$ of $\Gamma$, one may consider the hyperplane section of $(X,o)$ by 
      the hyperplane of coordinates defined by $z_{\lambda} =0$. As a particular case of 
      Neumann and Wahl's theorem \cite[Theorem 7.2 (6)]{NW 05bis}, this hyperplane section 
      is an irreducible germ of curve, therefore its associated link is a knot inside the link 
      of $(X,o)$. This fact will be used in \autoref{def:fourdimsplice}.
  \end{remark}

In \cite[Theorems 2.6 and 7.2]{NW 05}, Neumann  and 
Wahl prove the following explicit form of \autoref{thm:semgrpsplice}:

\begin{theorem} \label{thm:splicesingzhs}
      Let $\Gamma$ be a splice diagram which satisfies both the determinant and the 
      semigroup conditions. Then the link of any splice type singularity associated to $\Gamma$ 
      is orientation-preserving homeomorphic to $\Sigma(\Gamma)$. 
\end{theorem}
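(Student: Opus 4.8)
The plan is to follow Neumann and Wahl's argument \cite{NW 05}, proceeding by induction on the number of nodes $|\nodesT{\Gamma}|$ and deducing the topological statement from the combinatorics of a good resolution. The link of a resolved normal surface singularity is diffeomorphic to the boundary of a regular neighborhood of the exceptional divisor, hence to the three-manifold obtained by plumbing along the (decorated) dual graph of that divisor; so it suffices to produce, for a splice type singularity $(X,o)$ associated to $\Gamma$, a good resolution whose dual graph is the one that Eisenbud and Neumann's dictionary (\autoref{thm:charZHSlinks}, \cite{EN 85}) attaches to $\Gamma$, since that plumbing graph computes $\Sigma(\Gamma)$.

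\textbf{Base case.} Suppose $\Gamma$ is star-shaped with unique node $v$ of valency $n$. For a leaf-edge $e=[v,\lambda]$ one has $\nodesTevRoot{v,e}=\{\lambda\}$, so \eqref{eq:PowersadmisibleMon} reads $\du{v}=\wtNveL{v}{e}{\lambda}\,\wtuv{v}{\lambda}=\wtNveL{v}{e}{\lambda}\,\du{v}/\du{v,e}$, forcing $\wtNveL{v}{e}{\lambda}=\du{v,e}$: the unique admissible monomial is the pure power $z_\lambda^{\du{v,e}}$. Thus a strict splice type system for $\Gamma$ is precisely a Pham--Brieskorn--Hamm system with exponents $(\du{v,e})_{e}$, which are pairwise coprime by \autoref{def:splicediag}, and the Hamm determinant condition is exactly the hypothesis of \autoref{thm:hammbrieslinks} (equivalently of \autoref{prop:isolsingBH}); that theorem yields an isolated complete intersection with link $\Sigma(\du{v,e_1},\dots,\du{v,e_n})=\Sigma(\Gamma)$. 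Finally, by \eqref{eq:gviConditions} the tails $\gvi{v}{i}$ have strictly larger $\wu{v}$-weight than $\fvi{v}{i}$; a finite determinacy / equisingularity argument, carried out by Neumann and Wahl, shows that the full system $(\fgvi{v}{i})_i$ defines a germ analytically, hence topologically, equivalent to the strict one, so its link is again $\Sigma(\Gamma)$.

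\textbf{Inductive step.} Fix an internal edge $[a,b]$ of $\Gamma$ and sever it: let $\Gamma_a$ be the subdiagram on the $a$-side, completed by one new leaf at the cut and carrying the decorations inherited around $a$, and symmetrically $\Gamma_b$; both are splice diagrams satisfying the determinant and semigroup conditions, and by \autoref{def:splicediagZHS} one has $\Sigma(\Gamma)=(\Sigma(\Gamma_a),K_a)\oplus(\Sigma(\Gamma_b),K_b)$, the knots $K_a,K_b$ being the fibers glued across $[a,b]$. By the inductive hypothesis the links of the splice type singularities $(X_a,o)$, $(X_b,o)$ attached to $\Gamma_a$, $\Gamma_b$ are $\Sigma(\Gamma_a)$, $\Sigma(\Gamma_b)$. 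It remains to build a good resolution of $(X,o)$ whose dual graph is obtained from good resolution graphs of $(X_a,o)$ and $(X_b,o)$ by Eisenbud and Neumann's splicing calculus along $[a,b]$. I would construct it from the weighted blow-ups attached to the weight vectors $\wu{v}$ of \eqref{eq:wu} --- equivalently, in the language of this paper, from the toric modification along the fan which is topologically the cone over $\Gamma$, refined to a resolution (cf.\ \autoref{rem:tropNND2D}): each node contributes one exceptional rational curve, the edges of $\Gamma$ dictate their incidences, the semigroup condition makes the admissible monomials \eqref{eq:admissibleMon} pull back to units along the appropriate components so that the strict transforms of the irreducible end curves $\{z_\lambda=0\}$ (\autoref{rem:irredinters}) meet the exceptional divisor transversally at the leaf sites, and the edge determinant inequality $\du{u}\du{v}>\wtuv{u}{v}^2$ of \autoref{rem:whyLinkingName} is exactly what forces the resulting intersection matrix to be negative definite.

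The main obstacle is this construction and its verification: one must check that the modification governed by the $\wu{v}$ genuinely resolves $(X,o)$ --- that no worse singularities are created, that the exceptional components are smooth rational curves crossing normally, and that the self-intersections and mutual intersection numbers read off from $\Gamma$ match Eisenbud and Neumann's plumbing recipe edge for edge --- with negative-definiteness, controlled by the determinant condition, as the decisive point. The prerequisite that the strict system already cuts out an isolated complete intersection is Neumann and Wahl's structural theorem, a Jacobian-rank statement controlled by the Hamm determinant condition and the shape of the admissible monomials; together with the finite-determinacy reduction of the base case it lets us work with the strict system throughout. Once the dual graphs match, the plumbing description of links gives an orientation-preserving homeomorphism $\partial(X,o)\cong\Sigma(\Gamma)$, and since cutting $[a,b]$ splits the constructed resolution graph compatibly with three-dimensional splicing, the induction closes.
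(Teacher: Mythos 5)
The statement you were asked to prove is not proved in this paper at all: it is quoted as Neumann and Wahl's theorem, with a pointer to \cite[Theorems 2.6 and 7.2]{NW 05}, so there is no in-paper argument to compare with, and your sketch is in effect an attempt to reconstruct their proof. Your base case is sound in its combinatorial part: for a star-shaped diagram the only admissible exponent on a leaf edge $e=[v,\lambda]$ is indeed $\du{v,e}$, so a strict splice system is a Pham--Brieskorn--Hamm system and \autoref{thm:hammbrieslinks} applies (modulo the small point that weights equal to $1$ must be handled as in \autoref{rem:pisCanBeOne}). But already there you overreach: adding the higher $\wu{v}$-weight tails $\gvi{v}{i}$ does \emph{not} in general give a germ analytically equivalent to the strict one, and ``finite determinacy'' is not available just from a weight inequality; proving that the link is unchanged under such perturbations is a genuine piece of work (Neumann and Wahl devote serious effort to it, and the present paper's own route to such statements is via Newton non-degeneracy and tropicalizing fans, cf.\ \autoref{rem:tropNND2D} and \cite{CPS 21}), so deferring it with a one-line appeal is a gap, and appealing to ``Neumann and Wahl's structural theorem'' for the ICIS property is circular if the aim is to prove their theorem rather than cite it.

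The decisive gap is in the inductive step. Everything is reduced to the claim that the toric modification attached to the cone over $\Gamma$, suitably refined, is a good resolution of $(X,o)$ whose decorated dual graph is exactly the Eisenbud--Neumann plumbing graph associated to $\Gamma$ -- smooth rational node curves, normal crossings, transversal end-curve transforms, and the correct self-intersections and edge strings. You name this ``the main obstacle'' and then do not carry it out; but this claim \emph{is} the theorem, since once the graphs match, the plumbing description of links gives the conclusion for $\Gamma$ directly and the induction on nodes does no work (your inductive hypothesis about the links of $X_a$, $X_b$ is never actually used in the construction). Two further signs that the mechanism is not under control: negative definiteness of the intersection matrix is automatic for any resolution of a normal surface germ (Du Val--Mumford--Grauert), so the edge determinant condition cannot be ``what forces'' it -- its role is to ensure the EN graph of $\Gamma$ is realizable as a singularity link and to match the numerical decorations; and the rationality and smoothness of the exceptional curves assigned to the nodes, as well as the behaviour of the admissible monomials along them, are asserted rather than established. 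As it stands the proposal is a plausible roadmap of the Neumann--Wahl strategy, not a proof.
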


\medskip
\subsection{Neumann and Wahl's Milnor fiber conjecture}  \label{ssec:mfconj}$\:$ 
\medskip

In this subsection we explain Neumann and Wahl's {\em four-dimensional splicing operation} 
(see \autoref{def:fourdimsplice}) 
and we give a more precise formulation of the {\em Milnor fiber conjecture} that the one 
given in the Introduction (see \autoref{conj:MFC}). 
We conclude by stating a corollary of our proof of this conjecture (see \autoref{thm:invMF}).
\medskip

Throughout this subsection, we fix  a splice diagram  $\Gamma$ with $n$ leaves satisfying the determinant and the semigroup conditions (see~ 
{Definitions}~\ref{def:edgedet} and \ref{def:semgpcond}). Furthermore, we assume that $\Gamma$ is not star-shaped and we fix two adjacent nodes $\boxed{a,b}$ of it. As illustrated in~\autoref{fig:splittingByEdge}, we let $\boxed{\Gamma_a}$ and $\boxed{\Gamma_b}$  be the splice diagrams  obtained by cutting $\Gamma$ at an interior point $\boxed{\roottree}$ of $[a,b]$. 
Denote by $\boxed{\roottree_a} \in \Gamma_a$ and $\boxed{\roottree_b} \in \Gamma_b$ 
the corresponding leaves of $\Gamma_a$ and $\Gamma_b$. We view them as roots of the 
two trees. 
We let $\boxed{n_a}$ and $\boxed{n_b}$ be the number of leaves of $\Gamma_a$ and $\Gamma_b$, respectively. Therefore, $n=n_a+n_b - 2$.

\begin{figure}[tb]
  \includegraphics[scale=0.45]{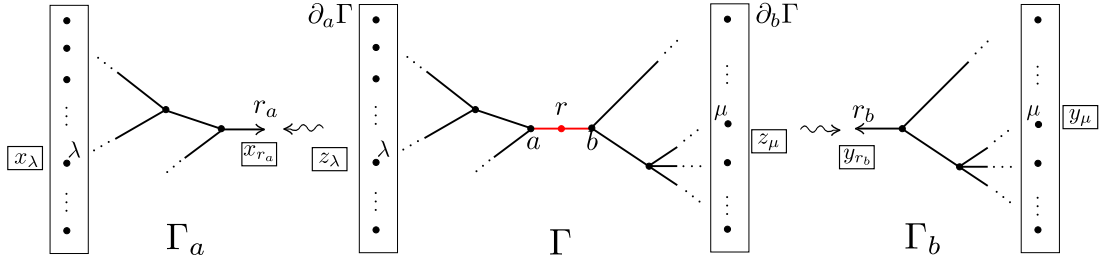}
  \caption{Splitting  the splice diagram $\Gamma$ along any interior point $\roottree$ on the central edge $[a,b]$ yields the diagrams $\Gamma_a$ and $\Gamma_b$, with roots $\roottree_a$ and $\roottree_b$, respectively. The  variables associated to the leaves on each diagram are labeled from left to right by  $x_{\lambda}$, $z_{\lambda}$, $z_{\mu}$ and $y_{\mu}$, respectively. \label{fig:splittingByEdge}
  }
  \end{figure}

It is a simple matter to check that $\Gamma_a$ and $\Gamma_b$ also satisfy the determinant and 
semigroup conditions. Thus, we may use the three splice diagrams $\Gamma$, $\Gamma_a$ 
and $\Gamma_b$ to  build three splice type systems. We let  $\boxed{X}$, $\boxed{X_a}$ and 
$\boxed{X_b}$ be the germs at the origin defined by each system in $\CC^n$, $\CC^{n_a}$ and 
$\CC^{n_b}$, respectively.  We denote by $x_{\lambda}$ the variables of the ambient space 
$\CC^{n_a}$ of $X_a$ and by $y_{\mu}$ those of the ambient space $\CC^{n_b}$ of $X_b$, 
where $\lambda$ varies in the set $\leavesT{\Gamma_a}$ of leaves of $\Gamma_a$ and 
$\mu$ varies in the set $\leavesT{\Gamma_b}$ of leaves of $\Gamma_b$. In particular, 
there are two variables, $x_{r_a}$ and $y_{r_b}$, which correspond to the roots of the two trees.

Since the germs $X$, $X_a$, and $X_b$ are isolated complete intersections, they  have well-defined Milnor fibers $\boxed{F}$, $\boxed{F_a}$, $\boxed{F_b}$, which are compact oriented four-dimensional manifolds with boundary. Furthermore, their boundaries are orientation-preserving diffeomorphic to the links of the associated singularities.  The Milnor fiber conjecture of Neumann and Wahl describes a concrete topological operation to build $F$ from $F_a$ and $F_b$. In what follows, we review this construction.

Consider the restriction of the coordinate function $x_{\roottree_a}$ to $X_a$. 
This holomorphic function has an isolated critical point at $0 \in X_a$. 
Therefore, it defines an {\em open book}\index{open book} (a terminology introduced by Winkelnkemper 
\cite{W 73}, also called an {\em open book decomposition}) 
on the link $\partial(X_a, 0)$ of $(X_a, 0)$: it is the 
{\em Milnor open book}\index{Milnor!open book} induced 
by the argument of the holomorphic function (see \cite[Section 6.5]{LNS 20}). Since 
the link $\partial(X_a, 0)$ is diffeomorphic to the boundary of the Milnor fiber $F_a$, 
we obtain an open book on this boundary~\cite{H 71}. Denote by $\boxed{G_a} \hookrightarrow F_a$ a compact surface with boundary obtained by pushing a page of this open book inside $F_a$, while keeping the boundary fixed. \autoref{fig:openBook} depicts this construction in lower dimension. 
Note that the boundary of $G_a$ is connected, because the hyperplane sections of $X_a$ 
by coordinate hyperplanes are irreducible (see \autoref{rem:irredinters}). 

We let $N_{F_a}(G_a)$ be a tubular neighborhood of $G_a$ in $F_a$. 
Consider the associated classical cut $\cC_{G_a} F_a$ of $F_a$ along $G_a$, 
as in~\autoref{def:cutmanif}.  Note that the normal bundle of  $G_a$ inside $F_a$ 
is trivial because it is a disk bundle over a connected surface with non-empty boundary. 
Therefore, the tubular 
neighborhood $N_{F_a}(G_a)$ is diffeomorphic to $G_a \times \D$, where $\D$ denotes a compact 
two-dimensional disk. This implies that the 
\textbf{longitudinal boundary} of $N_{F_a}(G_a) $, which we  define as 
\begin{equation}\label{eqhorizontalBoundary}
\boxed{\partial_{long}N_{F_a}(G_a)} := \cC_{G_a} F_a \cap N_{F_a}(G_a) 
\hookrightarrow \partial_{top}( \cC_{G_a} F_a)
\end{equation}
is diffeomorphic to $G_a \times \bS^1$. 

\begin{figure}[tb]
  \includegraphics[scale=0.5]{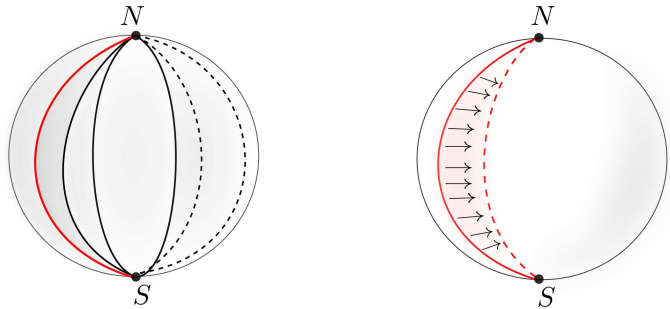}
  \caption{From left to right: collection of meridians forming an open book on the sphere $\bS^2$ with a distinguished page (in red), and pushing of this  page inside the interior of the ball 
  bounded by $\bS^2$, featured as a dashed arc. The binding is given by the north and south poles.\label{fig:openBook}}
\end{figure}

The next definition recalls
Neumann and Wahl's {\em four-dimensional splicing operation}  
in this context (see~\cite[Section 6]{NW 05} for further details). 
The construction is depicted in~\autoref{fig:4DSplicing}. 

 \begin{figure}[tb]
   \includegraphics[scale=0.7]{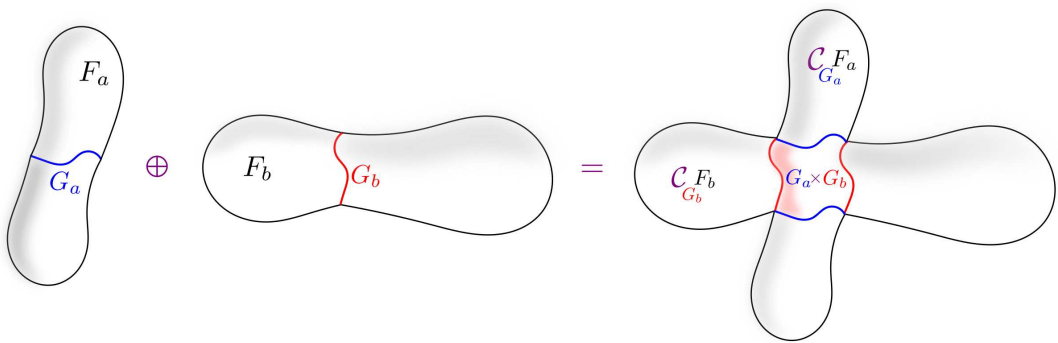}
   \caption{Splicing of two four-dimensional manifolds $F_a$ and $F_b$ with integral 
        homology sphere boundaries along properly embedded surfaces called $G_a$
        and $G_b$, respectively (see \autoref{def:fourdimsplice}).\label{fig:4DSplicing}}
 \end{figure}

 \begin{definition}   \label{def:fourdimsplice}\index{splicing!four-dimensional}
     Let $(F_a, G_a)$ and $(F_b, G_b)$ be the pairs defined above. 
     The \textbf{manifold 
     $\boxed{(F_a, G_a) \oplus (F_b, G_b)}$ obtained by splicing $F_a$ and $F_b$ 
     along $G_a$ and $G_b$} is constructed from the disjoint union 
        \[\cC_{G_a} F_a \:  \:  \sqcup  \:  \:  (G_a \times G_b)  \:  \:   \sqcup   \:  \:  \cC_{G_b} F_b\]
     by identifying $G_a \times \bS^1 \simeq \partial_{long}N_{F_a}(G_a) \hookrightarrow \cC_{G_a} F_a$ 
     with $G_a \times \bS^1 \simeq G_a \times \partial_{top} G_b \hookrightarrow G_a \times G_b$ 
     and, similarly, 
     $G_b \times \bS^1 \simeq \partial_{long}N_{F_b}(G_b) \hookrightarrow \cC_{G_b} F_b$ 
     with $ \bS^1 \times G_b \simeq \partial_{top}G_a \times G_b \hookrightarrow G_a \times G_b$.
 \end{definition}

 A basic, yet crucial, property of this operation is that it induces the 
 $3$-dimensional splicing operation of~\autoref{def:splicing} at the level of boundaries. As no proof for this fact was given in~\cite{NW 05}, we include one below.

 \begin{proposition}\label{pr:property4dSplicing}
     Let $(F_a, G_a)$ and $(F_b, G_b)$ be pairs as above. 
     Then, the boundary of the manifold obtained by splicing $F_a$ and $F_b$ 
     along $G_a$ and $G_b$ is the three-dimensional manifold obtained 
     by splicing their boundaries. More precisely, we have an orientation 
     preserving diffeomorphism:
        \[  \partial_{top} \left(  (F_a, G_a) \oplus (F_b, G_b) \right) \simeq 
                (\partial_{top}  F_a, \partial_{top}  G_a) \oplus (\partial_{top}  F_b, \partial_{top}  G_b).      \]
 \end{proposition}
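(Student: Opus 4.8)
The plan is to track the boundary through each of the three pieces in the definition of the four-dimensional splice and then recognize the result as an assembly of pieces matching Dehn's three-dimensional splice. First I would record that for a compact oriented manifold with corners built by gluing pieces along codimension-one faces, taking $\partial_{top}$ commutes with the gluing, provided one is careful about the corner locus. So I would compute $\partial_{top}$ of each of the three building blocks $\cC_{G_a} F_a$, $G_a \times G_b$, and $\cC_{G_b} F_b$, keeping track of which part of each boundary is ``free'' (becomes part of the final boundary) and which part is glued.

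For the cut pieces: $\partial_{top}(\cC_{G_a} F_a)$ decomposes as the union of $\cC_{G_a}(\partial_{top} F_a)$ — the classical cut of the link $\partial_{top} F_a = \partial(X_a,0)$ along the knot $\partial_{top} G_a$, using \autoref{rem:irredinters} to know this boundary is a knot — together with the longitudinal boundary $\partial_{long} N_{F_a}(G_a) \simeq G_a \times \bS^1$, these two meeting along the torus $\partial_{top}(\cC_{G_a} F_a) \cap \partial_{top}(\partial_{top} F_a)$. Since in the four-dimensional splice the face $G_a \times \bS^1$ is glued to $G_a \times \partial_{top} G_b \subset G_a \times G_b$, the free part of $\partial_{top}(\cC_{G_a} F_a)$ contributing to the boundary of the spliced manifold is precisely $\cC_{\partial_{top} G_a}(\partial_{top} F_a)$, the classical cut of the link along the binding knot. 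The symmetric statement holds for $\cC_{G_b} F_b$. For the product piece, $\partial_{top}(G_a \times G_b) = (\partial_{top} G_a \times G_b) \cup (G_a \times \partial_{top} G_b)$, and \emph{both} of these faces get glued to the cut pieces; so $G_a \times G_b$ contributes nothing new to the boundary but mediates the gluing. Hence $\partial_{top}$ of the four-dimensional splice is obtained from $\cC_{\partial_{top} G_a}(\partial_{top} F_a) \sqcup \cC_{\partial_{top} G_b}(\partial_{top} F_b)$ by gluing along the boundary tori, where the identification is read off from the two-step gluing through $\partial_{top} G_a \times \partial_{top} G_b \subset G_a \times G_b$.

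The remaining work is to identify that induced torus gluing with the one in \autoref{def:splicing}, i.e.\ to check it swaps meridian and longitude. Here I would use that $\partial_{top} G_a$ is a page of the Milnor open book of $x_{\roottree_a}|_{X_a}$ on $\partial(X_a,0)$, so its boundary — the binding — is the link of the hyperplane section $\{x_{\roottree_a}=0\}$, which is the distinguished knot $K_a$ in $\partial(X_a,0)$ corresponding to the root leaf. A page of an open book is a Seifert surface for the binding, so $\partial_{top} G_a$ realizes a longitude $\lambda_a$ of $K_a$ on $\partial_{top} N(K_a)$, while the normal $\bS^1$-direction in $G_a \times \bS^1$ is a meridian $\mu_a$. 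Tracing the two-step identification $\partial_{long} N_{F_a}(G_a) \simeq G_a \times \bS^1 \simeq G_a \times \partial_{top} G_b$ and symmetrically for $b$, the curve that is ``$\mu_a$'' on the $a$-side torus (the $\bS^1$-factor) is glued to the curve that is ``$\partial_{top} G_b$'' hence $\lambda_b$ on the $b$-side, and vice versa — which is exactly the meridian-longitude swap defining three-dimensional splicing. Finally I would verify the orientations match: orient everything as boundaries, note $G_a \times \bS^1$ inherits opposite orientations as a face of $\cC_{G_a} F_a$ versus as a face of $G_a \times G_b$ (which is what makes the gluing orientation-reversing on faces, hence orientation-preserving on the glued manifold), and conclude the diffeomorphism is orientation preserving.

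The main obstacle I expect is the orientation/corner bookkeeping: being careful that $\partial_{top}$ of a manifold with corners is only a manifold after smoothing the corner locus, that the several boundary faces are glued with consistent (outward-normal-reversing) orientations, and that the induced torus identification on the boundary genuinely carries the correct signs so that the result is $(\partial_{top} F_a, \partial_{top} G_a) \oplus (\partial_{top} F_b, \partial_{top} G_b)$ with its canonical orientation and not its reverse. The open-book input (page $=$ Seifert surface of the binding $\Rightarrow$ the page's boundary is a longitude, its co-oriented normal circle a meridian) is standard but I would state it as a lemma with a reference to \cite{W 73} or \cite[Section 6.5]{LNS 20}, since it is the crux of identifying the abstract torus gluing with Dehn's.
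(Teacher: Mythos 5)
Your boundary bookkeeping is sound and, if anything, more explicit than the paper's: computing $\partial_{top}$ of the three blocks, observing that both faces $\partial_{top} G_a \times G_b$ and $G_a \times \partial_{top} G_b$ of the product piece are absorbed by the gluing, and concluding that the boundary is $\cC_{K_a}(\partial_{top} F_a) \sqcup \cC_{K_b}(\partial_{top} F_b)$ glued along their boundary tori through $K_a \times K_b$, where $K_a = \partial_{top} G_a$, $K_b = \partial_{top} G_b$. You also reduce to the same crux as the paper: one must check that this torus identification swaps meridians and longitudes in the sense of \autoref{def:merparzhs}.

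The gap is in how you justify the swap. The curve on $\partial_{top} N_{\partial_{top} F_b}(K_b)$ that gets matched with the meridian $\mu_a$ is the boundary of a \emph{constant section} of $\partial_{long} N_{F_b}(G_b) \simeq G_b \times \bS^1$, i.e.\ it is determined by the product trivialization of the normal disk bundle of the pushed-in surface $G_b$ inside the \emph{four}-manifold $F_b$. You declare it to be a longitude (``the curve that is $\partial_{top} G_b$, hence $\lambda_b$'') by citing the standard fact that a page of an open book is a Seifert surface for its binding. That fact concerns the page $G_{K_b} \subset \partial_{top} F_b$ and the framing it induces on $K_b$ inside the three-manifold; it says nothing, by itself, about the framing induced by a trivialization of $N_{F_b}(G_b)$ (your slip ``$\partial_{top} G_a$ is a page'' --- it is the binding --- is a symptom of this conflation). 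What must be proved is exactly that this product framing has linking number $0$ with $K_b$ in $\partial_{top} F_b$, i.e.\ agrees with the longitude of \autoref{prop:trivbd}; and that is the entire content of the paper's proof. There, since $G$ is the page pushed into the interior rel boundary, $G \cup G_K$ bounds a compact oriented three-manifold $M$ embedded in $F$; choosing the tubular neighborhood of $K$ transversal to $M$, the induced section of the normal circle bundle of $K$ can be pushed off $M$, hence made disjoint from the Seifert surface $G_K = M \cap \partial_{top} F$, which forces the linking number to vanish. Some bridge of this kind is unavoidable: note for instance that the pushed-off curve bounds a parallel copy of $G_b$ in $F_b$, but bounding in $F_b$ does not compute linking in $\partial_{top} F_b$, since the Milnor fiber has plenty of $H_2$. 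Without this step the identification of the induced torus map with Dehn's splicing, and hence the whole proposition, is not established; the rest of your outline (orientations included) is fine.
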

 
 \begin{proof}  
     Let $(F, G)$ be one of the pairs $(F_a, G_a)$ and $(F_b, G_b)$. The next  
   reasoning is to be followed along using~\autoref{fig:ProffOfProp4d}, suggestive of 
   an analogous situation in one dimension lower.

   As explained above, the  boundary $K$ of the surface $G$ is a knot in $\partial_{top} F$, 
   because the hyperplane sections of splice type singularities 
   by coordinate hyperplanes are irreducible. 
   We must show that the trivialization of the circle bundle 
     $ \partial_{top} N_{\partial_{top} F} (K) \to K$ induced by the chosen trivialization 
   of the circle bundle $ \partial_{long} N_F(G) \to G$ coincides up to isotopy with 
   the trivialization described in~\autoref{prop:trivbd}. Thus, we must check that the  boundary 
   of a constant section of $ \partial_{long} N_F(G) \to G$ relative to this 
   trivialization has linking number $0$ with $K$ inside $\partial_{top} F$.

  Consider a page 
   $G_K \hookrightarrow \partial_{top} F$ of the given open book on $\partial_{top} F$ with 
   binding $K$. As $G$ is obtained by pushing $G_K$ inside $F$ while preserving its boundary, 
   $G \cup G_K$ is  the boundary of an oriented compact three-manifold $M$ diffeomorphic 
   to a handelbody and 
   embedded in $F$. We choose the tubular neighborhood $N_{\partial_{top} F} (K)$ to be transversal 
   to $M$. Therefore, the intersection $\partial_{top} N_{\partial_{top} F} (K) \  \cap \ M$ is 
   a section of the circle bundle $ \partial_{top} N_{\partial_{top} F} (K) \to K$. Slightly turning 
   this intersection inside each fiber yields another section $G'$ which is disjoint from $M$. 
   Therefore, its boundary 
   $K' := G' \ \cap \  \partial_{top} F \hookrightarrow \partial_{top} F$ is disjoint from 
   $G_K = M \ \cap \ \partial_{top} F$. This implies that $lk_{\partial_{top} F}(K', K) =0$, 
   as we wanted to show.
 \end{proof}

 \begin{figure}[tb]
   \includegraphics[scale=0.5]{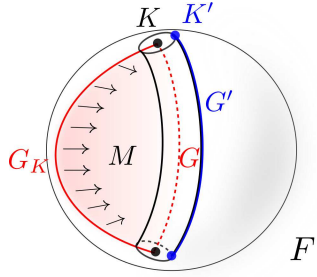}
   \caption{A tubular neighborhood of $G$ inside $F$, and copies $G'$ (in blue) 
   and $G_{K}$ (in red)  of $G$  (in dashed red) with $G_K\subset \partial_{top} F$ 
   satisfying $\partial_{top} G= \partial_{top} G_K=K$ and $\partial_{top} G' = K'$. 
   The pink shaded area $M$
   bounded by $G_K$ and $G$ avoids $G'$ 
   (see the proof of \autoref{pr:property4dSplicing}). \label{fig:ProffOfProp4d}}
 \end{figure}

\autoref{def:fourdimsplice}  allows us to present a more precise version of 
Neumann and Wahl's {\em Milnor fiber conjecture} 
of \cite[Section 6]{NW 05} than the one given in~\autoref{sec:introd}:

\begin{conjecture}   \label{conj:MFC}\index{conjecture!Milnor fiber}
    Let $X$ be a splice type singularity whose splice diagram $\Gamma$ is not star-shaped. 
    Fix an internal edge $[a,b]$ of $\Gamma$. 
    Let $\Gamma_a$ and $\Gamma_b$ be the rooted splice diagrams obtained by cutting 
    $\Gamma$ at an interior point of $[a,b]$. 
    Denote by $X_a$ and $X_b$ the  splice type singularities associated to $\Gamma_a$ 
    and $\Gamma_b$. Let $F$, $F_a$ and $F_b$ be Milnor 
    fibers of $X$, $X_a$ and $X_b$, respectively. Consider a surface 
    $G_a \hookrightarrow F_a$ obtained as above 
    from the  open book defined on $\partial (X_a, 0)$ by the variable associated to the 
    root of $\Gamma_a$. Consider an analogous surface $G_b \hookrightarrow F_b$. 
    Then, $F$ is homeomorphic to the result of splicing  $F_a$ and $F_b$ 
     along $G_a$ and $G_b$.
\end{conjecture}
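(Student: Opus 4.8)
The plan is to carry out the seven-stage strategy sketched in the Introduction, which reduces this topological statement to a comparison of logarithmic models of three smoothings and of their Kato--Nakayama roundings. First I would introduce, for the fixed internal edge $[a,b]$, an explicit one-parameter deformation of the splice type system defining $X$: one rescales the admissible monomials attached to the two half-edges at $a$ and $b$ by suitable powers of a parameter $t$, yielding a deformation $(Y,o)\to(\D,0)$ with central fibre $(X,o)$, where $(Y,o)$ is a three-dimensional germ in $\CC^n\times\D$. The first point to verify is that this deformation is a genuine smoothing: flatness, and the fact that the generic fibre is again an isolated complete intersection surface singularity --- hence smooth along its Milnor fibre --- would follow from the Newton non-degeneracy of the deformed system, itself a consequence of the tropical description of splice type systems recalled in~\autoref{rem:tropNND2D}. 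In parallel I would define the $a$-side and $b$-side deformations $(Y_a,o)$ and $(Y_b,o)$ as pullbacks of $(Y,o)$ along the affine toric morphisms that implement the monomial substitutions bundling the leaves on each side of $\roottree$ into the root variables $x_{\roottree_a}$ and $y_{\roottree_b}$.

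Next I would make the tropical picture explicit. As a preliminary step, the local tropicalization of $(X,o)$ is shown to carry a fan structure that is topologically the cone over $\Gamma$, with rays spanned by the $\wu{\lambda}$ and the node vectors $\wu{v}$ of~\eqref{eq:wu}, as in~\cite[Theorem 1.2]{CPS 21}. One then upgrades this to compatible fan structures on the local tropicalizations of $(Y,o)$, $(Y_a,o)$, $(Y_b,o)$, passes to the associated toric birational morphisms, and forms the strict transforms $(\tilde{Y},o)$, $(\tilde{Y}_a,o)$, $(\tilde{Y}_b,o)$. Newton non-degeneracy guarantees that these strict transforms are toroidal and that the induced maps to the germs are modifications, with exceptional fibres $\cD$, $\cD_a$, $\cD_b$ over the origin. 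The combinatorial fact to extract here is that, because $\roottree$ is an \emph{interior} point of $[a,b]$, the exceptional divisor $\cD$ of $\tilde{Y}\to Y$ has a distinguished component which is a cartesian product of two curves --- and these are precisely the curves which, pushed into $F_a$ and $F_b$, become the pages $G_a$ and $G_b$.

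Then I would invoke the Nakayama--Ogus local triviality theorem: applied to the roundings of the log-smooth morphisms $(\tilde{Y},\cD)\to(\D,0)$, $(\tilde{Y}_a,\cD_a)\to(\D,0)$, $(\tilde{Y}_b,\cD_b)\to(\D,0)$, it yields, canonically from the modifications, locally trivial fibrations over the circle whose fibres are representatives of the Milnor fibres $F$, $F_a$, $F_b$. This is the log-geometric replacement for A'Campo's real oriented blowup; crucially, it realizes each Milnor fibre as a rounding glued from the roundings of the strata of the relevant exceptional divisor, with the gluing prescribed by the incidence combinatorics of the fan. The remaining input is Stage~\ref{stage vi}: the toric morphisms of Stage~\ref{stage ii} induce embeddings of suitably log-enriched exceptional divisors of $\tilde{Y}_a\to Y_a$ and $\tilde{Y}_b\to Y_b$ into a similar enrichment of that of $\tilde{Y}\to Y$, hence, after rounding, embeddings into $F$ of the pieces coming from $F_a$ and $F_b$.

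Assembling everything (Stage~\ref{stage vii}): the rounding of $\cD$ decomposes along its strata; the distinguished product component contributes precisely the manifold with corners $G_a\times G_b$ of~\autoref{def:fourdimsplice}; the roundings of the remaining strata, transported via the Stage~\ref{stage vi} embeddings, supply the classical cuts $\cC_{G_a}F_a$ and $\cC_{G_b}F_b$; and the incidences among the strata reproduce exactly the identifications of $G_a\times\bS^1\simeq\partial_{long}N_{F_a}(G_a)$ with $G_a\times\partial_{top}G_b$ and of $G_b\times\bS^1\simeq\partial_{long}N_{F_b}(G_b)$ with $\partial_{top}G_a\times G_b$ demanded in~\autoref{def:fourdimsplice}. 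This produces a homeomorphism $F\cong(F_a,G_a)\oplus(F_b,G_b)$. I expect the principal obstacle to be this final identification: checking that the canonical gluing of the roundings of the exceptional strata agrees, on the nose and compatibly with the canonical orientations, with Neumann and Wahl's four-dimensional splicing --- in particular, matching the trivialization of $\partial_{long}N_{F_a}(G_a)$ coming from the automatic triviality of the normal bundle of $G_a$ with the product trivialization on a boundary face of $G_a\times G_b$. Here the product structure of the distinguished exceptional component, together with the boundary compatibility of~\autoref{pr:property4dSplicing}, are the tools that make the matching work.
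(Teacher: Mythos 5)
Your outline reproduces the paper's announced seven-stage strategy (edge deformation, side deformations via monomial maps, tropicalizing fans, quasi-toroidal modifications, Nakayama--Ogus local triviality of roundings, embeddings of log-enriched exceptional divisors, product structure of the central piece), and from Stage (iii) onwards it matches the paper's own proof outline step by step. The genuine problem is at Stage (i), and it propagates. The deformation you propose --- rescaling the admissible monomials attached to the two half-edges of $[a,b]$ by powers of a parameter $t$ --- is not a smoothing of $(X,o)$: at $t=0$ those monomials disappear, so the central fibre is the decoupled system (each $f_{v,i}$ involving only the variables on its own side of the edge), not $X$, while the fibres over $t\neq 0$ are equisingular to $X$ by weighted rescaling of coordinates. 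You have written down a degeneration of $X$, not a smoothing, so the Milnor fibre of $X$ is not the generic fibre of your family and Tyurina's theorem cannot be invoked to transfer the conclusion back to an arbitrary smoothing.

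The paper's edge deformation (\autoref{def:deformation}) is different in kind: one adds to \emph{every} equation $\fgvi{v}{i}$ a generic multiple of a pure power $z_0^{D\,\ell_{r,v}}$ of a \emph{new} variable $z_0$, with exponents governed by an enrichment $(k_a,k_b,D)$ of the subdivided diagram $\wtG{\Gamma}$ chosen to satisfy the inequalities of \autoref{lm:refineG} and the divisibility constraints of \autoref{def:tripleAdapted}. These choices, together with explicit genericity of the coefficients $\cvi{v}{i}$, are exactly what make the deformed three-fold system a Newton non-degenerate complete intersection presentation of $(Y,0)$ (\autoref{thm:NoPointsInTheBoundary}) --- this is a new three-dimensional statement, not a consequence of the two-dimensional result of \autoref{rem:tropNND2D} as you suggest --- and what force the tropicalizing fan to contain the distinguished non-simplicial cone spanned by $\wubar{0},\wubar{a},\wubar{b},\wubar{\roottree}$ (\autoref{rem:tropNND3D}). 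That cone is the sole source of the exceptional component $\partial_{\roottree}\tilde{Y}$ which is a product of two curves, i.e.\ of the $G_a\times G_b$ middle piece, and of the identification of the $a$- and $b$-side pieces with cut Milnor fibres of $X_a$ and $X_b$ via the maps $\varphi_a$, $\varphi_b$ (whose compatibility $\varphi_a^*z_0=x_0$ with the deformation variables you also elide). Without the correct deformation and its enrichment data, Stages (iii)--(vii) of your plan have nothing to act on, even though your description of those later stages is otherwise faithful to the paper's argument.
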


As was mentioned in~\autoref{sec:introd}, the formulation of this conjecture was motivated by the Casson invariant conjecture from~\cite{NW 90}.
In \cite[Section 6]{NW 05}, Neumann and Wahl proved  that 
the Casson invariant conjecture\index{conjecture!Casson invariant} for splice type singularities follows from the Milnor fiber conjecture. So far, the latter has only been confirmed in special cases. Indeed,
Neumann and Wahl \cite[Section 8]{NW 05} 
showed it for hypersurface singularities defined by equations of the form $z^n + f(x,y)=0$, whereas Lamberson work \cite{L 09} discusses a   generalization of this class of singularities, whose links are obtained from $\bS^3$ by iterated cyclic branched covers along suitable links.

Note that~\autoref{conj:MFC} presumes that all splice type singularities with a fixed 
splice diagram have homeomorphic Milnor fibers, since for a fixed $X$, the singularity $X_a$ 
can be chosen to be \emph{any} splice type singularity with diagram $\Gamma_a$. Remarkably, this subtle yet previously unknown fact is a direct consequence of the proof of the conjecture  
outlined in this paper. More precisely, we have:

\begin{theorem}   \label{thm:invMF}
    The Milnor fibers of any two splice type singularities arising from the same splice diagram 
    are diffeomorphic.
\end{theorem}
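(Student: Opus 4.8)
The plan is to deduce \autoref{thm:invMF} as a corollary of the proof of \autoref{conj:MFC} outlined in Stages \ref{stage i}--\ref{stage vii}, by exploiting the fact that the construction of the Milnor fibration representatives there is essentially canonical once the splice diagram is fixed. First I would observe that, for a non-star-shaped diagram $\Gamma$, the diffeomorphism type of the Milnor fiber is constrained by \autoref{conj:MFC}: if $F$ (resp. $F'$) is the Milnor fiber of a splice type singularity $X$ (resp. $X'$) with the same diagram $\Gamma$, then choosing an internal edge $[a,b]$ expresses both $F$ and $F'$ as four-dimensional splices $(F_a,G_a)\oplus(F_b,G_b)$ and $(F_a',G_a')\oplus(F_b',G_b')$, where $F_a, F_a'$ are Milnor fibers of splice type singularities with the \emph{same} smaller diagram $\Gamma_a$, and similarly on the $b$-side. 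Since the splicing operation of \autoref{def:fourdimsplice} depends only on the pairs $(F_a,G_a)$ and $(F_b,G_b)$ up to diffeomorphism (and on how the binding knots $\partial_{top}G_a\subset\partial_{top}F_a$ sit, which is itself determined by the open book attached to the root variable), the statement reduces by induction on the number of nodes of $\Gamma$ to: (a) the base case of a star-shaped diagram, and (b) the claim that the auxiliary surface $G_a$ and its embedding are determined up to ambient diffeomorphism by $\Gamma_a$.

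Next I would settle the base case. When $\Gamma$ is star-shaped, the associated splice type singularity is a Pham--Brieskorn--Hamm complete intersection (see \autoref{thm:hammbrieslinks} and \autoref{rem:homogeneity}); its Milnor fiber is the Milnor fiber of a single smoothing, and by Tyurina's theorem (quoted in the introduction) all smoothings of a given complete intersection have diffeomorphic Milnor fibers. Since the Hamm determinant condition is open and the space of admissible coefficient matrices is connected, any two strict splice type systems for a star-shaped $\Gamma$ are connected by a path of systems all defining isolated complete intersection singularities, so their Milnor fibers are diffeomorphic; adding the higher-weight tails $\gvi{v}{i}$ does not change the germ up to the relevant equisingularity (cf. \autoref{rm:HomogConditions}(\ref{item:whomog}) and \autoref{thm:splicesingzhs}). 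This gives invariance of the Milnor fiber for star-shaped diagrams.

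For the inductive step, the key input is that the construction in Stages \ref{stage i}--\ref{stage v} produces the Milnor fibration representative from data that depends only on $\Gamma$: the edge-deformation of \autoref{sec:edgedeform}, the standard tropicalizing fans of \autoref{rem:tropNND2D} and \autoref{rem:tropNND3D}, the induced toric modifications, and the rounding of the resulting log morphism via the Nakayama--Ogus local triviality theorem. In particular the page $G_a$ is obtained from the Milnor open book of the root coordinate $x_{\roottree_a}$ on $\partial(X_a,0)$, and the open book of a holomorphic function with an isolated critical point is determined up to diffeomorphism by the link together with the function germ up to equisingularity; since the hyperplane section $\{x_{\roottree_a}=0\}\cap X_a$ is irreducible (\autoref{rem:irredinters}) and depends on $\Gamma_a$ alone in the relevant sense, the pair $(F_a,G_a)$ is well-defined up to diffeomorphism. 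Feeding diffeomorphic inputs into the splicing recipe of \autoref{def:fourdimsplice} yields diffeomorphic outputs, so $F\cong F'$.

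I expect the main obstacle to be item (b): proving that the embedded surface $G_a\hookrightarrow F_a$, equivalently the open book of the root variable on the link, is truly independent of the choice of splice type system with diagram $\Gamma_a$ --- not merely that $F_a$ is. This requires controlling not just the diffeomorphism type of the Milnor fiber but the isotopy class of the knot $\partial_{top}G_a$ and the framing coming from the normal bundle, uniformly as the coefficients vary; the cleanest route is to upgrade the equisingularity statement for splice type systems (the connectedness-of-parameter-space argument together with Tyurina/the log-geometric triviality over the deformation base) to a statement about the pair $(\partial(X_a,0), \{x_{\roottree_a}=0\})$, so that the ambient isotopy of links carries the page to the page. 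Once that is in place the induction closes. An alternative, and perhaps ultimately cleaner, formulation is to prove \autoref{thm:invMF} directly from Stage \ref{stage v}: the rounded log morphism whose total space gives the Milnor fibration is built functorially from $(X,o)$, its edge-deformation, and the standard fan, none of which see the coefficients beyond the open Hamm condition, so the fibration --- hence $F$ --- is an invariant of $\Gamma$ by construction, bypassing the splicing recursion entirely.
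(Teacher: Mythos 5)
Your overall strategy coincides with the paper's: reduce to the star-shaped case using the machinery developed for \autoref{conj:MFC}, then prove coefficient-independence there. But the way you propose to execute the reduction has two genuine problems. First, you invoke \autoref{conj:MFC} as a black box; the conjecture only asserts that $F$ is \emph{homeomorphic} to the splice $(F_a,G_a)\oplus(F_b,G_b)$, so even granting everything else your induction delivers a homeomorphism, not the diffeomorphism claimed in \autoref{thm:invMF} (and in dimension four this is a real distinction). Second, as you yourself flag in item (b), the black-box route requires knowing that the \emph{pair} $(F_a,G_a)$ --- the embedded pushed-in page, its framing, and the isotopy class of its binding --- depends only on $\Gamma_a$, and you do not prove this. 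The paper avoids both issues at once: the reduction is done not through the statement of the conjecture but through the canonical rounding presentation of the cut pieces (Steps (\ref{item:isomlogf})--(\ref{item:cutmiln}) of \autoref{sec:stepsproof}), in which the $a$-side cut Milnor fiber arises as a fiber of a rounded log morphism attached to $\partial_a\tilde{Y}$, with no choice of open book page or tubular neighborhood ever made. Your closing ``alternative formulation'' is in fact the paper's actual route, though your claim that the construction does not ``see the coefficients'' is too quick: the edge-deformation and the strict transform certainly depend on them, and the coefficient-independence is exactly what remains to be proved after the reduction.

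In the star-shaped base case there are two further gaps. Your path-connectedness argument over the locus of matrices satisfying the Hamm determinant condition needs a uniform Milnor radius along the path; without it the family of Milnor fibers over the connected parameter space need not be locally trivial. This is precisely the point the paper supplies via quasi-homogeneity: any Euclidean ball centered at the origin is simultaneously a Milnor ball for \emph{all} Pham--Brieskorn--Hamm systems, so the fibers can be compared directly. And your dismissal of the higher-weight tails $\gvi{v}{i}$ by appeal to ``equisingularity'' (citing \autoref{thm:splicesingzhs}) is unjustified as stated, since that result only controls the link, not the Milnor fiber of a chosen smoothing; the paper instead derives the independence of higher-order terms from the rounding description of the Milnor fibration, which for a star-shaped diagram depends only on the initial Pham--Brieskorn--Hamm system. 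With the quasi-homogeneity argument and the rounding-based discarding of higher-order terms supplied, your outline closes and agrees with the paper's proof.
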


\begin{proof}
  The proof outline of the Milnor fiber conjecture discussed in~\autoref{sec:stepsproof} 
  allows us to reduce to the case when the splice diagram $\Gamma$ is star-shaped. 
  For such diagrams, our  description of Milnor fibers through roundings shows 
  that the Milnor fiber of such a splice type singularity does not depend of the higher 
  order terms of the defining splice type system, but only on the initial 
  Pham-Brieskorn-Hamm system (see~\autoref{rem:PBH}). 

                    It remains to check that the Milnor fibers 
                    of those singularities do not depend on the matrix of coefficients satisfying 
                    the Hamm determinant condition. But this is a consequence of the fact that 
                    those singularities are quasi-homogeneous. Indeed, quasi-homogeneity ensures 
                    that any Euclidean ball centered at the origin becomes a Milnor ball for all 
                    such systems simultaneously. This proves the statement. 
\end{proof}

\section{The main ideas of our proof}   \label{sec:princproof}

In this section we give an informal description of the main ideas involved in our proof of~\autoref{conj:MFC}. In~\autoref{ssec:canMiln}, we explain  how to canonically decompose 
Milnor fibers into pieces using real oriented blowups of embedded resolutions of smoothings.  
In ~\autoref{ssec:ourusequasitor} we extend this construction to 
{\em quasi-toroidalizations} of suitable smoothings using the notion of \emph{rounding} of 
a complex log space and explain how to construct quasi-toroidalizations using 
tropical geometry techniques. Finally,~\autoref{ssec:loctrop} gives some basic intuitions 
about {\em local tropicalization} and {\em Newton non-degeneracy}, two notions  
which are key players in 
our construction of quasi-toroidalizations of smoothings.

\medskip
\subsection{Canonical Milnor fibrations through real oriented blowups} 
\label{ssec:canMiln}
$\:$
\medskip

In this subsection we explain how A'Campo's\index{real oriented blow up!A'Campo} 
notion of {\em real oriented blow up} 
yields canonical representatives of the Milnor fibrations over the circle of a given 
smoothing of an isolated singularity, once an embedded resolution of the smoothing 
is fixed.
\medskip

Throughout, we  let $(X,o)$ be an isolated singularity of arbitrary dimension and we let
$f \colon (Y, o) \to (\CC, 0)$ be a smoothing of  $(X,o)$. Consider an 
\emph{embedded resolution} of $f$, that is, a modification $\pi \colon \tilde{Y} \to Y$ 
which restricts to an isomorphism outside $o$, such that 
$\tilde{Y}$ is smooth and the zero level set $Z(\tilde{f})$ of the lifting 
$\tilde{f} := f \circ \pi$ of $f$ to $\tilde{Y}$ is 
a normal crossings divisor. It is natural to ask how the  non-zero levels of 
$\tilde{f}$ degenerate to  $Z(\tilde{f})$. By definition, these  levels are identified via $\pi$ 
with the Milnor fibers of $f$. This produces a  decomposition of those  
Milnor fibers into compact pieces, each piece  consisting of the points which degenerate to a fixed  irreducible component of $Z(\tilde{f})$. These pieces are manifolds with corners, 
whose boundaries degenerate to the singular locus of $Z(\tilde{f})$. 

This decomposition into pieces is analogous to Mumford's plumbed decomposition 
 of the link of an isolated surface singularity obtained 
by looking at the way the link degenerates onto 
the exceptional divisor of a good resolution (see~\cite[Section 1]{M 61}).  As in that prototypical case, the decomposition 
of a given Milnor fiber of $f$ is not canonical, because it depends on the choices 
of embedded resolution,  of suitable coordinate 
systems near the singular locus of the exceptional divisor and also of 
a level  $f^{-1}(\lambda)$  of $f$ with  $0<| \lambda |\ll 1$.

Once the embedded resolution is fixed, the non-canonical aspect of the construction 
can be repaired via the
operation of \emph{real oriented blowup}, introduced by A'Campo in his
study of monodromies of germs $f\colon (\CC^{n+1}, 0) \to (\CC,
0)$~\cite[Section 2]{A 75}. This operation may be performed starting
from any normal crossings divisor $D$ (seen as a reduced hypersurface)
in a complex manifold $W$.  Its effect is to determine a {\em canonical} cut
of $W$ along $D$, which contrasts with the dependency of a classical cut 
(see \autoref{def:cutmanif}) on the choice of a tubular neighborhood. 
 This canonical cut produces a real analytic manifold with corners
$W_D$, together with a map 
   \[ \tau_{W, D}\colon W_D \to W.\] 
 This {\em real oriented blowup} map is
proper and a homeomorphism on $W \setminus D$.  It sends the
topological boundary $\partial_{top} W_D$ of $W_D$ onto $D$ and,
furthermore, the corner locus of $W_D$ is the preimage of the singular
locus of $D$ under $\tau_{W,D}$.  In this way, the {\em algebro-geometric boundary} $D$ 
of the pair $(W, D)$ (in algebraic geometry it is customary to say that a boundary 
is a divisor) is replaced by the {\em topological boundary} $\partial_{top} W_D$ 
of the piecewise-smooth manifold
$W_D$. \autoref{fig:realBlowup} shows a ``real analog'' of this
procedure for a divisor $D$ with two components in a smooth surface.

\begin{example}
    When $W = \CC$ and $D$ is the origin, the real oriented blowup is the map 
    $\tau_{\CC, \{0 \}} \colon [0, + \infty) \times \bS^1 \to \CC$ obtained through 
    the use of polar coordinates: 
    $(r, e^{i \theta}) \to r e^{i \theta}$. The origin of $\CC$, seen 
    as an algebro-geometric boundary, is replaced by the topological boundary circle $\bS^1$ 
    of the cylinder $[0, + \infty) \times \bS^1$. This example 
    will be thoroughly discussed in~\autoref{ssec:loggeomround}.
\end{example}

\begin{figure}
  \begin{center}
    \includegraphics[scale=0.6]{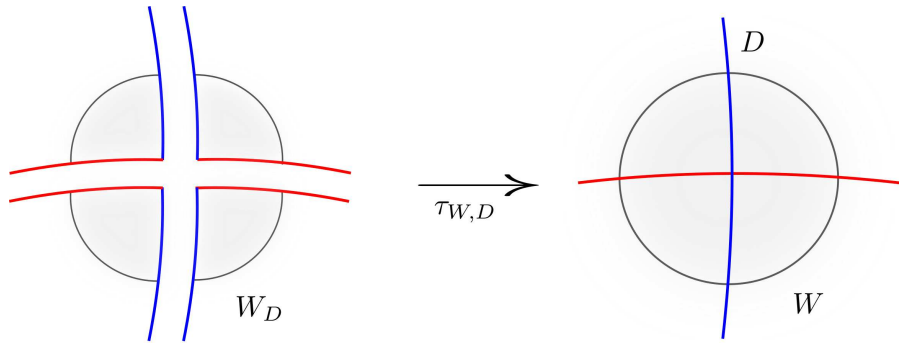}
  \end{center}
  \caption{Real analog of A'Campo's real oriented blowup of $W$ along the 
       divisor $D$.\label{fig:realBlowup}}
\end{figure}

Let us come back to the lifted morphism $\tilde{f}\colon \tilde{Y} \to \CC$ defined earlier. Performing  both the real oriented blowup of $\tilde{Y}$ along $Z(\tilde{f})$ and of $\CC$ at 
the origin allows us to lift  the function $\tilde{f}$ in a canonical way to  those new spaces. Moreover, the restriction $\partial (\tilde{f})\colon  \partial  \tilde{Y}_{Z(\tilde{f})}   \to  \bS^1$  of this lift 
to the boundaries of the source and target spaces gives \emph{a canonical representative of the Milnor fibration of $f$ above a circle}, relative to its embedded resolution $\pi$ 
(see A'Campo's \cite[Section 2]{A 75} and \autoref{cor:milntubelog} of a 
theorem of Nakayama and Ogus).

Since the source space $ \partial  \tilde{Y}_{Z(\tilde{f})} $ is endowed with a canonical surjection
\begin{equation}\label{eq:cutMap}
  \tau_{\tilde{Y}, Z(\tilde{f}) }  \colon   \partial  \tilde{Y}_{Z(\tilde{f})}  \twoheadrightarrow  Z(\tilde{f}),
\end{equation}
we see that it inherits a canonical decomposition into  pieces that are manifolds with corners. 
Each piece lies above an irreducible component of $Z(\tilde{f})$. This yields the desired decomposition of all fibers of the canonical Milnor fibration $\partial (\tilde{f})$.

\medskip
\subsection{Quasi-toroidalizations} $\:$   \label{ssec:ourusequasitor}
\medskip

In this subsection we introduce a class of maps called \emph{quasi-toroidalizations}, 
associated to smoothings of isolated singularities, which are more general than 
the embedded resolutions $\pi\colon \tilde{Y}\to Y$ considered in \autoref{ssec:canMiln}, and which 
play a central role in our proof of~\autoref{conj:MFC}. They may have mildly singular total spaces 
and special fibers which are not crossing normally. Remarkably, the construction 
of canonical Milnor fibrations from modifications explained in~\autoref{ssec:canMiln} 
can be applied to quasi-toroidalizations as well. This extension uses the operation of 
\emph{rounding}, a generalization of real oriented blowups introduced by Kato and Nakayama 
\cite{KN 99} in the context of \emph{logarithmic geometry} (in the sense of Fontaine and Illusie), 
or \emph{log geometry} for short. The latter will be reviewed in~\autoref{ssec:logspacesmorph}. 
Quasi-toroidalizations are relevant for us, as the modifications induced by the natural 
fans we use in order to subdivide the local tropicalizations of our germs are quasi-toroidalizations. 
We could of course subdivide  those fans even further, in a non-canonical way, until we reach an  
embedded resolution morphism. However, this process would fail to describe our morphisms 
explicitly in terms of the given splice diagrams.

\medskip

Let us start with the notions of toroidal varieties and morphisms:

\begin{definition}\label{def:toroimorph}
  Let $(W, \partial W)$ be a pair consisting of an equidimensional complex analytic space $W$ 
  and a reduced complex hypersurface $\partial W$ in it. We say that $(W,\partial W)$ is a 
  \textbf{toroidal variety}\index{variety!toroidal}\index{toroidal!variety} or a 
  \textbf{toroidal pair}\index{toroidal!pair} if it is  locally analytically isomorphic to 
  the pair consisting of a toric variety and its \textbf{toric boundary}, i.e., the complement 
  of its dense algebraic torus. Such a local isomorphism is a \textbf{toric chart} of the toroidal pair.  
  The hypersurface $\partial W$ is called a \textbf{toroidal boundary}\index{toroidal!boundary} for $W$.
  
  If $(W, \partial W)$ is a toroidal pair, then the {\bf toroidal stratification} of $W$ is obtained 
    by gluing together using the toric charts the preimages of the various toric orbits. 
  
     A \textbf{toroidal morphism}\index{toroidal!morphism} 
     is a complex analytic morphism $(V, \partial V) \to (W, \partial W)$ between     
     toroidal varieties that is locally analytically a monomial map when restricted 
     to convenient toric charts.
\end{definition}

\begin{remark}\label{rem:toroidalvstoric} 
   The notion of toroidal variety generalizes that of a toric variety, since every pair consisting of a toric variety and its toric boundary is automatically toroidal. Note that $(W, \partial W)$ is a toroidal pair if and only if $W\smallsetminus \partial W \hookrightarrow W$ is a \emph{toroidal embedding} in the sense of~\cite{KKMS 73}.
\end{remark}

\begin{definition}\label{def:quasi-toroidal} 
   Fix  an isolated singularity $(X,o)$ of arbitrary dimension and a smoothing
   $f\colon (Y,0) \to (\CC,0)$ of it. A  \textbf{quasi-toroidalization}\index{quasi-toroidalization} of $f$ 
   is  a modification $\pi \colon \tilde{Y} \to Y$  satisfying the following conditions:
  \begin{enumerate}
      \item there exists a reduced hypersurface $\partial \tilde{Y}$ in $\tilde{Y}$ such that  
           the lifting  $\tilde{f} \colon (\tilde{Y}, \partial \tilde{Y}) \to (\CC, 0)$ 
           of $f$ to $\tilde{Y}$ is a toroidal morphism,
      \item the zero-locus $Z(\tilde{f})$ 
            of  $\tilde{f}$ in $\tilde{Y}$ is included in the toroidal boundary 
            $\partial \tilde{Y}$, and
       \item locally around each point $x$ of $\partial \tilde{Y}$, we can find a 
           (local) toroidal stratum $S$ for which the (local) irreducible components 
           of $\partial \tilde{Y}$ containing $x$ that are  not components of 
           $Z(\tilde{f})$ are exactly those irreducible components of $\partial \tilde{Y}$ 
           containing $S$. 
  \end{enumerate}
\end{definition}

\noindent The third condition may seem strange at 
first sight. It originates in the observation that, unlike normal crossing divisors 
in manifolds (or smooth varieties), 
closed subdivisors of toroidal boundaries are not necessarily toroidal 
(see {Examples}~\ref{ex:quasitorcone} and~\ref{ex:quasitor4D}). 
Its exact formulation is explained in \autoref{rem:motivqtor}.

\begin{remark}\label{rem:whyQT}
    The relevance of quasi-toroidalizations for our work lies in the following crucial observation.     
    Mimicking the real oriented blowup 
     construction of the previous subsection in  this more general context via Kato and Nakayama's     
     rounding operation (see \autoref{ssec:KNrounding}) and restricting to $Z(\tilde{f})$, 
      produces a  morphism
\begin{equation}\label{eq:restrictionBoundary}
  \partial (\tilde{f}) \colon  \partial  \tilde{Y}_{|_{Z(\tilde{f})}}   \to  \bS^1
\end{equation}
      which is a representative of the Milnor fibration of $f$. This is a consequence 
      (see \autoref{cor:milntubelog}) of a 
      more general local triviality theorem for roundings proved by Nakayama and 
     Ogus \cite[Theorem 3.7]{NO 10}, stated as~\autoref{thm:logehresm}  below. 
\end{remark}

In order to use quasi-toroidalizations to determine the topology of the Milnor fiber 
of a splice type singularity $(X,0)\hookrightarrow (\CC^n,0)$, we must first pick an appropriate smoothing $f\colon (Y,0)\to (\CC,0)$. Notice that, unlike the quasi-toroidalization $\pi\colon \tilde{Y}\to Y$,  the Milnor fiber is independent (up to diffeomorphism) of the choice of $f$ because $X$ is an isolated complete intersection, which implies by an important result of Tyurina \cite[Theorem 8.1]{T 70}  
(see also \cite[Chapter 6]{L 84} or  \cite[Theorem 1.16]{GLS 07}) that its miniversal 
deformation has an irreducible (even smooth) base. Thus, we may pick a smoothing that 
is well-adapted to proving~\autoref{conj:MFC}.
 We construct such a smoothing by deforming the splice type system defining $X$ in a way compatible with the  given internal edge $[a,b]$ (see \autoref{def:deformation}).  
The deformed system defines a three-dimensional germ $(Y,0) \hookrightarrow 
(\CC^{n+1}, 0)$.

The {\em local tropicalization} of this deformed system (a notion discussed in 
~\autoref{ssec:loctrop} and~\autoref{sec:loctropNND}) is supported on a three-dimensional fan $\cF$ 
contained in the cone of weights $(\Rp)^{n+1}$ defining $\CC^{n+1}$ as an  
affine toric variety. 
This fan has the following crucial property:

\begin{proposition}\label{prop:QuasiTfromTropical} 
      Consider the  toric birational morphism 
     $\pi_{\cF}  \colon \tv_{\cF} \to \CC^{n+1}$ defined by the fan $\cF$ and let  
    $\pi\colon \tilde{Y} \to Y$ be its restriction to  the strict transform of $Y$ by $\pi_{\cF}$. 
    Then, the map $\pi$ is a quasi-toroidalization  of $f$. Furthermore,
   the dual complex of the exceptional divisor $E:=\pi^{-1}(0)$ is naturally isomorphic to a    
   subdivision of the splice diagram $\Gamma$ of $(X,0)\hookrightarrow (\CC^n,0)$,  
   obtained by adding an interior point $r$ of the 
    edge $[a,b]$ as an extra node and subdividing $[a,b]$ accordingly.
\end{proposition}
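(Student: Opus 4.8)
The plan is to establish the three assertions of the proposition in sequence. First, I would verify that $\pi\colon\tilde Y\to Y$ is a modification, i.e., a proper bimeromorphic morphism. Since $\pi_{\cF}\colon\tv_{\cF}\to\CC^{n+1}$ is a toric birational morphism and $\pi$ is its restriction to the strict transform of $Y$, properness is inherited from $\pi_{\cF}$, and bimeromorphy follows from the fact that $\cF$ subdivides a subfan of $(\Rp)^{n+1}$ that is unimodular away from the relevant cones, so that $\pi_{\cF}$ is an isomorphism over the dense torus, hence over $Y\smallsetminus\{0\}$. The key input here is that the local tropicalization $\Trop_{\geq 0}(Y)$ is supported on $\cF$ with the right multiplicities, which is exactly the content of the Newton non-degeneracy of the deformed splice type system asserted in~\autoref{rem:tropNND2D} (and its edge-deformation analog alluded to there).

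Second, and this is the technical heart, I would show that $\pi$ is a quasi-toroidalization of $f$, verifying the three conditions of~\autoref{def:quasi-toroidal}. The structural point is that Newton non-degeneracy of the deformed system along the fan $\cF$ forces the strict transform $\tilde Y$ to meet each toric orbit of $\tv_{\cF}$ transversally (in the toric-stratified sense), so that the pair $(\tilde Y,\partial\tilde Y)$, where $\partial\tilde Y$ is the preimage of the toric boundary of $\tv_{\cF}$ intersected with $\tilde Y$, is locally analytically isomorphic to a toric pair; this gives condition~(1), and the morphism $\tilde f=f\circ\pi$ being toroidal follows because in suitable toric charts it is the monomial map corresponding to the linear functional on $\cF$ cutting out the $t$-coordinate (the deformation parameter), which is nonnegative and integral on all rays of $\cF$. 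Condition~(2) holds because $Z(\tilde f)$ is the vanishing locus of that monomial, hence a union of toric-boundary components. Condition~(3) — that locally the components of $\partial\tilde Y$ through a point $x$ which are not components of $Z(\tilde f)$ are precisely those passing through a chosen local toroidal stratum — is the delicate one; I would handle it by observing that the non-$Z(\tilde f)$ components of $\partial\tilde Y$ correspond to rays of $\cF$ on which the $t$-functional vanishes, i.e., rays coming from the ambient coordinate directions $\wu{\lambda}$, and that these are arranged combinatorially exactly as the star of a cone in the original (pre-deformation) tropicalizing fan of $(X,0)$, whose cone structure is a cone over the splice diagram $\Gamma$; transversality of $\tilde Y$ to the toric strata then translates this ambient combinatorics into the required statement about the toroidal stratification of $\tilde Y$ itself.

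Third, I would identify the dual complex of $E=\pi^{-1}(0)$. The exceptional divisor $E$ is the union of the strict transforms (restricted to $\tilde Y$) of the toric-boundary divisors of $\tv_{\cF}$ corresponding to the rays of $\cF$ that do not lie on the boundary of $(\Rp)^{n+1}$, i.e., to the new rays introduced by the subdivision. By the transversality of $\tilde Y$ to the toric stratification, the incidence pattern of these divisors on $\tilde Y$ — which divisors meet, and in what configuration — is read off from the cone structure of $\cF$ restricted to the interior of $(\Rp)^{n+1}$, so the dual complex of $E$ is the corresponding subcomplex of $\cF$. By the tropical interpretation of splice diagrams recalled in~\autoref{rem:tropNND2D}, the tropicalizing fan of $(X,0)$ has a transversal section equal to $\Gamma$; the deformation associated to the edge $[a,b]$ modifies this fan precisely by inserting the weight vector $\wu{r}$ attached to an interior point $r$ of $[a,b]$ as a new ray, which on the level of the transversal section subdivides the edge $[a,b]$ into $[a,r]\cup[r,b]$, with $r$ becoming a new node. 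Matching the remaining rays (those generated by the $\wu{v}$ for the other nodes $v$, and by the $\wu{\lambda}$) then yields the claimed natural isomorphism between the dual complex of $E$ and the subdivided splice diagram.

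I expect the main obstacle to be verifying condition~(3) of~\autoref{def:quasi-toroidal}, since it requires controlling not just the ambient toric combinatorics of $\cF$ but the way the strict transform $\tilde Y$ sits inside $\tv_{\cF}$ relative to deep toric strata — in particular ruling out that $\tilde Y$ is tangent to, or fails to meet, some stratum it is supposed to meet transversally. This is where the full strength of the Newton non-degeneracy of the \emph{deformed} system (as opposed to the original splice type system) is essential, and making the passage from "the tropicalization is supported on $\cF$" to "the strict transform is transverse to the toric stratification with the expected local charts" precise is the crux of the argument.
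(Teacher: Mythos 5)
Your overall route coincides with the paper's intended one: show that the edge-deformed system is a Newton non-degenerate complete intersection presentation of $(Y,0)$ and that $\cF$ is a standard tropicalizing fan for it (\autoref{thm:NoPointsInTheBoundary}, \autoref{rem:tropNND3D}), deduce properness of $\pi$ from the tropical criterion, deduce toroidality of $(\tilde{Y},\partial\tilde{Y})$ and of $\tilde{f}$, together with the quasi-toroidal subboundary condition, from $\partial$-transversality of the strict transform (\autoref{pr:transverseBoundary}, \autoref{thm:NNDtoroidal}), and then identify the dual complex of $E$ with the subdivided splice diagram through the interior cones of $\cF$. One statement along the way is simply wrong, though: properness of $\pi$ is \emph{not} inherited from $\pi_{\cF}$. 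Since $|\cF|=\Trop Y$ is three-dimensional and strictly contained in $(\Rp)^{n+1}$, the toric morphism $\pi_{\cF}$ is never proper in this situation; properness of the restriction $\pi$ is precisely the content of \autoref{pr:localTropicalCompactification} (equivalently \autoref{prop:purecodimlift}, part (1)), i.e., it holds because $|\cF|$ contains $\Trop Y$. Multiplicities play no role in that criterion, and no unimodularity of $\cF$ is available or needed (the fan even has a non-simplicial maximal cone).

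The substantive gap is in your last step. Boundary-transversality of $\tilde{Y}$, together with $|\cF|=\Trop Y$ and \autoref{prop:purecodimlift}, part (2), guarantees that $\tilde{Y}$ meets every orbit of $\tv_{\cF}$ along a nonempty smooth subvariety of the expected codimension, but it says nothing about how many connected components these intersections have. Hence the incidence pattern of the components of $E$ is \emph{not} automatically ``read off from the cone structure of $\cF$'': a priori a single interior ray of $\cF$ could contribute several irreducible components of $E$, and a two-dimensional interior cone several edges of the dual complex, so the dual complex could be a nontrivial covering-like thickening of the expected tree rather than the tree itself. To get the claimed isomorphism with the subdivision of $\Gamma$ at $r$ one must prove irreducibility/connectedness of the intersections of $\tilde{Y}$ with the orbit closures attached to the interior cones --- for instance that the central divisor $\partial_{\roottree}\tilde{Y}$ is irreducible (it is a product of two curves). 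In the paper this comes from the explicit description of the initial forms of the deformed system under the genericity conditions on the coefficients (cf. \autoref{pr:InitialRestr}, \autoref{thm:NoPointsInTheBoundary} and \autoref{rem:tropNND3D}), not from transversality alone; your proposal needs to make this connectedness argument explicit, and it is also where your verification of condition (3) of \autoref{def:quasi-toroidal} should be anchored rather than in the ambient combinatorics only.
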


The second part of~\autoref{prop:QuasiTfromTropical} confirms that the quasi-toroidalization 
$\pi \colon  \tilde{Y} \to Y$  of $f$ is adapted to the proof of~\autoref{conj:MFC}. As we discuss in Step (\ref{item:decompdivf}) of~\autoref{sec:stepsproof},  this property yields a decomposition of  the exceptional divisor $E$ into three pieces: two divisors $D_a$ and $D_b$  coming from the $a$- 
and $b$-sides, respectively, and  an irreducible central divisor 
$D_r$ corresponding to the new vertex $r$. 
Moreover, the special fiber $Z(\tilde{f})$ of 
$\tilde{f}$ is reduced, making $\tilde{f}$ analogous to a semistable degeneration in the sense of~\cite{KKMS 73}. Its component $D_r$ is a cartesian product of two projective curves. In turn, this last 
fact then allows us to prove that the central piece of 
the Milnor fiber which connects the $a$-side and $b$-side has
  the desired product structure. This central piece is obtained by intersecting  the preimage 
  $\tau_{\tilde{Y}, Z(\tilde{f}) }^{-1}(D_r)$ of the analog in our context 
  of the map ~\eqref{eq:cutMap} 
  with  a fiber of the restriction map $\partial(\tilde{f})$ from~\eqref{eq:restrictionBoundary}. 
 The product structure results from 
the reducedness of the special fiber $Z(\tilde{f})$ combined with  a result of Achinger and Ogus 
\cite[Corollary 4.1.9]{AO 20}.  

In a similar way, the $a$-side piece of any Milnor fiber of $(X,0)$ is recovered by intersecting the preimage $\tau_{\tilde{Y}, Z(\tilde{f}) }^{-1}(D_a)$ and
the fibers of the restriction map $\partial (\tilde{f})$ from~\eqref{eq:restrictionBoundary}. 
This piece can then be identified with a Milnor fiber  of a smoothing $f_a \colon (Y_a, 0) \to (\CC, 0)$ 
of a convenient splice type singularity $(X_a, 0)$ associated to the $a$-side rooted subtree 
$\Gamma_a$ of $\Gamma$, cut (as explained in \autoref{ssec:mfconj})  
along a pushed page of the Milnor open book 
defined by the root coordinate $x_{r_a}$. The total space $Y_a$
is determined from  the system defining $(Y,0)$ by its pullback under a suitable monomial map $\varphi_a\colon \CC^{n_a+1}\to \CC^{n+1}$ (see Steps (\ref{item:toricmorpha}) and (\ref{item:deformsysta}) of~\autoref{sec:stepsproof}). In turn, the smoothing $f_a$ is obtained by restricting $f\circ \varphi_a$ to $Y_a$.  This identification of portions of Milnor fibers 
of smoothings of distinct singularities is done using log geometry techniques. More concretely,  we prove that the corresponding pieces
 of the canonical Milnor fibrations obtained through rounding are homeomorphic (see Steps (\ref{item:commuttrianga}) through (\ref{item:cutmiln}) of~\autoref{sec:stepsproof}). 

Basic to the proof of this homeomorphism is the following reinterpretation of cutting the Milnor fibers of $(X_a,0)$ in the direction of the root $r_a$  
of $\Gamma_a$. It can be achieved by  cutting a level of the smoothing 
$f_a \colon (Y_a, 0) \to (\CC, 0)$ of $(X_a, 0)$ along the coordinate hypersurface $Z(x_{r_a})$ associated to the root of $\Gamma_a$. In order to perform such a cut in the logarithmic setting via rounding  we must  cut a suitable modification $\tilde{Y}_a$ of $Y_a$
along $Z(\tilde{f}_a \tilde{x}_{r_a})$. Note that the latter is precisely the total transform 
of the intersection of $Y_a$ with the coordinate hypersurface $Z(f_ax_{r_a})$. This construction is illustrated in~\autoref{fig:MilnorFibersCutv2}.  The map $(\tilde{f_a})^{\dagger}_{\log} \colon (\tilde{Y_a}, \cO^{\star}_{\tilde{Y_a}}(-Z(\tilde{f_a}\tilde{x}_{r_a})))_{\log} \to (\D, \cO_{\D}(- \{ 0 \}))_{\log}$ in the figure is the rounding of  a log enhancement of $\tilde{f_a}$ (see Definitions 
\ref{def:indlogmorph} and \ref{def:rounding}).

\begin{figure}[tb]
  \includegraphics[scale=0.6]{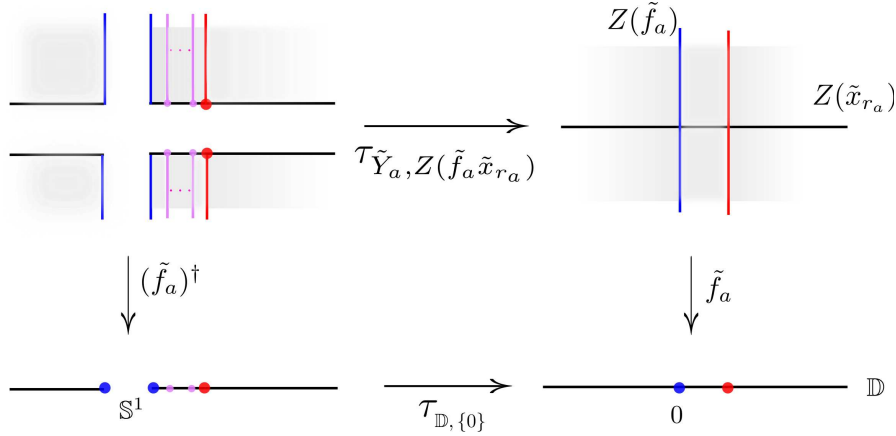}
  \caption{Local drawing of the rounding operation away from $\tilde{X_a}$. 
  The canonical representative of the Milnor fiber (in red) has been cut using $Z(\tilde{x}_{r_a})$. 
  The special fiber $Z(\tilde{f}_a)$ is drawn in blue. 
  The origin has been replaced by the circle $\bS^1$ under the real oriented blow up $\tau_{\D, \{ 0 \}}$ 
  of \autoref{ssec:loggeomround}. This circle is depicted on the left as a pair of points. 
  The two remaining fibers (in purple) indicate the local triviality of the Milnor fibration.
    \label{fig:MilnorFibersCutv2}}
\end{figure}

\vfill
\pagebreak

\medskip
\subsection{Local tropicalization and Newton non-degeneracy} $\:$ \label{ssec:loctrop}
\smallskip

In this subsection we discuss the essential role played 
 in our proof of~\autoref{conj:MFC} by both the \emph{local 
tropicalizations} of analytic subgerms of $(\CC^{n+1}, 0)$ and the 
\emph{Newton non-degeneracy condition}. 
More details on these two notions can be found in~\autoref{sec:loctropNND}. 
\medskip

Let $(Y,0)\hookrightarrow (\CC^{n+1},0)$ be a proper equidimensional 
subgerm, without irreducible components contained 
in the toric boundary of $\CC^{n+1}$.  
Given any  fan $\cF$ of $(\Rp)^{n+1}$, we can consider the toric morphism 
$\pi_{\cF}  \colon \tv_{\cF} \to \CC^{n+1}$ defined by $\cF$ and its restriction 
   \[ \pi\colon \tilde{Y}\to Y\] 
to the strict transform $\tilde{Y}$ of $Y$ by 
$\pi_{\cF}$. 
Note that the map $\pi_{\cF}$ is not proper if the support of $\cF$ is strictly 
included in $(\Rp)^{n+1}$. 
However, the restricted map $\pi$ \emph{is proper}  
whenever the support of $\cF$ contains the {\em local tropicalization} of the 
embedding $(Y,0)\hookrightarrow (\CC^{n+1},0)$ (see \autoref{pr:localTropicalCompactification}).

Local tropicalizations\index{local tropicalization} were developed by the last two authors 
in~\cite{PPS 13} as a tool to study singularities. They are a local version of global 
tropicalizations (or ``non-Archimedean amoebas'') of subvarieties of the algebraic torus 
$(\CC^*)^{n+1}$. Namely, the local tropicalization of a subgerm 
of $(\CC)^{n+1}$ is the support of a fan contained in $(\R_{\geq 0})^{n+1}$. 
We used this notion in~\cite{CPS 21} to prove several properties of splice type surface singularities, including their Newton non-degeneracy property and the first tropical interpretation of splice 
diagrams, whenever they satisfy the determinant and semigroup conditions.

The statement regarding the properness of $\pi\colon \tilde{Y}\to Y$ is a consequence of the following 
local analog  of Tevelev's result~\cite[Proposition 2.3]{T 07}. For a proof, we refer the reader to~\cite[Proposition 3.15 (1)]{CPS 21}.

\begin{proposition} \label{pr:localTropicalCompactification}
Let $(Y,0)$ be any reduced complex analytic subgerm  of $\CC^{n+1}$ without irreducible 
components contained in the toric boundary $\partial \CC^{n+1}$. Let $\cF$ be a fan whose 
support is contained in $(\R_{\geq 0})^{n+1}$. Then,  the strict transform morphism 
$ \pi\colon \tilde{Y}\to Y$ is proper if, and only if,  the support of $\cF$ 
 contains the local tropicalization  of $Y$.
\end{proposition}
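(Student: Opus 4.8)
The plan is to establish both implications through the valuative criterion of properness. Since $\pi$ is separated (as $\tilde{Y}$ is a locally closed analytic subspace of the separated space $\tv_{\cF}$) and of finite type, and since $\pi_{\cF}$ restricts to an isomorphism from the dense torus $T_N$ of $\tv_{\cF}$ onto the dense torus $T:=(\CC^{*})^{n+1}$ of $\CC^{n+1}$, properness of $\pi$ is equivalent to a valuative-type criterion: for every discrete valuation ring $R\supseteq\CC$ with fraction field $K$ (in the analytic setting one may take $R=\CC\{t\}$ or $R=\CC[[t]]$, using that the convergent and formal local tropicalizations agree), every square consisting of $\Spec K\to\tilde{Y}$ and $\Spec R\to Y$ admits a lift $\Spec R\to\tilde{Y}$. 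The link between this criterion and the statement is the arc-description of the local tropicalization developed in~\cite{PPS 13} and recalled in~\autoref{sec:loctropNND}: since $Y$ has no irreducible component inside $\partial\CC^{n+1}$, the set $Y\cap T$ is dense in $Y$, a rational vector $w\in(\R_{\geq 0})^{n+1}$ lies in $\nntrop(Y)$ if and only if there is a $K$-point $\gamma=(\gamma_1,\dots,\gamma_{n+1})$ of $Y\cap T$ with $\operatorname{ord}(\gamma_i)=w_i$ for all $i$, and $\nntrop(Y)$ is a rational polyhedral fan.

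First I would prove the ``only if'' direction by contraposition. Suppose there is $w\in\nntrop(Y)$ with $w\notin|\cF|$; as $|\cF|$ is closed and $\nntrop(Y)$ is rational, we may take $w$ rational. Choose a $K$-point $\gamma$ of $Y\cap T$ with valuation vector $w$, regarded as a map $\Spec K\to Y$, and lift it through the isomorphism $\pi_{\cF}^{-1}(T)\simeq T$ to a $K$-point of $\tilde{Y}$. A standard toric computation shows that such a torus point extends to an $R$-point of the affine chart $U_{\sigma}$ of $\tv_{\cF}$ attached to a cone $\sigma$ of $\cF$ precisely when $w\in\sigma$; since $w$ belongs to no cone of $\cF$, no extension $\Spec R\to\tv_{\cF}$ exists, hence none into the closed subspace $\tilde{Y}$. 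Thus the valuative criterion fails for the square given by $\gamma$, and $\pi$ is not proper.

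Then I would prove the ``if'' direction, whose heart is the converse of the toric computation above. Assume $\nntrop(Y)\subseteq|\cF|$ and fix a square $\Spec K\to\tilde{Y}$, $\Spec R\to Y$. A Noetherian induction on the toroidal strata of $\tv_{\cF}$ --- using that the boundary strata of $\tilde{Y}$ are again strict transforms of subgerms of lower-dimensional affine toric varieties, whose local tropicalizations are the corresponding stars of $\nntrop(Y)$ --- reduces the problem to the case where the given $K$-point lies over $T$, i.e.\ comes from a $K$-point $\gamma=(\gamma_1,\dots,\gamma_{n+1})$ of $Y\cap T$. Let $w_i:=\operatorname{ord}(\gamma_i)$; compatibility with $\Spec R\to Y\subseteq\CC^{n+1}$ forces each $\gamma_i\in R$, so $w\in(\R_{\geq 0})^{n+1}$, and the arc description gives $w\in\nntrop(Y)\subseteq|\cF|$, hence $w\in\sigma$ for some $\sigma\in\cF$. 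For every character $m$ in $\sigma^{\vee}\cap M$, where $M\cong\Z^{n+1}$ is the character lattice of $T$, the monomial $\chi^{m}(\gamma)$ has order $\langle w,m\rangle\geq 0$ and so lies in $R$; this exhibits the sought extension $\Spec R\to U_{\sigma}\subseteq\tv_{\cF}$. Since $\tilde{Y}$ is closed in $\tv_{\cF}$ and the generic point already maps into $\tilde{Y}$, the extension factors through $\tilde{Y}$, and by separatedness of $Y$ it is compatible with $\Spec R\to Y$, giving the required lift.

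The step I expect to be the main obstacle is the reduction, in the ``if'' direction, to arcs lying generically in the dense torus: making the Noetherian induction over the toroidal stratification of $\tv_{\cF}$ precise requires identifying, for each cone $\tau\in\cF$, the intersection of $\tilde{Y}$ with the orbit closure $V(\tau)$ as the strict transform of an explicit subgerm in a smaller affine toric variety, and matching its local tropicalization with the star of $\nntrop(Y)$ along $\tau$. The companion technical point is the arc-theoretic description of $\nntrop(Y)$ in the analytic category and its coincidence with the formal and algebraic versions; both issues are settled within the general theory of local tropicalization of~\cite{PPS 13}, on which the whole argument rests, and for the complete details one refers to~\cite[Proposition 3.15]{CPS 21}.
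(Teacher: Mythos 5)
The paper itself contains no proof of this proposition (it defers to \cite[Proposition 3.15]{CPS 21}), so your proposal can only be judged on its own merits. Your ``only if'' half is essentially sound once rephrased in analytic terms: if $\Trop Y\not\subseteq|\cF|$, then, since $\ptrop Y$ is dense in $\Trop Y$, $|\cF|$ is closed and the local tropicalization is rational, one can pick a rational $w\in\ptrop Y\setminus|\cF|$; the results of \cite{PPS 13} give a convergent arc in $Y\cap(\CC^*)^{n+1}$ centered at $0$ with order vector $w$, and your toric computation shows its lift to $\tilde Y$ leaves every compact subset of $\tv_{\cF}$ as $t\to 0$ while the arc downstairs stays in a compact neighborhood of $0$, contradicting properness. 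Even here, though, you should argue via accumulation points and bounded extension of the monomials $\chi^m$ rather than by quoting ``the valuative criterion'', which is a scheme-theoretic statement.

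The genuine gap is in the ``if'' direction. The map $\pi\colon\tilde Y\to Y$ is a morphism of complex analytic spaces (representatives of germs), not a finite-type morphism of schemes, and in the analytic category the arc-lifting property does \emph{not} imply topological properness: the open inclusion $\D\hookrightarrow\CC$ lifts every convergent arc whose center lies in $\D$, yet it is not proper. Since ``arc lifting $\Rightarrow$ proper'' is exactly the implication your second half needs, that half is unsupported as written; in addition, the ``Noetherian induction on toroidal strata'' you invoke to reduce to arcs generically in the torus is left vague and would itself require identifying the tropicalizations of the boundary strata. The standard repair is to compare with an honest proper modification: complete $\cF$ to a fan $\cF'$ with support $(\Rp)^{n+1}$ containing $\cF$ as a subfan, so that $\pi_{\cF'}$, hence the strict transform $\tilde Y'\to Y$, is proper; since $\tilde Y=\tilde Y'\cap\tv_{\cF}$ is open and dense in $\tilde Y'$, properness of $\pi$ is equivalent to $\tilde Y=\tilde Y'$, i.e.\ to $\tilde Y'$ meeting no orbit $O_\tau$ with $\tau\in\cF'\setminus\cF$; and such an intersection is excluded by curve selection, because an arc in $\tilde Y'$ through a point of $O_\tau$, generically in the torus, pushes down to an arc in $Y$ whose order vector lies in $\operatorname{relint}(\tau)\cap\Trop Y$, contradicting $\Trop Y\subseteq|\cF|$. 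Finally, note that citing \cite[Proposition 3.15]{CPS 21} for ``the complete details'' is circular here, since that proposition \emph{is} the statement you were asked to prove.
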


As in the global setting (see \cite[Theorem 3.2.3]{MS 15}), local tropicalizations admit an alternative more algebraic description using {initial ideals} relative to non-negative weight vectors. Namely, as discussed in~\autoref{def:posloctrop}, the \emph{local tropicalization} $\Trop Y$ of a germ $(Y,0)\hookrightarrow (\CC^{n+1}, 0)$ defined by an ideal $I$ of the local ring $\CC\{z_0,\ldots, z_{n}\}$ of $(\CC^{n+1}, 0)$ is the closure of the set of non-negative weight vectors $\wu{}$ such that the $\wu{}$-initial ideal $\initwf{\wu{}}{I}$ contains no monomials.

This  viewpoint is particularly useful  when working with explicit equations 
defining $(Y,0)$ inside $(\CC^{n+1}, 0)$.
 For instance, it allowed us to determine the local tropicalizations of splice 
type singularities in \cite{CPS 21}. Similar methods can be used to 
compute the local tropicalization of 
an edge-deformation $(Y,0) \hookrightarrow (\CC^{n+1}, 0)$ in the sense of 
\autoref{def:deformation}. Namely, the support of the fan $\cF$ alluded to in~\autoref{prop:QuasiTfromTropical} is $\Trop Y$.

\begin{remark}\label{rem:tropicalizingFan} 
      Notice that $\Trop Y$ has no canonical fan structure. Particularly useful to us are those 
      fan structures where the initial ideals of $I$ are constant along the relative interiors of all its cones. 
      A fan $\cF$ with this property and support equal to $\Trop Y$ is called a 
      {\em standard tropicalizing fan}\index{tropicalizing fan!standard}  
     (see \autoref{def:tropfans}). 
\end{remark}

The use of standard tropicalizing fans is convenient when dealing with  
{\em Newton non-degenerate germs} (see \autoref{def:NNDgerms}):

\begin{proposition}\label{pr:transverseBoundary} 
     Assume that $\cF$ is a standard tropicalizing fan 
     of a Newton non-degenerate germ $(Y,0)\hookrightarrow (\CC^{n+1},0)$, 
     and let $\tilde{Y}$ be  the  strict transform  of $Y$ under the toric morphism 
     $\pi_{\cF}\colon \tv_{\cF}\to \CC^{n+1}$. Then, $\tilde{Y}$  is transversal to the 
     toric boundary $\partial\tv_{\cF}$ of $\tv_{\cF}$ in the sense of \autoref{def:bdrytransv}. 
\end{proposition}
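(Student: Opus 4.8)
The plan is to reduce the statement to a purely toric/local computation on each toric chart of $\tv_{\cF}$, using the combinatorial meaning of Newton non-degeneracy relative to a standard tropicalizing fan. First I would recall that by~\autoref{def:tropfans}, the defining ideal $I$ of $(Y,0)$ has constant initial ideal $\init_{\wu{}}(I)$ along the relative interior of every cone $\sigma\in\cF$; write $\init_\sigma(I)$ for this common value. The strict transform $\tilde{Y}$ meets the torus orbit $O_\sigma$ of $\tv_{\cF}$ associated to $\sigma$ precisely in the subscheme cut out by $\init_\sigma(I)$ (viewed in the coordinate ring of $O_\sigma$), and transversality of $\tilde Y$ to $\partial\tv_{\cF}$ in the sense of~\autoref{def:bdrytransv} is a condition that can be checked one orbit at a time: one needs the scheme-theoretic intersection of $\tilde Y$ with the closure of each $O_\sigma$ to be reduced of the expected codimension, with $\tilde Y$ and the orbit closures meeting like coordinate subspaces meet a smooth subvariety in a local analytic chart.

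The key step is to translate Newton non-degeneracy (\autoref{def:NNDgerms}, \autoref{def:NNDcomplint}) into exactly this transversality. For a Newton non-degenerate complete intersection presentation $F_1,\dots,F_c$ of $(Y,0)$, non-degeneracy says that for every weight vector $\wu{}$ in the relative interior of a cone of $\cF$, the initial forms $\init_{\wu{}}(F_1),\dots,\init_{\wu{}}(F_c)$ cut out a smooth subscheme of the torus $(\CC^*)^{n+1}$ of the expected codimension $c$. Passing to the toric variety $\tv_{\cF}$, these initial forms become (up to units coming from the monomial change of coordinates in the chart $U_\sigma$) the equations of $\tilde Y$ restricted near a point of $O_\sigma$, while the boundary $\partial U_\sigma$ is the union of the coordinate hyperplanes of that chart. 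The smoothness and expected-dimension statement, together with the fact that the initial forms only involve the monomials constant along $\sigma$ (hence are pulled back from the quotient torus $O_\sigma$), yields that $\tilde Y\cap U_\sigma$ is smooth along $O_\sigma$, meets $O_\sigma$ in the expected codimension, and is transversal to each coordinate stratum through that point. Since $\cF$ is a tropicalizing fan, $\tilde Y$ meets $\partial\tv_{\cF}$ only along such orbits $O_\sigma$ with $\sigma$ a nonzero cone contained in $\Trop Y$, so checking the condition on these charts suffices.

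I would organize the argument as: (i) fix a point $p\in\tilde Y\cap\partial\tv_{\cF}$, let $\sigma$ be the cone of $\cF$ with $p\in O_\sigma$, and choose a toric chart $U_\sigma\cong\Spec\CC[\sigma^\vee\cap M]$ adapted to $\sigma$; (ii) compute the strict transform in this chart, identifying its equations with the $\wu{}$-initial forms of a Newton non-degenerate presentation for $\wu{}\in\operatorname{relint}(\sigma)$, divided by the appropriate monomial; (iii) invoke the non-degeneracy hypothesis to conclude that these initial forms define, inside the ambient torus factor of $U_\sigma$, a smooth complete intersection of codimension $c$; (iv) deduce boundary-transversality at $p$ by a direct check in the chart, using that the initial forms are monomials-times-units in the variables dual to the rays of $\sigma$, hence the intersection with each boundary divisor of $U_\sigma$ is again smooth of the expected codimension.

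The main obstacle I anticipate is step (ii)–(iv): making precise the dictionary between ``$\init_{\wu{}}$ of the defining equations'' and ``local equations of the strict transform in the toric chart $U_\sigma$,'' including keeping careful track of the monomial factors that appear when one clears denominators in $\sigma^\vee\cap M$, and verifying that what Newton non-degeneracy gives (smoothness of the initial degeneration \emph{in the torus}) upgrades to the stronger \emph{transversality to all boundary strata} required by~\autoref{def:bdrytransv} — rather than merely reducedness along the open orbit. This upgrade should follow because the initial forms are quasi-homogeneous with respect to every weight in $\sigma$, so they are pulled back from the quotient torus $O_\sigma$ and hence automatically transversal to the torus-invariant boundary of $U_\sigma$; but spelling this out cleanly, uniformly over all faces of $\sigma$, is the technical heart of the proof.
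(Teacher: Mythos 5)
First, a caveat on the comparison: this survey does not actually prove \autoref{pr:transverseBoundary}. It is restated as \autoref{thm:NNDtoroidal}, presented as a local analog of Luxton and Qu's theorem on sch\"on compactifications \cite{LQ 11}, with the regular-fan cases attributed to Varchenko, Merle and Oka \cite{V 76, M 80, O 97} and the general proof deferred to the authors' detailed work. So there is no in-paper proof to match line by line; the closest model is the proof of \autoref{prop:strongerNND}, which uses exactly the dictionary you propose (initial ideals of $I$ versus scheme-theoretic intersections of $\tilde{Y}$ with orbits, via the quotient morphism of tori). Your overall strategy — constancy of initial ideals on the relative interiors of cones of a standard tropicalizing fan, identification of $\tilde{Y}\cap O_\sigma$ with the descent of $Z(\init_\sigma(I))$ to the quotient torus, and smoothness from Newton non-degeneracy — is the expected one and is consistent with the approach the authors indicate.

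There are, however, two concrete gaps. First, the codimension condition (\ref{codimcond}) of \autoref{def:bdrytransv} does not follow from the smoothness statement you invoke: \autoref{def:NNDgerms} (the actual hypothesis) gives no complete intersection presentation and no ``expected codimension $c$,'' and even if you add a presentation as in \autoref{def:NNDcomplint}, smoothness of the initial schemes in the torus only controls $O_\sigma\cap\bigcap_i\widetilde{Z(f_i)}$; identifying this with $O_\sigma\cap\tilde{Y}$ is precisely the delicate point, handled in the proof of \autoref{prop:strongerNND} by a regular-sequence argument. Under the stated hypotheses the codimension equality should instead be extracted from the tropicalizing-fan property, i.e.\ from \autoref{prop:purecodimlift}~(2) (support of $\cF$ equal to $\Trop Y$ forces nonempty, pure intersections with orbits of the right codimension), together with a dimension statement for initial degenerations; your write-up never supplies this input. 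Second, your chart-level language in steps (ii)--(iv) (``the boundary of $U_\sigma$ is the union of the coordinate hyperplanes of that chart'') tacitly assumes the cones are regular or at least simplicial, which is exactly what the authors refuse to assume (the relevant fans contain non-simplicial cones). The correct mechanism, which you do gesture at, is torus-equivariance: $\init_\sigma(I)$ is homogeneous for every weight in $\sigma$, hence invariant under the subtorus corresponding to $\sigma$, so it descends to $O_\sigma$ and the smoothness/codimension statements on $O_\sigma$ propagate to all orbits in the star; but then the argument must be run orbit by orbit, not via coordinate hyperplanes of a smooth chart. Finally, the identity $\tilde{Y}\cap O_\sigma = $ (descended) $Z(\init_\sigma(I))$ is itself a nontrivial local analog of Tevelev's lemma and needs the machinery of \cite{PPS 13, CPS 21}; you assert it without justification, which is acceptable as a quoted ingredient but should be flagged as such.
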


\begin{remark}\label{rem:NND} 
    \autoref{pr:transverseBoundary} is the crucial ingredient allowing us to prove 
    \autoref{prop:QuasiTfromTropical}, concerning our special 
    smoothings $f : (Y,0) \to (\CC, 0)$ of splice-type singularities 
    (see \autoref{thm:NNDtoroidal}). As explained in~\autoref{ssec:ourusequasitor}, once we know that 
   $\pi$ is a quasi-toroidalization of $f$, a consequence (see \autoref{cor:milntubelog}) 
   of a general local triviality theorem of Nakayama and Ogus 
    applied to $f \circ \pi$ yields a canonical 
   representative of the Milnor fibration of $f$.
\end{remark}

\section{Logarithmic ingredients}   \label{sec:logingred}

In this section we give an overview of the logarithmic tools needed to prove~\autoref{conj:MFC}. 
\autoref{ssec:quasitor} discusses in further detail than in \autoref{ssec:ourusequasitor} 
the notion of quasi-toroidalization of a smoothing. In turn, \autoref{ssec:loggeomround}  provides a first glimpse of both the rounding operation and the 
notion of a log structure by means of the classical passage to polar coordinates.  \autoref{ssec:logspacesmorph} reviews basic definitions of log spaces and  morphisms between them that are needed to introduce the rounding operation of Kato and Nakayama. The latter is the subject of~\autoref{ssec:KNrounding}. \autoref{ssec:loctrivquasitor} discusses Nakayama and Ogus' local triviality theorem. This result allows us to get canonical representatives of Milnor fibrations over the circle using quasi-toroidalizations of smoothings.

\medskip
\subsection{Quasi-toroidal subboundaries and quasi-toroidalizations of smoothings} 
 \label{ssec:quasitor}  $\:$
 \medskip

We begin this subsection by defining 
\emph{boundary-transversal subvarieties} of toroidal varieties. 
Then, we introduce the notions of \emph{quasi-toroidal subboundaries} of toroidal varieties 
and of \emph{quasi-toroidalizations} of smoothings. These last two notions play a central role in both~\autoref{cor:quasitorrelcoh} and  the  local triviality theorem of Nakayama and Ogus (see~\autoref{thm:logehresm}).

\medskip

Recall that {\em toroidal varieties} were introduced in~\autoref{def:toroimorph}.  
We define now a special type of complex analytic subvarieties of toroidal varieties, that are relevant for proving~\autoref{conj:MFC}:

  \begin{definition}   \label{def:bdrytransv}
   Let $(W, \partial W)$ be a toroidal variety. A reduced closed equidimensional subvariety $V$ of $W$ 
   is called \textbf{boundary-transversal}\index{boundary-transversality}, 
   or \textbf{$\partial$-transversal} for short, 
   if  the following conditions are satisfied for each stratum $S$ of the toroidal stratification  of $W$:
      \begin{enumerate}
            \item  \label{manifcond} 
                the  analytic space $V \cap S$ 
                is a  (possibly empty) equidimensional complex manifold;
          \item \label{codimcond} 
            if $V \cap S \neq \emptyset$, then $\codim_V(V\cap S) =\codim_W (S)$.
      \end{enumerate}
\end{definition}

As~\autoref{thm:NNDtoroidal} below shows, our main example of $\partial$-transversal 
  subvarieties are strict transforms of Newton non-degenerate germs  
  $(X,0) \hookrightarrow \CC^n$   
   by toric birational morphisms defined by standard tropicalizing fans 
   of $(X,0) \hookrightarrow \CC^n$.

\begin{remark}   \label{rem:classicalNotionTransverse} 
     Notice that when $W$ is a complex manifold and $S$ is a 
     submanifold of it, then conditions (\ref{manifcond}) and 
     (\ref{codimcond}) of ~\autoref{def:bdrytransv} recover in a neighborhood of $S$ the classical 
     notion of {\em transversality} of two submanifolds of an ambient manifold (meaning that 
     at each of their intersection points, the sum of their tangent spaces is equal to 
     the tangent space of the ambient manifold). 
     Indeed, assume that  $S$ is a submanifold of $W$ and $V$ 
     is a reduced subvariety of $W$ such that $V \cap S$ is smooth and 
     $\codim_V(V \cap S) = \codim_W (S)$. Then 
       $V$ is smooth in a neighborhood of $S$ and transversal to it. 
       Condition (\ref{manifcond}) regarding the smoothness of the intersection is essential, 
       as shown by the example of the pair $(W, S):= (\CC^2, Z(x))$ and $V := Z(y^2 - x^3)$: the 
       analytic space $V \cap S$ is the doubled origin $\mathrm{Spec} (\CC[y]/(y^2))$, 
       therefore it is not a manifold. 
       Condition (\ref{codimcond}) regarding equality of codimensions is also crucial, 
       as shown by the example 
       of the pair $(W, S):= (\CC^2, 0)$ and $V := Z(x)$, since  $\codim_V(V \cap S) = 1$ and 
       $ \codim_W (S)= 2$. 
\end{remark}

\begin{remark}   \label{rem:schoncompar}
    The notion of $\partial$-transversality in the toric case is closely related to that of 
    {\em sch{\"o}n compactifications} of subvarieties of tori, a concept introduced by 
    Tevelev in~\cite[Definition 1.3]{T 07}. An equivalent definition, more suitable for 
    our purposes was given by  Maclagan and Sturmfels in \cite[Definition 6.4.19]{MS 15} 
    (see also~\cite[Proposition 6.4.7]{MS 15}):
       \begin{quote}
          {\em Let $V$ be an equidimensional subvariety of an algebraic torus $T$, and let $\tv_{\cF}$ 
         be a normal toric variety with dense torus $T$. The compactification $\overline{V}\subset \tv_{\cF}$ 
         is {\bf sch\"on}\index{sch\"on compactification} if, and only if,  
         $\overline{V}$ intersects each orbit $\cO_{\tau}$ of $\tv_{\cF}$ 
          and, furthermore, these intersections are smooth with 
          $\codim_{\overline{V}}(\overline{V}\cap \cO_{\tau}) = \codim_{\tv_{\cF}}(\cO_{\tau})$.}
      \end{quote}
   Notice that the equality of codimensions in this last definition agrees with condition (\ref{codimcond})   
   of~\autoref{def:bdrytransv}. In particular,    if  $\overline{V}\subset \tv_{\cF}$ is a 
   sch\"on compactification,  then  $\overline{V}$ is 
   $\partial$-transversal in the toroidal variety $(\tv_{\cF}, \partial \tv_{\cF})$.  
   Moreover,  a $\partial$-transversal subvariety of a toric variety is a sch\"on compactification 
   of its intersection with the dense torus if, and only if,  it meets each torus orbit.
 \end{remark}

\smallskip 
Boundary-transversal subvarieties of toroidal varieties admit an inherited toroidal structure whose associated  toroidal stratification is compatible with the ambient one. This is summarized in the following folklore result that can be easily established by working locally in toric charts: 

\begin{proposition}  \label{prop:restoroid}
      Let $(W, \partial W)$ be a toroidal variety and let $V$ be a $\partial$-transversal subvariety of $W$. Consider the set $\partial V := V \cap \partial W$. 
      Then: 
       \begin{enumerate}
           \item The pair $(V, \partial V)$ is a toroidal variety.
           \item 
               The  strata  of the toroidal stratification of $(V, \partial V)$
                 are the connected components of the 
                  intersections $V \cap S$, where $S$ varies among the strata 
                   of the toroidal stratification of $W$. 
            \item  The embedding $(V, \partial V) \hookrightarrow (W, \partial W)$  
                is a toroidal morphism of toroidal varieties. 
      \end{enumerate}
\end{proposition}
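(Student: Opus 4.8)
The plan is to verify all three assertions simultaneously by working in a toric chart of $(W,\partial W)$ and reducing to a purely combinatorial statement about a $\partial$-transversal subvariety of an affine toric variety. First I would fix a point $p\in V$ and a toric chart $U\cong U_\sigma$ of $(W,\partial W)$ near $p$, where $U_\sigma=\Spec\CC[\sigma^\vee\cap M]$ is an affine toric variety with torus $T$ and toric boundary given by the complement of $T$. Writing $V\cap U$ as a subvariety of $U_\sigma$, the two defining conditions of $\partial$-transversality say precisely that $V\cap U$ meets every torus orbit $\cO_\tau$ of $U_\sigma$ (for $\tau\preceq\sigma$) in a smooth variety of the expected codimension $\codim_{U_\sigma}(\cO_\tau)$. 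The key local claim is then: \emph{such a $V\cap U$ is, in a neighborhood of $p$, analytically isomorphic to $V_0\times(\text{toric variety})$, where $V_0$ is a smooth analytic germ transverse to all the coordinate strata, and the isomorphism carries $\partial(V\cap U)=V\cap\partial U$ to the product of $V_0$ with the toric boundary.} This is the heart of the matter and is exactly the local structure theorem underlying Tevelev's and Maclagan--Sturmfels' treatment of sch\"on compactifications (\autoref{rem:schoncompar}); I would either cite \cite[\S6.4]{MS 15} or reproduce the short slicing argument: pick the smallest orbit $\cO_{\tau_0}$ meeting $V$ near $p$, use smoothness of $V\cap\cO_{\tau_0}$ and the codimension equality to find local analytic coordinates in which $V\cap\cO_{\tau_0}$ becomes a coordinate subspace, and then check that the ambient monomial functions cutting out the deeper strata restrict to a system of parameters on $V$, so that $V$ is again (analytically locally) toric times a smooth factor.

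Granting the local claim, assertion~(1) is immediate: a product of a smooth germ with a toric variety, with boundary the product of the smooth germ with the toric boundary, is manifestly a toroidal pair, and these local isomorphisms are exactly toric charts for $(V,\partial V)$. Assertion~(3) follows at the same stroke, since in these compatible charts the inclusion $V\hookrightarrow W$ is, after the product splitting, the composition of a coordinate inclusion of the smooth factors with a torus-equivariant (monomial) map of the toric factors, hence a toroidal morphism in the sense of~\autoref{def:toroimorph}. For assertion~(2), I would note that in the product chart $V_0\times U_{\tau}$ the toroidal strata of $V\cap U$ are the sets $V_0\times\cO_\tau$, whereas the ambient strata restrict to $V\cap S$; comparing the two descriptions shows $V\cap S$ is a disjoint union of toroidal strata of $V$, and since each toroidal stratum is connected (being, locally, $V_0\times\cO_\tau$ with $V_0$ a germ, hence connected) while $V\cap S$ need not be, the toroidal strata of $(V,\partial V)$ are exactly the connected components of the intersections $V\cap S$. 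Finally I would remark that the local models glue: on overlaps of two toric charts of $W$ the transition is a monomial isomorphism, which respects the product decompositions up to a further monomial change, so the charts just constructed form a genuine atlas and $\partial V=V\cap\partial W$ is a well-defined reduced hypersurface in $V$.

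The main obstacle I anticipate is establishing the local product structure cleanly, i.e.\ showing that $\partial$-transversality (conditions~(\ref{manifcond})--(\ref{codimcond})) is strong enough to force the analytic-local splitting $V\cong V_0\times(\text{toric})$ with matching boundaries, rather than merely controlling each stratum-intersection in isolation. The subtlety is precisely the one flagged after~\autoref{def:quasi-toroidal}: a closed subvariety of a toroidal boundary need not itself be toroidal, so one must genuinely use that $V$ meets \emph{all} strata transversally and with the right codimension, not just the top-dimensional one. Once that local normal form is in hand the rest is bookkeeping. I would also take care, in the slicing step, to work with convergent power series / analytic germs throughout (the paper's toroidal pairs are complex analytic), invoking the analytic implicit function theorem rather than its algebraic counterpart, so that the charts produced are honest analytic isomorphisms as required by~\autoref{def:toroimorph}.
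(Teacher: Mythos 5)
Your overall route is the one the paper itself gestures at: the proposition is stated there as a folklore fact to be ``easily established by working locally in toric charts'', with no written proof, and your reduction to a local product normal form inside a toric chart, together with the link to the sch\"on-compactification circle of ideas (\autoref{rem:schoncompar}, \cite{T 07}, \cite{LQ 11}, \cite[\S 6.4]{MS 15}), is exactly the intended strategy; items (2) and (3) are indeed routine bookkeeping once the local normal form is available.

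The genuine weak point is the last inference of your slicing argument. Knowing that the monomial functions cutting out the deeper strata restrict to a system of parameters on $V$ at $p$ only tells you that the induced map from the germ $(V,p)$ to the candidate model $\bigl(\text{germ of } V\cap S \text{ at } p\bigr)\times\bigl(\text{transverse affine toric slice}\bigr)$ is finite; it does not by itself yield that this map is an isomorphism of germs carrying $\partial V$ to the model boundary, which is precisely what toroidality of $(V,\partial V)$ and toroidality of the inclusion require. To close this you need an additional degree-one/injectivity argument — for instance, check the map over the open strata, where condition (2) of \autoref{def:bdrytransv} reduces to classical transversality (\autoref{rem:classicalNotionTransverse}) and the implicit function theorem applies, and then extend across the boundary; or argue via smoothness of the multiplication map $T\times V\to U_\sigma$ as in \cite{LQ 11}. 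Note also that neither $V$ nor the local toric model is assumed normal here, so a shortcut of the form ``finite and bimeromorphic onto a normal target, hence an isomorphism'' is not available verbatim. Your fallback of citing \cite[\S 6.4]{MS 15} or Tevelev is legitimate in spirit, but those statements are global and algebraic (sch\"on subvarieties meeting \emph{every} orbit), so you should say explicitly why the argument transports to the analytic-local setting with possibly empty orbit intersections — harmless, since toroidality is checked only at points of $V$, but it needs saying.
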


Of particular interest to us are  special 
subvarieties of toroidal boundaries obtained by taking unions of certain irreducible 
components satisfying a special condition, as we now describe:

   \begin{definition}  \label{def:face-subdiv}
       Let $(W, \partial W)$ be a toroidal variety and let  $\cD_W$ be a subdivisor of the toroidal 
       boundary $\partial W$ of $W$. We say that $\cD_W$ is a 
        \textbf{quasi-toroidal subboundary of $(W, \partial W)$}\index{quasi-toroidal subboundary}  
       if in the neighborhood of any point of $\partial W$, the complementary divisor 
               $\partial W - \cD_W:= \overline{\partial W \smallsetminus \cD_W}$ consists of the local 
               irreducible components of $\partial W$ containing a fixed stratum of the 
               toroidal stratification.       
   \end{definition}

   We illustrate this definition with two examples:
   \begin{example}   \label{ex:quasitorcone}
        Consider the quadratic cone 
       $W := Z(z^2 -xy)  \hookrightarrow \CC^3_{x,y,z}$. It is a normal 
       affine toric surface, whose  boundary $\partial W$ is the union  $Z(z, xy)$ of the  
       $x$-axis $L'$ and the $y$-axis $L$. Then, $L$ is a quasi-toroidal subboundary of 
       the toroidal surface $(W, L+ L')$. However,  $(W, L)$ is not a toroidal pair, 
       because the boundary of a toric variety is always locally reducible at a singular point.
   \end{example}

      \begin{figure}[tb] 
     \includegraphics[scale=0.6]{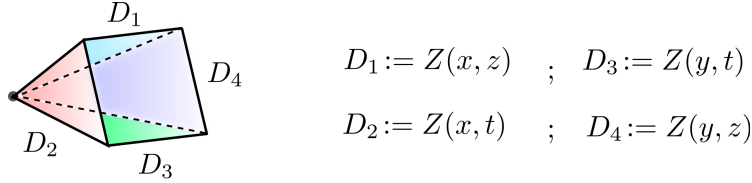}
     \caption{Boundary strata of the toroidal variety $(W,\partial W)$, where  $W:=Z(xy-zt)\subset \CC^4$. 
     The toric boundary $\partial W$ has   \label{fig:QuasiToroidalEx2}  four irreducible components 
     (see \autoref{ex:quasitor4D}).}
     \end{figure}

   \begin{example}   \label{ex:quasitor4D}
     We consider the normal affine toric hypersurface 
     $W :=Z (x\,y-z\,t)\hookrightarrow \CC^4_{x,y,z,t}$ whose boundary $\partial W$ 
     is the union of the coordinate subspaces $D_1:=Z(x,z)$, $D_2:=Z(x,t)$, 
     $D_3:= Z(y,t)$, $D_4:= Z(y,z)$, as seen in~\autoref{fig:QuasiToroidalEx2}.  
     Then, $D_1 + D_4$ is a quasi-toroidal subboundary because $D_2$ and $D_3$ 
     are the only components of $\partial W$ containing $Z(t)\cap W$. In turn, 
     $D_1+D_3$ is not a quasi-toroidal subboundary since the only stratum contained 
     in both $D_2$ and $D_4$ is the origin, but this point is contained in both 
     $D_1$ and $D_3$ as well.
  \end{example}
  
   \begin{remark} \label{rem:motivqtor} 
          We were led to~\autoref{def:face-subdiv} by trying to determine which 
           subdivisors of boundaries of toroidal spaces produce associated divisorial log structures that are 
           relatively coherent in the sense of Nakayama and Ogus \cite[Definition 3.6]{NO 10}   
            (see~\autoref{prop:toroidcharrelcoh}).
  \end{remark}

        Quasi-toroidal subboundaries are essential ingredients to  define   
        {\em quasi-toroidalizations of smoothings}:
        
     \begin{definition}  \label{def:quasitorsmoothing}
        Let $f \colon (Y, o) \to (\CC, 0)$ be a smoothing. 
        A \textbf{quasi-toroidalization}\index{quasi-toroidalization} of $f$ is
         a modification $\pi \colon \tilde{Y} \to Y$  such that there exists 
         a divisor $\partial \tilde{Y}$ of $\tilde{Y}$ satisfying the following properties:
        \begin{enumerate}
           \item the pair  $(\tilde{Y}, \partial \tilde{Y})$ is a toroidal variety;
           \item  the morphism $\tilde{f}  \colon (\tilde{Y}, \partial \tilde{Y}) \to (\CC, 0)$ is toroidal; 
           \item the zero-locus $Z(\tilde{f})$ 
                of the lifting of $f$ to $\tilde{Y}$ is a quasi-toroidal subboundary 
                 $(\tilde{Y}, \partial \tilde{Y})$.
       \end{enumerate}
    \end{definition}
    
Note that  Definitions~\ref{def:face-subdiv} and \ref{def:quasitorsmoothing} 
are reformulations 
of parts of ~\autoref{def:quasi-toroidal}. Quasi-toroidalizations of smoothings 
feature in Steps (\ref{item:torbirY}) and (\ref{item:torbirYa})    of our proof of the Milnor fiber conjecture.

\medskip
\subsection{Introduction to logarithmic structures and rounding through polar coordinates} $\:$  \label{ssec:loggeomround}
\medskip

In this subsection we introduce \emph{log structures} in the sense 
of Fontaine and Illusie \cite{K 88} and the operation of \emph{rounding} due to Kato and Nakayama 
\cite{KN 99} by ways of a unifying example, namely,  the standard morphism of 
{\em passage to polar coordinates}\index{polar coordinates}:
    \begin{equation} \label{eq:polcoord}
             \boxed{\tau_{\CC, \{0\}}} \colon   [0, + \infty) \times \bS^1   \to    
             \CC   \qquad  (r, e^{i \theta})\mapsto r e^{i \theta}.
     \end{equation}
    We do not claim that this was the original motivation behind the development of these two notions. 
    The reader interested in learning how Fontaine and Illusie discovered 
    log structures may consult~\cite{I 94}.
    \smallskip

Our first objective is to define the map $\tau_{\CC, \{0 \}}$ from~\eqref{eq:polcoord} 
in a coordinate-free fashion, in order to extend it to any pair consisting of a complex space and a hypersurface in it, rather than solely for $(\CC, 0)$.

 Since $\tau_{\CC, \{0 \}}$ is a homeomorphism outside the circle $\bS^1\simeq \tau_{\CC, \{0 \}}^{-1}(0)$ 
 bounding $[0,+\infty)\times \bS^1$, we may view  $\tau_{\CC, \{0 \}}$ as an analog of the 
 usual blowup of the real plane $\CC$ at the origin. While in the usual blowup the origin is replaced by 
 the set of real lines passing through it, in the passage to polar coordinates this point  is replaced by the 
 set of \emph{oriented} lines (which may be canonically identified with the set of 
 half-lines, see \autoref{fig:blowup}). For this reason, the map $\tau_{\CC, \{0 \}}$ 
 is known also under the 
 name of a \emph{real oriented blowup}\index{real oriented blow up}. 
 The analogy between the two blowups may be enhanced  by seeing them both as closures 
 of graphs of maps which are undetermined at the origin. While for the usual 
 blowup the map is the real projectivization $\CC= \R^2 \dashrightarrow \mathbb{P}(\R^2)$, 
 it is the \emph{argument function} for the real oriented blowup:
 
  \begin{definition}\label{def:logCoordinatesPolar}
     Let $z$ be the standard coordinate function on $\CC$. 
     The \textbf{argument function}\index{argument function} 
     $\boxed{\mbox{arg}} \colon \CC^*\to \bS^1$ is defined by 
     $\mbox{arg}(z):=z/|z|$.
 \end{definition}
 
\begin{remark}
   The argument function given above is a variant of the standard notion of argument of a 
   non-zero complex number, which takes values in $\R / 2 \pi \Z$, and is defined by 
   $ r e^{i \theta} \to \theta  \mod 2 \pi$. Our notation follows the choice made by Ogus 
   in~\cite[Section V.1.2]{O 18}.
\end{remark}

 \begin{figure}[tb] 
     \includegraphics[scale=0.5]{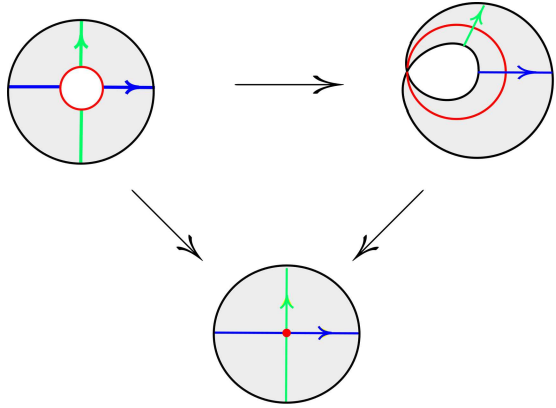}
     \caption{The left arrow is the real oriented blowup while the right one is the usual blowup of the center of a disc. The horizontal arrow is the natural factorisation of the left arrow through the second one.}\label{fig:blowup}
     \end{figure}

 \begin{remark} \label{rem:evolrealblowup}
         The construction of  \emph{real blowups} or \emph{real oriented blowups} 
          was extended by  A'Campo ~\cite[Section 2]{A 75} to arbitrary normal 
          crossings divisors in complex manifolds  
          (see also~\cite[pages 404--405]{KN 94},~\cite[Section I.3]{M 84},~\cite[Section 2.2]{P 77} 
          and~\cite{P 98}). 
          It was later extended by Kawamata \cite[Section 2]{K 02} to toroidal boundaries 
          of special types of toroidal varieties  and by Hubbard, 
          Papadopol and Veselov~\cite[Section 5]{HPV 00} to arbitrary closed analytic 
          subsets of real analytic manifolds. In another direction, A'Campo's definition 
          was extended by Kato and Nakayama to arbitrary log complex spaces 
          (see \autoref{def:rounding} below). It is this last viewpoint which is of interest 
          for us, therefore we explain now how to see the simplest real oriented 
          blow up $\tau_{\CC, \{0\}}$ above as an operation performed on a log complex space. 
   \end{remark}

Note first that the lift $\tau_{\CC, \{0 \}}^*( \mbox{arg})$ of the argument function to 
$ (0, + \infty) \times \bS^1$ can be uniquely extended by continuity 
to  $[0, + \infty) \times \bS^1$. The resulting map 
   \[   \tau_{\CC, \{0 \}}^* (\mbox{arg}) :  [0, + \infty) \times \bS^1 \to \bS^1 \]
is simply the second projection. Therefore, each point 
$P  \in \bS^1= \partial_{top}([0,+\infty)\times \bS^1)$ may be seen 
as a possible place to compute the limit of $\mbox{arg}(z)$ as $z$ converges to $0$. 
As we will now explain, the choice of such a point allows to also define the limit of 
$\mbox{arg}(h(z))$ as $z$ converges to $0$, for all non-zero germs of holomorphic functions 
$h$ at the origin.

Let $\boxed{\cO_{\CC, 0}}$ be the local ring of the complex curve $\CC$ at $0$, 
consisting of the germs of holomorphic functions on $\CC$ at $0$.  
Then $\cO_{\CC, 0} \setminus \{0\}$ is a commutative monoid for multiplication, 
in the following sense:

\begin{definition} \label{def:monoid}
    A {\bf monoid}\index{monoid} is a set endowed with an associative binary operation which 
    has a neutral element. The monoid is {\bf commutative} if the operation is so. 
\end{definition}

Denote by 
$\boxed{\cO_{\CC, 0}^{\star}}$ the subgroup of units of the monoid 
$\cO_{\CC, 0} \setminus \{0\}$, consisting of the germs of 
holomorphic functions which are non-zero at $0$.

Any germ $h \in \cO_{\CC, 0} \setminus \{0\}$  
can be  written in a unique way as $h=z^m\cdot v$ for some $m \in \N$ and 
$v \in \cO_{\CC, 0}^{\star}$. Thus, the following  relation holds in a sufficiently small punctured neighborhood of $0$ in $\CC$: 
     \begin{equation}   \label{eq:argh}
          \mbox{arg}(h) = \frac{h}{|  h |} =  \left(\frac{z}{|  z |} \right)^m    \frac{v}{|  v |} = 
        \mbox{arg}(z)^m \frac{v}{|  v |}.  
    \end{equation}
As a consequence of the fact that $\tau_{\CC, \{0 \}}^*( \mbox{arg})$ extends by continuity 
to $[0, + \infty) \times \bS^1$, we see that  the same is true for 
 the lift $\tau_{\CC, \{0 \}}^* (\mbox{arg}(h))$. 
By abusing notation, we denote this extension also by  $\tau_{\CC, \{0 \}}^* (\mbox{arg}(h))$: 
         \[
                           \xymatrix{
                               [0, + \infty) \times \bS^1
                                  \ar[rrd]^{\:\:\:  \tau_{\CC, \{0 \}}^* (\mbox{arg}(h))} 
                                  \ar[d]_{\tau_{\CC, \{0 \}} }& 
                                                & \\
                                  \CC \ar@{-->}[rr]_{\mbox{arg}(h)}  &  & \bS^1. }  
           \]

If $h_1, h_2 \in \cO_{\CC, 0} \setminus \{0\}$, then on any punctured neighborhood of the origin 
on which they are both non-zero, we have:  
   \[  \mbox{arg}(h_1) \cdot  \mbox{arg}(h_2) =  \mbox{arg}(h_1 \cdot h_2). \]   
 As a consequence, the relation
     \[ \tau_{\CC, \{0 \}}^*( \mbox{arg}(h_1)) \cdot \tau_{\CC, \{0 \}}^*( \mbox{arg}(h_2)) =  
        \tau_{\CC, \{0 \}}^*(\mbox{arg}(h_1 \cdot h_2)) \]   
 is true over a neighborhood of the boundary $\bS^1= \partial_{top}([0,+\infty)\times \bS^1)$ 
 of $[0,+\infty)\times \bS^1$. 
We get:

 \begin{proposition}\label{pr:monoidsFromPoints}
   Consider a point $P \in \bS^1= \partial_{top}([0,+\infty)\times \bS^1)$. 
    Then, the map 
      \[  \begin{array}{ccc} 
              (\cO_{\CC, 0} \setminus \{0\}, \cdot) &   \to   & (\bS^1, \cdot) \\
                  h   &  \to & \tau_{\CC, \{0 \}}^* (\mathrm{arg}(h))(P)
           \end{array} \]
is a \emph{ morphism of multiplicative monoids}
 extending the standard morphism of groups $(\cO_{\CC, 0}^{\star}, \cdot)  \to    (\bS^1, \cdot)$ given by $h\mapsto   \mbox{arg}(h(0))$. 
 \end{proposition}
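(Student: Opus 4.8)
The plan is to deduce all three assertions from the two facts already assembled above. The first is that for every $h\in\cO_{\CC,0}\setminus\{0\}$ the pullback $\tau_{\CC,\{0\}}^*(\mathrm{arg}(h))$, a priori defined only on the punctured cylinder $(0,+\infty)\times\bS^1$, extends continuously to all of $[0,+\infty)\times\bS^1$; this was obtained from the factorization $h=z^m\cdot v$ of~\eqref{eq:argh} together with the fact that $\tau_{\CC,\{0\}}^*(\mathrm{arg})$ itself extends to the second projection. The second is the identity $\tau_{\CC,\{0\}}^*(\mathrm{arg}(h_1))\cdot\tau_{\CC,\{0\}}^*(\mathrm{arg}(h_2))=\tau_{\CC,\{0\}}^*(\mathrm{arg}(h_1h_2))$, valid on a neighborhood of the boundary circle $\bS^1$. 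The first fact makes evaluation at a point $P\in\bS^1$ meaningful, so $h\mapsto\tau_{\CC,\{0\}}^*(\mathrm{arg}(h))(P)$ is a well-defined map of sets $\cO_{\CC,0}\setminus\{0\}\to\bS^1$; the only point requiring a little care is that the various continuous extensions are mutually compatible, which holds because all of them are restrictions of the single continuous function on the cylinder obtained from the previous paragraphs.

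Next I would check that this map is a morphism of monoids. Since $P$ lies on the boundary circle, it belongs to the neighborhood on which the multiplicativity identity holds; evaluating that identity at $P$ gives $\tau_{\CC,\{0\}}^*(\mathrm{arg}(h_1h_2))(P)=\tau_{\CC,\{0\}}^*(\mathrm{arg}(h_1))(P)\cdot\tau_{\CC,\{0\}}^*(\mathrm{arg}(h_2))(P)$, which is exactly multiplicativity. For the neutral element, $\mathrm{arg}(1)$ is the constant function $1$ on $\CC^*$, so $\tau_{\CC,\{0\}}^*(\mathrm{arg}(1))$ is the constant $1$ on the whole cylinder and in particular takes the value $1\in\bS^1$ at $P$. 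Hence the unit $1\in(\cO_{\CC,0}\setminus\{0\},\cdot)$ is sent to $1\in(\bS^1,\cdot)$, and the map is a homomorphism of monoids.

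Finally I would verify that it extends the standard group morphism $h\mapsto\mathrm{arg}(h(0))$ on the subgroup of units $\cO_{\CC,0}^{\star}$. A unit $h$ corresponds to $m=0$ in the factorization of~\eqref{eq:argh}, so $\mathrm{arg}(h)=v/|v|=h/|h|$ is already a function on $\CC^*$ that extends continuously across $0\in\CC$, with value $h(0)/|h(0)|=\mathrm{arg}(h(0))$. Consequently $\tau_{\CC,\{0\}}^*(\mathrm{arg}(h))$ is the pullback of a function continuous at $0$, so its continuous extension to the boundary circle is the constant $\mathrm{arg}(h(0))$; in particular its value at $P$ equals $\mathrm{arg}(h(0))$ regardless of the choice of $P$. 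Thus the restriction of our monoid morphism to $\cO_{\CC,0}^{\star}$ coincides with $h\mapsto\mathrm{arg}(h(0))$, the claimed group homomorphism. I do not expect a genuine obstacle here: the substantive content lies entirely in the continuity statements established before the proposition, and what remains is bookkeeping, the only mildly delicate point being the compatibility of the continuous extensions noted in the first paragraph.
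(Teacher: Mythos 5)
Your argument is correct and is essentially the paper's own: the proposition there is stated as a direct consequence of the preceding discussion (the factorization $h=z^m\cdot v$, the continuous extension of $\tau_{\CC,\{0\}}^*(\mathrm{arg}(h))$, and the multiplicativity of $\mathrm{arg}$ near the boundary circle), which is exactly what you evaluate at $P$. Your added checks of the neutral element and of the restriction to units (the case $m=0$) are the same bookkeeping the paper leaves implicit.
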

 
 That is, each point of the topological boundary of the real oriented blowup 
    $[0,+\infty)\times \bS^1$ of $\CC$ at $0$ may be seen as a morphism of monoids 
    from $(\cO_{\CC, 0} \setminus \{0\}, \cdot)$ to  $(\bS^1, \cdot)$. 
  This statement yields the promised intrinsic, ``coordinate-free'', 
  extension of the  map $\tau_{\CC, \{0 \}}$ from~\eqref{eq:polcoord} to arbitrary pairs of complex varieties and hypersurfaces in them:

  \begin{definition}\label{def:roundingPrelim}  
      Let $(W,D)$ be a pair consisting of a reduced complex variety $W$, 
     and a hypersurface $D\subset W$ (which may be also seen as a reduced 
     Weil divisor).  For every point $x \in W$, denote by $\mathcal{M}_{W,D,x}$ 
     the multiplicative monoid of germs at $x$ of holomorphic  functions on $W$       
     which are non-zero outside $D$. Consider the set
        \begin{equation}\label{eq:Md}
              \begin{split}
                        W_D:=  \{  (x,P)\colon x\in W \text{ and }   P\colon 
                   (\mathcal{M}_{W,D,x} , \cdot) \to  (\bS^1, \cdot) \text{ is a morphism of monoids such that } \\
                   P(v) = \mbox{arg}(v(x))  \text{ for every } v \in \cO_{W, x}^{\star} \},
              \end{split}
         \end{equation}
   The \textbf{rounding map}\index{rounding!map} $\tau_{W,D}\colon W_D\to W$ 
   is given by the first projection.
 \end{definition}

   \begin{example}  \label{ex:roundbasic}
  When $(W,D)=(\CC,\{0\})$, the rounding map 
 $\tau_{\CC,\{0\}} \colon \CC_{\{0\}}\to \CC$ 
  becomes the change to polar coordinates map $\tau_{\CC, \{0 \}}$ from \eqref{eq:polcoord}. 
  When $(W,D)=(\CC^2, Z(xy))$, the rounding map
     \[  \tau_{\CC^2, Z(xy)} \colon \CC^2_{Z(xy)}\to \CC^2 \] 
 is simply the cartesian product of the 
 rounding maps $\tau_{\CC,\{0\}} \colon \CC_{\{0\}}\to \CC$ of the factors of $\CC^2$:
    \[  \begin{array}{ccc} 
               [0,+\infty)\times \bS^1 \times [0,+\infty)\times \bS^1 &   \to   & \CC^2 \\
                  (r_1, e^{i \theta_1}, r_2, e^{i \theta_2})   &  \to &   r_1e^{i \theta_1} r_2 e^{i \theta_2}
           \end{array}. \]
   \end{example}
  
\smallskip

Each monoid $(\cM_{W, D, x}, \cdot)$ from~\autoref{def:roundingPrelim} is the stalk at 
the point $x\in W$ of the sheaf of monoids $\cM_{W, D}$ on $W$ whose sections on 
  an open subset $U$ of $W$ containing $x$ are the holomorphic functions 
  on $U$ which are non-zero 
  outside $D$. Note that the sheaf $\cM_{W,D}$ comes with a canonical morphism of 
  sheaves of monoids 
     \[ (\cM_{W, D}, \cdot)  \to( \cO_W, \cdot)  \]
to the sheaf  $\cO_M$ of germs of holomorphic functions on $W$: it is simply the inclusion 
morphism. This morphism  identifies the corresponding subgroups of units. This is precisely 
  the defining property of a \emph{log structure}\index{log!structure} in the sense of Fontaine 
    and Illusie \cite{K 88} (see~\autoref{def:logspace} below). 
  The previous log structure is called the 
  {\em divisorial log structure}\index{log!structure!divisorial} induced by $D$ 
  (see~\autoref{def:divlogstruct} below).

      The notations of~\autoref{def:roundingPrelim} will not be used any further. 
      We chose them because they were sufficiently simple not to hinder the understanding 
      of the meaning of a divisorial log structure. 
  We will introduce other notations for divisorial log structures and for rounding 
  maps in {Definitions}~\ref{def:divlogstruct} and~\ref{def:rounding}, 
  believing that they are more adapted for a functorial manipulation of log structures:
    \[    \cO_W^{\star}(-  D) :=  \cM_{W, D}, \:  \:   \tau_{\cO_W^{\star}(-  D)} :=  \tau_{W,D}. \]

  \begin{remark}\label{rem:logGeomvsAG} 
      It is worth pointing out some differences between scheme-theoretic algebraic geometry 
      and  log geometry in the sense of Fontaine and Illusie. First, the algebraic basis of algebraic 
            geometry consists of the study of {\em rings}, their ideals and modules, whereas 
            the algebraic basis of log geometry involves {\em monoids}, 
            and the corresponding notions of ideals and modules 
            (see \cite[Sections I.1.2 and I.1.4]{O 18}).
        Second, assume we are given an algebraic variety $W$ and a hypersurface $D$ on it. 
        Then,
     \begin{enumerate}
        \item   Algebraic geometry assigns to this pair a  sheaf of ideals, whose sections 
            on an open subset of $W$ consist of the regular functions vanishing 
            \emph{at least on $D$}.
        \item  Fontaine and Illusie's log geometry assigns to $(W,D)$ a sheaf of monoids, 
           whose sections   on an open subset of $W$ consists of the regular functions vanishing 
          \emph{at most on $D$}.
     \end{enumerate}
  \autoref{rem:NotationsDivLogStr} is a consequence of this observation. 
  \end{remark}

\medskip
\subsection{Complex log spaces and their morphisms}   \label{ssec:logspacesmorph}  $\:$ 
\medskip

In~\autoref{ssec:loggeomround} we motivated the concept of 
a divisorial log structure through a coordinate-free version of the classical 
change to polar coordinates in $\CC$. In this subsection, we explain basic 
general definitions about {\em log spaces} and their {\em morphisms}, including 
{\em pre-log and log structures} (see \autoref{def:prelog}), 
{\em pullbacks and pushforwards of log structures} (see Definitions \ref{def:pb} 
and \ref{def:pf}),  \emph{divisorial log structures} (see \autoref{def:divlogstruct}), 
{\em toroidal log structures} (see \autoref{def:logtoricgen}), \emph{strict log morphisms} (see \autoref{def:strict}) and 
\emph{log enhancements} of suitable analytic morphisms of pairs (see \autoref{def:indlogmorph}).  
For further details, we refer the reader to Ogus' textbook~\cite{O 18}. 
\medskip

Kato's foundational paper \cite{K 88} on the subject develops log structures in the category 
of schemes, inspired by ideas of Fontaine and Illusie (see also~\cite[Definition III.1.1.1]{O 18}). 
Log structures in the complex analytic setting are discussed in~\cite[Section 1]{KN 99}. 
We will give the definitions 
for arbitrary ringed spaces, which will be assumed to be locally ringed.

The starting point for defining log structures is the notion of a {\em pre-logarithmic structure} 
(recall that {\em monoids} were introduced in \autoref{def:monoid}):

   \begin{definition}   \label{def:prelog}
        A \textbf{pre-logarithmic space}\index{pre-logarithmic!space}\index{space!pre-logarithmic} 
        $W$  is a  ringed space $\boxed{\underline{W}}$ 
        (called the \textbf{underlying ringed space} of the pre-logarithmic space), 
        endowed with a 
        sheaf of monoids $\cM_W$ and a morphism of sheaves of monoids 
        \[ \boxed{\alpha_W \colon \cM_W \to (\cO_{\underline{W}}, \cdot)}. \]  
        The pair $\boxed{(\cM_W, \alpha_W)}$ is called a \textbf{pre-logarithmic 
        structure}\index{pre-logarithmic!structure} on $\underline{W}$, or \textbf{pre-log structure} for short. 
        To simplify notation, we often write  $\cO_W$ instead 
        of $\cO_{\underline{W}}$. The pre-logarithmic space $W$ is called \textbf{complex} 
        (respectively, \textbf{complex analytic}) 
        if the underlying ringed space $\underline{W}$ is complex (respectively, complex analytic).
    \end{definition}

   A {\em log structure} is a pre-log structure satisfying a supplementary condition:

     \begin{definition}  \label{def:logspace}   
        A pre-logarithmic space $(W, \cM_W, \alpha_W)$ is called a 
        \textbf{logarithmic space}\index{space!logarithmic}, 
        or a \textbf{log space}\index{log!space}\index{space!log} for short 
        (and the associated pre-log structure is then called a 
        {\bf log structure}\index{log!structure})  
        if the morphism $\alpha_W$  induces an isomorphism 
        $\alpha_W^{-1}(\cO_W^{\star}) \simeq \cO_W^{\star}$. Here, $\boxed{\cO_{W}^{\star}}$ 
        denotes the sheaf 
        of units of $(W, \cO_W)$.       
        A complex (analytic) space endowed with a logarithmic structure is called a 
        \textbf{log complex (analytic) space}\index{log!complex space}. 
      \end{definition}

      \begin{remark}  \label{rem:isomequiv}
           The condition that $\alpha_W$  induces an isomorphism 
            $\alpha_W^{-1}(\cO_W^{\star}) \simeq \cO_W^{\star}$ is equivalent to the condition 
            that it induces an isomorphism $\cM_W^{\star} \simeq \cO_W^{\star}$ between the sheaves 
            of unit subgroups of the sheaves of monoids $\cM_W$ and $\cO_W$. 
      \end{remark}

     \begin{remark}\label{rem:logNotation}
        If a log structure on a complex space $W$ can be inferred from the context, 
        we  simplify notation and write  $\boxed{W^{\dagger}}$  for the corresponding 
        log space. The notation ``$W^{\dagger}$'' is borrowed from the book \cite{G 11}, 
        which surveys the Gross-Siebert program to study mirror symmetry with log geometry techniques. 
    \end{remark}

    \begin{remark}  \label{rem:namelog}
          Fontaine and Illusie's main motivations for introducing the notion of 
           a log space (in the context of schemes) can be found in~\cite{I 94}. 
           The terminology refers  to the fact that 
           a log structure gives rise to a canonical notion of \emph{sheaf of 
           differential forms with logarithmic poles}. 
           The term ``logarithmic'' hints also to the fact that the composition law in $\cM_W$ 
           can be viewed additively, i.e.,  $\alpha_W$ becomes an exponential map turning 
           sums into products.
   \end{remark} 
 
   Every ringed space can be endowed with two canonical log structures, which we now describe:

    \begin{definition} \label{def:tauttriv}
        Let  $(\underline{W}, \cO_{\underline{W}})$ be a ringed space. Its 
        \textbf{tautological log structure}\index{log!structure!tautological} 
        is given by the identity  morphism on $\cO_{\underline{W}}$ and its  
        \textbf{trivial log structure}\index{log!structure!trivial} 
        by the embedding $\cO_{\underline{W}}^{\star} \hookrightarrow \cO_{\underline{W}}$. 
    \end{definition}
    
\begin{remark}\label{rem:logCatFixedV} 
   Log  structures on a fixed ringed space form a category. 
   More precisely,  morphisms $\phi \colon (\cM,\alpha) \to (\cN, \beta)$  
   are morphisms of sheaves of monoids     
   $\varphi \colon \cM \to \cN$ compatible with the evaluation morphisms 
   $\alpha$ and $\beta$, i.e., $\alpha = \beta \circ  \varphi$.
    The trivial log structure is the initial object in this category, 
    whereas  the tautological log structure is its final object. 
\end{remark}

By definition,  any log structure on a ringed space $(\underline{W},\cO_{\underline{W}})$ 
is a pre-log structure. Thus, we have a natural inclusion functor:
\begin{equation}\label{eq:forgetful}
  \iota\colon \{\text{log structures on } \underline{W}\} \to \{\text{pre-log structures on } \underline{W}\}.
\end{equation}
Furthermore, $\iota$ admits a left adjoint $j$ by~\cite[Proposition III.1.1.3]{O 18}. More precisely, 
given a pre-log structure $(\cM_W,\alpha_W)$ on $\underline{W}$, 
its image $ \boxed{\cM_W^a} $ under $j$ (``$a$'' being the initial of ``associated'', see 
\autoref{def:standardLogStructure} below) 
is the push-out of the diagram of sheaves over $\underline{W}$:
\[
\xymatrix{
  \alpha_W^{-1}(\cO^{\star}_W) \ar[r] \ar[d]_{\alpha_W} &\cM_W\\
  \cO_W^{\star} &
  }
\]
where $\alpha_W^{-1}(\cO_W^{\star})$ is the inverse image sheaf under $\alpha_W$.  
It comes with a natural map $\boxed{\alpha_W^a} \colon \cM_W^a\to \cO_W$ sending 
$(s,t)$ to $\alpha_W(s)t$ for each $s\in \cM_W$ and $t\in \cO^{\star}_W$.
Thus, any pre-log structure on $W$ comes with a natural log structure, namely, 
its image under $j$ (see  \cite[(1.3)]{K 88} and
\cite[Proposition III.1.1.3]{O 18} for details). 

\begin{definition}\label{def:standardLogStructure} 
     We call $(\cM_W^a,\alpha_W^a)$ the \textbf{log structure associated 
     to the pre-log structure}\index{log!structure!associated} $(\cM_W, \alpha_W)$.
\end{definition}
\smallskip

    Log structures may be {\em pulled back} and {\em pushed forward}   (see \cite[Section 1.4]{K 88} 
    and \cite[Definition III.1.1.5]{O 18}): 
   
   \begin{definition}   \label{def:pb}
    Let $f \colon V \to W$ be a morphism of ringed spaces. Fix a log structure 
    $(\cM_W,\alpha_W)$  on $W$.
     The \textbf{pullback\index{log!structure!pullback} $\boxed{f^* \cM_W}$  of $\cM_W$ by $f$} 
      is the log structure on $V$ associated to the pre-log structure obtained 
      as the composition $f^{-1}(\cM_W) \xrightarrow{\alpha_W} f^{-1}(\cO_W) \to \cO_V$.  
      Here, $f^{-1}(\cM_V)$ is the inverse image sheaf, i.e., the sheafification of the presheaf 
      $U\mapsto \lim_{U'\supseteq f(U)} \cM_V(U')$ on $V$ where $U'\subset W$ 
      and $U\subset V$ are open.         
   \end{definition}
   
   \begin{definition}   \label{def:pf}
    Let $f \colon V \to W$ be a morphism of ringed spaces. Fix a log structure $(\cM_V,\alpha_V)$ 
     on $V$. 
     The \textbf{pushforward\index{log!structure!pushforward} 
     $ \boxed{f_* \cM_V} $  of $\cM_V$ by $f$} is 
     the fiber product of the morphisms of sheaves of monoids 
     $\cO_W \to f_+(\cO_V)$ and $f_+(\cM_V) \to f_+(\cO_V)$ on $W$, 
     endowed with the projection $\boxed{\rho_2}\colon f_* \cM_V \to  \cO_W$: 
          \[  \xymatrix{
                               f_* \cM_V
                                  \ar[rr]^{\rho_2}
                                  \ar[d]_{\rho_1} & 
                                                &   \cO_W   \ar[d] \\
                                    f_+(\cM_V)  \ar[rr]&  & f_+(\cO_V). }  \]
     Here, $f_+(\cM_V)$ and $f_+(\cO_V)$ denote the direct image sheaves 
     of $\cM_V$ and $\cO_V$ by $f$. 
   \end{definition}
   
   The pair $(f_* \cM_V , \rho_2)$ is a log structure on $W$.

     \begin{remark} \label{rem:restrictions}
                    If $f\colon V\hookrightarrow W$ is a closed immersion of analytic spaces, 
                 we say that $f^*\cM_W$ is  \textbf{the restriction\index{log!structure!restriction of} 
                 of $\cM_W$ to $V$}. 
                 For this reason, we often denote it by $\boxed{\cM_{W | V}}$.

                 This operation of restriction is thoroughly used in our proof of the Milnor fiber 
                 conjecture (see Steps (\ref{item:startlogpart}), (\ref{item:isomtorlogf}), 
                 (\ref{item:startlogparta}), (\ref{item:isomtorlogfa}) and (\ref{item:isomtoriclog}) 
                 of~\autoref{sec:stepsproof}). In turn, the operation of pushforward is used in 
                 \autoref{def:divlogstruct} below.
    \end{remark}

   In order to turn pre-log and log spaces into categories, morphisms must be appropriately 
   defined. We start with morphisms between  pre-log spaces, which are defined 
   using inverse image sheaves:

  \begin{definition}  \label{def:morphprelog}
        A \textbf{morphism $\phi \colon V \to W$ between pre-log spaces} is a pair 
      \[(\underline{\phi} \colon \underline{V}  \to \underline{W}, 
   \:  \phi^{\flat}\colon \underline{\phi}^{-1} (\cM_W) \to \cM_V),\]
   where 
   $\underline{\phi}$ is a morphism of ringed spaces and 
   $\phi^{\flat}$ is a morphism of sheaves of monoids on $V$, making the
    following diagram commute
      \begin{equation}\label{eq:logMorphisms}  
           \xymatrix{
              \underline{\phi}^{-1} (\cM_W)  \ar[r]^-{\phi^{\flat}} \ar[d]_{\underline{\phi}^{-1}\alpha_W}                       
                           & \cM_V \ar[d]^{\alpha_V} \\
               \underline{\phi}^{-1} (\cO_W)   \ar[r]
                          & \cO_V.}
      \end{equation}        
      The pre-log structure on $\underline{\phi}^{-1} (\cM_W)$ is given 
      by the composition $\alpha_V\circ \phi^{\flat}\colon \underline{\phi}^{-1} (\cM_W)\to \cO_V$.
   \end{definition}

\begin{definition} \label{def:morlogspaces}
   A \textbf{morphism of log spaces}, or 
   \textbf{log morphism}\index{log!morphism} for short, 
    is simply a morphism between the underlying pre-log spaces. 
   That is, the \textbf{category of log spaces}\index{log!category} is the full subcategory of 
   the category of pre-log spaces whose objects are the log spaces. 
\end{definition}

\begin{example}\label{ex:trivialStructures}   
     If two ringed spaces $V$ and $W$ are endowed with their trivial log structures 
   in the sense of~\autoref{def:tauttriv}, then a log morphism 
   $\phi \colon V \to W$ is simply a morphism of ringed spaces. 
\end{example}

Next, we define special morphisms of log spaces, namely, those that can be obtained by restricting log structures (see~\cite[Section III.1.2]{O 18}). They play a central role in the construction of roundings, as we will see in~\autoref{thm:torsorfibre} below.

     \begin{definition}  \label{def:strict}
         A morphism of log spaces $f \colon V \to W$ is called 
         \textbf{strict}\index{strict log morphism}\index{log!strict morphism} 
         if it establishes an isomorphism $f^* \cM_W \simeq \cM_V$. 
     \end{definition}

     As we saw in~\autoref{ssec:loggeomround} through the example of the passage to polar coordinates, 
     special types of log structures on complex analytic varieties may be built 
     using reduced divisors (see \autoref{def:roundingPrelim}). 
     We reformulate now that definition using the operation of pushforward:

 \begin{definition}  \label{def:divlogstruct}
         If $D$ is a reduced divisor on a complex analytic variety $W$, 
         its \textbf{associated divisorial log structure}\index{log!structure!divisorial}  
                    $ \boxed{\cO_W^{\star}(-  D)} $
         is the pushforward of the trivial log structure on 
         $W \setminus D$ by the inclusion  $W \setminus D \hookrightarrow W$. 
         More precisely, its monoid of sections on an open set $U$ of $W$ consists of the 
         holomorphic functions defined on $U$ {\em which do not vanish outside $D$}. 
         If $V \hookrightarrow W$ is an embedding, then we write 
         $\boxed{\cO_{W| V}^{\star}(-  D)}$ for the restriction of $\cO_W^{\star}(-  D)$ to $V$, 
         following~\autoref{rem:restrictions}.
    \end{definition}
    
    For the role of divisorial log structures in the proof of~\autoref{conj:MFC}, we refer to 
     Steps (\ref{item:isomlogf}) and (\ref{item:isomtorlogf}) of~\autoref{sec:stepsproof}.

\begin{remark}\label{rem:NotationsDivLogStr} 
   The notation ``$\cO_W^{\star}(-  D)$'' is not standard. We chose it by analogy  
   to the classical notation   
    ``$\cO_W(-D)$'' for the sheaf of holomorphic functions vanishing at least along $D$ 
    (keeping in mind that, as we emphasized in~\autoref{rem:logGeomvsAG}, 
    sections of $\cO_W^{\star}(-  D)$ are not allowed to vanish outside 
    $D$, unlike for $\cO_W(-D)$).
    Other notations used in the literature are ``$\cM_{(W \setminus D) | V}$'' 
    (see \cite[Section III.1.6]{O 18}) and ``$\cM_{(W,D)}$'' (see \cite[Example 3.8]{G 11} 
    or \cite[Example 1.6]{A 21}).  It is worth pointing out that, unlike what happens to the sheaf 
    $\cO_W(-D)$, no new object arises from $\cO_W^{\star}(-  D)$ if we consider 
    non-reduced divisors. In short, $\cO_W^{\star}(-  D)$ depends only on the support of $D$.
\end{remark}

Toroidal varieties (see~\autoref{def:toroimorph}) can be equipped with canonical divisorial log structures as follows:

   \begin{definition}  \label{def:logtoricgen}
        A \textbf{toroidal log structure}\index{log!structure!toroidal} is a divisorial log structure of the form  
        $\cO_W^{\star}(- \partial W)$, where $(W, \partial W)$ is a toroidal variety. 
        A variety endowed with a toroidal log 
        structure is called \textbf{log toroidal}\index{variety!log toroidal}. 
   \end{definition}

   In the same way as divisors determine log structures, particular kinds of morphisms between 
   varieties endowed with divisors determine log morphisms. Indeed, let $V$ and $W$ 
   be two complex  analytic varieties and let $\cD_V$ and $\cD_W$ be two reduced divisors 
   on them. Let $f\colon V \to W$ be a complex morphism such that 
   the following inclusion holds:
       \[  f^{-1} (\cD_W) \subseteq \cD_V. \]
   Then, the pullback by $f$ of any section of $\cO_W^{\star}(- \cD_W)$ is a section 
   of $\cO_V^{\star}(-\cD_V)$. Since this pullback commutes with the tautological 
   inclusion morphisms 
   $\cO_V^{\star}(-\cD_V) \to \cO_V$ and $\cO_W^{\star}(-\cD_W) \to \cO_W$, it induces  
   a log morphism  between the corresponding log toroidal varieties.
   The following terminology summarizes this construction:

   \begin{definition}  \label{def:indlogmorph}
       Let $V, W$ be two complex analytic varieties and $\cD_V, \cD_W$ be two reduced divisors 
       on them. Let $f\colon V \to W$ be a complex morphism such that 
       $f^{-1} (\cD_W) \subseteq \cD_V$. Then, the log morphism 
       \[\boxed{f^{\dagger}}\colon (V, \cO_V^{\star}(- \cD_V))\to 
       (W, \cO_W^{\star}(- \cD_W))\] obtained by pullback via $f$ is called the 
        \textbf{log enhancement\index{log!enhancement}\index{enhancement of a morphism} 
        of $f$ associated to the divisors $\cD_V$ and $\cD_W$}. 
   \end{definition}

   \begin{remark}  In our proof of~\autoref{conj:MFC} 
     we consider log enhancements of 
     morphisms of the form $\tilde{f}\colon \tilde{Y} \to \D$, where $f\colon Y \to \D$ is 
     a smoothing of a splice type singularity, 
     $\pi \colon \tilde{Y} \to Y$ is a quasi-toroidalization of $f$ in the sense 
     of~\autoref{def:quasitorsmoothing} and  $\tilde{f} := f \circ \pi$. 
     Such log enhancements feature in Steps (\ref{item:startlogpart}), 
     (\ref{item:roundlogenhancmt}), (\ref{item:isomtorlogf}), (\ref{item:startlogparta}), 
     (\ref{item:isomtorlogfa}) and (\ref{item:commuttrianga}) of our proof. 
   \end{remark}
   
\autoref{prop:restoroid} has the 
   following important consequence:  $\partial$-transversal 
   subvarieties of toroidal varieties acquire log-theoretic properties when intersecting 
   the input subvariety with a quasi-toroidal 
   subboundary of the ambient space. Indeed, we prove:
   
    \begin{proposition}  \label{prop:stricttransv}
         Let $(W, \partial W)$ be a toroidal variety and let $\cD_W$ 
         be a quasi-toroidal subboundary of it. 
          Consider a $\partial$-transversal subvariety $V$ 
         of $(W, \partial W)$ as in~\autoref{def:bdrytransv} and write 
         $\partial V := V \cap \partial W$ and $\cD_V := V \cap \cD_W$. Then:
            \begin{enumerate}
                \item  The subvariety $\cD_V$ of $V$ is a quasi-toroidal subboundary of  
                    $(V, \partial V)$. 
                \item The log enhancement of the embedding $V \hookrightarrow W$ 
                    as in~\autoref{def:indlogmorph} relative 
                    to the divisors $\cD_V$ and $\cD_W$ is strict in the sense of~\autoref{def:strict}.
            \end{enumerate}
     \end{proposition}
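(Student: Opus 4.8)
The proof will be local on $V$, since both ``being a quasi-toroidal subboundary'' (\autoref{def:face-subdiv}) and ``strictness of a log morphism'' (\autoref{def:strict}) are conditions one checks in a neighbourhood of each point. Fix $x\in V$. If $x\notin\partial W$, then near $x$ the divisors $\partial W$, $\cD_W$, $\partial V$ and $\cD_V$ are all empty, so $\cD_V$ is trivially a quasi-toroidal subboundary and $f^{\dagger}$ is a morphism of spaces carrying their trivial log structures, hence strict; so assume $x\in\partial V\subseteq\partial W$. Choose a toric chart of $(W,\partial W)$ around $x$ (\autoref{def:toroimorph}) and let $E_{1},\dots,E_{m}$ be the local irreducible components of $\partial W$ through $x$; each is the local trace of a toric prime divisor, hence an orbit closure, and it carries its own toric structure once endowed with the boundary $E_{j}\cap\overline{\partial W\smallsetminus E_{j}}$ and with the strata of $W$ it contains. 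A routine bookkeeping of codimensions shows that $V\cap E_{j}$ is a $\partial$-transversal subvariety of $E_{j}$ in the sense of \autoref{def:bdrytransv}: for a stratum $S\subseteq E_{j}$ one has $V\cap E_{j}\cap S=V\cap S$, a smooth equidimensional manifold with $\codim_{V\cap E_{j}}(V\cap S)=\codim_{V}(V\cap S)-1=\codim_{W}(S)-1=\codim_{E_{j}}(S)$. Applying \autoref{prop:restoroid} to $E_{j}$, the pair $(V\cap E_{j},V\cap E_{j}\cap\partial W)$ is toroidal, so $V\cap E_{j}$ is normal and therefore locally irreducible at $x$. Since moreover $V\cap E_{i}\cap E_{j}$ has codimension at least two in $V$ for $i\neq j$ (again by $\partial$-transversality applied to the strata inside $E_{i}\cap E_{j}$), I conclude that the local irreducible components of $\partial V=V\cap\partial W$ at $x$ are exactly $V\cap E_{1},\dots,V\cap E_{m}$, and that they are pairwise distinct.

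To prove the first assertion, I will use that $\cD_{W}$ is a quasi-toroidal subboundary: after shrinking the chart there are a stratum $S$ of $W$ with $x\in\overline{S}$ and a subset $A\subseteq\{1,\dots,m\}$ such that $\partial W-\cD_{W}=\bigcup_{j\in A}E_{j}$, with $E_{j}\supseteq S$ exactly when $j\in A$. Then $\cD_{V}=V\cap\cD_{W}=\bigcup_{j\notin A}(V\cap E_{j})$ and, since distinct $V\cap E_{j}$ share no irreducible component, $\partial V-\cD_{V}=\bigcup_{j\in A}(V\cap E_{j})$. A short transversality argument in the toric chart shows $x\in\overline{V\cap S}$, so I may pick a connected component $S_{V}$ of $V\cap S$ with $x\in\overline{S_{V}}$, which is a stratum of $(V,\partial V)$ by \autoref{prop:restoroid}. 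I then claim that $V\cap E_{j}\supseteq S_{V}$ if and only if $j\in A$: if $j\in A$ then $E_{j}\supseteq S$, hence $V\cap E_{j}\supseteq V\cap S\supseteq S_{V}$; conversely, if $E_{j}\not\supseteq S$ then in the chart $E_{j}$ is an orbit closure not containing the orbit $S$, so $E_{j}\cap S=\emptyset$ near $x$, whence $V\cap E_{j}$ cannot contain the nonempty set $S_{V}\subseteq V\cap S$. Combining these, $\partial V-\cD_{V}$ is precisely the union of the local irreducible components of $\partial V$ that contain the fixed stratum $S_{V}$, so $\cD_{V}$ is a quasi-toroidal subboundary of $(V,\partial V)$.

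For the second assertion I must check that the canonical log morphism $f^{*}\cO_{W}^{\star}(-\cD_{W})\to\cO_{V}^{\star}(-\cD_{V})$ underlying the log enhancement of $V\hookrightarrow W$ (\autoref{def:indlogmorph}, \autoref{def:pb}, \autoref{def:divlogstruct}) is an isomorphism, and by locality it suffices to do this on stalks at $x$. Working in the toric chart, \autoref{prop:restoroid} gives $(V,\partial V)$ a toroidal structure with stratification compatible with that of $(W,\partial W)$; consequently the restriction map from germs on $W$ not vanishing outside $\cD_{W}$ to germs on $V$ not vanishing outside $\cD_{V}$ induces an isomorphism on the associated quotient monoids $\cM_{W,\cD_{W},x}/\cO_{W,x}^{\star}\xrightarrow{\sim}\cM_{V,\cD_{V},x}/\cO_{V,x}^{\star}$: both are the submonoid of the local toric model singling out monomials that vanish only along the components $E_{j}$ with $j\notin A$ (resp.\ along $V\cap E_{j}$, $j\notin A$), and the map identifies them through the bijection $E_{j}\leftrightarrow V\cap E_{j}$ from the first paragraph. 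By the definition of the pulled-back log structure (\autoref{def:pb}, \autoref{def:standardLogStructure}), the stalk $(f^{*}\cO_{W}^{\star}(-\cD_{W}))_{x}$ is obtained from $(\cO_{W}^{\star}(-\cD_{W}))_{x}$ by pushing out along the unit sheaf $\cO_{V,x}^{\star}$; since $\cO_{V}^{\star}(-\cD_{V})$ has isomorphic quotient monoid and the same unit subgroup, and the canonical map respects both, it is an isomorphism. Therefore $f^{\dagger}$ is strict.

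The main obstacle is the local analysis underlying the first and third paragraphs: one must convert the transversality hypothesis of \autoref{def:bdrytransv}, via \autoref{prop:restoroid}, into an explicit local toric model in which $V$ meets the toric stratification ``like a coordinate subvariety transverse to the boundary'', so that the irreducible components of $\partial V$ at $x$, together with their incidences to strata, correspond bijectively to those of $\partial W$, and the characteristic monoids restrict isomorphically. Once these local normal forms are in place, the combinatorial identification of $\partial V-\cD_{V}$ in the second paragraph and the monoid computation in the third follow formally; the subsidiary facts invoked along the way (that $x\in\overline{V\cap S}$, that $V\cap E_{j}$ is normal, that intersections of two components have codimension at least two) are all routine consequences of $\partial$-transversality checked orbit by orbit in the chart.
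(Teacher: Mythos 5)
A preliminary remark: the paper states \autoref{prop:stricttransv} without proof (this is an overview article whose technical details are deferred to forthcoming work), so your proposal can only be judged on its own terms. Your architecture is the natural one — localize at $x$, invoke \autoref{prop:restoroid}, work in a toric chart, match boundary components and strata, and compare characteristic monoids — but as written the argument has genuine gaps, because the statements you label ``routine'' are precisely the mathematical content of the proposition. First, the claim that the local irreducible components of $\partial V$ at $x$ are exactly $V\cap E_1,\dots,V\cap E_m$, and the claim that $x\in\overline{V\cap S}$, are not routine in the toroidal setting: the $E_j$ are in general Weil divisors that need not be $\Q$-Cartier (compare \autoref{ex:quasitor4D}), so the naive dimension count that works for a normal crossings boundary does not apply, and local irreducibility of $V\cap E_j$ at $x$ really uses the scheme-theoretic manifold condition of \autoref{def:bdrytransv} on the deeper strata (two local sheets of $V\cap E_j$ crossing along a deeper stratum would force a non-reduced intersection there). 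Moreover your verification that $V\cap E_j$ is $\partial$-transversal in $E_j$ already presupposes that $V\cap E_j$ is equidimensional of dimension $\dim V-1$, i.e.\ that every point of it lies in the closure of the codimension-one part — which is one of the incidence facts being established, so that step is circular as stated. All of these require the explicit local normal form (``$V$ sits in $W$ as a transverse slice times the same local toric model''), which you correctly identify as the main obstacle but never actually produce.

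Second, and more seriously, the third paragraph essentially restates strictness rather than proving it. Saying that both characteristic monoids are ``the submonoid of the local toric model singling out monomials that vanish only along the components indexed by $j\notin A$'' and that the map ``identifies them through the bijection $E_j\leftrightarrow V\cap E_j$'' assumes that the local toric model of $(V,\partial V)$ at $x$ has the \emph{same} characteristic monoid as that of $(W,\partial W)$ at $x$ with matching divisor labels — but \autoref{prop:restoroid} only gives that $(V,\partial V)$ is toroidal with compatible strata, not that its local monoid agrees with the ambient one. A bijection of boundary components does not yield an isomorphism of monoids: for injectivity on characteristics you need that restriction to $V$ preserves vanishing orders along corresponding components (a multiplicity-one statement), and for surjectivity you need that every germ in $\cO_V^{\star}(-\cD_V)_x$ is, up to a unit of $\cO_{V,x}$, the restriction of a germ in $\cO_W^{\star}(-\cD_W)_x$. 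Both are exactly the local-model comparison you deferred, so the proof of part (2) is not complete; the reduction from ``isomorphism on characteristic monoids'' to ``isomorphism of (integral) log structures'' is fine, but the isomorphism on characteristics is the whole point and needs an explicit argument in a toric chart.
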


\subsection{Types of monoids and charts of log structures}   \label{ssec:chartslogstruct}  $\:$ 
\medskip

   In this subsection we introduce terminology for various types of commutative monoids 
  and we explain the notion of {\em chart} for a log structure, which is an analog of the 
  usual notion of chart in differential geometry.
\medskip

 \begin{definition} \label{def:groupif} Let  $(P, +)$ be a monoid. 
         The {\bf Grothendieck group}\index{Grothendieck group} 
         $\boxed{(P^{gp}, +)}$ generated by it 
         is the set of formal differences 
         $m_1 - m_2$ of elements of $P$ modulo the equivalence relation: 
        \[   m_1 - m_2 \equiv n_1 - n_2 \:   \Longleftrightarrow \: \mbox{ there exists } \:  
            p \in P \text{ satisfying }    m_1 + n_2 + p = m_2 + n_1 + p \]
        and endowed with the obvious addition:
            \[  (m_1 - m_2) + (m'_1 - m'_2) := (m_1 + m'_1) - (m_2 + m'_2). \] 
   The \textbf{group of units} $\boxed{P^{\star} }$ of the monoid $P$ is its maximal subgroup. 
  \end{definition} 
  
\noindent The group $P^{gp}$ is also called the \emph{groupification} or the \emph{group hull}  of $P$. 
   It is endowed with a natural morphism of monoids 
 $P \to P^{gp}$. The nature of this morphism  determines  special classes 
 of monoids (see \cite[Definition I.1.3.1]{O 18}). More precisely:
 
 \begin{definition}   \label{def:toricmon}
     A monoid\index{monoid} $(P, +)$  is called:
        \begin{enumerate}
            \item \textbf{integral}\index{monoid!integral} or \textbf{cancellative}\index{monoid!cancellative} 
               if the natural monoid morphism 
                $P \to P^{gp}$ is injective, 
                that is, if the implication 
                   \[  m + m' = m + m''  \:  \Longrightarrow 
                   \:  m' = m''  \]
                 holds for every $m, m', m'' \in P$; 
            \item \textbf{unit-integral}\index{monoid!unit-integral} 
                if the natural group morphism $P^{\star} \to P^{gp}$ is injective; 
            \item \textbf{saturated}\index{monoid!saturated} if it is integral and the  implication 
                  \[  q \,m  \in P \: \Longrightarrow \: m \in P \]
                  holds whenever 
                $m \in P^{gp}$ and $q \in \N^*$ (here, 
                 $\boxed{q\,m}:= \underbrace{m+\ldots +m}_{q \text{ times}}$);
            \item \textbf{fine}\index{monoid!fine} if it is integral and finitely generated; 
            \item \textbf{toric}\index{monoid!toric} 
               if it is fine and $P^{gp}$ is a lattice, that is, a free abelian group 
               of finite rank. 
        \end{enumerate}
 \end{definition}
 
\begin{remark} 
    Note that the toric monoids are exactly the monoids of characters of 
    affine toric varieties. Those varieties are normal if, and only if,  the toric monoid is saturated.  
\end{remark}

Just as local charts are essential to  do computations in differential  geometry,  the notion of a \emph{chart} of a log structure is crucial to study log structures locally. 
The definition of a chart is based on the construction of  log structures associated to pre-log structures (see~\autoref{def:standardLogStructure}).
A chart depends on the choice of a monoid.

Before formally defining charts of a log space $(W, \cM_W)$ (following~\cite[Definition (2.9)]{K 88} 
and \cite[Sections II.2.1, III.1.2]{O 18}), we need some auxiliary notation. For any monoid  
$P$, giving a morphism of monoids $P \to \Gamma(W, \cM_W)$ 
from $P$ to the monoid of global sections of the sheaf $\cM_W$ 
is equivalent to giving a morphism
of sheaves of monoids $P_W \to \cM_W$. Here, $\boxed{P_W}$ denotes the 
{\em constant sheaf associated to $P$}, that is, the sheaf associated 
to the presheaf that takes each open set of $W$ to $P$ and whose restriction maps 
are identities. Strictly speaking, it should be called the \emph{locally constant sheaf 
associated to $P$}, but tradition established the shorter name. 
By composing this morphism of sheaves with the structure map
$\alpha_W \colon \cM_W \to \cO_W$ of the log space $W$ we get a pre-log structure
$P_W \to \cO_W$. Its associated log structure  $\boxed{P_W^a}$ in the sense of 
\autoref{def:standardLogStructure} 
comes equipped with a morphism of log structures $P_W^a \to \cM_W$.

  \begin{definition}  \label{def:chart}
        Let $(W, \cM_W)$ be a log space and $P$ a monoid. 
        A \textbf{chart}\index{chart of a log space}\index{log!chart} 
        for $W$ \textbf{subordinate to $P$} is a morphism $P_W \to \cM_W$
        of sheaves of monoids such that the induced morphism $P_W^a \to \cM_W$
        of log structures is an isomorphism.

        If the monoid $P$ is finitely generated, then the chart
        is called \textbf{coherent}\index{coherent!chart}. If $P$ is fine/toric 
        (in the sense of~\autoref{def:toricmon}), then the chart is called 
        \textbf{fine/toric}\index{fine!chart}\index{toric!chart}.

        A log space which admits a coherent chart
        in a neighborhood of every point is called \textbf{coherent}\index{coherent!log space}. 
        If, moreover, such charts may be chosen to be fine/toric, then the log space 
        or structure is called \textbf{fine/toric}\index{fine!log space}\index{toric!log space}. 
  \end{definition}

  \begin{remark} 
      A simple check confirms that toroidal log structures 
      in the sense of~\autoref{def:logtoricgen} are toric, therefore coherent. 
      As with toric varieties, charts in neighborhoods of distinct points can 
      be subordinate to different monoids. 
      Indeed, if $W$ is a complex affine toric variety associated to a toric monoid $P$, 
      whose set of closed points is $\Hom(P, \CC)$, then the natural morphism of monoids 
      $P \to \Gamma(W, \cO_{W}^{\star}(-   \partial W))$ 
      is a chart whose domain is the whole variety $W$. Furthermore,  the 
      monoid $P/ P^{\star}$ can be reconstructed from the toric log space 
      $(W, \cO_{W}^{\star}(-   \partial W))$ as the  quotient of the monoid 
      of germs of sections $\cO_{W}^{\star}(-   \partial W)_o$ at the unique closed 
      orbit $o$ of $W$ by its subgroup of units $(\cO_{W}^{\star}(-   \partial W)_o)^{\star}$.
  \end{remark}

Our proof of~\autoref{conj:MFC} involves  divisorial log structures which are defined 
by quasi-toroidal subboundaries in the sense of 
  ~\autoref{def:face-subdiv}. The associated divisorial log structures are not necessarily 
   coherent, but they are \emph{relatively coherent} as defined by Nakayama and Ogus 
   in~\cite[Definition 3.6]{NO 10}. 
   In the context of toroidal varieties,  relatively coherent divisorial log structures
   correspond exactly to quasi-toroidal subboundaries, as our next result asserts: 
      
   \begin{proposition}   \label{prop:toroidcharrelcoh}
       Let $(W, \partial W)$ be a toroidal variety and $\cD_W$ be a subdivisor of $\partial W$. 
       Then, $(W, \cO_W^{\star}( -   \cD_W))$ is relatively coherent in 
       $(W, \cO_W^{\star}( -   \partial W))$ if, and only if,  $\cD_W$  is 
       a quasi-toroidal subboundary of $(W, \partial W)$. 
   \end{proposition}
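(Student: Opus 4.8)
The plan is to reduce to a local toric model and then to verify the equivalence point by point. Both the property of being a quasi-toroidal subboundary (\autoref{def:face-subdiv}) and that of inducing a relatively coherent divisorial log structure in the sense of Nakayama and Ogus \cite[Definition 3.6]{NO 10} are conditions imposed in a neighbourhood of every point of $\partial W$, so it suffices to fix $x\in\partial W$ and work inside a toric chart of $(W,\partial W)$ around $x$. Since $W\smallsetminus\partial W\hookrightarrow W$ is a toroidal embedding, $W$ is normal, and after shrinking and splitting off a trivial torus factor — which affects neither condition — I may assume that the germ of $(W,\partial W)$ at $x$ is the germ of an affine toric variety $\Spec\CC[P]$ together with its toric boundary at the torus fixed point $o$, where $P=\sigma^{\vee}\cap M$ is a sharp saturated toric monoid attached to a full-dimensional strongly convex rational polyhedral cone $\sigma\subseteq N_{\R}$. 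Let $\rho_{1},\dots,\rho_{N}$ be the rays of $\sigma$, with primitive generators $u_{1},\dots,u_{N}$; they index the irreducible components $D_{1},\dots,D_{N}$ of $\partial W$ (all passing through $o$), and $\cD_{W}=\bigcup_{i\in I}D_{i}$ for some $I\subseteq\{1,\dots,N\}$. The toroidal strata near $o$ are the torus orbits $O_{\tau}$, one for each face $\tau$ of $\sigma$, and $O_{\tau}$ is contained in exactly those $D_{i}$ with $\rho_{i}\subseteq\tau$.

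First I would translate the quasi-toroidal condition into combinatorics: unwinding \autoref{def:face-subdiv} with the above description of the strata, $\cD_{W}$ is a quasi-toroidal subboundary near $x$ if and only if there is a face $\tau_{0}$ of $\sigma$ whose set of rays is exactly $\{\rho_{j}:j\notin I\}$, equivalently such that $\operatorname{cone}(\rho_{j}:j\notin I)$ is a face of $\sigma$. Next I would identify the divisorial log structure. Using the standard description of the characteristic sheaf of $\cO_{W}^{\star}(-\cD_{W})$ on a normal toric variety — a germ at $o$ of a holomorphic function vanishing only along $\cD_{W}$ is a unit times a monomial $\chi^{m}$ with $m\in P$ and $\langle m,u_{j}\rangle=0$ for all $j\notin I$ — one finds that the characteristic monoid of $\cO_{W}^{\star}(-\cD_{W})$ at $o$ is $P_{I}:=\{m\in P:\langle m,u_{j}\rangle=0\ \text{for all }j\notin I\}$, and analogously at a point of $O_{\tau}$ it is the image of $\{m\in P:\langle m,u_{j}\rangle=0\ \text{for all }j\notin I\text{ with }\rho_{j}\subseteq\tau\}$ in the sharp quotient $P/(P\cap\tau^{\perp})$. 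On the other hand, for any face $G$ of the fine monoid $P$, a routine computation in toric charts identifies the sub-log-structure of the (fine, toric) log structure $\cO_{W}^{\star}(-\partial W)$ generated over $\cO_{W}^{\star}$ by $G$ with the divisorial log structure $\cO_{W}^{\star}(-D_{G})$, where $\tau_{G}:=\{u\in\sigma:\langle m,u\rangle=0\ \text{for all }m\in G\}$ is the face of $\sigma$ dual to $G$ and $D_{G}=\bigcup\{D_{i}:\rho_{i}\not\subseteq\tau_{G}\}$. Combining these with \cite[Definition 3.6]{NO 10} — which in the present toric setting asserts that $\cO_{W}^{\star}(-\cD_{W})$ is relatively coherent in $\cO_{W}^{\star}(-\partial W)$ exactly when, locally, it is the sub-log-structure generated over $\cO_{W}^{\star}$ by a face of a fine chart of $\cO_{W}^{\star}(-\partial W)$ — I obtain that $\cO_{W}^{\star}(-\cD_{W})$ is relatively coherent at $x$ if and only if $\cD_{W}=D_{G}$ for some face $G$ of $P$, i.e. if and only if there is a face of $\sigma$ whose rays are $\{\rho_{j}:j\notin I\}$. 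This is exactly the combinatorial condition found for quasi-toroidality, so the two are equivalent at $x$, hence everywhere. The case $\cD_{W}=D_{1}+D_{3}$ on $W=Z(xy-zt)$ in \autoref{ex:quasitor4D}, where $D_{1}$ and $D_{3}$ correspond to opposite rays of the cone over a square — so that $\operatorname{cone}(\rho_{2},\rho_{4})$ fails to be a face, while $\operatorname{cone}(\rho_{2},\rho_{3})$ in the quasi-toroidal example $D_{1}+D_{4}$ is a face — illustrates both failures at once; it also shows why the stalk at the closed point $o$ alone is insufficient, since there the characteristic of $\cO_{W}^{\star}(-\cD_{W})$ is trivial even though the log structure is non-trivial along $D_{1}\smallsetminus\{o\}$.

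The step I expect to be the main obstacle is precisely matching \cite[Definition 3.6]{NO 10} — stated for an arbitrary sub-log-structure of an arbitrary log structure — with the concrete ``generated by a face of a fine chart'' reformulation used above, and, inside that, checking that a chart witnessing relative coherence can be taken compatible with the toroidal stratification not merely at the closed point $o$ but along every stratum $O_{\tau}$ in a neighbourhood of $x$. Once this reformulation and the accompanying local analysis in toric charts are in place, the remainder is bookkeeping with the face lattice of $\sigma$: everything rests on the elementary facts that $\sigma^{\vee}\cap\tau^{\perp}\cap M$ is the face of $P$ dual to a face $\tau$ of $\sigma$ (and is finitely generated, by Gordan's lemma) and that the incidence poset of the boundary divisors $D_{i}$ mirrors the face lattice of $\sigma$.
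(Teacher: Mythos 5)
Your reduction to local toric charts, the normalization to a germ $\Spec\CC[P]$ at a fixed point, the stalkwise computation of the characteristic monoids of $\cO_W^{\star}(-\cD_W)$, and the combinatorial translation of \autoref{def:face-subdiv} (the rays $\{\rho_j: j\notin I\}$ must span a face of $\sigma$) are all sound, and this is indeed the natural frame for the statement. The genuine gap sits exactly at the step you flag as the main obstacle, and your proposed resolution of it is incorrect: the lemma identifying the sub-log-structure generated over $\cO_W^{\star}$ by a face $G$ of $P$ with the divisorial log structure $\cO_W^{\star}(-D_G)$ is false, and your reformulation of \cite[Definition 3.6]{NO 10} cannot be right either, because together with the paper's own \autoref{ex:quasitorcone} it would contradict the very proposition being proved. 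Concretely, take $W=Z(z^2-xy)$ and $\cD_W=L$ the $y$-axis. The dual face is $G=\N\cdot[x|_W]$, and $\operatorname{div}(x|_W)=2L$; at a point $y_0\in L\smallsetminus\{o\}$ the divisor $L$ is principal, cut out by some $\ell$, and $x=u\,\ell^{2}$ for a unit $u$. Hence the subsheaf of monoids generated by $\cO^{\star}$ and $G$ has characteristic $2\N$ at $y_0$, while $\cO_W^{\star}(-L)$ has characteristic $\N$ there (the section $\ell$ is not a unit times a power of $x$). So $\langle\cO^{\star},G\rangle\subsetneq\cO_W^{\star}(-D_G)$ away from the deepest stratum; and if relative coherence literally meant ``locally equal to the sub-log-structure generated over $\cO^{\star}$ by a face of a fine chart'', then $\cO_W^{\star}(-L)$ would fail to be relatively coherent even though $L$ is a quasi-toroidal subboundary. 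The two errors do not cancel: with your reading, the forward implication of your chain of equivalences breaks precisely on this example.

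What makes the statement true is that generation in the sense of Nakayama--Ogus is generation \emph{as a sheaf of faces}: the relevant subsheaf is the preimage in $\cM$ of the sheaf of faces of $\overline{\cM}$ generated by the images of the chosen local sections, i.e.\ it is saturated under divisibility (if $a+b$ lies in it, so do $a$ and $b$). With that notion your stratum-by-stratum computation does close up: the sheaf of faces generated by $G$ (equivalently, by a single monomial in the relative interior of $G$, such as $x|_W$ in the quadric-cone example) is exactly $\cO_W^{\star}(-D_G)$ at every point, because the face of the characteristic monoid generated by an element of valuation $2$ along $L$ is all of $\N$; and conversely, a relatively coherent divisorial sub-log structure forces, at the deepest stratum of the chart, its local generators to be monomials lying in a single face of $P$, whose associated divisor is $D_G$, giving back your criterion that the complementary rays span a face. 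So the architecture of your argument is salvageable, but as written the central identification is false and the deduction from \cite[Definition 3.6]{NO 10} does not stand; you must redo the matching of that definition using face-generation (divisibility-saturated), verifying the identification along every stratum as you began to do, rather than monoid generation over $\cO^{\star}$.
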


\begin{remark}\label{rem:whyRelCoh}   
    Relative coherence plays a crucial role in  Nakayama and Ogus' local 
   triviality theorem (see~\autoref{thm:logehresm} below), as the source of a relatively log smooth 
   morphism is relatively coherent by hypothesis. We use this local triviality 
   and~\autoref{cor:quasitorrelcoh} to produce canonical representatives 
   of the Milnor fibrations over the circle associated to the quasi-toroidalizations of a given smoothing 
   (see Steps (\ref{item:roundlogenhancmt}) and (\ref{item:combprevsteps})). 
\end{remark}

\medskip
\subsection{Kato and Nakayama's rounding operation}   \label{ssec:KNrounding}  $\:$ 
\medskip

In~\autoref{ssec:loggeomround} we introduced \emph{rounding maps} by analogy with  
 the classical passage to polar coordinates (see \autoref{def:roundingPrelim}). In this subsection we give further details on this construction and discuss its functoriality properties. Throughout, 
 a {\em cartesian diagram} of topological spaces denotes a pullback or fiber product diagram in 
 the topological category.

      \medskip
    The following definition of the {\em rounding} of a log space is a slight reformulation 
    of Kato and Nakayama's generalization of the 
    real oriented blowup operation 
    given in~\cite[Section 1]{KN 99} for log complex analytic spaces 
    (see also~\cite[Definition V.1.2.4]{O 18}).     Alternative descriptions of this operation 
    can be found in~\cite[Section 1.2]{IKN 05} and~\cite[Section 1.1]{A 21}.
    A useful example to keep in mind  is the passage to polar coordinates on the log space 
    $(\CC,\cO^\star(-\{0\}))$, discussed in~\autoref{ssec:loggeomround}.
   For a  comparison with A'Campo's   
  classical real oriented blowups we refer the reader to~\cite{Gi 11}.

    \begin{definition} \label{def:rounding}
        Let $(W,\cM_W, \alpha_W)$ be a log complex space in the sense of~\autoref{def:logspace}. 
        We identify  the sheaves  $\cM_W^{\star}$ and $\cO_W^{\star}$ via the map  
        $\alpha_W$ (see \autoref{rem:isomequiv}). 
                          The \textbf{rounding}\index{rounding}     of $W$ is the set
                         \[ \boxed{W_{\log}}:= \left\{ (x,u), x \in W,   u \in \Hom(\cM_{W,x}, \bS^1)  , 
                               u(\alpha_{W,x}(f)) = \mbox{arg}(f(x)), \: \forall \: f \in 
                                   \cM_{W,x}^{\star}  = \cO_{W,x}^{\star} 
                               \right\}, \]
                               where $\mbox{arg}(s) = s/|s|$ for each $s\in \CC^*$ (see 
                               \autoref{def:logCoordinatesPolar}).  
                    The \textbf{rounding map}\index{rounding!map} is the function 
                        \[ \begin{array}{cccc}
                              \boxed{\tau_W} \colon &  W_{\log} & \to & \underline{W}  \\
                                & (x,u) & \to & x. 
                           \end{array}   \]
     The rounding $W_{\log}$ is endowed with the weakest topology making continuous 
     the rounding map $\tau_W$ and the set of maps 
            \[ \{\mbox{arg}(m)\in \Hom(\tau_W^{-1}(U), \bS^1): 
         U\subset \underline{W} \text{ open }, m\in \cM_W(U)\}, \]
     where: 
  \[  \begin{array}{cccc}
                   \boxed{\mbox{arg}(m)}\ \colon &   \tau_W^{-1}(U) & \to & \bS^1  \\
                                & (x,u) & \to & u(m_x). 
       \end{array} 
   \]
    \end{definition}

     \begin{remark}  \label{rem:terminbettirealiz} 
          The terminology ``\emph{rounding}'' was coined by Ogus   
        (see \cite{ACGHOSS 13, NO 10}) and refers to the fact that whenever $W$ 
        is a  \emph{fine} log space in the sense  of~\autoref{def:chart}, the fibers of the rounding 
        map $\tau_W$ are finite disjoint unions of compact tori, which are product 
        of circles, and thus, prototypical ``round'' geometric objects (see~\autoref{thm:torsorfibre}). 
        Alternative names in the literature are 
        ``\emph{Kato-Nakayama Space}''\index{space!Kato-Nakayama} 
        (see \cite{A 21,TV 18}) or ``\emph{Betti realization}''\index{Betti realization}, again a 
        terminology due to Ogus (see~\cite{O 18}).
     \end{remark}

    The next result discusses functoriality properties of the rounding operation. For a proof when $W$ is a log complex analytic space, we refer to \cite[Proposition V.1.2.5]{O 18}. The same proof is valid for  arbitrary log complex  spaces:
    
    \begin{theorem}  \label{thm:torsorfibre}
         Assume that $(W,\cM_W, \alpha_W)$ is a log complex space. 
             \begin{enumerate}
                   \item The rounding map $\tau_W$ is continuous.  It is a 
                       homeomorphism whenever the log structure of $W$ is trivial. 
                       
                   \item   \label{fibrdescr}
                       Let $x$ be a point of $\underline{W}$ and consider the abelian group 
                     \begin{equation}\label{eq:Tx}  
                       \boxed{T_x}:= \Hom({\cM}_{W,x}/{\cM}_{W,x}^{\star}, \bS^1).
                     \end{equation}
                     Then, $T_x$ acts naturally on the fiber $\tau_W^{-1}(x)$ by extending the natural 
                     action on $\Hom(\cM_{W,x}, \bS^1)$, i.e.:
                     \[ (\beta \cdot u)(m) = \beta(\overline{m}) u(m) \quad \text{ for } 
                             \beta\in T_x, \;u\in \Hom(\cM_{W,x}, \bS^1), \;m\in {\cM}_{W,x},
                     \]
                     where $\overline{m}$ is the coset of $m$ in $\cM_{W,x}/\cM_{W,x}^{\star}$.
                     This action 
                         defines a torsor if the monoid $\cM_{W,x}$ is unit-integral in the sense 
                         of~\autoref{def:toricmon}. 
                         In particular, $\tau_W$ is surjective if $\cM_W$ has only unit-integral stalks. 
                         This occurs, for instance, if $W$ is a fine log space.
                         
                   \item \label{functbr} 
                       The construction of $W_{\log}$ is functorial and the morphism 
                      $\tau_W$ is natural. More precisely, a morphism $f \colon V \to W$ 
                      of complex log spaces 
                      induces a morphism of topological spaces 
                      $\boxed{f_{\log}} \colon V_{\log} \to W_{\log}$, 
                      called the \textbf{rounding of $f$}\index{rounding}, which fits in a commutative diagram: 
                          \begin{equation}\label{eq:roundingOfF}  
                            \xymatrix{
                                V_{\log}     \ar[r]^{f_{\log}}   \ar[d]_{\tau_V} &   W_{\log} \ar[d]^{\tau_W} \\
                                \underline{V}\ar[r]_{\underline{f}}                                  &   \underline{W}.}
                          \end{equation}
                          Thus, the rounding operation is a \emph{covariant functor} from the category 
                          of log spaces to the category of topological spaces.
                          
                     \item   \label{cartdiag}
                         The diagram~\eqref{eq:roundingOfF} is cartesian (in the topological category) 
                         whenever the log morphism $f$ is strict in the sense of~\autoref{def:strict}. 
             \end{enumerate}
    \end{theorem}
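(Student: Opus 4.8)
The statement bundles four assertions about the rounding functor, and the plan is to follow the standard argument for log complex analytic spaces (see~\cite[Proposition V.1.2.5]{O 18}), checking that the only analytic input actually used is the existence of the argument map on units of a locally ringed space over $\CC$, so that the proof transfers verbatim to arbitrary log complex spaces. Part~(1) is essentially definitional: the topology on $W_{\log}$ is, by construction, the coarsest one making $\tau_W$ and all the maps $\mathrm{arg}(m)$ continuous, so $\tau_W$ is continuous; and when $\cM_W = \cO_W^{\star}$ the compatibility condition $u(\alpha_{W,x}(f)) = \mathrm{arg}(f(x))$ is imposed for \emph{every} $f\in\cM_{W,x}$, so each fiber $\tau_W^{-1}(x)$ is a single point, $\tau_W$ is a bijection, and since each generating map $\mathrm{arg}(m)$ then factors as $\mathrm{arg}\circ m$ composed with $\tau_W$, the topology on $W_{\log}$ is the one transported from $W$, so $\tau_W$ is a homeomorphism.

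For part~(2) one checks directly that $(\beta\cdot u)(m):=\beta(\overline m)\,u(m)$ defines a monoid homomorphism again lying in $\tau_W^{-1}(x)$ (on units $\overline m=0$, so $\beta(\overline m)=1$) and hence a group action, which is free because $\beta\cdot u = u$ forces $\beta\equiv 1$. Transitivity is equivalent to non-emptiness of the fibers: if $u,u'\in\tau_W^{-1}(x)$ then $u'u^{-1}\colon\cM_{W,x}\to\bS^1$ is trivial on $\cM_{W,x}^{\star}$ and so descends to $\cM_{W,x}/\cM_{W,x}^{\star}$, giving the required $\beta\in T_x$. To produce one point of the fiber, I would extend the character $\mathrm{arg}\circ\mathrm{ev}_x\colon\cM_{W,x}^{\star}\to\bS^1$ along the inclusion $\cM_{W,x}^{\star}\hookrightarrow\cM_{W,x}^{gp}$ --- an inclusion precisely because $\cM_{W,x}$ is unit-integral --- to a character of $\cM_{W,x}^{gp}$, using that $\bS^1$ is divisible, hence injective as a $\Z$-module, and then restrict along $\cM_{W,x}\to\cM_{W,x}^{gp}$. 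Surjectivity of $\tau_W$ when every stalk is unit-integral (in particular for fine log spaces, whose stalks $\cM_{W,x}$ are integral) follows.

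For part~(3), given a log morphism $f=(\underline f,\,f^{\flat})$ I would define $f_{\log}(y,v):=(\underline f(y),\,v\circ f^{\flat}_y)$, where $f^{\flat}_y\colon\cM_{W,\underline f(y)}\to\cM_{V,y}$ is the stalk map. That $v\circ f^{\flat}_y$ again satisfies the unit-compatibility condition follows from the compatibility of $f^{\flat}$ with the structure maps $\alpha$ and the fact that morphisms of locally ringed spaces preserve units and commute with evaluation at points. Continuity of $f_{\log}$ is then automatic, since $\tau_W\circ f_{\log}=\underline f\circ\tau_V$ and $\mathrm{arg}(m)\circ f_{\log}=\mathrm{arg}(f^{\flat}(m))$ are continuous --- these being exactly the maps that topologize $W_{\log}$ --- and functoriality together with commutativity of~\eqref{eq:roundingOfF} is formal. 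For part~(4), strictness of $f$ means $\cM_{V,y}$ is the pushout of $\cM_{W,x}\leftarrow\alpha_W^{-1}(\cO_{W,x}^{\star})\to\cO_{V,y}^{\star}$ with $x=\underline f(y)$; by the universal property of the pushout, a homomorphism $v\colon\cM_{V,y}\to\bS^1$ restricting to $\mathrm{arg}\circ\mathrm{ev}_y$ on units amounts to a single element $u=v\circ f^{\flat}_y\in\tau_W^{-1}(x)$, so the canonical continuous map $V_{\log}\to\underline V\times_{\underline W}W_{\log}$ is a bijection, which comparison of the two topologies then upgrades to a homeomorphism.

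I expect the genuinely fiddly point to be this last topological comparison: one must show the canonical continuous bijection $V_{\log}\to\underline V\times_{\underline W}W_{\log}$ is open, which reduces to the observation that, for strict $f$, every local section of $\cM_V$ is a product of a section pulled back from $\cM_W$ and a unit, so every $\mathrm{arg}$-function used to topologize $V_{\log}$ is pulled back from the fiber product. The extension-of-characters step in part~(2) is the conceptual core, but it becomes a one-line argument once one invokes injectivity of $\bS^1$ as an abelian group.
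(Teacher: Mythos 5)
Your proposal is correct and follows essentially the same route as the paper, which simply invokes the standard argument of Ogus~\cite[Proposition V.1.2.5]{O 18} and notes that it carries over verbatim to arbitrary log complex spaces; your reconstruction (initial topology for continuity, extension of the unit character via divisibility of $\bS^1$ under unit-integrality, composition with stalk maps $f^{\flat}_y$ for functoriality, and the pushout description of $f^*\cM_W$ plus the local factorization of sections as pulled-back sections times units for the cartesian claim) is exactly that proof. No discrepancies to report.
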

    
 \begin{remark}\label{rem:ConnectedFibers}    
    Note that whenever $(W,\cM_W, \alpha_W)$ 
   is a fine log space in the sense  of~\autoref{def:chart}, the  monoid 
   ${\cM}_{W,x}/{\cM}_{W,x}^{\star}$ appearing in~\autoref{thm:torsorfibre}  (\ref{fibrdescr}) is fine. 
   Consequently, its Grothendieck group 
   $({\cM}_{W,x}/{\cM}_{W,x}^{\star})^{gp}$
   is finitely generated, thus a direct sum of a finite abelian group and a lattice. Therefore, 
   the group $T_x$ from~\eqref{eq:Tx}
    is a finite disjoint union of compact tori (that is, of groups isomorphic to 
    $(\bS^1)^n$ for some $n \in \N$). 
    As a consequence of~\autoref{thm:torsorfibre}, the fiber $\tau_W^{-1}(x)$ 
    is connected (that is, it is a single torus) if, and only if,
    the group $({\cM}_{W,x}/{\cM}_{W,x}^{\star})^{gp}$ is a lattice. This is always the case 
    when $(W,\cM_W)$ is a toric log space 
    in the sense  of~\autoref{def:chart} (see \cite[Proposition II.2.3.7]{O 18}). 
    Notice that even if the toric monoid is not saturated, its associated group is still a lattice: 
    it is the lattice of exponents of  monomials. 
 \end{remark}

\autoref{thm:torsorfibre}~(\ref{cartdiag})  has an important consequence:

    \begin{corollary}  Let $W$ be a complex log space and let $\underline{V}
    \hookrightarrow\underline{W}$ be a subspace of the underlying topological space. 
    Endow $\underline{V}$ with a  log structure obtained by restricting the log structure of $W$. 
    Then, $\tau_V$ is the restriction of 
      $\tau_W$ to the subspace $V_{\log}$.
    \end{corollary}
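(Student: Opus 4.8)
The plan is to deduce the corollary directly from the functoriality and base-change properties of the rounding operation recorded in \autoref{thm:torsorfibre}. Write $\iota\colon \underline{V}\hookrightarrow\underline{W}$ for the inclusion of the underlying spaces, and equip $\underline{V}$ with the restricted log structure $\iota^{*}\cM_{W}$, so that $V$ becomes a complex log space and $\iota$ upgrades to a log morphism $\iota\colon V\to W$. By the very definition of the pullback log structure (\autoref{def:pb}), this log morphism is \emph{strict} in the sense of \autoref{def:strict}: the canonical comparison map $\iota^{*}\cM_{W}\to\cM_{V}$ is an isomorphism.

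Applying \autoref{thm:torsorfibre}~(\ref{functbr}) to $\iota$ produces a continuous map $\iota_{\log}\colon V_{\log}\to W_{\log}$ fitting into the commutative square \eqref{eq:roundingOfF}, i.e.\ $\tau_{W}\circ\iota_{\log}=\underline{\iota}\circ\tau_{V}$. Since $\iota$ is strict, part~(\ref{cartdiag}) of the same theorem asserts that this square is cartesian in the topological category. Thus $V_{\log}$ is identified, via $\iota_{\log}$, with the fiber product $\underline{V}\times_{\underline{W}}W_{\log}$, and $\tau_{V}$ corresponds under this identification to the projection onto the first factor.

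It remains only to unwind what a fiber product over the inclusion of a subspace looks like in the topological category. Because $\underline{V}\hookrightarrow\underline{W}$ is a topological embedding, the fiber product $\underline{V}\times_{\underline{W}}W_{\log}$ is canonically homeomorphic to the preimage $\tau_{W}^{-1}(\underline{V})\subseteq W_{\log}$ with the subspace topology, the first projection going over to the restriction of $\tau_{W}$. Consequently $\iota_{\log}$ is a homeomorphism of $V_{\log}$ onto the subspace $\tau_{W}^{-1}(\underline{V})$ of $W_{\log}$; identifying $V_{\log}$ with this subspace (which is precisely the subspace denoted $V_{\log}$ in the statement), the equality $\tau_{V}=\tau_{W}|_{V_{\log}}$ follows from the commutativity of \eqref{eq:roundingOfF}.

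There is essentially no serious obstacle here: the content of the corollary is already packaged in \autoref{thm:torsorfibre}~(\ref{cartdiag}), and the only point requiring a word of care is the identification of $\iota_{\log}$ with the inclusion of the subspace $\tau_{W}^{-1}(\underline{V})$, which is the routine fact that pullback along a subspace inclusion computes preimages in the topological category.
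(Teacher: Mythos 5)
Your argument is correct and is exactly the route the paper intends: the corollary is stated as a direct consequence of \autoref{thm:torsorfibre}~(\ref{cartdiag}), using strictness of the inclusion with the pulled-back log structure and the fact that a topological fiber product over a subspace inclusion is the preimage $\tau_W^{-1}(\underline{V})$. Nothing essential differs from the paper's (implicit) proof.
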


    The next result characterizes topological boundaries of roundings of log toroidal varieties in the sense of~\autoref{def:logtoricgen}.

    \begin{proposition}  \label{prop:topmantoroid}
        Assume that $W$ is a log toroidal variety.  Then, $W_{\log}$ is a 
        real semi-analytic variety homeomorphic to a topological manifold with boundary. 
        Its topological boundary $\partial_{top} (W_{\log})$ 
        is the preimage 
        of the toroidal boundary $\partial \underline{W}$ of $\underline{W}$ under 
        the rounding map $\tau_W$.        
    \end{proposition}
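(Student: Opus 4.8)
The plan is to reduce the statement to the case of an affine toric variety equipped with its toroidal log structure, compute the rounding explicitly there, and recognise the answer as a manifold with boundary.

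First I would note that every assertion is local on $W$: being a topological manifold with boundary, and the description of the topological boundary as the set of points having no Euclidean neighbourhood, are both local conditions. Moreover, for an open subset $U\hookrightarrow W$ carrying the restriction of the log structure of $W$, one has $U_{\log}=\tau_W^{-1}(U)$ straight from \autoref{def:rounding}, since the datum $(x,u)$ of a point of the rounding only involves the stalk $\cM_{W,x}$. Hence I may shrink $W$. Because $W$ is log toroidal (\autoref{def:logtoricgen}), its log structure is $\cO_W^{\star}(-\partial W)$ for a toroidal pair $(W,\partial W)$, so by \autoref{def:toroimorph} every point has a neighbourhood $U$ with a toric chart $U\hookrightarrow\tv_{\sigma}:=\Spec\CC[\sigma^{\vee}\cap M]$ onto an open subset of a (normal, hence saturated) affine toric variety, carrying $\partial U$ onto $\partial\tv_{\sigma}\cap U$; the restriction of the toroidal log structure $\cO_{\tv_{\sigma}}^{\star}(-\partial\tv_{\sigma})$ to $U$ equals $\cO_U^{\star}(-\partial U)$ by \autoref{def:divlogstruct}. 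This reduces the problem to $W=\tv_{\sigma}$ with its toroidal log structure.

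For such $W$ I would compute the rounding through the global monomial chart $\sigma^{\vee}\cap M\to\Gamma(W,\cM_W)$, $m\mapsto\chi^{m}$. Unwinding \autoref{def:rounding} — using that a germ of $\cM_W$ is a monomial times a unit, exactly as in the one-dimensional model of \autoref{ssec:loggeomround} and the product example \autoref{ex:roundbasic} — should yield a homeomorphism $W_{\log}\cong\Hom(\sigma^{\vee}\cap M,(\Rp,\cdot))\times\Hom(M,\bS^1)$, compatible with rounding maps, under which $\tau_W$ becomes the multiplication map $(\phi,\psi)\mapsto\bigl(m\mapsto\phi(m)\psi(m)\bigr)$ into $W=\Hom(\sigma^{\vee}\cap M,(\CC,\cdot))$; here $\Rp$ and $\CC$ are viewed multiplicatively and a morphism $\sigma^{\vee}\cap M\to\bS^1$ extends uniquely to $(\sigma^{\vee}\cap M)^{gp}=M$. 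This identification — the toric case of Kato and Nakayama's construction, recorded in \cite[\S V.1.2]{O 18} — is the technical heart of the argument, and verifying it (well-definedness, and bicontinuity through the functions $\mbox{arg}(\chi^{m})$ defining the topology of $W_{\log}$) is the step I expect to cost the most work; an alternative I would keep in mind is to deduce it from strict base change along suitable charts via \autoref{thm:torsorfibre}, though the direct unwinding seems more transparent.

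It then remains to analyse the right-hand side. The factor $\Hom(M,\bS^1)\cong(\bS^1)^{n}$, with $n=\dim W$, is a compact real-analytic torus. The factor $X_{\sigma}(\Rp):=\Hom(\sigma^{\vee}\cap M,(\Rp,\cdot))$ is the space of non-negative real points of $\tv_{\sigma}$: picking a finite generating set $m_1,\dots,m_s$ of $\sigma^{\vee}\cap M$ presents it as the semialgebraic subset of $\Rp^{s}$ cut out by the binomial relations of the monoid, which already shows that $W_{\log}$ is a real semi-analytic variety. Topologically, $X_{\sigma}(\Rp)$ is homeomorphic to the cone $\sigma\subset N_{\R}$ via the standard "moment" identification; since a rational polyhedral cone is the product of a linear subspace with a pointed cone, and a pointed cone is a cone over a topological disk hence homeomorphic to a closed half-space, $X_{\sigma}(\Rp)$ is homeomorphic to $\R^{n-1}\times\Rp$ (to $\R^{n}$ when $\sigma=\{0\}$). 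Thus $W_{\log}\cong X_{\sigma}(\Rp)\times(\bS^1)^{n}$ is a topological manifold with boundary of dimension $2n$, whose boundary is $\partial_{top}\bigl(X_{\sigma}(\Rp)\bigr)\times(\bS^1)^{n}$, i.e. the locus where some $\phi(m)$ vanishes; since $\phi(m)\psi(m)=0$ precisely when $\phi(m)=0$, this is exactly $\tau_W^{-1}(\partial\tv_{\sigma})$. Finally I would glue these local descriptions over a toric atlas of $W$ — the transition maps of the roundings being real semi-analytic, as they are built from moduli and arguments of holomorphic transition functions, which is why one cannot expect more than a semi-analytic structure — to conclude that $W_{\log}$ is a real semi-analytic variety, a topological manifold with boundary of dimension $2\dim W$, with $\partial_{top}(W_{\log})=\tau_W^{-1}(\partial\underline W)$, as claimed.
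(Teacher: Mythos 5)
Your proposal is correct and follows essentially the same route as the paper, which likewise reduces the statement to open sets of affine toric varieties and relies on the explicit toric description of the rounding (the paper cites \cite[Lemma 1.2]{KN 08} for the topological part and calls the semi-analytic extension straightforward); you simply carry out that local computation yourself. One small slip to fix: $\Hom(\sigma^{\vee}\cap M,(\Rp,\cdot))$ is homeomorphic to the \emph{dual} cone $\sigma^{\vee}$, not to $\sigma$ (as your own parenthetical shows, it is $\R^{n}$ when $\sigma=\{0\}$), but since $\sigma^{\vee}$ is again a linear subspace times a pointed cone, your subsequent half-space argument and the identification of the boundary are unaffected.
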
 
    
    Furthermore, it can be shown that $W_{\log}$ is a ``manifold with generalized corners'' 
    in the sense of Joyce~\cite{J 16} (see also~\cite{GM 15,KM 15}).    
    The statement can be proven locally since open sets of affine toric varieties serve 
    as local models for toroidal varieties.  The topological part of the statement can be found 
    in~\cite[Lemma 1.2]{KN 08}, and its extension to the semi-analytic category 
    is straightforward. \autoref{thm:logehresm} in the next subsection complements 
    this result by extending it to morphisms.

    The next result is a slight generalization of~\autoref{thm:torsorfibre}~(\ref{cartdiag}). 
    It can be proved using  the classical {\em pullback lemma} of abstract category theory   
    (see~\cite[Lemma 5.8]{A 10} or \cite[Exercise III.4.8]{M 98}). It plays a crucial role in 
    Steps~(\ref{item:roundlogenhancmt}) and (\ref{item:startlogparta})
    of the proof of~\autoref{conj:MFC}.

     \begin{proposition}   \label{prop:strictparallel}
        Fix the following commutative diagram of log morphisms between log complex spaces 
           \begin{equation} \label{eq:cartdiaglog}  
                     \xymatrix{
                           X    \ar[r] \ar[d] &      Y \ar[d] \\
                            V   \ar[r] & W.}
            \end{equation}
           Assume  
           and that either its two vertical or its two horizontal arrows are strict 
           and that the underlying commutative diagram of topological spaces is cartesian.
           Then, the commutative diagram             \[  
                     \xymatrix{
                           X_{\log} \ar[r] \ar[d] &    Y_{\log}\ar[d] \\
                            V_{\log}   \ar[r]  &   W_{\log}}
                           \]
           obtained by rounding (\ref{eq:cartdiaglog}) is cartesian in the topological category.                 
     \end{proposition}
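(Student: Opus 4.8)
The plan is to deduce everything from two ingredients that are already available: first, that rounding turns a strict log morphism into a cartesian square of topological spaces (\autoref{thm:torsorfibre}~(\ref{cartdiag})), and second, the categorical pasting law for pullbacks, which holds without restriction in the topological category since all topological fibre products exist. Nothing deeper than these two facts will be needed.

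First I would perform a harmless reduction. Transposing the square~\eqref{eq:cartdiaglog} interchanges its two pairs of parallel arrows, does not affect whether the underlying square is cartesian (a topological pullback is symmetric in its two arms up to the canonical swap homeomorphism), and commutes with rounding because rounding is a functor applied object- and morphism-wise (\autoref{thm:torsorfibre}~(\ref{functbr})). Hence I may assume that the two \emph{horizontal} arrows are strict; call them $g\colon X\to Y$ and $f\colon V\to W$, and call the two vertical arrows $p\colon X\to V$ and $q\colon Y\to W$, so that $q\circ g=f\circ p$. Write $\underline X,\underline Y,\underline V,\underline W$ for the underlying topological spaces. The goal is then to show that the rounded square exhibits $X_{\log}$, equipped with the maps $p_{\log}$ and $g_{\log}$, as the topological fibre product $V_{\log}\times_{W_{\log}}Y_{\log}$.

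Next I would assemble the three cartesian squares that drive the argument. By \autoref{thm:torsorfibre}~(\ref{cartdiag}) the strictness of $f$ makes square~\eqref{eq:roundingOfF} for $f$ cartesian, i.e.\ $V_{\log}\cong \underline V\times_{\underline W}W_{\log}$ (fibre product along $\underline f$ and $\tau_W$, with projections $\tau_V$ and $f_{\log}$), and likewise strictness of $g$ gives $X_{\log}\cong \underline X\times_{\underline Y}Y_{\log}$ (along $\underline g$ and $\tau_Y$, with projections $\tau_X$ and $g_{\log}$). By hypothesis $\underline X\cong \underline V\times_{\underline W}\underline Y$, with projections $\underline p$ and $\underline g$. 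Finally \autoref{thm:torsorfibre}~(\ref{functbr}) gives the commuting rounding squares; in particular $\tau_W\circ q_{\log}=\underline q\circ\tau_Y$ and $\tau_V\circ p_{\log}=\underline p\circ\tau_X$. Feeding these into iterated applications of the pasting law produces a chain of canonical homeomorphisms
\[
V_{\log}\times_{W_{\log}}Y_{\log}\;\cong\;\bigl(\underline V\times_{\underline W}W_{\log}\bigr)\times_{W_{\log}}Y_{\log}\;\cong\;\underline V\times_{\underline W}Y_{\log}\;\cong\;\bigl(\underline V\times_{\underline W}\underline Y\bigr)\times_{\underline Y}Y_{\log}\;\cong\;\underline X\times_{\underline Y}Y_{\log}\;\cong\;X_{\log},
\]
where the second step is the pasting law with $f_{\log}$ as the map $V_{\log}\to W_{\log}$, the third step rewrites the structure map $Y_{\log}\to\underline W$ as $\underline q\circ\tau_Y$ (using the commuting rounding square for $q$) and applies the pasting law again, the fourth step uses cartesianness of the underlying square, and the last step is strictness of $g$.

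The main obstacle—really the only point demanding care, since conceptually the whole thing is just the pasting lemma—will be to check that this chain of isomorphisms carries the two structural projections of $V_{\log}\times_{W_{\log}}Y_{\log}$ onto $p_{\log}$ and $g_{\log}$, so that the rounded square is genuinely a pullback square and not merely abstractly isomorphic to one. For the projection onto $Y_{\log}$ this is immediate: it is transported unchanged along the chain and ends up as the projection $\underline X\times_{\underline Y}Y_{\log}\to Y_{\log}$, which is $g_{\log}$ by the identification of the preceding paragraph. For the projection onto $V_{\log}$, I would use that a continuous map into $V_{\log}\cong\underline V\times_{\underline W}W_{\log}$ is pinned down by its composites with $\tau_V$ and with $f_{\log}$, and then verify that for our map these composites are $\underline p\circ\tau_X$ and $f_{\log}\circ p_{\log}=q_{\log}\circ g_{\log}$ respectively; both identities are instances of the functoriality of rounding, applied to $\tau_V\circ p_{\log}=\underline p\circ\tau_X$ and to $f\circ p=q\circ g$. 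This is pure bookkeeping of which structure maps the various fibre products are formed along, and I do not expect any genuine difficulty beyond keeping track of it.
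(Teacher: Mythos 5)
Your proposal is correct and follows essentially the route the paper itself indicates: it deduces the statement from \autoref{thm:torsorfibre}~(\ref{cartdiag}) for the two strict arrows together with the classical pullback (pasting) lemma in the topological category, which is exactly the argument the paper sketches. The extra bookkeeping you include to identify the structural projections with $p_{\log}$ and $g_{\log}$ is the right way to make that sketch rigorous.
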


  \medskip
  \subsection{Nakayama and Ogus' local triviality theorem} 
  \label{ssec:loctrivquasitor}  $\:$
  \medskip

  In this subsection, we discuss Nakayama and Ogus' local triviality theorem 
  (see \autoref{thm:logehresm})
  and two of its consequences (see Corollaries \ref{cor:quasitorrelcoh} and \ref{cor:milntubelog}), 
  expressed in the language of quasi-toroidal subboundaries.
  As stated in~\autoref{rem:whyRelCoh}, these results are essential to confirm that 
  one obtains canonical representatives of Milnor fibrations over a compact 
  two-dimensional disk $\D$ centered at the origin of $\CC$, from 
  quasi-toroidalizations of smoothings of the input splice type surface singularities. 
  \medskip

       Using Siebenmann's topological local triviality theorem from~\cite[Corollary 6.14]{S 72},   
       Nakayama and Ogus proved  the following log version of 
       Ehresmann's theorem (see~\cite[Theorems 3.5 and 5.1]{NO 10}). 
       We will not give precise definitions of several terms involved in the statement 
       (\emph{relative coherence, separated, exact and relatively log smooth 
       morphisms, points where a morphism is vertical}), 
       since our interest in this result lies in one of its consequences, 
       namely,~\autoref{cor:milntubelog}  discussed below.
    
    \begin{theorem}   \label{thm:logehresm}
         Let $f\colon V \to W$ be a morphism of log complex analytic spaces, where $W$ is fine and 
         $V$ is relatively coherent. Assume that $f$ is proper, separated, exact 
         and relatively log smooth. Then,
         its rounding $f_{\log} \colon V_{\log} \to W_{\log}$ is a locally trivial 
         fibration whose fibers 
         are oriented topological manifolds with boundary. The union of the boundaries of the fibers  
         consists of those points of $V_{\log}$ sent by the rounding map 
         $\tau_V\colon V_{\log}\to \underline{V}$ to points of $\underline{V}$ where $f$ is not vertical.
    \end{theorem}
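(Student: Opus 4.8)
The plan is to follow Nakayama and Ogus' strategy: reduce the global assertion to an explicit local normal form for the rounding of a relatively log smooth morphism, and then invoke Siebenmann's topological local triviality theorem \cite[Corollary 6.14]{S 72}. Two ingredients come essentially for free. First, $f_{\log}$ is proper and separated: the rounding maps $\tau_V$ and $\tau_W$ are proper with compact fibers of the shape $T_x$ from~\eqref{eq:Tx} (compare~\autoref{prop:topmantoroid}), and $\underline{f}$ is proper and separated by hypothesis, so $f_{\log}$ is a proper separated map of locally compact, second-countable Hausdorff spaces --- precisely the setting of \cite[Corollary 6.14]{S 72}. Second, the orientability of the fibers of $f_{\log}$ will be inherited from the complex/toric structure of the model fibers computed below.

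The heart of the argument is a local model for $f$ together with the computation of its rounding. Using the chart description of log structures (\autoref{def:chart}) and the structure theory of log smooth morphisms, one shows that analytically-locally around any point of $V$ the morphism $f$ is isomorphic to the one induced by an injection of monoids $P \hookrightarrow Q$, where $P$ is fine (a chart of $W$) and $Q$ is only \emph{relatively coherent} over $P$ in the sense of \cite[Definition 3.6]{NO 10} --- so that a chart on $V$ need exist only relative to the one on $W$; thus $f$ looks like $\Spec \CC[Q]\times \D^k \to \Spec \CC[P]$, with exactness and relative log smoothness translating into combinatorial conditions on $P\hookrightarrow Q$ (torsion-freeness of the cokernel of $P^{gp}\to Q^{gp}$, suitable saturation-type conditions, and the induced map of rational cones being a piecewise-linear submersion with a product structure on its fibers). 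Computing the rounding of this model, one finds that the rounding of $\Spec \CC[P]$ is the space of monoid homomorphisms $P \to \Rp\times\bS^1$, homeomorphic to the product of the dual cone of $P$ with a compact group of the shape $T_x$; and --- this is the key claim --- that $f_{\log}$ on the model becomes, up to homeomorphism, the projection induced by $P\hookrightarrow Q$, which is a topological \emph{product} over its image, i.e.\ a trivial fibration with fiber an oriented topological manifold with boundary, the boundary locus being precisely the $\tau$-preimage of the points where $f$ fails to be vertical, as read off from~\autoref{prop:topmantoroid}.

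With local triviality and properness in hand, \cite[Corollary 6.14]{S 72} upgrades $f_{\log}$ to a globally locally trivial fibration, and the fiberwise-boundary description follows from the local analysis, since ``$f$ not vertical at $x$'' is exactly the condition producing a boundary stratum of $\tau_V^{-1}$ above $x$.

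I expect the main obstacle to be the second step, and within it the verification that $f_{\log}$ is a genuine topological product locally rather than merely a fibration: this forces one to unwind the interplay between the combinatorics of $P\hookrightarrow Q$ (exactness, the saturation-type conditions, relative coherence) and the topology of the associated dual cones and compact tori, and it is here that working with relative coherence of $V$ --- rather than full coherence --- does real work.
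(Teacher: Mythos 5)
First, a point of comparison: the paper does not prove this theorem at all --- it is imported verbatim from Nakayama and Ogus \cite[Theorems 3.5 and 5.1]{NO 10}, and the surrounding text only records that their proof rests on Siebenmann's topological local triviality theorem \cite[Corollary 6.14]{S 72} and deliberately leaves the notions of exactness, relative coherence, relative log smoothness and verticality undefined. So the only meaningful comparison is with the argument of \cite{NO 10}, and your outline does shadow that route: properness and separatedness of $f_{\log}$ deduced from $\underline{f}$ and the rounding maps, a chart-level local normal form for $f$, a computation of the rounding of the model, and Siebenmann's theorem to convert a source-local product structure into local triviality over the base. The skeleton is the right one.

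The genuine gap is that the decisive step is asserted rather than proved. The entire content of the theorem sits in what you call the ``key claim'': that for the model induced by an exact injection of monoids $P\hookrightarrow Q$, with the log structure on the source only \emph{relatively} coherent, the induced map of roundings is a topological submersion with boundary whose boundary locus is exactly the non-vertical locus. That statement is the technical core of \cite{NO 10} --- their detailed analysis of maps of the form $\Hom(Q,\Rp\times\bS^1)\to\Hom(P,\Rp\times\bS^1)$ and the construction of explicit local trivializations, which is where exactness and relative coherence do all the work --- and nothing in your sketch supplies it. Worse, your combinatorial gloss on exactness (torsion-freeness of the cokernel of $P^{gp}\to Q^{gp}$) is not what exactness means: $\theta\colon P\to Q$ is exact when $P=(\theta^{gp})^{-1}(Q)$ inside $P^{gp}$, a condition independent of torsion in the cokernel (the latter governs connectedness of the torus fibers, not the submersion property), so the verification as you describe it would not go through. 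A smaller slip: the local model $\Spec\CC[Q]\to\Spec\CC[P]$ is not proper, so its rounding is only a submersion, not a ``topological product over its image''; triviality over the base is precisely what properness plus Siebenmann's theorem are needed for, as your final paragraph in fact acknowledges, and the middle of the argument should not claim it for the non-proper model.
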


     \begin{remark}\label{rem:logehresm} 
         \autoref{thm:logehresm} generalizes earlier work of Kawamata
       concerning the structure of real oriented  blowups of 
      proper surjective toroidal and equidimensional morphisms of quasi-smooth toroidal varieties 
      (see~\cite[Theorem 2.4]{K 02}).
             Kawamata's definition of a real oriented blowup is a generalization of A'Campo's notion 
       with the same name (see~\autoref{rem:evolrealblowup}). But while  A'Campo's 
       original construction for normal 
       crossings divisors in smooth complex varieties  uses line bundles, 
       Kawamata's approach is to glue local models for quasi-smooth toroidal varieties 
       built from  \emph{simplicial} affine toric varieties, thus avoiding the use of log structures altogether.
    \end{remark}
      
 The next corollary to~\autoref{thm:logehresm} can be proved by
       translating the notions of \emph{relative coherence}, 
       {\em separatedness, exactness}, \emph{log smoothness}, 
       \emph{relative log smoothness} and \emph{verticality} 
       into the toroidal language when the target is the standard log disk 
       $(\D, \cO_{\D}^{\star}(- \{ 0 \}))$, and by using~\autoref{prop:toroidcharrelcoh}:

       \begin{corollary}  \label{cor:quasitorrelcoh} 
             Let $ \tilde{f} \colon V \to \D$ be a proper complex analytic morphism from 
             a complex analytic variety $V$ to an open disk $\D$ of $\CC$ centered at the origin.
             Let $\cD_V$ be a reduced divisor on $V$ such that 
             the complement $V \setminus \cD_V $ is smooth and with $  \tilde{f}^{-1}(0)  \subseteq  \cD_V $. 
         Choose the following  log enhancement  of $\phi$, 
         in the sense of~\autoref{def:indlogmorph}: 
                    \[   \tilde{f}^{\dagger} \colon (V, \cO^{\star}_V(- \cD_V)) \to (\D, \cO_{\D}^{\star}(- \{ 0 \})).   \]
         Assume that there exists a reduced divisor $\partial V$ of $V$ with the property that 
            $(V, \partial V)$ is toroidal, $\cD_V$ is a quasi-toroidal subboundary of $(V,\partial V)$ 
            in the sense of~\autoref{def:face-subdiv} 
                      and the morphism $ \tilde{f} \colon (V, \partial V) \to (\D, 0)$ is toroidal.             
          Then, the morphism of topological spaces 
              \[  \tilde{f}^{\dagger}_{\log} \colon (V, \cO_V^{\star}(-   \cD_V))_{\log} 
                    \to (\D, \cO_{\D}^{\star}(- \{ 0 \}))_{\log}  \]
          obtained by taking the rounding of $ \tilde{f}^{\dagger}$,  
          is a locally trivial topological fibration whose fibers are manifolds 
          with boundary. The union of the boundaries of the fibers 
         consists of those points of $ (V, \cO_V^{\star}(-   \cD_V))_{\log}$ 
         sent by the rounding map $(V, \cO_V^{\star}(-   \cD_V))_{\log} \to V$   
         to points $x$ of $V$ for which the germ $(\cD_V)_x$  strictly contains
         the germ $( \tilde{f}^{-1}(0))_x$. 
       \end{corollary}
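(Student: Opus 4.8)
The plan is to deduce \autoref{cor:quasitorrelcoh} as a direct application of Nakayama and Ogus' local triviality theorem (\autoref{thm:logehresm}), by verifying each of the theorem's hypotheses for the log morphism $\tilde{f}^{\dagger}\colon (V,\cO_V^{\star}(-\cD_V))\to(\D,\cO_\D^{\star}(-\{0\}))$ after translating them from the abstract log-geometric language into the toroidal language supplied by our standing assumptions. First I would observe that the target $(\D,\cO_\D^{\star}(-\{0\}))$ is the standard log disk: its log structure is the divisorial one associated to the reduced divisor $\{0\}$, which is coherent --- indeed toric, with chart subordinate to $\N$ --- so $W$ is fine in the sense required by \autoref{thm:logehresm}. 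Next, since $(V,\partial V)$ is toroidal and $\cD_V$ is a quasi-toroidal subboundary of $(V,\partial V)$, \autoref{prop:toroidcharrelcoh} applies verbatim and yields that $(V,\cO_V^{\star}(-\cD_V))$ is relatively coherent inside $(V,\cO_V^{\star}(-\partial V))$; this is exactly the relative-coherence hypothesis on the source $V$.

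The remaining hypotheses --- properness, separatedness, exactness and relative log smoothness of $\tilde{f}^{\dagger}$ --- are handled by the toroidal dictionary. Properness is part of the hypotheses of the corollary and is preserved by passing to the log enhancement, since the underlying morphism is unchanged; separatedness is automatic for a morphism of (Hausdorff) complex analytic spaces. For exactness and relative log smoothness, the key point is that $\tilde{f}\colon(V,\partial V)\to(\D,0)$ is a toroidal morphism and $\cD_V$ is a quasi-toroidal subboundary: I would argue that working in a toric chart of $(V,\partial V)$ the map $\tilde{f}$ is locally a monomial map $\tv_\sigma\to\CC$, and removing the complementary components $\partial V-\cD_V$ --- which by the defining property of a quasi-toroidal subboundary are precisely the local components through a fixed stratum --- produces exactly the relatively-coherent divisorial log structure for which the induced morphism to the standard log point/disk is exact and relatively log smooth. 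This is the toroidal reformulation of the conditions in \autoref{thm:logehresm}, and it is the step I expect to require the most care: one must check that the local monomial picture is compatible with the choice of quasi-toroidal subboundary $\cD_V$ (so that the ``horizontal'' and ``vertical'' parts of the boundary match up with $\partial V-\cD_V$ and the components of $\tilde{f}^{-1}(0)$ inside $\cD_V$), and that no additional regularity of the fans is needed. The reducedness/smoothness hypothesis $V\setminus\cD_V$ smooth, together with $\tilde{f}^{-1}(0)\subseteq\cD_V$, guarantees that away from $\cD_V$ the morphism $\tilde{f}^\dagger$ has trivial log structure and is genuinely smooth, which is what relative log smoothness encodes on that open locus.

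Once the hypotheses are in place, \autoref{thm:logehresm} gives directly that the rounding $\tilde{f}^{\dagger}_{\log}\colon(V,\cO_V^{\star}(-\cD_V))_{\log}\to(\D,\cO_\D^{\star}(-\{0\}))_{\log}$ is a locally trivial topological fibration with fibers oriented topological manifolds with boundary. To finish I would identify the boundary locus concretely: \autoref{thm:logehresm} says the union of the boundaries of the fibers is the set of points of $(V,\cO_V^{\star}(-\cD_V))_{\log}$ sent by the rounding map $\tau_V$ to points of $V$ where $\tilde{f}^{\dagger}$ is not vertical. Translating ``$\tilde{f}^{\dagger}$ not vertical at $x$'' into the divisorial language, this means that at $x$ the divisor $\cD_V$ contains some local component not coming from $\tilde{f}^{-1}(0)$, i.e.\ the germ $(\cD_V)_x$ strictly contains the germ $(\tilde{f}^{-1}(0))_x$; this is exactly the description in the statement of the corollary. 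The only genuine obstacle in the whole argument is the verticality/exactness translation in the preceding paragraph --- everything else is a matter of quoting \autoref{prop:toroidcharrelcoh}, \autoref{thm:logehresm}, and the local structure of toroidal morphisms.
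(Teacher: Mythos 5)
Your proposal follows essentially the same route the paper indicates: the paper proves this corollary precisely by translating the hypotheses of Nakayama and Ogus' local triviality theorem (relative coherence, separatedness, exactness, relative log smoothness, verticality) into the toroidal language for the standard log disk target and invoking \autoref{prop:toroidcharrelcoh} for the relative coherence of the quasi-toroidal subboundary. Your verification of the remaining hypotheses and your translation of the verticality condition into the statement that $(\cD_V)_x$ strictly contains $(\tilde{f}^{-1}(0))_x$ match the paper's argument.
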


      \autoref{cor:quasitorrelcoh} can be used as a tool to study Milnor fibers of smoothings 
      of isolated complex singularities. More precisely, if  $f \colon (Y,0) \to (\CC,0)$ 
      is such a smoothing, we consider a quasi-toroidalization  $\pi \colon \tilde{Y} \to Y$ of it 
      in the sense of~\autoref{def:quasitorsmoothing}, and we aim to 
      apply~\autoref{cor:quasitorrelcoh} to the triple
      $V := \tilde{Y}$, $\tilde{f} := f \circ \pi$, and $\cD_V := \tilde{f}^{-1}(0)$.   
       In order to achieve \emph{properness} of $\tilde{f}$, we work with a 
       \textbf{Milnor tube representative}\index{Milnor!tube representative}  
       of $f$. Such a representative is obtained by first considering the part of a representative 
       of $(Y,0)$ contained in a Milnor ball, and then restricting this set further to the preimage 
       by $f$ of a sufficiently small Euclidean disk $\D$ centered at the origin of $\CC$.
       There is a slight difference between this setting and that of~\autoref{cor:quasitorrelcoh}, 
       as  $\tilde{Y}$ has a topological boundary. However, since $\tilde{f}$ 
       is locally trivial near that boundary, 
       it is straightforward to show that~\autoref{cor:quasitorrelcoh} 
       generalizes to this slightly broader context:

        \begin{corollary}  \label{cor:milntubelog} 
             Let $f \colon Y \to \D$ be a Milnor tube representative of a smoothing. 
             Let $\pi \colon \tilde{Y} \to Y$ be a quasi-toroidalization of it 
             and $ \tilde{f} := f \circ \pi$ 
             be the lift of $f$ to $\tilde{Y}$. Fix $\cD_{ \tilde{Y}} := \tilde{f}^{-1}(0)$ and consider 
             the following log enhancement  of $\tilde{f}$,  in the sense of~\autoref{def:indlogmorph}: 
                    \[  \tilde{f}^{\dagger} \colon (\tilde{Y}, \cO^{\star}_{ \tilde{Y}}(- \cD_{ \tilde{Y}})) 
                             \to (\D, \cO_{\D}^{\star}(- \{ 0 \})).   \]
         Assume that there exists a reduced divisor $\partial  \tilde{Y}$ of $ \tilde{Y}$ with the property that 
            $( \tilde{Y}, \partial  \tilde{Y})$ is toroidal, $\cD_{\tilde{Y}}$ is a quasi-toroidal subboundary of 
            $( \tilde{Y},\partial  \tilde{Y})$ 
            in the sense of~\autoref{def:face-subdiv} 
                      and the morphism $\tilde{f} \colon ( \tilde{Y}, \partial  \tilde{Y}) \to (\D, 0)$ is toroidal.             
          Then the morphism of topological spaces 
              \[ \tilde{f}^{\dagger}_{\log} \colon ( \tilde{Y}, \cO_{ \tilde{Y}}^{\star}(-   \cD_{ \tilde{Y}}))_{\log} 
                    \to (\D, \cO_{\D}^{\star}(-  \{ 0 \}))_{\log}  \]
          obtained by taking the rounding of $\tilde{f}^{\dagger}$  
          is a locally trivial topological fibration whose fibers are manifolds 
          with boundary homeomorphic to the Milnor fibers of the 
          smoothing $f$. Moreover, the restriction of this fibration to the boundary 
          circle $(0, \cO_{\D | 0}^{\star}(- \{ 0 \}))_{\log} $ of the cylinder 
          $(\D, \cO_{\D}^{\star}(- \{ 0 \}))_{\log}$  
          is isomorphic to the Milnor fibration of $f$ over the circle.
       \end{corollary}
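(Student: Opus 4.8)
The plan is to deduce this statement from Corollary \ref{cor:quasitorrelcoh} (which is itself the toroidal reformulation of Nakayama and Ogus' Theorem \ref{thm:logehresm}), together with the classical fact that a Milnor tube representative realizes the Milnor fibration. First I would reduce to the interior situation. The only discrepancy between the hypotheses here and those of Corollary \ref{cor:quasitorrelcoh} is that $\tilde{Y}$, being a Milnor tube representative pulled back along $\pi$, carries a topological boundary $\partial_{top}\tilde{Y}$ coming from the Milnor sphere. Away from a neighborhood of this boundary the morphism $\tilde f$ agrees with the germ-theoretic situation and all the hypotheses (toroidality of the pair $(\tilde Y,\partial\tilde Y)$, toroidality of $\tilde f\colon(\tilde Y,\partial\tilde Y)\to(\D,0)$, the quasi-toroidal subboundary condition on $\cD_{\tilde Y}=\tilde f^{-1}(0)$) hold verbatim. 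Near $\partial_{top}\tilde Y$ the map $\pi$ is an isomorphism and $f$ is the restriction of a holomorphic function on a representative of $(Y,0)$, so by the usual conical structure of analytic sets (Milnor's curve selection / the existence of a Milnor ball) $f$ is a locally trivial fibration near $\partial_{top}\tilde Y$, compatibly with the argument map. Thus one may glue: apply Corollary \ref{cor:quasitorrelcoh} on the interior part, use the evident product structure near the boundary, and patch the two local trivializations over $(\D,\cO_\D^\star(-\{0\}))_{\log}$ using a partition of unity in the base direction. This yields the claimed locally trivial topological fibration $\tilde f^\dagger_{\log}$ with fibers that are manifolds with boundary.

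Next I would identify the fibers. By Corollary \ref{cor:quasitorrelcoh} (applied on the interior) the fibers of $\tilde f^\dagger_{\log}$ over points of the cylinder are manifolds with boundary, and I must check they are homeomorphic to the Milnor fibers of $f$. The rounding map $\tau_{\tilde Y}\colon(\tilde Y,\cO^\star_{\tilde Y}(-\cD_{\tilde Y}))_{\log}\to\tilde Y$ is proper and restricts to a homeomorphism over $\tilde Y\smallsetminus\cD_{\tilde Y}=\tilde Y\smallsetminus\tilde f^{-1}(0)$, by Theorem \ref{thm:torsorfibre}(1) applied where the log structure is trivial. Over a point $\lambda$ of $\D$ with $0<|\lambda|\ll 1$, the log structure on $\D$ is trivial near $\lambda$, hence the fiber of the rounding of $(\D,\cO_\D^\star(-\{0\}))$ over $\lambda$ is a single point; pulling back, the fiber of $\tilde f^\dagger_{\log}$ over that point maps homeomorphically onto $\tilde f^{-1}(\lambda)$. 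Since $\pi$ is a modification that is an isomorphism away from the special fiber, $\tilde f^{-1}(\lambda)\simeq f^{-1}(\lambda)$, which is the Milnor fiber of $f$ (using that $f$ is a Milnor tube representative). Because $\tilde f^\dagger_{\log}$ is locally trivial over the whole cylinder and the cylinder is connected, all its fibers — including those over the boundary circle — are homeomorphic to this Milnor fiber.

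Finally I would pin down the boundary-circle statement. The base $(\D,\cO_\D^\star(-\{0\}))_{\log}$ is the cylinder $[0,+\infty)\times\bS^1$ truncated to the disk, with boundary circle $(0,\cO^\star_{\D|0}(-\{0\}))_{\log}\simeq\bS^1$ (this is exactly Example \ref{ex:roundbasic}, the passage to polar coordinates $\tau_{\CC,\{0\}}$). Restricting the locally trivial fibration $\tilde f^\dagger_{\log}$ to this circle gives a fibration over $\bS^1$; I claim it is isomorphic to the Milnor fibration of $f$ over the circle. This is precisely the content recalled in \autoref{ssec:canMiln} for embedded resolutions (A'Campo's theorem, \cite[Section 2]{A 75}) and, in the quasi-toroidal generality, follows from the naturality of rounding: the argument map $\mathrm{arg}\circ f\colon f^{-1}(\partial\D)\to\bS^1$ extends by continuity to $\tau_{\tilde Y}^{-1}(\cD_{\tilde Y})$ over the boundary circle exactly as in \eqref{eq:argh}, and under the identifications above this extension is the restriction of $\tilde f^\dagger_{\log}$. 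The hardest point — and the place where the real content sits — is precisely this last identification: checking that the abstract rounding $\tilde f^\dagger_{\log}$ restricted to the boundary circle coincides, as a fibration over $\bS^1$, with the concrete Milnor fibration $\mathrm{arg}(f)$, i.e.\ that the monodromy is correct. This requires tracing through the functoriality diagram \eqref{eq:roundingOfF} for $\tilde f^\dagger$ together with the continuous extension of $\mathrm{arg}(h)$ from \eqref{eq:argh}, and comparing it with A'Campo's classical construction via \autoref{cor:milntubelog}'s predecessor Corollary \ref{cor:quasitorrelcoh}; everything else is a gluing-and-properness bookkeeping argument.
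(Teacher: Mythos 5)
Your proposal is correct and follows essentially the same route as the paper, which likewise deduces the statement from \autoref{cor:quasitorrelcoh} together with the local triviality of $\tilde{f}$ near the topological boundary of the Milnor tube (the paper gives no more detail than this, so your interior/boundary-collar reduction and the identification of the fibers over points $\lambda\neq 0$, using that the rounding is a homeomorphism off $Z(\tilde{f})$ and that $\pi$ is an isomorphism there, is exactly the intended argument). One remark: the step you single out as hardest dissolves — over the outer circle $\partial\D$ both log structures are trivial, so the restriction of $\tilde{f}^{\dagger}_{\log}$ there is literally $f\colon f^{-1}(\partial\D)\to\partial\D$, and local triviality over the compact cylinder $(\D,\cO_{\D}^{\star}(-\{0\}))_{\log}$ transports this, as an isomorphism of fibrations over $\bS^1$ (monodromy included), to the boundary circle, so neither the arg-extension/A'Campo comparison nor the partition-of-unity patching is needed.
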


In the context of the Milnor fiber conjecture, we apply~\autoref{cor:milntubelog} 
to  smoothings of three different singularities: the input splice type surface singularity, 
       and the $a$- and $b$-side 
       singularities, whose associated splice diagrams are obtained by cutting the starting 
       splice diagram at an internal point of the edge $[a,b]$ 
       (see Steps (\ref{item:startlogpart}) and (\ref{item:startlogparta}) of~\autoref{sec:stepsproof}). 
       In this case, $\partial  \tilde{Y}$ is the intersection of 
                 $\tilde{Y}$ with the toric boundary of its ambient toric variety, and similarly 
                 for the $a$ and $b$ sides.
       In addition, in order to get representatives of the \emph{cut Milnor fibers} 
       appearing in the definition of the four-dimensional splicing 
       operation (see \autoref{def:fourdimsplice}), we use an analog of~\autoref{cor:milntubelog} 
       in which $\cD_{\tilde{Y}}$  strictly contains $\tilde{f}^{-1}(0)$ 
       (see the last paragraph of \autoref{ssec:ourusequasitor}).
     
     \medskip

\section{Tropical ingredients}   \label{sec:loctropNND}

In this section we elaborate on  the tropical techniques used in our proof of the 
Milnor fiber conjecture, discussed already in \autoref{ssec:loctrop}. 
We explain the notions of \emph{(positive) local tropicalization} (see \autoref{def:posloctrop}), 
\emph{(standard) tropicalizing fan} (see \autoref{def:tropfans}) and 
\emph{Newton non-degeneracy} (see Definitions \ref{def:NNDgerms}, \ref{def:NNDcomplint}). 
In particular, we state a local analog  
of global theorems of Tevelev,  Luxton and Qu showing that the strict transform of a 
Newton non-degenerate germ by a tropicalizing fan is boundary transversal inside 
the ambient toric variety (see \autoref{thm:NNDtoroidal}). 
\medskip

Throughout this section, we view $\CC^n$ as an affine toric variety, whose toric boundary 
$\partial \CC^n$ consists of the union of all coordinate hyperplanes. We let 
$\boxed{\sigma} :=(\Rp)^n$ be the cone of non-negative weight vectors. A vector 
$\wu{}=(\wu{1},\ldots, \wu{n}) \in \sigma$ endows each variable $z_i$ of $\CC^n$ with weight $\wu{i}$. 
Any fan $\cF$ with support $|\cF|$ contained in $\sigma$ determines a birational toric morphism
 \[  \pi_{\cF} \colon \tv_{\cF} \to \CC^n.  \] 
This morphism is proper (and, therefore, a modification of $\CC^n$) if, and only if,  
$|\cF| = |\sigma|=(\Rp)^n$. 

Fix  a germ $(X,0) \hookrightarrow (\CC^n, 0)$  of an irreducible complex analytic space not contained in the toric boundary of $\CC^n$. Even if $\pi_{\cF}$ is not proper, its restriction 
\begin{equation}\label{eq:piOnX}
  \pi \colon \tilde{X} \to X
\end{equation}
to the strict transform $\tilde{X}$ of $X$ by $\pi_{\cF}$ may very well be. Properness is controlled by a cone (i.e., a set closed under scaling by $\Rp$) inside $\sigma$, called the 
\emph{local tropicalization}  $\Trop X$ 
of $(X,0) \hookrightarrow (\CC^n, 0)$. This is the content of the next proposition, which we view as a local version of~\cite[Proposition 6.4.7]{MS 15} inspired by Tevelev's work~\cite{T 07} 
(see \autoref{rem:schoncompar}). 
More details (including a proof) can be found in~\cite[Proposition 3.15]{CPS 21}:

\begin{proposition}   \label{prop:purecodimlift}
   With the previous notations and hypotheses, the following properties hold:
       \begin{enumerate}
           \item \label{inclusionloctrop} 
              The morphism $\pi$ from~\eqref{eq:piOnX} is proper 
               if, and only if,  the support $|\cF|$ contains the local tropicalization $\Trop X$. 
            \item Assume that $\pi$ is proper. Then, $|\cF| = \Trop X$ if, and only if,  $\widetilde{X}$ 
               intersects every orbit $S$ of the toric variety $\tv_{\cF}$ along a non-empty  pure-dimensional subvariety with 
               $\codim_{\widetilde{Y}}(\widetilde{Y}\cap S)= \codim_{\tv_{\cF}} (S)$. 
       \end{enumerate}
\end{proposition}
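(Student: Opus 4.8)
The plan is to prove \autoref{prop:purecodimlift} by reducing to the global tropical picture of Tevelev, Maclagan--Sturmfels et al., via the standard dictionary between germs at the origin of $\CC^n$ and non-negative-weight tropicalizations. First I would set up the local-to-global correspondence. A germ $(X,0)\hookrightarrow(\CC^n,0)$ cut out by an ideal $I\subset\CC\{z_1,\dots,z_n\}$ has its local tropicalization $\Trop X\subset\sigma=(\Rp)^n$ defined (as recalled before \autoref{prop:purecodimlift}, via \autoref{def:posloctrop}) as the closure of the locus of non-negative weights $\wu{}$ for which $\initwf{\wu{}}{I}$ contains no monomial. The key observation is that for a cone $\tau\in\cF$ and the corresponding affine chart $U_\tau$ of $\tv_\cF$, with torus orbit $\cO_\tau$, the strict transform $\widetilde X$ meets $\cO_\tau$ precisely when $\tau^\circ\subset\Trop X$, and moreover the intersection $\widetilde X\cap U_\tau$ can be described in local coordinates by the $\wu{}$-initial degeneration for $\wu{}\in\tau^\circ$; this is exactly the content of~\cite[Proposition 3.15]{CPS 21}, which I would invoke. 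So both parts of the proposition become statements about which orbits $\widetilde X$ meets and the dimensions of those intersections, governed by $\Trop X$.

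For part (\ref{inclusionloctrop}) I would argue as follows. If $|\cF|\supseteq\Trop X$, then $\widetilde X$ is covered by the charts $U_\tau$ for $\tau$ ranging over cones whose relative interior meets $\Trop X$ plus those disjoint from it (over which the strict transform is just an isomorphic copy of $X\setminus\partial\CC^n$ near $0$). Properness of $\pi\colon\widetilde X\to X$ is a statement over a neighborhood of $0\in X$, i.e. about the compactness of fibers over points converging to $0$; one checks, as in the global case, that valuative-criterion-style arguments (or a direct covering argument using that $\Trop X$ records exactly the asymptotic directions of arcs on $X$ approaching $0$) force the limit of any such arc in $\widetilde X$ to stay in the part of $\tv_\cF$ corresponding to cones containing the arc's direction — and these cones lie in $\cF$ by hypothesis, so no mass escapes. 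Conversely, if some direction $\wu{}\in\Trop X\setminus|\cF|$ exists, one exhibits an explicit one-parameter arc on $X$ approaching $0$ along the direction $\wu{}$ whose strict transform runs off to a point not contained in $\tv_\cF$, violating properness. This is the local mirror of~\cite[Proposition 6.4.7]{MS 15} and~\cite[Proposition 2.3]{T 07}, and since a full proof is already referenced in~\cite[Proposition 3.15]{CPS 21}, I would keep this part brief.

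For part (2), assume $\pi$ proper, so $|\cF|\supseteq\Trop X$ by part (\ref{inclusionloctrop}). The claim is $|\cF|=\Trop X$ iff $\widetilde X$ meets every orbit $S=\cO_\tau$ in a non-empty pure-dimensional subvariety of the expected codimension $\codim_{\tv_\cF}(S)=\dim\tau$. In one direction: if $|\cF|=\Trop X$, then for each cone $\tau$ its relative interior lies in $\Trop X$, so by the dictionary above $\widetilde X\cap\cO_\tau\neq\emptyset$; pure-dimensionality and the codimension equality follow from the fact that $\Trop X$, being a tropical variety, is pure of dimension $\dim X$, hence on the cone $\tau$ (of dimension $\dim\tau$) the ``transverse'' part of $\widetilde X$ has dimension $\dim X-\dim\tau$, exactly $\codim_{\tv_\cF}(\cO_\tau)$ inside $\widetilde X$. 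In the other direction: if $|\cF|\supsetneq\Trop X$, pick a cone $\tau$ with $\tau^\circ\cap\Trop X=\emptyset$; then $\widetilde X\cap\cO_\tau=\emptyset$, contradicting the hypothesis. The main obstacle I anticipate is making the pure-dimensionality and codimension bookkeeping fully rigorous in the \emph{local} analytic setting — in particular ensuring the local tropicalization is pure of the right dimension and that the initial-degeneration description of $\widetilde X\cap\cO_\tau$ is dimension-exact — but since \autoref{prop:purecodimlift} is explicitly stated as a known result with a reference to~\cite[Proposition 3.15]{CPS 21}, my write-up would mainly organize the reduction to that reference and the global analogues, rather than reprove the tropical structure theory from scratch.
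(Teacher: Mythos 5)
The paper itself gives no proof of this proposition: it is stated as a local analogue of Tevelev's result and of \cite[Proposition 6.4.7]{MS 15}, with the proof deferred to \cite[Proposition 3.15]{CPS 21}. So your overall plan — organize a reduction to that reference and to the global dictionary — is consistent with what the paper does, and your sketch of part~(\ref{inclusionloctrop}) follows the standard Tevelev-type argument in outline.

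However, your sketch of part (2) contains a genuine gap, caused by a misstated dictionary. The correct orbit criterion is that $\widetilde X$ meets the orbit $\cO_\tau$ if and only if $\tau^\circ\cap\Trop X\neq\emptyset$, \emph{not} if and only if $\tau^\circ\subset\Trop X$. (Test case: $X=\{z_1=z_2\}\subset\CC^2$ and $\cF$ the fan of faces of $(\Rp)^2$, so $\pi=\mathrm{id}$; then $\Trop X$ is the diagonal ray, the interior of the two-dimensional cone meets it without containing it, and $\widetilde X=X$ does contain the closed orbit $\{0\}$.) With the correct dictionary, your ``only if'' argument for part (2) breaks down: from $|\cF|\supsetneq\Trop X$ you want a cone $\tau$ with $\tau^\circ\cap\Trop X=\emptyset$, forcing an empty orbit intersection, but no such cone need exist — all rays of $\cF$ can lie in $\Trop X$ while some higher-dimensional cone's interior meets $\Trop X$ without being contained in it. In that situation the orbit intersection is nonempty, and the contradiction must come from the codimension hypothesis, which your argument never uses in this direction: for such a cone $\tau$ the intersection $\widetilde X\cap\cO_\tau$ has codimension in $\widetilde X$ strictly smaller than $\dim\tau=\codim_{\tv_{\cF}}(\cO_\tau)$ (in the test case above, $X\cap\{0\}$ has codimension $1$ in $X$, not $2$). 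Finally, in the forward direction, the claim that the intersection with each orbit is pure of dimension $\dim X-\dim\tau$ is not mere bookkeeping; it is the Bieri--Groves-type purity/dimension statement that constitutes the substance of \cite[Proposition 3.15]{CPS 21}, so if you defer to that reference you should say so explicitly rather than derive it from ``purity of $\Trop X$'' alone.
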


\autoref{prop:purecodimlift} (\ref{inclusionloctrop}) gives a complete characterization of 
the local tropicalization of an irreducible germ not contained in the toric boundary. 
This construction extends readily to any finite union of germs of this type 
by setting its local tropicalization to be the union 
of the local tropicalizations of its irreducible components. The formal definition of local tropicalizations, provided below, follows the construction of global tropicalization for subvarieties of tori from~\cite[Theorem 3.2.3]{MS 15} and it implies this additivity property:

 \begin{definition}   \label{def:posloctrop} 
     Let $(X,0) \hookrightarrow \CC^n$ be a  germ of a complex analytic space defined 
     by an ideal $I$ of the power series ring
     $\boxed{\cO} := \CC\{z_1, \dots, z_n\}$. If $\wu{}\in \sigma$, 
     the \textbf{$\wu{}$-initial ideal} of $I$ is the ideal  $\boxed{\initwf{\wu{}}{I}\cO}$ of $\cO$ generated 
      by the $\wu{}$-initial forms of all elements in $I$. 
      
     The \textbf{local tropicalization of $X$}\index{local tropicalization} is the set of all vectors 
     $\wu{}\in \sigma$ such that the $\wu{}$-initial ideal $\initwf{\wu{}}{I}\cO \subseteq \cO$ of $I$  
     is  monomial-free. We denote it by  $\boxed{\Trop X}$.
        In turn, the {\bf positive local tropicalization of  $X$}\index{local tropicalization!positive} 
        is  the intersection of the local tropicalization with the positive orthant $(\R_{>0})^n$. 
        We denote it by  $\boxed{\ptrop X}$.
\end{definition}

The two previous notions of local tropicalization depend on the embedding 
      $(X,0) \hookrightarrow (\CC^n,0)$. For simplicity, we do not include this 
      embedding in the notation of $\Trop X$ since it can be inferred from context.

\begin{remark}   \label{rem:initdefltrop}
     Local tropicalizations were introduced by the last two authors  in a slightly 
    different form~\cite{PPS 13}, i.e., for germs of analytic or formal spaces 
     contained or even mapped to germs of arbitrary affine toric varieties. 
     In that paper, the two versions $\Trop X$ and $\Trop X_{>0}$ of local tropicalization 
      contained also strata ``at infinity'', corresponding to the local tropicalizations 
      of the intersections of $(X,0)$ with various torus-orbit closures. 
\end{remark}

\autoref{def:posloctrop}, combined with the existence of standard bases for ideals of $\cO$, 
ensures that  $\Trop X$ is the support of a fan (see \cite[Theorem 11.9]{PPS 13}). 
However, $\Trop X$  has a priori no preferred fan structure. Any fixed structure on $\Trop X$ 
can be  further refined to satisfy desired properties (e.g.,~regularity). Of particular interest 
to us are fan structures for which the initial ideals $\initwf{\wu{}}{I}$ are constant along 
the relative interiors of all cones of $\Trop X$. More precisely (see 
\cite[Definition 3.13]{CPS 21}):

\begin{definition}  \label{def:tropfans}
    Let $(X,0) \hookrightarrow (\CC^n,0)$ be a  germ of a complex analytic space defined 
    by an ideal $I$ of 
     $\cO$. A \textbf{tropicalizing fan}\index{tropicalizing fan} for $(X,0)$ is a fan $\cF$ whose support is 
     the local tropicalization $\Trop X$. In turn, 
     a \textbf{standard tropicalizing fan}\index{tropicalizing fan!standard} for $(X,0)$ is a 
     tropicalizing fan such that $\initwf{\wu{}}{I}$ is constant when $\wu{}$ varies along the relative interior 
     of any cone of $\cF$.
\end{definition}

The adjective ``standard'' makes reference to ``standard bases'', which are used in 
\cite[Section 9]{PPS 13} to define local tropicalizations, in analogy with the use of Gr\"obner bases to study global tropicalizations of subvarieties of tori. Note that when $(X,0)$ is defined by 
polynomial equations, the Gr\"obner complex of $X$ determines a tropicalizing fan for $(X,0)$, as was shown by Aroca, G\'omez-Morales and Shabbir in~\cite{AGS 13}. Standard tropicalizing 
fans always exist 
in the holomorphic context, as we proved in \cite[Proposition 3.11]{CPS 21}.  

\begin{remark}
If $(X,0) \hookrightarrow (\CC^n,0)$ is a hypersurface singularity defined 
   by a series $f \in \cO$, then $\Trop X$ admits a coarsest fan structure which, in addition, is a standard tropicalizing fan for $(X,0)$. Indeed, we can describe $\Trop X$ as  the subfan of the Newton fan of $f$ consisting of all cones of dimension at most $n-1$. The duality between the Newton fan and the Newton polyhedron of $f$ (i.e., the convex hull of the union of 
   $\sigma$-translates of the space of exponents 
   of monomials in the support of $f$) confirms this fact.
\end{remark}

\begin{figure}[tb]
  \includegraphics[scale=0.8]{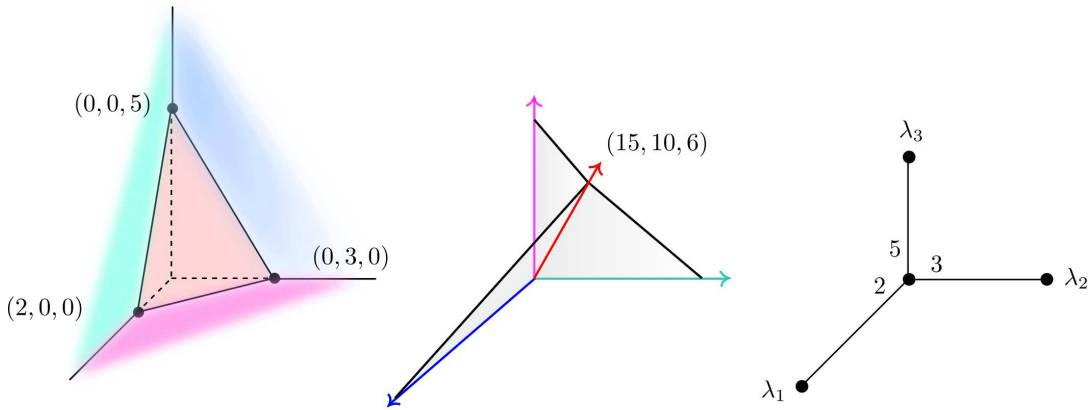}
    \caption{The Newton polyhedron, the local tropicalization and the splice diagram 
      of the $E_8$ surface singularity (see \autoref{ex:NpolE8}). 
      \label{fig:ExampleE8SingularityTropical}}
\end{figure}

   \begin{example}   \label{ex:NpolE8}
      Let  $(X,0) \hookrightarrow (\CC^3, 0)$ be  the $E_8$ surface singularity 
      from~\autoref{ex:quasihomE8}. As~\autoref{fig:ExampleE8SingularityTropical} shows, 
   its standard tropicalizing fan consists of the faces of the two-dimensional 
   cones spanned by  $\wu{} := (3 \cdot 5, 2 \cdot 5, 2 \cdot 3)$ 
   and each of the coordinate weight vectors. Note that $\wu{}$ is orthogonal to the unique compact 
   two-dimensional face of the Newton polyhedron of $f:=x^2 + y^3 + z^5$ because $f$ 
   is $\wu{}$-homogeneous. Note that a transversal 
   section of the standard tropicalization of $X$ is isomorphic to its associated splice diagram. As explained in~\autoref{rem:tropNND2D}, this property holds for any splice type singularity.
\end{example}

The positive local tropicalization of a germ  determines its local tropicalization, as the next statement confirms (see~\cite[Proposition 3.8]{CPS 21} for details).  

\begin{proposition}    \label{prop:compartrop} 
  Consider a germ $(X,0) \hookrightarrow (\CC^n, 0)$.
  The local tropicalization  $\Trop X$ is the topological 
  closure of the positive local tropicalization $\ptrop X$ 
  inside the cone $\sigma$.
\end{proposition}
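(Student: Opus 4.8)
\textbf{Proof strategy for \autoref{prop:compartrop}.}
The plan is to establish two inclusions. One direction is essentially trivial: since $\ptrop X = \Trop X \cap (\R_{>0})^n \subseteq \Trop X$ and $\Trop X$ is closed in $\sigma$ (being the support of a fan, by \autoref{def:posloctrop} together with the standard-basis argument of \cite[Theorem 11.9]{PPS 13}), the closure of $\ptrop X$ in $\sigma$ is contained in $\Trop X$. The substance of the statement is the reverse inclusion: every $\wu{}\in\Trop X$ is a limit of weight vectors in $\ptrop X$, i.e.\ every point of $\Trop X$ lies in the closure of the interior part $\Trop X\cap(\R_{>0})^n$.

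To prove the reverse inclusion I would fix a fan structure $\cF$ on $\Trop X$ (which exists by the cited results) and argue cone by cone. For a cone $\tau\in\cF$, it suffices to show that $\tau$ is the closure of $\tau\cap(\R_{>0})^n$, which in turn reduces to showing that every maximal cone of $\cF$ meets the open orthant $(\R_{>0})^n$ (a face of a cone that meets the open orthant has its relative interior accessible as a limit from that open-orthant part, since the open orthant is convex and the relative interior of a face is in the closure of the cone's interior). So the heart of the matter is: \emph{no maximal cone of a tropicalizing fan of $(X,0)$ is contained in a coordinate hyperplane of $\sigma$}, equivalently, every maximal cone meets $(\R_{>0})^n$. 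I would prove this via the initial-ideal characterization: if $\wu{}\in\Trop X$ has some coordinate $\wu{i}=0$, I need to produce a nearby vector $\wu{}'$ with all coordinates strictly positive and $\initwf{\wu{}'}{I}$ still monomial-free. The natural move is to perturb $\wu{}$ by adding a small positive multiple $\varepsilon\mathbf{1}$ of the all-ones vector (or more carefully, a small generic strictly positive vector): for $\varepsilon>0$ small enough, $\initwf{\wu{}+\varepsilon\mathbf{1}}{I}$ is a monomial-free initial ideal of $\initwf{\wu{}}{I}$, and monomial-freeness is preserved under taking further initials. Concretely, $\init_{\wu{}+\varepsilon\mathbf{1}}(I) = \init_{\mathbf{1}}(\init_{\wu{}}(I))$ for $\varepsilon$ sufficiently small (a standard two-step degeneration / Gröbner-walk type fact, valid in the local setting with standard bases as developed in \cite[Section 9]{PPS 13}), and if $\init_{\wu{}}(I)$ contains no monomial then neither does any of its further initial ideals — a monomial in $\init_{\mathbf 1}(\init_{\wu{}}(I))$ would lift to a monomial in $\init_{\wu{}}(I)$ because initial forms of monomials are monomials. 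Hence $\wu{}+\varepsilon\mathbf{1}\in\Trop X\cap(\R_{>0})^n=\ptrop X$, and letting $\varepsilon\to 0$ shows $\wu{}\in\overline{\ptrop X}$.

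Assembling these observations gives the result: an arbitrary $\wu{}\in\Trop X$ lies in the relative interior of some cone $\tau\in\cF$; pick a maximal cone $\tau'\supseteq\tau$; by the perturbation argument $\tau'$ contains a point of $(\R_{>0})^n$; since $\tau'\cap(\R_{>0})^n$ is an open (in $\tau'$) convex subset whose closure is all of $\tau'$, and $\wu{}\in\tau'$, we get $\wu{}\in\overline{\tau'\cap(\R_{>0})^n}\subseteq\overline{\ptrop X}$. Combined with the easy inclusion $\overline{\ptrop X}\subseteq\Trop X$, this yields $\Trop X=\overline{\ptrop X}$ inside $\sigma$.

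\textbf{Main obstacle.} The routine-looking but genuinely load-bearing step is the claim $\init_{\wu{}+\varepsilon\mathbf 1}(I)=\init_{\mathbf 1}(\init_{\wu{}}(I))$ for small $\varepsilon$ and its consequence that monomial-freeness propagates. In the polynomial (global tropical) setting this is classical, but here $I$ lives in the power series ring $\cO=\CC\{z_1,\dots,z_n\}$, so one must work with standard bases rather than Gröbner bases and be careful that the relevant two-term weight degenerations behave well; this is exactly where I would lean on the machinery of \cite[Section 9]{PPS 13} and on the fact (used implicitly throughout \autoref{sec:loctropNND}) that $\Trop X$ genuinely admits a fan structure. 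A cleaner alternative, if one prefers to avoid re-deriving the two-step initial-ideal identity, is to cite \autoref{prop:purecodimlift}\eqref{inclusionloctrop} and argue geometrically: any fan $\cF$ with $|\cF|\supseteq\overline{\ptrop X}$ must already contain $\Trop X$ in its support, because otherwise the strict transform $\widetilde X$ fails to be proper over $X$ — and one can test properness using only the torus orbits corresponding to rays in the open orthant. I would present the initial-ideal argument as the main line and mention the geometric reformulation as a cross-check.
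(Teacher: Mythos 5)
Your easy inclusion ($\overline{\ptrop X}\subseteq\Trop X$ since $\Trop X$ is closed) and your convexity reduction (it suffices that every point of $\Trop X$ lies in a cone of a tropicalizing fan meeting $(\R_{>0})^n$) are fine, but the step that carries all the weight is false. It is not true that monomial-freeness passes to further initial ideals, and the justification contains a logical slip: an element of $\initwf{\wu{}}{I}$ whose $\mathbf 1$-initial form is a monomial need not itself be a monomial. Concretely, take $I=(z_1+z_2^2)\subset\CC\{z_1,z_2,z_3\}$ and $\wu{}=(0,0,1)$. Then $\initwf{\wu{}}{I}=(z_1+z_2^2)$ is monomial-free, so $\wu{}\in\Trop X$, yet $\init_{\mathbf 1}\bigl(\initwf{\wu{}}{I}\bigr)=(z_1)$ and indeed $\initwf{\wu{}+\varepsilon\mathbf 1}{I}=(z_1)$ for every small $\varepsilon>0$. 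Here $\Trop X=\{w\in\sigma: w_1=2w_2\}$, so the admissible perturbation directions at $\wu{}$ are exactly those with $v_1=2v_2$: a proper closed subset, so neither $\mathbf 1$ nor a ``generic'' strictly positive vector works. The two-step identity $\initwf{\wu{}+\varepsilon v}{I}=\init_v\bigl(\initwf{\wu{}}{I}\bigr)$ (which is fine, modulo the standard-basis care you flag) therefore does not help you: the appearance of monomials in further initial ideals is precisely what the tropicalization records, so your perturbation argument exits $\Trop X$ in general.

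What your reduction actually needs is an existence statement: for each $\wu{}\in\Trop X$ lying on the boundary of $\sigma$ there is \emph{some} direction $v$, strictly positive in the coordinates where $\wu{}$ vanishes, with $\init_v\bigl(\initwf{\wu{}}{I}\bigr)$ monomial-free -- equivalently, that the monomial-free ideal $\initwf{\wu{}}{I}$ has nonempty positive local tropicalization. That is essentially the proposition itself, and it is where the real work lies; note the paper does not reprove it but cites \cite[Proposition 3.8]{CPS 21}, whose argument rests on the semivaluative/standard-basis description of local tropicalization from \cite{PPS 13} rather than on perturbing weight vectors. Your proposed fallback via \autoref{prop:purecodimlift} does not close the gap either: to invoke it you would have to prove independently that the strict transform over a fan supported on $\overline{\ptrop X}$ is proper, and the claim that properness ``can be tested on rays in the open orthant'' is the same assertion in disguise (that proposition is moreover stated for irreducible germs not contained in the toric boundary). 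To repair the proof you must produce the special direction $v$ from the structure of $\initwf{\wu{}}{I}$ -- e.g.\ from a point of its zero locus in the torus, or from a semivaluation taking finite positive values on all coordinates -- not from a generic perturbation.
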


This result was heavily used in~\cite{CPS 21} to compute local tropicalizations of splice type singularities. The same method  determines the local tropicalization of edge deformations of these germs, as we discuss in~\autoref{sec:edgedeform}.

\smallskip

The second main result in~\cite{CPS 21} confirms that splice type surface singularities are Newton non-degenerate. Such property characterizes the simplest germs from the toric perspective. More precisely: 

\begin{definition}  \label{def:NNDgerms}
       Let $(X,0) \hookrightarrow \CC^n$ be a reduced germ defined by an ideal $I$ of $\cO$. 
       We say that $X$ is \textbf{Newton non-degenerate}\index{Newton non-degeneracy} 
       if for any $\wu{} \in (\R_{>0})^n$, the $\wu{}$-initial ideal 
       $\initwf{\wu{}}{I}\subset \CC[z_1,\ldots, z_n]$ defines a smooth subscheme 
       of the algebraic torus $(\CC^*)^n$. 
\end{definition}

\begin{remark}\label{rem:AGMS_DefNND} An alternative (yet equivalent) formulation of Newton non-degeneracy was proposed by Aroca, G\'omez-Morales and Shabbir in~\cite[Definition 11.2]{AGS 13}. They require that for each  $\wu{}\in \sigma$, the extended initial ideal $\initwf{\wu{}}{I}\cO\subset \cO$ defines a smooth subscheme of the algebraic torus $(\CC^*)^n$. Therefore, any potential singularity of the germ defined by $I$ is contained in the toric boundary.
\end{remark}

For hypersurfaces,~\autoref{def:NNDgerms} coincides with the prototypical definition of Newton non-degeneracy given by Kouchnirenko \cite[Definition 1.19]{K 76}. It  was extended 
to complete intersections by Khovanskii in \cite[Section 2.4]{K 77} (see also Oka's 
book ~\cite[page 112]{O 97}).  Here, we use a slight variation of it, in agreement with the setting from \cite{CPS 21}: 

\begin{definition}  \label{def:NNDcomplint}
   Let $(X,0) \hookrightarrow \CC^n$ be a reduced germ defined by a regular sequence 
   $(f_1, \dots, f_k)$ of elements of $\cO$. We say that this sequence is a 
   \textbf{Newton non-degenerate complete intersection presentation}\index{Newton non-degeneracy} 
   of $(X,0)$ if 
   for any $\wu{} \in (\R_{>0})^n$, the sequence of $\wu{}$-initial forms 
 $(\initwf{\wu{}}{f_1}, \dots, \initwf{\wu{}}{f_k})$ defines either the empty set 
   or a smooth complete intersection of the algebraic torus $(\CC^*)^n$. 
\end{definition}

The difference with Khovanskii's definition lies in the requirement of regularity of the sequence 
$(f_1, \dots, f_k)$, i.e., it must define $(X,0)$ as a \emph{complete intersection} in the standard sense. 
Note that if $(f_1, \dots, f_k)$ is a Newton non-degenerate complete intersection presentation 
in the sense of~\autoref{def:NNDcomplint} and if each series $f_i$ is multiplied by a suitable monomial such that all those monomials contain a common variable, then the resulting sequence is no longer regular, but it  nevertheless defines a 
Newton non-degenerate complete intersection singularity in Khovanskii's sense.

\autoref{def:NNDcomplint} is more restrictive than~\autoref{def:NNDgerms}. More precisely:

\begin{proposition}  \label{prop:strongerNND}
    Let $(f_1, \dots, f_k)$ be a Newton non-degenerate complete intersection presentation of a germ 
    $(X,0) \hookrightarrow \CC^n$. Then, $(X,0)$ is Newton non-degenerate. 
\end{proposition}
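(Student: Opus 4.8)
The plan is to derive \autoref{prop:strongerNND} from \autoref{def:NNDcomplint} by passing from the complete-intersection presentation $(f_1,\dots,f_k)$ to the ideal $I=(f_1,\dots,f_k)\cO$ that defines $(X,0)$, and analyzing its $\wu{}$-initial ideal for each fixed $\wu{}\in(\R_{>0})^n$. The key observation is that taking $\wu{}$-initial forms is not additive on ideals in general, so one cannot simply assert $\initwf{\wu{}}{I}=(\initwf{\wu{}}{f_1},\dots,\initwf{\wu{}}{f_k})$; nevertheless, one always has the inclusion of the ideal generated by the individual initial forms inside $\initwf{\wu{}}{I}$. The strategy is therefore to show that, under the Newton non-degeneracy hypothesis on the presentation, this inclusion is in fact an equality after restricting to the torus $(\CC^*)^n$, and that the locus it cuts out there is smooth.

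First I would fix $\wu{}\in(\R_{>0})^n$ and set $g_j:=\initwf{\wu{}}{f_j}\in\CC[z_1,\dots,z_n]$. By \autoref{def:NNDcomplint}, the sequence $(g_1,\dots,g_k)$ defines either the empty set or a smooth complete intersection $Z\subset(\CC^*)^n$ of codimension $k$. In the empty case there is nothing to prove, since then already the $g_j$ generate the unit ideal in the coordinate ring of the torus, hence so does $\initwf{\wu{}}{I}$, and the empty scheme is (vacuously) smooth. Assume then $Z\neq\emptyset$. On the torus, the $g_j$ form a regular sequence (codimension $k$, $k$ equations) and their Jacobian with respect to $z_1,\dots,z_n$ has rank $k$ at every point of $Z$. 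The heart of the argument is to show that $\initwf{\wu{}}{I}$ and $(g_1,\dots,g_k)$ cut out the same closed subscheme of $(\CC^*)^n$. One direction is immediate from $g_j\in\initwf{\wu{}}{I}$. For the reverse, one uses that for $\wu{}$ in the open positive orthant the $\wu{}$-weight defines a genuine monomial order refinement argument: any element of $I$ is an $\cO$-combination $\sum h_j f_j$, and after clearing denominators / extracting lowest-weight parts one checks that its $\wu{}$-initial form lies in the ideal generated by the $g_j$ \emph{up to monomial factors}, which become units on the torus. This is the standard ``initial ideal of a complete intersection is generated by the initial forms when the generators form a regular sequence and remain a regular sequence initially'' fact; I would either invoke it from \cite{CPS 21} (the cited \cite[Proposition 3.11]{CPS 21} and surrounding material) or prove it via a degeneration/flatness argument: the family over $\R$ interpolating between $(f_1,\dots,f_k)$ and $(g_1,\dots,g_k)$ is flat precisely because the special fiber remains a complete intersection of the same codimension.

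With that equality in hand, $\initwf{\wu{}}{I}$ restricted to $(\CC^*)^n$ defines $Z$, which is smooth by hypothesis; hence $\initwf{\wu{}}{I}\subset\CC[z_1,\dots,z_n]$ defines a smooth subscheme of the torus, which is exactly the condition in \autoref{def:NNDgerms}. Since $\wu{}\in(\R_{>0})^n$ was arbitrary, $(X,0)$ is Newton non-degenerate. I expect the main obstacle to be precisely the step identifying $\initwf{\wu{}}{I}$ with the ideal generated by the $\initwf{\wu{}}{f_j}$ on the torus: naively this fails (Buchberger-type obstructions / S-polynomials can produce new initial forms), and it is only rescued by the complete-intersection + non-degeneracy hypothesis guaranteeing that the initial sequence is again regular of the right codimension, so no ``extra'' elements of the initial ideal appear. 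Making this precise in the analytic (power-series) setting, rather than the polynomial one, requires the existence of standard bases, which the paper has already set up in \cite{PPS 13}; I would lean on that to avoid re-deriving the local flatness statement from scratch.
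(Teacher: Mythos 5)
Your strategy is sound and reaches the right conclusion, but it takes a genuinely different route from the paper, and its central step needs more care than the phrase ``standard fact'' suggests. The paper never proves that the initial forms generate the initial ideal: it first reduces to primitive integral $\wu{}$ via a standard tropicalizing fan (constancy of $\initwf{\wu{}}{I}$ on relative interiors of cones), then identifies $Z(\initwf{\wu{}}{I})\subset(\CC^*)^n$ with the preimage, under the quotient torus fibration $(\CC^*)^n\to O_{\wu{}}$, of the scheme-theoretic intersection $O_{\wu{}}\cap\tilde{X}$; since $X$ is a complete intersection, that intersection is pure of dimension $d-1$ and is contained in the smooth pure $(d-1)$-dimensional scheme $O_{\wu{}}\cap\bigcap_i\widetilde{Z(f_i)}$ supplied by the hypothesis, so it is a union of its irreducible components and hence smooth; the equality of the two loci is proved only afterwards, as a supplement, by a regular-sequence argument at a point. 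You instead go straight for the equality of $\initwf{\wu{}}{I}$ with $(\initwf{\wu{}}{f_1},\dots,\initwf{\wu{}}{f_k})$ after inverting the coordinates, from which smoothness is immediate. Two caveats if you pursue this: (i) the classical standard-basis criterion (initial forms a regular sequence $\Rightarrow$ they generate the initial ideal) does not apply verbatim, because the $\initwf{\wu{}}{f_j}$ are only guaranteed to be a regular sequence in the Laurent ring, not in $\CC[z_1,\dots,z_n]$ or in the $\wu{}$-graded ring (they may share monomial factors), so the real content is exactly your fallback flat-degeneration argument, where Cohen--Macaulayness of the complete-intersection total space excludes components and embedded primes over the special point and forces it to agree with the Gr\"obner limit of $X$ on the torus; (ii) that degeneration exists only for rational weights, so you still need the paper's first reduction to integral $\wu{}$ to cover irrational weights, and you need the standard-basis/local-tropicalization machinery to relate the analytic ideal to the torus subscheme --- note also that \cite[Proposition 3.11]{CPS 21} is the existence of standard tropicalizing fans, not the generation statement you want to quote. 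In exchange, your route yields the stronger scheme-theoretic identification of the initial degeneration with the complete intersection of initial forms on the torus, whereas the paper's argument needs only the containment plus purity of dimension, stays within the strict-transform/orbit dictionary used throughout, and sidesteps the delicate commutative algebra in the convergent power series setting.
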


\begin{proof}
        Let $\cF$ be a standard tropicalizing fan  of $X$ as in~\autoref{def:tropfans}. 
       The relative interior of each cone of $\cF$ contains at least one primitive 
       integral weight vector. This vector is unique if, and only if, the cone is a ray.  
       The constancy of initial ideals along  relative interiors of cones of $\cF$ 
       ensures that it is enough  to prove that for every primitive 
       integral vector $\wu{} \in (\R_{>0})^n$, 
       the subscheme $Z(\initwf{\wu{} }{I})$ of $(\CC^*)^n$ defined by 
       $\initwf{\wu{} }{I}$ is smooth. Here, $I$ denotes the ideal of $\cO$ generated 
       by  $(f_1, \dots, f_k)$. By hypothesis, $I$ defines the  germ $X$.  
       
         Let us fix a primitive integral vector $\wu{} \in (\R_{>0})^n$. 
        Consider the codimension one orbit $O_{\wu{} }$ inside the toric variety $\tv_{\R_{\geq 0} \wu{}}$.  
        There is a natural morphism of 
       algebraic tori $\varphi\colon (\CC^*)^n \to O_{\wu{} }$, corresponding to the quotient morphism of the 
       weight lattice $\Z^n$ of $(\CC^*)^n$ by the sublattice $\Z\, \wu{}$. 
       The scheme $Z(\initwf{\wu{} }{I})$ is the 
       preimage under $\varphi$ of the scheme-theoretic intersection $O_{\wu{} }  \cap  \tilde{X}$, 
       where $\tilde{X}$  is the strict transform 
       of $X$ by the toric birational morphism $\tv_{\R_{\geq 0} \wu{}} \to \CC^n$. 
       Thus, {\em it suffices to prove 
       that $O_{\wu{} }  \cap  \tilde{X}$  is smooth}.
       
        As $X$ is a complete 
       intersection germ, it is of pure dimension, say $d > 0$. Therefore, $\tilde{X}$ 
       is also of pure-dimension $d$, and so $O_{\wu{} }  \cap  \tilde{X}$ is pure  of dimension $d-1$. 
        Since $(f_1, \dots, f_k)$ is a Newton non-degenerate complete intersection presentation 
       of   $(X,0)$, we know that the strict transforms $\widetilde{Z(f_i)}$ 
       of the hypersurface germs $Z(f_i)$ 
       defined by the holomorphic germs $f_i$ intersect the orbit $O_{\wu{} }$ along hypersurfaces 
       which form a normal crossings divisor in a neighborhood of their intersection. Therefore 
       the scheme-theoretic intersection 
       $O_{\wu{} }  \cap   \bigcap_{i=1}^k\widetilde{Z(f_i)}$ is smooth of pure 
       dimension $d-1$.  Since     $O_{\wu{} }  \cap  \tilde{X}   \subseteq O_{\wu{} }  \cap   
           \bigcap_{i=1}^k\widetilde{Z(f_i)}$ is an inclusion 
       of schemes of pure dimension $d-1$, we deduce that $O_{\wu{} } \ \cap \ \tilde{X}$ 
       is a union of irreducible components of $O_{\wu{}} \bigcap_{i=1}^k\widetilde{Z(f_i)}$. 
       Thus, it is smooth.
       
      We claim that, furthermore,  the equality  
      $O_{\wu{} }  \cap  \tilde{X}   = O_{\wu{} }  \cap \bigcap_{i=1}^k\widetilde{Z(f_i)}$ holds. 
      To show the missing inclusion,  pick a point 
           $p \in O_{\wu{} }  \cap   \bigcap_{i=1}^k\widetilde{Z(f_i)}$ and let 
           $g_w \in \cO_p$ 
           be a defining function of $O_w$ in the local ring $\cO_p$  at $p$ of the complex 
           analytic variety $\tv_{\R_{\geq 0} \wu{}}$. 
           For every $i\in \{1,\ldots, k\}$, we pick  a defining function  
           $\tilde{f_i}  \in \cO_p$ 
           of the strict transform $\widetilde{Z(f_i)}$. Such functions 
           exist because $\tv_{\R_{\geq 0} \wu{}}$ is smooth. 
           As $O_{\wu{} }  \cap   \bigcap_{i=1}^k\widetilde{Z(f_i)}$ is pure of codimension $k$ 
           in $O_{\wu{} }$, we see that $(g_w, \tilde{f_1}, \dots, \tilde{f_k})$ is a regular sequence 
           in the local ring $\cO_p$. Therefore, the sequence $(\tilde{f_1}, \dots, \tilde{f_k}, g_w)$ 
           is also regular. Thus, $p$ lies in the closure  of the intersection 
           $(\CC^*)^n  \cap  \bigcap_{i=1}^k\widetilde{Z(f_i)}$. The latter equals
           $(\CC^*)^n  \cap   \tilde{X}$ by the complete intersection hypothesis. Thus, 
           $p \in O_{\wu{} }  \cap  \tilde{X}$, as desired. 
\end{proof}

The next result confirms the close interplay between Newton non-degeneracy, tropicalizing fans and toroidal varieties. It reinforces Teissier's suggestions from~\cite[Section 5]{T 04} 
{\em to take the $\partial$-transversality of a strict transform of a subgerm of $(\CC^n,0)$ by a toric modification of the ambient space as a general definition of Newton non-degeneracy 
for arbitrary germs in $(\CC^n,0)$}. More precisely:

\begin{theorem}  \label{thm:NNDtoroidal}
      Let $(X,0) \hookrightarrow \CC^n$ be a Newton non-degenerate germ 
      in the sense of \autoref{def:NNDgerms}, and let $\cF$ be 
      a standard tropicalizing fan for  it. Denote by $\tilde{X}$ the strict transform of $X$ by the 
      toric birational morphism $\pi_{\cF}\colon \tv_{\cF}\to \CC^n$. Then, $\tilde{X}$ 
      is $\partial$-transversal 
      in the toroidal variety $(\tv_{\cF}, \partial \tv_{\cF})$ in the sense of~\autoref{def:bdrytransv}.       
      Furthermore, the pair $(\tilde{X}, \tilde{X} \cap \partial \tv_{\cF})$ 
      and the morphism 
      $(\tilde{X}, \tilde{X} \cap \partial \tv_{\cF}) \to (\tv_{\cF}, \partial \tv_{\cF})$ are toroidal.
\end{theorem}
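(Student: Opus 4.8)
The plan is to reduce the statement to a local computation in toric charts and then invoke the already-established interaction between Newton non-degeneracy and initial ideals along cones of a standard tropicalizing fan. First I would recall that, by~\autoref{prop:restoroid}, once we know that $\tilde{X}$ is $\partial$-transversal in $(\tv_{\cF},\partial\tv_{\cF})$, the pair $(\tilde{X},\tilde{X}\cap\partial\tv_{\cF})$ is automatically a toroidal variety, its toroidal stratification is induced by that of $\tv_{\cF}$, and the embedding (hence also the structure morphism onto $\tv_{\cF}$, which is a toroidal morphism by~\autoref{def:toroimorph} since the toric birational morphism $\pi_{\cF}$ is monomial in charts) is a toroidal morphism. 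So the only thing that genuinely requires proof is the $\partial$-transversality assertion: for every stratum $S$ of the toroidal stratification of $\tv_{\cF}$ — equivalently, for every cone $\tau\in\cF$ with its associated torus orbit $O_\tau$ — one must check that $\tilde{X}\cap O_\tau$ is an equidimensional complex manifold and that $\codim_{\tilde{X}}(\tilde{X}\cap O_\tau)=\codim_{\tv_{\cF}}(O_\tau)=\dim\tau$, where here I identify the stratum $S$ with the corresponding orbit $O_\tau$ via a toric chart.

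The core step is the following. Fix a cone $\tau\in\cF$ and a weight vector $\wu{}$ in the relative interior of $\tau$. In the affine toric chart of $\tv_{\cF}$ corresponding to (a cone containing) $\tau$, the scheme-theoretic intersection of $\tilde{X}$ with the orbit $O_\tau$ is cut out — after the standard coordinate change on the torus quotienting by the lineality $\Z\langle\tau\rangle\cap\Z^n$ — by the ideal generated by the $\wu{}$-initial forms $\initwf{\wu{}}{f}$ for $f$ ranging over $I$, restricted to the quotient torus; this is the local description of local tropicalization in terms of initial ideals recalled in~\autoref{def:posloctrop}. Since $\cF$ is a \emph{standard} tropicalizing fan, $\initwf{\wu{}}{I}$ does not depend on the choice of $\wu{}$ in the relative interior of $\tau$ (\autoref{def:tropfans}), so this description is intrinsic to $\tau$. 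Now I would invoke Newton non-degeneracy of $(X,0)$ in the sense of~\autoref{def:NNDgerms}: for $\wu{}\in(\R_{>0})^n$ the ideal $\initwf{\wu{}}{I}$ defines a smooth subscheme of $(\CC^*)^n$; combined with~\autoref{prop:compartrop} (which lets us approach any cone of $\Trop X$ from the positive orthant, together with the constancy of initial ideals along relative interiors) this upgrades to smoothness of $Z(\initwf{\wu{}}{I})$ in $(\CC^*)^n$ for \emph{every} $\wu{}$ in the relative interior of a cone of $\cF$. Pulling back along the quotient torus map $(\CC^*)^n\to O_\tau$ — which is smooth with smooth fibers — this shows $O_\tau\cap\tilde{X}$ is smooth. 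Equidimensionality, and the codimension equality $\codim_{\tilde{X}}(\tilde{X}\cap O_\tau)=\dim\tau$, follow because $X$ (hence $\tilde{X}$) is pure-dimensional and because taking $\wu{}$-initials along a cone of the tropical variety drops dimension by exactly the dimension of the cone — this is precisely the content of the ``if and only if'' in~\autoref{prop:purecodimlift}(2), applied cone by cone, once we know $\pi$ is proper, which holds because $|\cF|=\Trop X$ by definition of a tropicalizing fan. The handling of the empty intersection case is trivial (condition (\ref{manifcond}) of~\autoref{def:bdrytransv} explicitly allows $V\cap S=\emptyset$).

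The main obstacle I anticipate is the bookkeeping needed to pass rigorously between the three descriptions of $\tilde{X}\cap O_\tau$: (a) the scheme-theoretic strict-transform-meets-orbit picture inside $\tv_{\cF}$, (b) the ``residual germ'' picture given by $\initwf{\wu{}}{I}$ on the quotient torus, and (c) the statement of Newton non-degeneracy which is phrased on the full torus $(\CC^*)^n$ via $\initwf{\wu{}}{I}\cO$. One must be careful that ``strict transform'' (as opposed to total transform) is what appears, so that no extra boundary components contribute — this is exactly why the \emph{monomial-free} condition in the definition of $\Trop X$ is the right one, and why $Z(\initwf{\wu{}}{I})$ is nonempty in the torus precisely when $\wu{}\in\Trop X$. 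A second, more delicate point is verifying equidimensionality of $O_\tau\cap\tilde{X}$ rather than merely smoothness of each component: one must rule out components of different dimensions, which is where purity of $X$ and the dimension-count in~\autoref{prop:purecodimlift}(2) (or equivalently a local flatness/upper-semicontinuity argument for the family of initial degenerations) must be combined with the smoothness already obtained. Once these reductions are in place, the theorem follows by citing~\autoref{prop:restoroid} for the toroidal structure on the pair and on the two morphisms, so no further work is needed after $\partial$-transversality is established.
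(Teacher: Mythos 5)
Your overall architecture is reasonable, and on the points where the paper commits itself it agrees with you: the paper does not actually write out a proof of \autoref{thm:NNDtoroidal} (it presents it as a local analogue of Luxton--Qu, notes that the regular-fan hypersurface and complete intersection cases follow from Varchenko and Oka), but it does say explicitly that the toroidality claims follow from $\partial$-transversality combined with \autoref{prop:restoroid}, exactly as you argue; and your use of \autoref{prop:purecodimlift}~(2) to obtain non-emptiness, pure-dimensionality and the codimension count in one stroke is legitimate, leaving only smoothness of the orbit intersections to be proved.

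The genuine gap is in your ``upgrade'' of smoothness from strictly positive weight vectors to all cones of $\cF$. \autoref{def:NNDgerms} controls $\initwf{\wu{}}{I}$ only for $\wu{}\in(\R_{>0})^n$, whereas $\cF$, whose support is $\Trop X=\overline{\ptrop X}$, in general contains cones lying entirely in the boundary of $(\Rp)^n$ (for splice type singularities the coordinate rays $\raySpan{\wu{\lambda}}$, cf.\ \autoref{rem:tropNND2D}), as well as the zero cone, whose stratum is the dense torus. For such a cone $\tau$ the relative interior contains no positive vector, so the constancy of initial ideals from \autoref{def:tropfans} never links $\tau$ to the positive orthant, and \autoref{prop:compartrop} only says $\tau$ is a limit of positive cones, not that the initial ideal is unchanged in that limit: when $\wu{}$ passes from the relative interior of a positive cone to one of its faces the initial ideal changes (the interior initial is itself an initial degeneration of the face initial), and smoothness cannot be transported across that change by any continuity or constancy argument. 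The same issue appears at the dense orbit, where $\partial$-transversality requires a representative of $X$ to be smooth off the coordinate hyperplanes, which is again not literally contained in \autoref{def:NNDgerms}. Within the paper's toolkit the missing input is the equivalence with the Aroca--G\'omez-Morales--Shabbir formulation recorded in \autoref{rem:AGMS_DefNND}, which demands smoothness of the initial degeneration in $(\CC^*)^n$ for \emph{every} $\wu{}\in(\Rp)^n$, including $\wu{}=0$ and boundary vectors -- precisely what your argument needs at these strata; alternatively the boundary cones must be handled by a separate analysis of the strata ``at infinity'' of the local tropicalization (cf.\ \autoref{rem:initdefltrop}), which is exactly the genuinely local content going beyond Luxton--Qu. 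With that input, your chart-by-chart identification of $\tilde{X}\cap O_\tau$ with the quotient of $Z(\initwf{\wu{}}{I})$, together with \autoref{prop:purecodimlift}~(2) and \autoref{prop:restoroid}, does assemble into a proof.
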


We view~\autoref{thm:NNDtoroidal} as a local version of Luxton and Qu's 
result~\cite[Theorem 1.5]{LQ 11} regarding \emph{sch\"on subvarieties} 
of algebraic tori $(\CC^*)^n$. Such subvarieties satisfy the conditions from~\autoref{def:NNDgerms}, 
but in the global setting (see~\autoref{rem:schoncompar}).

When $(X,0)$ is a  Newton non-degenerate hypersurface germ and $\cF$ is a  
regular fan (that is, its associated toric variety is smooth),~\autoref{thm:NNDtoroidal}  
is a consequence of Varchenko's 
results from~\cite[Section 10]{V 76} (see also Merle's work~\cite{M 80}). In turn, if $(X,0)$ 
is a  Newton non-degenerate complete intersection and  $\cF$ is a regular fan, the statement 
follows from~\cite[Theorem III.3.4]{O 97}. The last claim in the statement is obtained by combining 
the $\partial$-transversality of $\tilde{X}$ with~\autoref{prop:restoroid}.

\section{Edge deformations of splice type systems}   \label{sec:edgedeform}

In this section, we define the special smoothings of splice type singularities which 
we use to prove the Milnor fiber Conjecture (see \autoref{def:deformation}). 
They depend on the choice of an internal edge  of the associated splice diagram $\Gamma$  
and on a triple 
of positive integers. The smoothings are constructed by adding scalar multiples of 
suitable powers of a new deformation variable $z_0$ to each series $\fgvi{v}{i}$ from \eqref{eq:surfaceSeries} defining the associated splice type system $\sG{\Gamma}$.
\medskip

Throughout, we let $\Gamma$ be a splice diagram satisfying the determinant and  
semigroup conditions (see Definitions \ref{def:edgedet} and~\ref{def:semgpcond}). 
We fix two adjacent nodes $a,b$ of $\Gamma$.
  As illustrated in~\autoref{fig:splittingByEdge}, 
  we let $\roottree$ be any point in the interior of $[a,b]$ and 
  $\boxed{\wtG{\Gamma}}$  
  be the rooted tree obtained by subdividing $[a,b]$ along $\roottree$, and fixing its root at $\roottree$.  In order for $\wtG{\Gamma}$ to be a splice diagram 
  (in a slightly more general sense than we allowed in previous sections since the vertex $r$ has valency two), we must endow it with weights around $r$. This is done via the following lemma, whose proof is a direct consequence of the positivity of the determinant  of the edge $[a,b]$:

\begin{lemma}\label{lm:refineG} 
        There exist positive coprime integers  
        $k_a$ and $k_b$ satisfying the inequalities:
  \begin{equation}\label{eq:refinedG}
      \frac{\du{a}}{(\du{a,b})^2} < \frac{k_a}{k_b} <     \frac{(\du{b,a})^2}{\du{b}} .
  \end{equation}
    In particular, the decorated diagram $\wtG{\Gamma}$ seen on the right of~\autoref{fig:enrichedDiagram}, which is obtained from  $\Gamma$  
  by subdividing $[a,b]$ using $\roottree$ and setting $\boxed{\du{\roottree,a}} := k_a$, 
  $\boxed{\du{\roottree, b}}:=k_b$, satisfies the edge determinant condition. 
\end{lemma}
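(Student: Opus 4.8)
\textbf{Proof plan for Lemma \ref{lm:refineG}.}

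The plan is to show first that the open interval $\left( \frac{\du{a}}{(\du{a,b})^2},\ \frac{(\du{b,a})^2}{\du{b}} \right)$ is nonempty, and then to extract from it a rational number with coprime positive numerator and denominator. For the nonemptiness, I would argue as follows. Since $\Gamma$ satisfies the edge determinant condition on the internal edge $[a,b]$, by the last sentence of \autoref{rem:whyLinkingName} we have $\du{a}\,\du{b} > \wtuv{a}{b}^2$. Now observe that $\wtuv{a}{b} = \du{a,b}\,\wtuv{a}{b}' = \du{b,a}\,\wtuv{a}{b}'$ — here $\du{a,b}$ and $\du{b,a}$ denote the two decorations sitting on the edge $[a,b]$ near $a$ and near $b$ respectively, while $\wtuv{a}{b}'$ is the reduced linking number, the product of all the other weights around $a$ and $b$. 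Hence $\frac{\du{a}}{(\du{a,b})^2} \cdot \frac{\du{b}}{(\du{b,a})^2} = \frac{\du{a}\,\du{b}}{(\du{a,b}\,\du{b,a})^2} = \frac{\du{a}\,\du{b}}{\wtuv{a}{b}^2} \cdot (\wtuv{a}{b}')^{-2}\cdot(\wtuv{a}{b}')^{2}$; more directly, $\frac{\du{a}}{(\du{a,b})^2}\cdot\frac{\du{b}}{(\du{b,a})^2} = \frac{\du{a}\du{b}}{\wtuv{a}{b}^2 }$, which is $>1$ by the edge determinant condition. (I should double check the bookkeeping: $\du{a,b}\du{b,a}=\wtuv{a}{b}/\wtuv{a}{b}'$? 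No — $\wtuv{a}{b}$ is the product of weights adjacent to the path $[a,b]$, which is exactly $\du{a,b}\cdot\du{b,a}$ times the weights hanging off $a$ and off $b$ away from the edge; and $\du{a}=\du{a,b}\cdot(\text{those at }a)$, $\du{b}=\du{b,a}\cdot(\text{those at }b)$. A short computation then gives $\frac{\du{a}}{(\du{a,b})^2}\cdot\frac{\du{b}}{(\du{b,a})^2}=\frac{\du{a}\du{b}}{\wtuv{a}{b}^2}>1$, and this is the identity I will record carefully.) Equivalently $\frac{\du{a}}{(\du{a,b})^2} < \frac{(\du{b,a})^2}{\du{b}}$, so the interval is a nonempty open interval of positive reals.

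Next, any nonempty open interval of the real line contains a rational number, say $p/q$ with $p,q\in\N^*$; dividing numerator and denominator by $\gcd(p,q)$ produces coprime positive integers $k_a:=p/\gcd(p,q)$ and $k_b:=q/\gcd(p,q)$ with $k_a/k_b=p/q$ still lying in the interval, which is exactly \eqref{eq:refinedG}.

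It remains to verify that $\wtG{\Gamma}$, obtained by inserting $\roottree$ on $[a,b]$ with $\du{\roottree,a}:=k_a$ and $\du{\roottree,b}:=k_b$, satisfies the edge determinant condition. Subdividing $[a,b]$ replaces it by two edges $[a,\roottree]$ and $[\roottree,b]$. The decorations near $a$ on $[a,\roottree]$ and near $b$ on $[\roottree,b]$ are still $\du{a,b}$ and $\du{b,a}$, the decorations near $\roottree$ are $k_a$ and $k_b$, and all weights at $a$ and at $b$ away from the edge are unchanged. Using again \autoref{rem:whyLinkingName} (the edge determinant of $[u,v]$ is positive iff $\du{u}\du{v}>\wtuv{u}{v}^2$), I compute the degree of the new node: $\du{\roottree}=k_a k_b$, while the linking number $\wtuv{a}{\roottree}$ along $[a,\roottree]$ equals $\du{a}\cdot k_b / \du{a,b}$ — i.e., drop the weight $\du{a,b}$ on the edge, keep the rest around $a$, and append $k_b$ from around $\roottree$. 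Hence the edge determinant of $[a,\roottree]$ is positive iff $\du{a}\cdot k_a k_b > (\du{a}k_b/\du{a,b})^2$, i.e. iff $k_a/k_b > \du{a}/(\du{a,b})^2$, which is the left inequality of \eqref{eq:refinedG}. Symmetrically, the edge determinant of $[\roottree,b]$ is positive iff $k_b/k_a < (\du{b,a})^2/\du{b}$, the right inequality of \eqref{eq:refinedG}. All other internal edges of $\wtG{\Gamma}$ are internal edges of $\Gamma$ with unchanged decorations, so their edge determinants remain positive. This establishes the claim.

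\textbf{Main obstacle.} The only subtlety is purely bookkeeping: getting the relation among $\du{a}$, $\du{a,b}$, $\wtuv{a}{b}$, $\wtuv{a}{b}'$ and the newly introduced linking numbers $\wtuv{a}{\roottree}$, $\wtuv{\roottree}{b}$ exactly right, so that the two defining inequalities of \eqref{eq:refinedG} coincide on the nose with positivity of the two new edge determinants and with the edge determinant condition for $[a,b]$ in $\Gamma$. Once the dictionary between ``product of weights adjacent to a path'' before and after subdivision is pinned down, the argument is immediate; I expect no real difficulty beyond writing these identities out cleanly.
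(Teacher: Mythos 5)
Your overall route --- show that the open interval $\left(\du{a}/(\du{a,b})^2,\ (\du{b,a})^2/\du{b}\right)$ is nonempty because of the edge determinant condition for $[a,b]$, pick a reduced fraction $k_a/k_b$ inside it, and check that the two inequalities of \eqref{eq:refinedG} are precisely the positivity of the edge determinants of the two new edges $[a,\roottree]$ and $[\roottree,b]$, all other edge determinants of $\wtG{\Gamma}$ being unchanged --- is the intended one; the paper records the lemma as a direct consequence of the positivity of the determinant of $[a,b]$ and gives no further detail. The second half of your argument is correct: with the paper's conventions, $\du{\roottree}=k_ak_b$ and $\wtuv{a}{\roottree}=(\du{a}/\du{a,b})\,k_b$, so by \autoref{rem:whyLinkingName} positivity of the determinant of $[a,\roottree]$ is $k_a/k_b>\du{a}/(\du{a,b})^2$, and symmetrically for $[\roottree,b]$ (where you wrote $k_b/k_a<(\du{b,a})^2/\du{b}$; the correct condition is $k_a/k_b<(\du{b,a})^2/\du{b}$).

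However, the bookkeeping identity you yourself flagged as the crux is wrong, and as written it inverts the inequality you need. By \autoref{def:linkingNumberDegree}, $\wtuv{a}{b}$ is the product of the weights adjacent to the edge $[a,b]$ but \emph{not on it}, i.e. $\wtuv{a}{b}=(\du{a}/\du{a,b})\cdot(\du{b}/\du{b,a})$; it does \emph{not} contain the factor $\du{a,b}\du{b,a}$ (in \autoref{ex:2NodesNW}, $\wtuv{a}{b}=420$ while $\du{a,b}\du{b,a}=539$). Consequently
\[
\frac{\du{a}}{(\du{a,b})^2}\cdot\frac{\du{b}}{(\du{b,a})^2}
=\frac{\du{a}\du{b}}{(\du{a,b}\du{b,a})^2}
=\frac{\wtuv{a}{b}^2}{\du{a}\du{b}},
\]
the \emph{reciprocal} of the value you claimed, and the edge determinant condition $\du{a}\du{b}>\wtuv{a}{b}^2$ shows this product is $<1$, not $>1$. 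Since this product is exactly the ratio of the left endpoint of your interval to the right endpoint, ``$<1$'' is what nonemptiness requires; had your claimed statement ``$=\du{a}\du{b}/\wtuv{a}{b}^2>1$'' been correct, it would assert that the interval is empty, contradicting the sentence you draw from it. So the conclusion stands --- nonemptiness of the interval is equivalent to the edge determinant condition for $[a,b]$ --- but only after replacing your identity by the correct one; the error stems from reading ``adjacent to the path'' as including the weights on the path itself, which also makes your parenthetical description of $\wtuv{a}{b}$ inconsistent with the convention you (correctly) use later when computing $\wtuv{a}{\roottree}$.
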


\begin{figure}[tb]
  \includegraphics[scale=0.45]{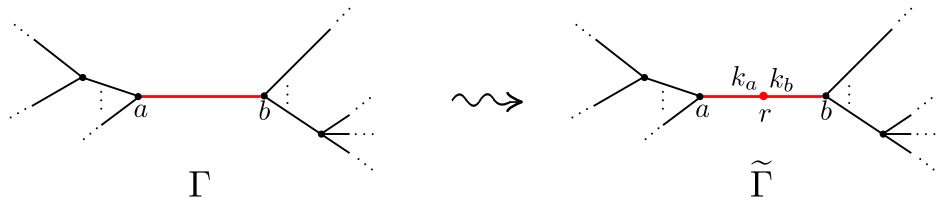}
    \caption{From left to right: a splice diagram $\Gamma$ and a subdivision of it induced by a point $\roottree$ in the relative interior of an internal edge $[a,b]$ (in red) producing a new splice diagram $\wtG{\Gamma}$ after decorating the edges around $\roottree$ with appropriate integers $k_a$ and $k_b$ (see \autoref{lm:refineG}).\label{fig:enrichedDiagram}}
\end{figure}

The weights on $\wtG{\Gamma}$ yield  a well-defined notion of 
  linking number $\wtuv{u}{v}$ of any two vertices $u,v$ of $\wtG{\Gamma}$.
  In turn, we use this to write  a weight vector for each node of $\wtG{\Gamma}$, including the root $\roottree$, by analogy with the construction of weight vectors for the nodes of 
  $\Gamma$ (see ~\eqref{eq:wu}).  Since $\wtG{\Gamma}$ has the same leaves as $\Gamma$, 
that is, $\leavesT{\wtG{\Gamma}} = \leavesT{\Gamma}$,    we  
view the lattices $\Nw{\Gamma}$ and $\Mw{\Gamma}$ from~\autoref{ssec:spltypesing} also as the weight lattice and lattice of 
exponent vectors of $\wtG{\Gamma}$. 
In particular, we set
      \begin{equation}\label{eq:weightsExtGamma}
           \boxed{\wu{\roottree}}:= \sum_{\lambda\in \leavesT{\Gamma}} 
               \wtuv{\roottree}{\lambda}     \wu{\lambda} \in N(\partial \Gamma).
       \end{equation}

      As was mentioned above, edge deformations of splice type systems depend on a  triple of positive integers. Here is the precise definition:

\begin{definition}\label{def:tripleAdapted} 
    A triple $\boxed{(k_a,k_b,D)}$ of positive integers is \textbf{adapted to the edge $[a,b]$} 
    of $\Gamma$ if $k_a$, $k_b$ satisfy the inequalities of~\autoref{lm:refineG} and 
      $D$ is divisible by all  decorations $\du{u,\roottree}$ 
     of  $\wtG{\Gamma}$, when $u$ varies among the nodes of $\Gamma$. 
     An \textbf{enrichment of $\Gamma$ relative to the edge $[a,b]$} is a choice 
     of a triple $(k_a,k_b,D)$ adapted to $[a,b]$, or equivalently, the datum of
     the splice diagram $\wtG{\Gamma}$ together with the integer $D$.   
\end{definition}

Such triples $(k_a,k_b,D)$ always exist,  by~\autoref{lm:refineG}.   
In order to build a  deformation of the  system $\sG{\Gamma}$ for a fixed triple, we introduce a new variable $z_0$ (the \textbf{deformation parameter}) and define two extended lattices
\begin{equation}\label{eq:extendedLattices}
        \boxed{\Nwbar{\partial \Gamma}} := \Z\langle \wu{0}\rangle \oplus \Nw{\Gamma}\simeq \Z^{n+1}  
              \qquad \text{ and } \qquad   
           \boxed{\Mwbar{\partial \Gamma}} := 
                \Z \langle \wu{0}^{\vee}\rangle \oplus \Mw{\Gamma}\simeq \Z^{n+1},
\end{equation}
where $\boxed{\wu{0}}$ and $\boxed{\wu{0}^{\vee}}$  denote the basis vectors 
corresponding to $z_0$ in $\Nwbar{\partial \Gamma}$ and $\Mwbar{\partial \Gamma}$, respectively. 
Analogously, for each $\lambda$ in $\leavesT{\Gamma}$, we let $\boxed{\wubar{\lambda}}$ 
be the  image in $\Nwbar{\partial \Gamma}$ of the basis vector $\wu{\lambda}$ from $\Nw{\Gamma}$. 
Similar notation applies to each vector $\boxed{\wubar{\lambda}^{\vee}}$ from $\Mwbar{\partial  \Gamma}$. 
We let $\boxed{\Nwbar{\partial  \Gamma}_{\R}} := \Nwbar{\partial  \Gamma} \otimes_{\Z} \R$ 
and $\boxed{\Mwbar{\partial  \Gamma}_{\R}} := \Mwbar{\partial  \Gamma} \otimes_{\Z} \R$ 
be the $\R$-vector spaces associated to the lattices in~\eqref{eq:extendedLattices}.

The  triple $(k_a,k_b,D)$ adapted to $[a,b]$  allows us to build new weight 
vectors in $\Nwbar{\partial \Gamma}$, i.e.,
\begin{equation}\label{eq:extWeights}
      \boxed{\wubar{u}}:= \wu{0} + \frac{D\,\wtuv{\roottree}{u}}{\du{u}} \wu{u} \in \Nwbar{\partial \Gamma} 
 \qquad \text{ for each node } u \text{ of } \wtG{\Gamma}.
\end{equation}
In particular: 
   \[ \wubar{\roottree} = \wu{0} + D\wu{\roottree}. \] 
      Notice that 
$\wubar{u}\in \Nwbar{\partial  \Gamma}$ since the divisibility constraint $\du{u,\roottree} | D$ imposed 
 on $D$ implies that $D\,\wtuv{\roottree}{u}/\du{u}\in \Z$.  The relevance of the weight vectors $\wubar{u}$ is explained in~\autoref{rem:extwhomog} below. 

\medskip       

Edge deformations of splice type systems adapted to an internal edge are constructed by analogy with \autoref{def:splicesystem}, as we now explain. 

\begin{definition}\label{def:deformation}\index{edge!deformation}
Let $\sG{\Gamma} = (\fgvi{v}{i}(\zu))_{v,i}$ be a splice type system.
  Fix an internal edge $e=[a,b]$ of $\Gamma$  and a triple $(k_a,k_b,D)$ adapted to it. We view $D$ as a supplementary decoration of the splice diagram $\wtG{\Gamma}$. 
\begin{itemize}
    \item An \textbf{edge-deformation} $\boxed{\dG{\wtG{\Gamma}}}$ of $\sG{\Gamma}$ 
        associated to the previous data 
        is a finite family of formal power series of the form:
         \begin{equation}\label{eq:3fold}
              \boxed{\fgvibar{v}{i}(z_0,\zu)}:= \fgvi{v}{i}(\zu) - \boxed{\cvi{v}{i}} \, z_0^{D\,\wtuv{\roottree}{v}}
              \quad \text{for all }   i\in \{1,\dots, \valv{v}-2\} \, \text{ and each node } v \text{ of } \Gamma,
         \end{equation}
         where $\cvi{v}{i}\in \CC^*$ and $\fgvi{v}{i}$ are as in~\eqref{eq:surfaceSeries}.  
     \item An \textbf{edge-deformation} of the splice type singularity  defined 
          by the system  $\sG{\Gamma}$ and associated to the previous data 
          is the subgerm at the origin of the affine space 
         $\CC^{n+1}$, which is defined by an edge deformation 
         system $\dG{\wtG{\Gamma}}$. 
         The deformation parameter is the new variable $z_0$. 
\end{itemize}
\end{definition}

\begin{remark}  \label{rem:extwhomog}
       By analogy with~\autoref{rm:HomogConditions} (\ref{item:whomog}), 
       we can show that our choice of exponents $D\,\wtuv{\roottree}{v}$ guarantees that  the polynomials 
     $\fvi{v}{i}(\zu) - \cvi{v}{i} \, z_0^{D\,\wtuv{\roottree}{v}}$ are $\wubar{u}$-homogeneous, 
     where  $\wubar{u}$ is the weight vector from~\eqref{eq:extWeights}. 
\end{remark}

\begin{remark}
        As we shall see in Step (\ref{item:toricmorpha}) of 
       \autoref{sec:stepsproof}, our proof of~\autoref{conj:MFC} requires an extension      
       of~\autoref{def:deformation} to the case where the edge $[a,b]$ 
       is not internal, but connects a node to a leaf. We do not discuss this generalization here, 
       to simplify the exposition.
\end{remark}

\begin{example}\label{ex:2NodesNW3fold} 
      We let $[a,b]$ be the unique internal edge of the splice diagram $\Gamma$ 
      from~\autoref{fig:numExampleSimpleEndCurve}, where $a=u$ and $b=v$. 
        We have multiple choices for the pairs $(k_a,k_b)$ satisfying $6/49 < k_a/k_b<11/70$. 
        For an illustration, we pick $(k_a,k_b) = (1,7)$. In particular,  
        $\wtuv{\roottree}{a} = 42 $ and $\wtuv{\roottree}{b}= 70$. 
        Thus, $\wu{\roottree} = (21,14, 10, 14, 35) \in \Z^5$. 
        Moreover, $\wu{a} = (147, 98, 60, 84, 210)$ and $\wu{b}=(210, 140, 110, 154, 385)$.
      
        The integer $D$ must be divisible by both $49$ and $11$, so we take $D=539$.
        A possible edge-deformation $\dG{\wtG{\Gamma}}$ of a strict splice-type 
        system  $\sG{\Gamma}$ 
        satisfying the Hamm determinant condition of \autoref{def:splicesystem} is  
 \begin{equation}\label{eq:numExample3fold} 
         \begin{cases} 
                \fvibar{a}{1}:= \;\;\;\, z_1^{2}\;\; \;-\;\;\;2\; z_2^3\; +\;\;\; z_4\,z_5\;\;\;  
                        + \;\;\; z_0^{22638} , \\
               \fvibar{b}{1}:=\;\;\;\; z_1z_2^4 +  z_3^7 + \;\; z_4^5\, - \;2155\; z_5^2    + z_0^{37730} , \\
               \fvibar{b}{2}:=  33\, z_1z_2^4 +  z_3^7 + 2\,z_4^5 - \; 2123 \, z_5^2  - z_0^{37730} .
        \end{cases}
 \end{equation}
 In particular, the three relevant extended weight vectors are 
     $\wubar{\roottree}  = (1, 79233, 52822, 32340, 45276, 113190)$, 
     $\wubar{a} = (1, 11319, 7546, 4620, 6468, 16170)$ and 
     $\wubar{b} = (1, 10290, 6860, 5390, 7546, 18865)$. 
\end{example}

In order for the germ defined by $\dG{\wtG{\Gamma}}$ to have a prescribed local tropicalization, 
we must impose further genericity constraints on the coefficients $\cvi{v}{i}$. 
To this end, given any $\wu{}\in (\R_{>0})^{n+1}\subset \Nwbar{\partial  \Gamma}_{\R}$ 
we consider the map
  \begin{equation}\label{eq:Fw}
      \boxed{F_{\wu{}}}\colon \CC^n\to \CC^{n-2} \qquad F_{\wu{}}(\zu) = (\initwf{\wu{}}{\fgvi{v}{i}}(\zu))_{v,i}\end{equation}
  whose entries are determined by the set of initial forms of all equations $\fgvi{v}{i}$ 
  defining the system $\sG{\Gamma}$. When restricted to codimension two coordinate subspaces of $\CC^n$, 
  the map $F_{\wu{}}$ satisfies the following key property:
    
  \begin{proposition}\label{pr:InitialRestr} For each $\wu{}\in \Trop X$ and any pair of distinct leaves $\lambda, \mu$ of $\Gamma$, the restriction map $\boxed{F_{\wu{},\lambda, \mu}}\colon \CC^{n-2} \to \CC^{n-2}$  to the coordinate subspace $Z(z_{\lambda},z_{\mu})$ of $\CC^n$ is generically finite, hence dominant.
  \end{proposition}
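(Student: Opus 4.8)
The plan is to show that the source and target of $F_{\wu{},\lambda,\mu}$ are both $(n-2)$-dimensional affine spaces and that the map has finite generic fiber; since a morphism of irreducible varieties of the same dimension with a finite fiber is dominant, the conclusion follows. Concretely, I would proceed as follows. First, I would analyze the $\wu{}$-initial forms $\initwf{\wu{}}{\fgvi{v}{i}}$ for $\wu{}\in \Trop X$. By \autoref{rm:HomogConditions}~(\ref{item:whomog}), each $\fvi{v}{i}$ is $\wu{v}$-homogeneous and the higher-order perturbations $\gvi{v}{i}$ have strictly larger $\wu{v}$-weight. Using the description of the standard tropicalizing fan of $X$ from \autoref{rem:tropNND2D}, whose rays are the $\wu{v}$ (nodes) and $\wu{\lambda}$ (leaves), one identifies for which cones $\sigma$ of this fan a given weight $\wu{}\in\operatorname{relint}(\sigma)$ makes $\initwf{\wu{}}{\fgvi{v}{i}}$ equal to a sub-sum of monomials of $\fvi{v}{i}$ (possibly a single monomial when $\wu{v}$ is not in the closure of $\sigma$). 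The key point is that the coefficients of these initial forms are, up to sign, exactly the entries $\cvei{v}{e}{i}$ of the coefficient matrices, and these satisfy the Hamm determinant condition (all maximal minors nonzero) from \autoref{def:splicesystem}.

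Next I would restrict to the coordinate subspace $Z(z_{\lambda},z_{\mu})$. Setting two leaf-variables to zero kills exactly those admissible monomials $\zexp{\wtNve{v}{e}}$ involving $z_\lambda$ or $z_\mu$. The crucial observation is that for a fixed node $v$, the monomials $\zexp{\wtNve{v}{e}}$ for the different edges $e\in\Gamma^v$ are supported on disjoint sets of leaves (the leaf-sets $\nodesTevRoot{v,e}$ partition $\leavesT{\Gamma}$), so killing $z_\lambda$ and $z_\mu$ destroys at most two of the $\valv{v}$ admissible monomials attached to $v$. Hence at least $\valv{v}-2$ of them survive in a nonzero form on $Z(z_\lambda,z_\mu)$. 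Because the $(\valv{v})\times(\valv{v}-2)$ coefficient matrix $(\cvei{v}{e}{i})_{e,i}$ has all maximal minors nonzero, the restriction of the $(\valv{v}-2)$ equations $\fvi{v}{i}|_{Z(z_\lambda,z_\mu)}$ to the surviving monomials still has a coefficient matrix of full rank $\valv{v}-2$. I would then argue, node by node, that one can solve for the surviving admissible monomials in terms of the $\fvibar{v}{i}$-values, which — combined with the fact that each admissible monomial is, after dividing by a pure power of one distinguished leaf-variable, a monomial in the remaining surviving leaf-variables with weight $\du{v}$ — lets one recover the point of $Z(z_\lambda,z_\mu)$ (in the torus) up to finitely many choices. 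Summing the counts $\sum_v(\valv{v}-2) = n-2$ over all nodes shows source and target both have dimension $n-2$ and the fiber over a generic point of the target is finite.

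The main obstacle I anticipate is the bookkeeping across several nodes simultaneously: the variable $z_\lambda$ appears in admissible monomials attached to potentially many nodes (all nodes $v$ with $\lambda\in\nodesTevRoot{v,e}$ for the appropriate $e$), so "solving" for leaf-coordinates is an intertwined system rather than a node-by-node triangular one. To handle this I would exploit the tree structure: order the nodes by distance from a chosen root, and show that the monomial-relations propagate along edges of $\Gamma$ so that, after determining the monomials at one node, the leaf-variables "seen" only from that node become determined up to roots of unity, and one proceeds outward. A clean way to package this is to note that the strict splice-type system $(\fvi{v}{i})$ already defines an isolated complete intersection (by \autoref{thm:semgrpsplice} / \autoref{thm:splicesingzhs}), that $X$ is Newton non-degenerate (\autoref{rem:tropNND2D}), and hence the initial system $(\initwf{\wu{}}{\fgvi{v}{i}})$ cuts out a pure-dimensional subscheme of the torus of the expected codimension $n-2$; intersecting further with the two hyperplanes $Z(z_\lambda)$, $Z(z_\mu)$ inside the torus $(\CC^*)^n$ — which by the disjointness-of-leaf-sets argument meets $\tilde X$ transversally in the relevant orbit — produces a scheme of dimension $n-4$ in $(\CC^*)^{n-2}\cap Z(z_\lambda,z_\mu)$... wait, that is not quite the right count; rather, one projects the graph of $F_{\wu{},\lambda,\mu}$ and uses that the fibers are contained in such complete-intersection loci, hence finite. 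I would make the dimension count precise via the equality $n = n_a + n_b - 2$ and $\sum_v (\valv v - 2) = n-2$, and then invoke generic finiteness of a dominant-or-not morphism between spaces of equal dimension, finishing by ruling out that the image is lower-dimensional using the nonvanishing of the maximal minors (which guarantees the differential of $F_{\wu{},\lambda,\mu}$ is generically surjective on the torus).
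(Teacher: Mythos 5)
The paper itself states this proposition without proof (the technical details are deferred to the authors' forthcoming work), so I can only assess your argument on its own terms, and it has a genuine gap at its central counting step. You treat each component of $F_{\wu{}}$ as if it were the full strict polynomial $\fvi{v}{i}$, i.e.\ as if all $\valv{v}$ admissible monomials at the node $v$ were present, so that setting $z_\lambda=z_\mu=0$ kills at most two of them and the Hamm determinant condition keeps the surviving coefficient matrix of full rank. But the components of $F_{\wu{}}$ are the initial forms $\initwf{\wu{}}{\fgvi{v}{i}}$, and for a general $\wu{}\in\Trop X$ these are proper sub-sums of $\fvi{v}{i}$ (even the fact that they are sub-sums, with no contribution from $\gvi{v}{i}$, is a nontrivial computation from \cite{CPS 21} which you only gesture at). In particular such an initial form can be supported entirely on the admissible monomials in the directions of $\lambda$ and $\mu$, in which case its restriction to $Z(z_\lambda,z_\mu)$ is identically zero; then neither the full-rank argument nor your final claim that the differential of $F_{\wu{},\lambda,\mu}$ is generically surjective can survive, since a whole component of the restricted map vanishes.

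This is not a hypothetical worry. In the running example (\autoref{fig:simpleExampleOverview}, \autoref{ex:splicesystbasic}) take the strict system $f_a=z_1^2-z_2^3+z_3z_4$, $f_b=z_1z_2^4+z_3^5-z_4^2$, and let $\wu{}$ be either $\wu{a}$ or any vector in the relative interior of the cone spanned by $\wu{a}$ and $\wu{b}$, which lies in $\Trop X$. A direct weight computation gives $\initwf{\wu{}}{f_b}=z_3^5-z_4^2$, so for $\{\lambda,\mu\}=\{\lambda_3,\lambda_4\}$ the restriction of $F_{\wu{}}$ to $Z(z_3,z_4)$ is $(z_1^2-z_2^3,\,0)$ (or $(z_1^2-z_2^3+0,\,0)$ for $\wu{}=\wu{a}$), which is neither dominant nor generically finite over its image. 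So under the reading of ``restriction of $F_{\wu{}}$'' that both you and the literal wording suggest, the assertion fails for some pairs $(\wu{},\{\lambda,\mu\})$; a correct proof must first pin down the intended formulation (for instance taking $\wu{}$-initial forms of the restricted series $\fgvi{v}{i}|_{Z(z_\lambda,z_\mu)}$, incorporating the deformation monomials $z_0^{D\,\wtuv{\roottree}{v}}$, or restricting the admissible weights and pairs of leaves), and only then can a counting argument in the spirit of yours, combined with the actual description of the initial forms from \cite{CPS 21}, be carried out. A smaller, fixable point: ``equal dimensions plus finite fibers implies dominant'' requires finiteness of the fibers over generic points of the image, not of the target, since empty fibers are finite but prove nothing.
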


 This result allows us to specify explicit genericity conditions on the coefficients $\cvi{v}{i}$ from \eqref{eq:3fold} that are suitable for proving~\autoref{conj:MFC}. 
Under such genericity conditions, we can verify that the vanishing sets of both $\sG{\Gamma}$ and the edge-deformation $\dG{\wtG{\Gamma}}$  have similar behavior. More precisely,

\begin{theorem}\label{thm:NoPointsInTheBoundary}
    Assume that $(\cvi{v}{i})_{v,i}$ are generic and let  $\cY(\wtG{\Gamma})$ be the vanishing 
     set of the edge-deformation $\dG{\wtG{\Gamma}}$ in $\CC^{n+1}$. Then, 
    \begin{enumerate}
      \item the germ $(\cY(\wtG{\Gamma}),0)$  is a three-dimensional reduced and irreducible 
           isolated complete intersection singularity   
           not contained in the toric boundary of $\CC^{n+1}$;
      \item \label{NND3D} the series defining the edge-deformation $\dG{\wtG{\Gamma}}$ 
           determine a Newton non-degenerate complete intersection presentation of its vanishing 
            set $\cY(\wtG{\Gamma})$;
      \item the local tropicalization $\Trop \cY(\wtG{\Gamma})\subset (\Rp)^{n+1}$  
             is independent of $\dG{\wtG{\Gamma}}$ 
           and its coarsest fan structure is a standard tropicalizing fan for $\cY(\wtG{\Gamma})$.
    \end{enumerate}
 \end{theorem}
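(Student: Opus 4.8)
\textbf{Proof strategy for Theorem~\ref{thm:NoPointsInTheBoundary}.}
The plan is to treat the three claims in the order stated, bootstrapping from the known properties of the splice type system $\sG{\Gamma}$ established in \cite{CPS 21} (see \autoref{rem:tropNND2D}). The unifying thread is that the edge-deformation is, by construction, quasi-homogeneous with respect to the extended weight vectors $\wubar{u}$ of \eqref{eq:extWeights} (\autoref{rem:extwhomog}), so that its initial degenerations along rays and cones of a tropicalizing fan can be analyzed by combining the two-dimensional information already available with the combinatorics of the refined diagram $\wtG{\Gamma}$.

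For part~(1), I would first observe that setting $z_0 = 0$ recovers the splice type system, whose vanishing set is the surface singularity $(X,0)$; this gives that $\cY(\wtG{\Gamma})$ has dimension at most $n-1-(n-2) = $ wait --- more carefully, $\dG{\wtG{\Gamma}}$ consists of $n-2$ power series in $n+1$ variables, so its vanishing set has dimension $\geq 3$, and I must show the sequence $(\fgvibar{v}{i})_{v,i}$ is regular, forcing equality and giving a complete intersection. Regularity follows once part~(2) is in hand: a Newton non-degenerate complete intersection presentation is in particular a regular sequence by \autoref{def:NNDcomplint}. Reducedness and irreducibility I would deduce from the fact that $z_0 = 0$ cuts out the irreducible germ $(X,0)$ together with the quasi-homogeneity: the $\CC^*$-action contracting to the origin shows the total space is a cone-like object whose generic hyperplane section $\{z_0 = \text{const}\}$ is a smoothing of $(X,0)$ (this is exactly Stage~(\ref{stage i}) of the outline), and a germ fibered over a disk with irreducible special fiber and smooth generic fiber is irreducible. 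The non-containment in the toric boundary is immediate since $(X,0)\hookrightarrow(\CC^n,0)$ is not contained in $\partial\CC^n$ and the defining series have the same structure.

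For part~(2), the heart of the matter, I would fix $\wu{}\in(\R_{>0})^{n+1}$ and compute the sequence of initial forms $(\initwf{\wu{}}{\fgvibar{v}{i}})_{v,i}$, showing it defines the empty set or a smooth complete intersection in $(\CC^*)^{n+1}$. The key case distinction is whether $\wu{}$ ``sees'' the $z_0$-term: writing $\wu{} = (w_0, w')$ with $w'\in(\R_{>0})^n$, the initial form of $\fgvibar{v}{i}$ either coincides with $\initwf{w'}{\fgvi{v}{i}}$ (when the monomials of $\fvi{v}{i}$ dominate), or picks up the monomial $z_0^{D\wtuv{\roottree}{v}}$, or is a balanced sum of both --- this last case occurring precisely when $\wu{}$ is proportional to some $\wubar{u}$ up to the two-dimensional tropical cones. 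In the first case smoothness follows from Newton non-degeneracy of $\sG{\Gamma}$ (pulled back under the coordinate projection forgetting $z_0$, which is a smooth morphism of tori). In the case where $z_0$-monomials appear, one uses \autoref{pr:InitialRestr}: the genericity of the $\cvi{v}{i}$ makes $F_{\wu{}}$ generically finite on codimension-two strata, which is the statement engineered precisely so that adding the generic $z_0$-terms cannot create singularities in the torus; concretely, the Jacobian criterion in the $z_0$-direction is governed by $\du{}{}$-many nonzero partials $\partial/\partial z_0$, and transversality of the remaining equations is the two-dimensional Newton non-degeneracy. I expect this verification --- organizing the cone-by-cone casework and checking that the generic choice of $(\cvi{v}{i})_{v,i}$ kills all the degenerate loci simultaneously --- to be the main obstacle, and the role of \autoref{pr:InitialRestr} is exactly to package the needed genericity.

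For part~(3), I would argue that $\Trop\cY(\wtG{\Gamma})$ is determined by the combinatorics of $\wtG{\Gamma}$ and hence independent of the higher-order and deformation coefficients. Concretely, by \autoref{def:posloctrop} and \autoref{prop:compartrop} it suffices to compute $\ptrop\cY(\wtG{\Gamma})$, i.e.\ the set of $\wu{}\in(\R_{>0})^{n+1}$ for which $\initwf{\wu{}}{I}$ is monomial-free. Using part~(2), monomial-freeness of the initial ideal is equivalent to the initial scheme being nonempty in the torus, and by the quasi-homogeneity the candidate rays are generated by $\wubar{0},\wubar{u}$ (for $u$ a node of $\wtG{\Gamma}$) and the $\wubar{\lambda}$; one checks these assemble into a fan that is the cone over a subdivision of $\wtG{\Gamma}$, by the same mechanism as in the two-dimensional case of \cite{CPS 21}. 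That this coarsest fan structure is a \emph{standard} tropicalizing fan --- meaning $\initwf{\wu{}}{I}$ is constant along relative interiors of cones --- follows because along each such relative interior the set of $\wu{}$-minimal monomials in every defining series is constant, which is a direct consequence of the piecewise-linear structure imposed by the weight vectors $\wubar{u}$. Finally, the independence from $\dG{\wtG{\Gamma}}$ is clear since only the supports (not the coefficients) of the series enter this computation, and the supports are fixed by $\Gamma$, the edge $[a,b]$, and the adapted triple $(k_a,k_b,D)$.
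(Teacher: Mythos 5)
First, a caveat: this survey does not actually prove \autoref{thm:NoPointsInTheBoundary} — the technical details are deferred to the authors' forthcoming work, and the paper only signals the intended ingredients (the genericity packaged in \autoref{pr:InitialRestr}, the methods of \cite{CPS 21}, and the explicit fan described in \autoref{rem:tropNND3D}). Measured against those indications, your overall plan — exploit the $\wubar{u}$-weighted homogeneity, run a cone-by-cone analysis of initial forms, reduce the ``no $z_0$-term'' cases to the Newton non-degeneracy of $\sG{\Gamma}$, and invoke \autoref{pr:InitialRestr} for the cases where the deformation monomial participates — is the right sort of strategy. But there are genuine gaps. In part (1) your treatment is circular: being a Newton non-degenerate complete intersection presentation \emph{includes} regularity of the sequence (\autoref{def:NNDcomplint}), and your sketch of part (2) only addresses smoothness of initial loci in the torus, so regularity is never established anywhere; it needs its own (standard, but necessary) dimension/Cohen--Macaulay argument showing no component of $\cY(\wtG{\Gamma})$ lies in $Z(z_0)$ and that the codimension equals $n-2$. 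Moreover the principle you invoke for irreducibility is false as stated: $Z(z_0x)\subset\CC^2_{x,z_0}$ has reduced irreducible special fiber and smooth nearby fibers yet is reducible; one must use that no component sits inside $Z(z_0)$ and then argue at the generic point of $X$ (or argue tropically). You also never address isolatedness of the singularity, which is part of claim (1) and does not follow formally from Newton non-degeneracy alone.

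In part (3) your combinatorial description is wrong. The local tropicalization of the three-fold $(Y,0)$ is three-dimensional, so it cannot be ``the cone over a subdivision of $\wtG{\Gamma}$''; even interpreted as the join of the $\wu{0}$-ray with the two-dimensional fan of $X$, this would make every maximal cone simplicial, whereas by the paper's own \autoref{rem:tropNND3D} the fan has rays for \emph{all} vertices of $\wtG{\Gamma}$ plus the deformation ray and a unique \emph{non-simplicial} maximal cone spanned by $\wu{0}$, $\wubar{a}$, $\wubar{b}$, $\wubar{\roottree}$ — the very cone responsible for the product structure of the central piece of the Milnor fiber, so this is not a cosmetic point. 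Finally, ``only the supports enter, and the supports are fixed'' is not accurate: the higher-order terms $\gvi{v}{i}$ have arbitrary supports and the coefficients $\cvi{v}{i}$ must be generic precisely so that the tropicalization is the prescribed set; the correct statement — that under Newton non-degeneracy and the stated genericity the tropicalization is insensitive to these choices — is a conclusion requiring proof, not a starting observation.
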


\begin{remark} \label{rem:tropNND3D} 
    The description of the top-dimensional cones of the standard tropicalization fan of 
    $\cY(\wtG{\Gamma})$ mentioned above is a bit more cumbersome than 
    for the splice type system $\sG{\Gamma}$ discussed in~\autoref{rem:tropNND2D}.   
    The explicit construction of this fan is used in Step (\ref{item:decompdivf}) 
    of~\autoref{sec:stepsproof} as well as in the proof 
    of~\autoref{thm:NoPointsInTheBoundary}~(\ref{NND3D}) under explicit genericity conditions. 
    The rays of $\Trop \cY(\wtG{\Gamma})$ are easy to list: they are generated by the weight 
    vectors $\wubar{u}$ indexed by all vertices  $u$ of the enriched splice diagram 
    $\wtG{\Gamma}$ plus one more ray corresponding to the deformation variable. 
    The fan is non-simplicial and its unique non-simplicial top-dimensional cone is spanned by 
    $\wu{0}, \wu{a}, \wu{b}$ and $\wu{\roottree}$. The presence of this last cone reveals the 
    product structure of the central component of the Milnor fiber of the germ defined 
    by $\sG{\Gamma}$ (see Step (\ref{item:prodstruct})).
\end{remark}

\section{Proof outline of the Milnor fiber conjecture}  \label{sec:stepsproof}

In this section, we outline our proof of Neumann and Wahl's {\em Milnor 
fiber conjecture} (see \autoref{conj:MFC}) through a sequence of 28 steps.  
Each step has a title, describing it briefly. The main statements proved at each step are written with boldface characters. The first four steps set up the deformations and smoothings of various splice type systems. The tropical techniques are used in Steps~(\ref{item:tropfanX}) through~(\ref{item:ttoriclift}), whereas logarithmic geometry features from  Step~(\ref{item:startlogpart}) onwards. This decomposition into steps is much more detailed 
than the decomposition into stages explained in \autoref{sec:introd}. The correspondence between 
them is as follows: Stage \ref{stage i} corresponds to Steps (\ref{item:enrich}) and (\ref{item:deform}); 
Stage \ref{stage ii} to Steps (\ref{item:toricmorpha}) and (\ref{item:deformsysta}); 
Stage \ref{stage iii} to Steps (\ref{item:tropfanX}), (\ref{item:tropfanY}) and (\ref{item:tropfanYa}); 
Stage \ref{stage iv} to Steps (\ref{item:torbirY}) and (\ref{item:torbirYa});
Stage \ref{stage v} to Steps (\ref{item:startlogpart}), (\ref{item:roundlogenhancmt}), 
      (\ref{item:removecollar}) and (\ref{item:startlogparta});
Stage \ref{stage vi} to Steps (\ref{item:decompdivf}), (\ref{item:subfanemb}), (\ref{item:ttoriclift}), 
  (\ref{item:isomlogf}), (\ref{item:isomtorlogf}), (\ref{item:combprevsteps}), (\ref{item:isomlogfa}), 
  (\ref{item:isomtorlogfa}), (\ref{item:combprevstepsa}), (\ref{item:commuttrianga}), 
  (\ref{item:isomtoriclog}) and  (\ref{item:cutfibinside});
Stage \ref{stage vii} to the remaining Steps (\ref{item:cutmiln}), (\ref{item:prodstruct}) and 
(\ref{item:finstep}). 
\medskip

We start from a splice diagram $\Gamma$ (see~\autoref{def:splicediag}) with $n$ leaves 
and at least two nodes, 
which satisfies the edge determinant condition of~\autoref{def:edgedet} 
and the semigroup condition of~\autoref{def:semgpcond}. 
We let $\boxed{(X,0)} \hookrightarrow (\CC^n, 0)$ be a splice type singularity defined 
 by a splice type system $\boxed{\cS(\Gamma)}$ as in~\autoref{def:splicesystem}. 
Fixing an internal edge $[a,b]$ of $\Gamma$ determines a partition of $\cS(\Gamma)$ 
into two systems: an \emph{$a$-side system} $\boxed{\cS_a(\Gamma)}$, combining the series 
associated to all the nodes seen from $b$ in the direction of $a$, and a 
\emph{$b$-side system} $\boxed{\cS_b(\Gamma)}$ involving the series associated 
to all the nodes of $\Gamma$ seen from $a$ in the direction of $b$.

   \medskip

    \begin{enumerate}
          \item \label{item:enrich}
              \underline{\em We enrich the splice diagram $\Gamma$.} 
              
                 We subdivide the splice diagram $\Gamma$ 
                 using an interior point $\boxed{\roottree}$ of the edge $[a,b]$ and we let
                  $\boxed{\wtG{\Gamma}}$ be the resulting tree, rooted at the vertex $\roottree$. 
                 We choose a triple $\boxed{(k_a, k_b, D)}$ adapted to $[a,b]$ in the sense of~\autoref{def:tripleAdapted} and we view $\wtG{\Gamma}$ as a splice diagram (with weights $\du{\roottree,a} = k_a$ and $\du{\roottree,b}=k_b$) enriched by $D$.

                \medskip    
         \item   \label{item:deform}
                \underline{\em We perform an edge deformation of the starting splice type system.}
               
               We consider an edge deformation  $\boxed{\dG{\wtG{\Gamma}}}$ 
               of the system $\cS(\Gamma)$ 
               in the sense of~\autoref{def:deformation} with deformation parameter 
               $\boxed{z_0}$. We assume that the coefficients $(\cvi{v}{i})_{v,i}\in (\CC^*)^{n-2}$ 
               satisfy the genericity constraints mentioned in~\autoref{sec:edgedeform}. We write
               $\dG{\wtG{\Gamma}}$ as the disjoint union of a \emph{deformed $a$-side system} 
               $\boxed{\dG{\wtG{\Gamma}}_a}$ and 
                a \emph{deformed $b$-side system} $\boxed{\dG{\wtG{\Gamma}}_b}$.

                We let $\boxed{(Y, 0)} \hookrightarrow \CC^{n+1}$ be the singularity defined by 
               $\dG{\wtG{\Gamma}}$ and denote by $\boxed{f}\colon Y \to \CC$ 
               the restriction of the linear form 
               $z_0\colon  \CC^{n+1} \to \CC$ 
               to $Y$. We prove that \textbf{$f$ is a smoothing of the splice type singularity $(X,0)$} 
               and  incorporate it into the following commutative diagram:               
                \begin{equation}  \label{eq:commutf}
                       \xymatrix{
                        X \:  \ar@{^{(}->}[r]
                        \ar[d]   &   Y \:  
                       \ar@{^{(}->}[rr]
                        \ar[d]_{f} & 
                        & \CC^{n+1}  \ar[d]^{z_0}  \\
                               0 \:  \ar@{^{(}->}[r] &   \CC \:  \ar@{^{(}->}[rr]  &  & \CC. }   
                  \end{equation}

              \medskip 
        \item   \label{item:toricmorpha}
              \underline{\em We define the notion of an $a$-side morphism associated to the given edge deformation.}
        
             Let $u$ be a node of the rooted tree $\Gamma_a$, seen as 
            a subtree of $\wtG{\Gamma}$ in~\autoref{fig:splittingByEdge}. 
            We prove that \textbf{the $\wu{u}$-initial forms
            of the series of the system $\dG{\wtG{\Gamma}}_b$ 
            are independent of the choice of $u$}. We let 
            $\boxed{\initwf{a}{\dG{\wtG{\Gamma}}_b  }}$ be
            the system determined by the vanishing of these $(n_b-1)$ initial forms.

            We prove that \textbf{the system $\initwf{a}{\dG{\wtG{\Gamma}}_b  }$ defines a 
            \emph{torus-translated toric subvariety}   of $\CC^{n+1}$ of dimension $n_a +1$. 
            Furthermore, this subvariety admits a normalization morphism}
                 \begin{equation} \label{eq:normaliza} 
                      \boxed{\varphi_a}  \colon \CC^{n_a +1} \to \CC^{n+1}, 
                  \end{equation}
            where  $\varphi_a$ is a monomial map (i.e., a torus-translated toric morphism). 
            Moreover, we have   
                  \begin{equation} \label{eq:pullbacka} 
                        \varphi_a^* z_0 = x_0,
                  \end{equation} 
            where $\boxed{x_0}$ is one of the variables of $\CC^{n_a +1}$.  
            We call $\varphi_a$ the  \emph{$a$-side morphism}.

            \medskip    
        \item    \label{item:deformsysta}
               \underline{\em We define an $a$-side deformation by a coordinate change of $\CC^{n_a+1}$ 
                 using the $a$-side morphism $\varphi_a$} \\
                 \underline{\em from~\eqref{eq:normaliza}.}
        
              We define a system $\boxed{\cD(\Gamma_a)}$ by pulling back the 
               system $\dG{\wtG{\Gamma}}_a$ via the $a$-side morphism 
               $\varphi_a$.  We let $\boxed{(Y_a, 0)} \hookrightarrow \CC^{n_a+1}$  be
               the singularity defined by the system $\cD(\Gamma_a)$.
                              Analogously, we let $\boxed{\cS(\Gamma_a)}$ be the pullback of the system 
               $\cS(\Gamma)$ via $\varphi_a$. By construction, $\cS(\Gamma_a)$  
               does not involve the variable $x_0$.  We identify  the 
               coordinate hyperplane $Z(x_0)$ of $\CC^{n_a + 1}$ with 
               $\CC^{n_a}$, and denote by $\boxed{(X_a,0)} \hookrightarrow (\CC^{n_a},0)$ the singularity    
               defined by the system $\cS(\Gamma_a)$.

               We show that  \textbf{$\cS(\Gamma_a)$ is a splice type system 
               with splice diagram $\Gamma_a$  
               and that the system $\cD(\Gamma_a)$ is an edge deformation 
               of $\cS(\Gamma_a)$ associated to the edge $[a, r_a]$, with deformation variable $x_0$ 
               (see~\autoref{fig:splittingByEdge}).} Notice that this last point requires us to extend 
               our definition of edge deformations to non-internal edges of splice diagrams.
                             As a consequence, \textbf{the restriction   of the linear map 
                             $x_0 \colon \CC^{n_a+1} \to \CC$ to $(Y_a,0)$ is a smoothing of   $(X_a,0)$.} 
                             We denote it by $f_a  \colon (Y_a,0) \to \CC$.
                              The above data fit into the following  commutative 
                             diagram analogous to~\eqref{eq:commutf}:
                               \begin{equation}  \label{eq:commutfa}
                   \xymatrix{
                        X_a \:  \ar@{^{(}->}[r]
                        \ar[d]   &   Y_a \:  
                       \ar@{^{(}->}[rr]
                        \ar[d]_{f_a} & 
                        & \CC^{n_a+1}  \ar[d]^{x_0}  \\
                               0 \:  \ar@{^{(}->}[r] &   \CC \:  \ar@{^{(}->}[rr]  &  & \CC. }     
                  \end{equation}

               As expected, we perform analogous constructions on the $b$-side, 
               denoting by $\boxed{y_0}$ the 
               corresponding deformation variable and by $\boxed{\varphi_b}$ 
               the $b$-side morphism. Since the construction is symmetric in $a$ and $b$, 
               we restrict our exposition to matters concerning only the $a$-side.

                 \medskip                    
        \item   \label{item:tropfanX}
              \underline{\em We build a standard tropicalizing fan for
                     $(X,0)$ and prove that $(X,0)$ is Newton non-degenerate.}
             
            \textbf{We construct a standard tropicalizing fan $\boxed{\cF_X}$ 
              for the embedding $(X, 0) \hookrightarrow \CC^{n}$  in the sense of~\autoref{def:tropfans} 
            and use it to prove  that the system $\cS(\Gamma)$ is a 
            Newton non-degenerate complete intersection presentation of $(X,0)$, 
            in the sense of~\autoref{def:NNDcomplint}.} Complete proofs 
            for these assertions can be found in \cite{CPS 21}.              
                 
                 We prove that {\bf the fan $\cF_X$ is a cone over a suitable embedding of the splice diagram 
                 $\Gamma$ in the standard simplex $\Delta_{n-1}\subset \R^n$ 
                 (see \cite[Theorem 1.2]{CPS 21})}.  
                 Its rays are in bijection with the vertices of $\Gamma$ and its 
                 two-dimensional cones are spanned by pairs of rays 
                 corresponding to adjacent vertices of $\Gamma$.   
                 Thus, the splice diagram appears as a transversal section of the local tropicalization of 
                 $(X, 0) \hookrightarrow \CC^{n}$. {\bf This gives the first tropical interpretation of 
                 splice diagrams, in the case when both the determinant and the semigroup 
                 conditions are satisfied.}

           \medskip                    
        \item   \label{item:tropfanY}
             \underline{\em  We build a standard  tropicalizing fan for $(Y,0)$
             and prove that this germ is Newton  non-degenerate.}
             
            Using the results of Step (\ref{item:tropfanX}),  
            \textbf{we describe a standard tropicalizing fan $\boxed{\cF}$ 
              for the embedding $(Y, 0) \hookrightarrow \CC^{n+1}$ 
              and prove that  the system $\dG{\wtG{\Gamma}}$ is a 
            Newton non-degenerate complete intersection presentation of $(Y,0)$.} 
            The genericity conditions on $\dG{\wtG{\Gamma}}$ are essential 
            to determine $\cF$, as discussed in~\autoref{thm:NoPointsInTheBoundary}. 
            A partial description of $\cF$ is given in~\autoref{rem:tropNND3D}. 
            Our proof uses results and techniques from~\cite{CPS 21}.  
            In particular, we show that the $2$-dimensional fan $\cF_X$ 
            introduced at Step (\ref{item:tropfanX}) is the union of strata at infinity of $\cF$  
            corresponding to the vanishing of the deformation parameter $z_0$ 
            (see \autoref{rem:initdefltrop}).

            \medskip
        \item    \label{item:torbirY}
             \underline{\em We describe a quasi-toroidalization of the smoothing $f$ of $(X,0)$ from 
              Step (\ref{item:deform}).}

            We let $\boxed{\pi_{\cF}}  \colon \tv_{\cF} \to \CC^{n+1}$ be the toric birational morphism 
               defined by the fan $\cF$ of Step (\ref{item:tropfanY}) and we denote by 
               $\boxed{\pi} \colon \tilde{Y} \to Y$ the restriction of 
           $\pi_{\cF}$ to the strict transform $\tilde{Y}$ of $Y$ by $\pi_{\cF}$. 
               Since $\cF$ is a tropicalizing fan 
           for  $(Y, 0) \hookrightarrow \CC^{n+1}$,~\autoref{prop:purecodimlift} 
           ensures that $\phi$ is a modification, unlike the case of the non-proper toric birational 
           map $\pi_{\cF}$.
          These data fit  into the commutative diagram           
             \[   \xymatrix{
                  \pi^{-1} (0)    =:    \boxed{\partial_0 \tilde{Y}} \:   \ar@{^{(}->}[rr]
                        \ar@<4ex>@{^{(}->}[d] & 
                        &\:\: \boxed{\partial_0 \tv_{\cF}}:= \pi_{\cF}^{-1} (0)  \ar@<-4ex>@{^{(}->}[d]   \\
                        \quad\quad\quad \quad
                        \tilde{Y} \: 
                       \ar@{^{(}->}[rr]
                       \ar@<4ex>[d]_{\pi} & 
                        & \:\tv_{\cF}                          \quad\quad\quad \:
                          \ar@<-4ex>[d]^{\pi_{\cF}}    \\
                        \quad\quad\quad \quad
                       Y  \:  \ar@{^{(}->}[rr]  &  & \: \CC^{n+1}.                         \quad\quad
             }     \]

           Using~\autoref{thm:NNDtoroidal}, we prove that \textbf{$\pi$ is a quasi-toroidalization of $f$} 
           (see~\autoref{def:quasitorsmoothing}). This statement follows 
           from the  fact that the deformed system $\dG{\wtG{\Gamma}}$ 
           is a Newton non-degenerate complete 
           intersection presentation of $(Y, 0)$, as discussed in  Step (\ref{item:tropfanY}).

            \medskip
         \item   \label{item:decompdivf}  
              \underline{\em We prove that the dual complex of the exceptional divisor of $\pi$ 
             is a subtree of $\wtG{\Gamma}$.} 
          
             The structure of the fan $\cF$ introduced in Step \eqref{item:tropfanY} 
             allows to prove that \textbf{the dual complex of the (compact) exceptional  
             divisor $\partial_0 \tilde{Y}$ of $\pi\colon \tilde{Y} \to Y$ is canonically isomorphic 
             to the unique connected subtree of 
             $\wtG{\Gamma}$  with vertex set equal to the set of nodes of $\tilde{\Gamma}$.} 
             This induces a decomposition of  $\partial_0 \tilde{Y}$ 
             as a sum  of three  reduced divisors, namely: 
                 \begin{equation} \label{eq:decompdiv}
                      \partial_0 \tilde{Y}  = 
                        \boxed{\partial_a \tilde{Y}}  + \boxed{\partial_{\roottree} \tilde{Y}}  + 
                              \boxed{\partial_b \tilde{Y}}.  
                  \end{equation}
              Here, $\partial_a \tilde{Y}$ is the sum of irreducible components of $\partial_0 \tilde{Y}$ 
              corresponding to the nodes of $\Gamma_a$ and similarly for $b$. In turn, 
              $\partial_\roottree \tilde{Y}$ is an irreducible variety corresponding 
              to the root $\roottree$ of $\wtG{\Gamma}$.

               \medskip
           \item  \label{item:tropfanYa}
                \underline{\em We perform the $a$-side analog of Step~(\ref{item:tropfanY}).} 
               
              \textbf{We determine a standard tropicalizing fan $\cF_a$ 
            for $(Y_a, 0) \hookrightarrow \CC^{n_a+1}$ and we use it to prove 
             that the system $\cD(\Gamma_a)$ introduced 
            in Step~(\ref{item:deformsysta}) is a Newton non-degenerate 
            complete intersection presentation of $(Y_a, 0)$}.  
              
              The rays of the fan $\cF_a$ correspond bijectively to the vertices of the rooted tree 
              $\Gamma_a$, excepted for a single ray, which is the 
                 coordinate ray associated to the deformation variable $x_0$. 
                 The cones spanned by pairs of rays 
                 corresponding to adjacent vertices  of $\Gamma_a$ belong to the set   
                 of two-dimensional cones of $\cF_a$. There are extra two-dimensional 
                 cones of $\cF_a$ not included in this list. The three-dimensional cones 
                 are spanned by some triples of rays of $\cF_a$.

             \medskip
           \item    \label{item:subfanemb}
               \underline{\em We compare objects associated to $X$ and their counterparts 
              on the $a$-side singularity $X_a$.} 
              
               Consider the torus-translated toric morphism 
                $ \varphi_a  \colon \CC^{n_a + 1} \to \CC^{n+1} $ 
                 introduced in Step (\ref{item:toricmorpha}).   
                     We prove that \textbf{the associated linear map 
                     $\phi_a  \colon N(\Gamma_a)_{\R} \to N(\Gamma)_{\R}$  between weight spaces 
                     \emph{almost embeds} the fan $\cF_a$ inside the fan $\cF$}. 
                     More precisely, it is an embedding when restricted to the subfan of 
                     $\cF_a$ spanned by the rays
                     associated to  any vertex of $\Gamma_a$ other than  the  root $\roottree_a$. 
                     Furthermore,  the ray $\boxed{l_{\roottree}}$ of $\cF$ 
                     associated to the root $\roottree$ of $\wtG{\Gamma}$ lies in the relative 
                     interior of the image under $\phi_a$ of the 
                     two-dimensional cone of $\cF_a$ spanned by the rays corresponding 
                     to the vertices $a$ and $\roottree_a$ of $\Gamma_a$.

                     We write  $\boxed{l_{a, \roottree}}:= 
                     \phi_a^{-1}(\{l_{\roottree}\}) \subseteq N(\Gamma_a)_{\R}$ 
                      and let $\boxed{\cF_{a, \roottree}}$ be the fan obtained by performing 
                     the stellar subdivision of $\cF_a$ along $l_{a, \roottree}$. 
                                                             Since it refines the standard tropicalizing fan $\cF_a$ for 
                     $(Y_a, 0) \hookrightarrow \CC^{n_a+1}$ from Step~(\ref{item:tropfanYa}), 
                     $\cF_{a, \roottree}$ is also a standard tropicalizing fan for $Y_a$.

        \medskip
           \item  \label{item:torbirYa} 
            
              \underline{\em We perform the $a$-side analog of Step~(\ref{item:torbirY}).}
             
            Let $\boxed{\pi_{a, \roottree}}\colon \tilde{Y}_a \to Y_a$ be the restriction of the  
            toric birational morphism 
            $\pi_{\cF_{a, \roottree}} \colon \tv_{\cF_{a, \roottree}} \to \CC^{n_a+1}$ 
           to the strict transform $\boxed{\tilde{Y}_a}$ of $Y_a$ by $\pi_{\cF_{a, \roottree}}$.             
           As $\cF_{a, \roottree}$ is a tropicalizing fan 
           for  $(Y_a, 0) \hookrightarrow \CC^{n+1}$ by Step (\ref{item:subfanemb}), the morphism 
           $\pi_{a, \roottree}$ is a modification. 
           This determines the commutative diagram
         \[   \xymatrix{
                 \pi_{a, \roottree}^{-1} (0) =:     \boxed{\partial_0 \tilde{Y}_a} \:   \ar@{^{(}->}[rr]
                        \ar@<4ex>@{^{(}->}[d] & 
                        &\:\: \boxed{\partial_0 \tv_{\cF_{a, \roottree}}}:=  \pi_{\cF_{a, \roottree}}^{-1} (0)     
                         \ar@<-4ex>@{^{(}->}[d]   \\
                        \quad\quad\quad \quad
                        \tilde{Y_a} \: 
                       \ar@{^{(}->}[rr]
                       \ar@<4ex>[d]_{\pi_{a, \roottree}} & 
                        & \:\tv_{\cF_{a, \roottree}}                          \quad\quad\quad \: 
                         \ar@<-4ex>[d]^{\pi_{\cF_{a, \roottree}}}    \\
                        \quad\quad\quad \quad
                       Y_a  \:  \ar@{^{(}->}[rr]  &  & \:\: \CC^{n_a+1}.                         \quad\quad
             }     \]
                            Since  the deformed system $\cD(\Gamma_a)$  
           is a \emph{Newton non-degenerate complete 
             intersection presentation} of $(Y_a, 0)$ by 
             Step~(\ref{item:tropfanYa}),~\autoref{thm:NNDtoroidal} confirms  
             that \textbf{$\pi_{a, \roottree}$  is a quasi-toroidalization of $f_a$}.

           \medskip
           \item    \label{item:ttoriclift}   
               \underline{\em We continue comparing objects associated to $X$ 
              with their counterparts on the $a$-side singularity $X_a$.}                            

                     We let  $\boxed{\cF_{a, \roottree}^-}$ be the subfan of $\cF_{a, \roottree}$ 
                     consisting of all  cones not containing the ray associated to the root 
                     $\roottree_a$ of $\Gamma_a$. By construction, the linear map $\phi_a$ from  
                     Step~(\ref{item:subfanemb})
                     embeds $\cF_{a, \roottree}^-$ into $\cF$.
                                          Therefore, \textbf{we can lift $ \varphi_a$ to a torus-translated morphism 
                     $\boxed{\Phi_a} \colon \tv_{\cF_{a, \roottree}^-} \to \tv_{\cF}$  
                     fitting into  the following commutative diagram
                     \[ \xymatrix{
                               \tv_{\cF_{a, \roottree}^-}
                                  \ar[rr]^{\Phi_a}
                                  \ar[d]_{\pi_{\cF_{a, \roottree}^-}} & 
                                                & \tv_{\cF}  \ar[d]^{\pi_{\cF}}  \\
                                  \CC^{n_a + 1} \ar[rr]_{\varphi_a}  &  & \CC^{n+1}, }  \]     
                     with the additional property that the morphism $\Phi_a$ is a toric embedding, 
                     that is, a toric morphism which is an embedding of algebraic varieties}.

            \medskip
        \item   \label{item:startlogpart}
              \underline{\em We build the log special fiber of a log enhancement of the lifting
              $\tilde{f}=f\circ \pi$, where $\pi$ is the quasi-}\\\underline{\em toroidalization of $f$
              from Step~(\ref{item:torbirY}).}

            Following the discussion preceding~\autoref{cor:milntubelog}, 
            we choose a  Milnor tube representative
           $f \colon Y \to \D$ of the smoothing $f$ and we consider its lift 
           $\boxed{\tilde{f}} := f \circ \pi  \colon \tilde{Y} \to \D$ to the modified space $\tilde{Y}$ 
            introduced in Step~(\ref{item:torbirY}). Recall that $\pi$ is a quasi-toroidalization of $f$.            
           We consider the \emph{log enhancement} 
           of $f$ relative to the divisors $Z(\tilde{f})$ and $\{0\}$  in the sense 
           of~\autoref{def:indlogmorph}, i.e., 
           \begin{equation*}   \label{eq:logenhancef}
                 \boxed{\tilde{f}^{\dagger}} \colon \tilde{Y}^{\dagger} \to \D^{\dagger},
           \end{equation*} 
         where   $\boxed{\tilde{Y}^{\dagger}} := (Y , \cO^{\star}_{\tilde{Y}}(- Z(\tilde{f})))$ 
         and $\boxed{\D^{\dagger}} := (\D, \cO^{\star}_{\D}(- \{ 0 \}))$ are log complex spaces. 
          At the level of sheaves 
           of monoids, $\tilde{f}^{\dagger}$ is simply the pullback of functions by $\tilde{f}$. 
        
              Consider now the \emph{log special fiber}   of the morphism $\tilde{f}^{\dagger}$
           \begin{equation}  \label{eq:logspecfibf} 
                 \boxed{(\tilde{f}_0)^{\dagger}}  \colon \boxed{Z(\tilde{f})^{\dagger}} 
                     \to \boxed{0^{\dagger}},
           \end{equation}
         obtained by restricting the log structures 
          of the source and target spaces to the special fiber of $\tilde{f}$ and to $\{0 \} \hookrightarrow \D$, 
          respectively (see~\autoref{rem:restrictions}). 
          The construction yields a commutative diagram  in the log category:
                     \begin{equation}  \label{eq:commdiaglog}
                           \xymatrix{
                               Z(\tilde{f})^{\dagger}
                                  \ar[rr]
                                  \ar[d]_{(\tilde{f}_0)^{\dagger}} & 
                                                & \tilde{Y}^{\dagger}  \ar[d]^{\tilde{f}^{\dagger}}  \\
                                  0^{\dagger} \ar[rr]  &  & \D^{\dagger}. }  
                       \end{equation} 
          Note that both horizontal arrows 
          are \emph{strict}, in the sense of~\autoref{def:strict}.

         \medskip
        \item   \label{item:roundlogenhancmt}
            \underline{\em We show that the rounding of the log enhancement of $\tilde{f}$
               is a representative of the Milnor  fibration} \\   \underline{\em of $f$.}

          Consider the \emph{rounding} of the diagram~\eqref{eq:commdiaglog} 
          in the sense of~\autoref{def:rounding}: 
              \begin{equation}   \label{eq:Brlogspec}
                     \xymatrix{
                               Z(\tilde{f})^{\dagger}_{\log}
                                  \ar[rr]
                                  \ar[d]_{(\tilde{f}_0)^{\dagger}_{\log}} & 
                                                & \tilde{Y}^{\dagger}_{\log}  \ar[d]^{\tilde{f}^{\dagger}_{\log}}  \\
                                  0^{\dagger}_{\log} \ar[rr]  &  & \D^{\dagger}_{\log}. }  
               \end{equation}
            Note that $0^{\dagger}_{\log}$ is a circle, identified canonically with  
            complex numbers of modulus one
            through the use of polar coordinates. As both horizontal arrows in \eqref{eq:commdiaglog} 
            are strict,~\autoref{prop:strictparallel} implies that \textbf{the diagram \eqref{eq:Brlogspec} 
              is cartesian in the topological category.}
            
            Using~\autoref{cor:quasitorrelcoh}, which is a direct consequence of 
            Nakayama and Ogus' local triviality 
           theorem (see~\autoref{thm:logehresm}), \textbf{we conclude that  
           the left vertical arrow of \eqref{eq:Brlogspec} 
            is a representative of the circular Milnor fibration of $f$.}

         \medskip  
        \item   \label{item:removecollar}
            \underline{\em We build a new representative of the Milnor fibration of $f$ by removing a collar
                    neighborhood  of the}\\ \underline{\em boundary
               of the total space of the fibration of Step (\ref{item:roundlogenhancmt}).}

           We consider  the rounding map of the complex  log space $Z(\tilde{f})^{\dagger}$: 
               \begin{equation*} \label{eq:BrY}
                       \tau_{Z(\tilde{f})^{\dagger}} \colon Z(\tilde{f})^{\dagger}_{\log} \to Z(\tilde{f}).
                 \end{equation*}
          \noindent
           \textbf{We build a new representative of the circular Milnor fibration of $f$ 
           using the  restriction 
                \begin{equation} \label{eq:reprcMFf}  
          (\tilde{f_0})^{\dagger}_{\log}|_{\tau_{Z(\tilde{f})^{\dagger}}^{-1}(\partial_0 \tilde{Y})}
         \colon   \tau_{Z(\tilde{f})^{\dagger}}^{-1}(\partial_0 \tilde{Y}) \to 0^{\dagger}_{\log} 
                 \end{equation}
            of the leftmost vertical arrow from~\eqref{eq:Brlogspec}    
            to the preimage $\tau_{Z(\tilde{f})^{\dagger}}^{-1}(\partial_0 \tilde{Y})$ 
            of the exceptional divisor $\partial_0 \tilde{Y} = \pi^{-1}(0)$ 
            of $\pi$ under the rounding map  $\tau_{Z(\tilde{f})^{\dagger}}$.} Note that 
            the divisor
            $\partial_0 \tilde{Y}$ already featured in Step~(\ref{item:torbirY}) 
            and that {\bf the complement of $\tau_{Z(\tilde{f})^{\dagger}}^{-1}(\partial_0 \tilde{Y})$ in 
            $Z(\tilde{f})^{\dagger}_{\log}$ is a collar neighborhood of the topological 
            boundary of $Z(\tilde{f})^{\dagger}_{\log}$}. The latter fact is crucial to prove the claim.

             \medskip  
        \item   \label{item:isomlogf}
             \underline{\em We prove that rounding yields a canonical decomposition of the source 
                of the Milnor fibration of $f$.}
        
            The decomposition \eqref{eq:decompdiv} induces the following decomposition of the 
            source space of the representative \eqref{eq:reprcMFf} of the circular Milnor fibration of $f$:
                \begin{equation*} \label{eq:decompBrYv0}
                      \tau_{Z(\tilde{f})^{\dagger}}^{-1}(\partial_0 \tilde{Y})  = 
                        \tau_{Z(\tilde{f})^{\dagger}}^{-1}(\partial_a \tilde{Y})   \: \: 
                           \cup \: \:  \tau_{Z(\tilde{f})^{\dagger}}^{-1}(\partial_{\roottree} \tilde{Y})   \: \: 
                           \cup  \: \:  \tau_{Z(\tilde{f})^{\dagger}}^{-1}(\partial_b \tilde{Y}) .  
                  \end{equation*}
                  We prove that \textbf{when restricted to the three parts of this decomposition, the 
                  rounding morphism $(\tilde{f}_0)^{\dagger}_{\log}$ is isomorphic 
                  to the roundings of the log morphisms 
                      \begin{equation} \label{eq:decompBrY}  
                                   (\partial_a \tilde{Y}, \cO^{\star}_{\tilde{Y}| \partial_a \tilde{Y}}( - Z(\tilde{f})) ) \to 
                                             0^{\dagger}    , \quad 
                                 (\partial_{\roottree} \tilde{Y}, \cO^{\star}_{\tilde{Y}| 
                                        \partial_{\roottree} \tilde{Y}}( - Z(\tilde{f})) ) 
                                 \to    0^{\dagger}    \; \text{ and } \;
                                     (\partial_b \tilde{Y}, \cO^{\star}_{\tilde{Y}| \partial_b \tilde{Y}}( - Z(\tilde{f})) ) \to 
                                             0^{\dagger}  
                      \end{equation}
                  obtained by \emph{restricting} the log special fiber \eqref{eq:logspecfibf} 
                  to the subdivisors $\partial_a \tilde{Y}, \partial_{\roottree} \tilde{Y}$ and 
                  $\partial_b \tilde{Y}$ of $Z(\tilde{f})$, respectively.}

             \medskip             
          \item \label{item:isomtorlogf}
             \underline{\em We prove that the three  log morphisms from~\eqref{eq:decompBrY} 
                 can be obtained by restrictions from the ambient} \\  \underline{\em  toric varieties.}

             Using the functoriality of restriction of log structures, 
             we prove that \textbf{the log morphisms of \eqref{eq:decompBrY} are isomorphic 
             to the log morphisms
                  \begin{equation} \label{eq:decompBrYtoric}
                      (\partial_a \tilde{Y}, \cO^{\star}_{\tv_{\cF}| \partial_a \tilde{Y}}( - Z(\tilde{z_0})) ) \to 
                                             0^{\dagger},\;\;
                      (\partial_{\roottree}\tilde{Y}, \cO^{\star}_{\tv_{\cF}| 
                                    \partial_{\roottree} \tilde{Y}}( - Z(\tilde{z_0})) )\to 
                                             0^{\dagger}     \text{ and } 
                       (\partial_b \tilde{Y}, \cO^{\star}_{\tv_{\cF}| \partial_b \tilde{Y}}( - Z(\tilde{z_0})) ) \to 
                                             0^{\dagger}  
                      \end{equation}
                   obtained by restricting 
                     the logarithmic enhancement   $\tilde{z_0}^{\dagger}\colon    \tv_{\cF}^{\dagger}                               
                                   \to \CC^{\dagger}$                                   
                      of the linear map $\tilde{z_0}\colon \tv_{\cF}  \to \CC$ 
                     to the subdivisors $\partial_a \tilde{Y}, \partial_{\roottree} \tilde{Y}$ 
                     and $\partial_b \tilde{Y}$ of $Z(\tilde{f})$, respectively}.
                  Here, $\boxed{\tv_{\cF}^{\dagger}}$ denotes the divisorial 
                  complex log space 
                  $ (\tv_{\cF}, \cO_{\tv_{\cF}}^{\star} ( - Z(\tilde{z_0})) )$.

            \medskip
           \item  \label{item:combprevsteps}
               \underline{\em We describe the $a$-side parts of the Milnor fibers of $f$.}
       
                  Using the results of Steps (\ref{item:startlogpart}) through (\ref{item:isomtorlogf}), 
                   we deduce that \textbf{the fibers of the rounding
                      \begin{equation*} \label{eq:bettiapart}
                         (\partial_a \tilde{Y}, \cO^{\star}_{\tv_{\cF}| \partial_a \tilde{Y}}( - Z(\tilde{z_0})) )_{\log} \to 
                                             0^{\dagger}_{\log}                    
                      \end{equation*} 
                   of the first arrow of~\eqref{eq:decompBrYtoric} are homeomorphic to the 
                   parts
                   of the fibers of the representative 
                   $ (\tilde{f}_0)^{\dagger}_{\log}  \colon Z(\tilde{f})^{\dagger}_{\log} \to 0^{\dagger}_{\log} $ 
                   of the circular Milnor fibration of $f$  contained inside 
                   $\tau_{Z(\tilde{f})^{\dagger}}^{-1}(\partial_a \tilde{Y})$.}

               \medskip
           \item  \label{item:startlogparta}
               \underline{\em We perform an $a$-side analog of Steps (\ref{item:startlogpart}), (\ref{item:roundlogenhancmt}) and~(\ref{item:removecollar}).}

               We choose a Milnor tube representative 
           $f_a \colon Y_a \to \D$ of the smoothing $f_a$ of $(X_a, 0)$ appearing in diagram 
           (\ref{eq:commutfa}) and we consider its lift 
           $\boxed{\tilde{f}_a} := f_a \circ \pi_{a, \roottree} \colon \tilde{Y}_a \to \D$ 
           to the modified space 
           $\tilde{Y}_a$  introduced in Step (\ref{item:torbirYa}).  
           Recall that $\pi_{a, \roottree}$ is a quasi-toroidalization of $f_a$.  

           Consider the log enhancement of $f_a$ relative to the divisors $Z(\tilde{f}_a)$ and $0$:          
           \begin{equation*}   \label{eq:logenhancefa}
                 \boxed{\tilde{f}^{\dagger}_a}  \colon \boxed{\tilde{Y}^{\dagger}_a} \to \D^{\dagger}.
           \end{equation*}
            
            In turn, we build the log special fiber of the morphism $\tilde{f}^{\dagger}_a$ and its rounding, i.e.
             \begin{equation}  \label{eq:Brlogspeca} 
                       (\tilde{f}_{a,0})^{\dagger}  \colon Z(\tilde{f}_a)^{\dagger} \to 0^{\dagger}  \qquad \text{ and }   
                       \qquad (\tilde{f}_{a,0})^{\dagger}_{\log}  \colon Z(\tilde{f}_a)^{\dagger}_{\log} 
                       \to 0^{\dagger}_{\log}.
             \end{equation}
          Let   $\tau_{Z(\tilde{f}_a)^{\dagger}} \colon Z(\tilde{f}_a)^{\dagger}_{\log} \to Z(\tilde{f}_a)$ 
                   be the rounding map of the complex  log space $Z(\tilde{f}_a)^{\dagger}$. 
                   \textbf{We show that the restriction
           of the rounding morphism $(\tilde{f}_{a,0})^{\dagger}_{\log}$ from~\eqref{eq:Brlogspeca}  
        to the preimage $\tau_{Z(\tilde{f}_a)^{\dagger}}^{-1}(\partial_0 \tilde{Y}_a)$             
            of the exceptional divisor $\boxed{\partial_0 \tilde{Y}_a} := \pi_{a, \roottree}^{-1}(0)$ 
            of $\pi_{a, \roottree}$ under the rounding map  $\tau_{Z(\tilde{f}_a)^{\dagger}}$  
            gives a representative of the circular   Milnor fibration of $f_a$.
                   }

                \medskip
           \item \label{item:isomlogfa}
           
                \underline{\em We perform an $a$-side analog of Step (\ref{item:isomlogf}).}     
              
              We prove that \textbf{when restricted to 
                   $\tau_{Z(\tilde{f}_a)^{\dagger}}^{-1}( \partial_0 \tilde{Y}_a )$, the 
                  rounding morphism $(\tilde{f}_{a,0})^{\dagger}_{\log}$ from \eqref{eq:Brlogspeca}  
                  is isomorphic 
                  to the rounding of the log morphism 
                      \begin{equation*} \label{eq:decompBrYa}
                                   (\partial_0 \tilde{Y}_a, 
                                    \cO^{\star}_{\tilde{Y}_a| \partial_0 \tilde{Y}_a}( - Z(\tilde{f}_a )) ) \to 
                                             0^{\dagger}   
                      \end{equation*}
                 obtained by restricting the log special fiber map 
                 $(\tilde{f}_{a,0})^{\dagger}$ from~\eqref{eq:Brlogspeca}
                  to the subdivisor $\partial_0 \tilde{Y}_a$ of $Z(\tilde{f}_a)$.} 

              \medskip
             \item \label{item:isomtorlogfa}
                  \underline{\em We perform an $a$-side analog of Step (\ref{item:isomtorlogf}).} 
                 
                 Recall that the variable $x_0$ of $\CC^{n_a + 1}$, 
                 introduced in Step (\ref{item:toricmorpha}), denotes 
                 the deformation variable of the $a$-side system $\cD(\Gamma_a)$ 
                 of Step (\ref{item:deformsysta}). Consider the tropicalizing fan  
                 $\cF_{a, \roottree}$ for $Y_a$
                  introduced in Step (\ref{item:subfanemb}).                  
                 Let $ \boxed{\tv_{\cF_{a, \roottree}}^{\dagger}} $ denote the divisorial 
                  complex log space 
                  $ (\tv_{\cF_{a, \roottree}}, \cO_{\tv_{\cF_{a, \roottree}}}^{\star} 
                         ( - Z(\tilde{x}_0 )) )$ 
                  and let
                    \begin{equation*}    \label{eq:logspectoricA-side}
                            \boxed{ \tilde{x_0}^{\dagger}}  \colon   \tv_{\cF_{a, \roottree}}^{\dagger}                               
                                   \to \D^{\dagger} 
                     \end{equation*} 
                   be the logarithmic enhancement of $\tilde{x_0}$ relative to the divisors  
                   $Z(\tilde{x}_0)$ and $0$. 
                 We prove that \textbf{the log morphism  \eqref{eq:decompBrYa} is isomorphic 
                  to the log morphism}  
                  \begin{equation} \label{eq:decompBrYtorica}
                          \tilde{x_0}^{\dagger}|_{\partial_0 \tilde{Y}_a} \colon      
                          (\partial_0 \tilde{Y}_a, \cO^{\star}_{\tv_{\cF_{a, \roottree}}| 
                         \partial_0 \tilde{Y}_a}( - Z(\tilde{x}_0 )) ) 
                            \to  0^{\dagger}  
                  \end{equation}
                   obtained by restricting   $\tilde{x_0}^{\dagger}$ 
                    to the subdivisor $\partial_0 \tilde{Y}_a$ of $Z(\tilde{f}_a)$.

               \medskip
           \item  \label{item:combprevstepsa}
           
               \underline{\em We perform an $a$-side analog of Step (\ref{item:combprevsteps}).} 
              
           Using the results of Steps (\ref{item:startlogparta}), (\ref{item:isomlogfa}) and (\ref{item:isomtorlogfa}), 
            we deduce that \textbf{the fibers of the rounding
                 \begin{equation*} \label{eq:bettiaparta}
                             (\partial_0 \tilde{Y}_a, \cO^{\star}_{\tv_{\cF_{a, \roottree}}| 
                         \partial_0 \tilde{Y}_a}( - Z(\tilde{x}_0 )) )_{\log} 
                            \to  0^{\dagger}_{\log}          
                \end{equation*}
                   of the log morphism~\eqref{eq:decompBrYtorica} are homeomorphic to 
                    the fibers of the representative 
                      \[ (\tilde{f}_{a,0})^{\dagger}_{\log} \colon   
                           \tau_{Z(\tilde{f}_a)^{\dagger}}^{-1}(\partial_0 \tilde{Y}_a) \to 0^{\dagger}_{\log}  \]
                    of the circular Milnor fibration of the smoothing $f_a$ of $X_a$.}

                     \medskip
               \item  \label{item:commuttrianga}   
                     \underline{\em We compare the objects associated
                      to the starting singularity $X$  and to the $a$-side singularity $X_a$,  as} \\
                      \underline{\em  a sequel to Steps   (\ref{item:subfanemb}) and (\ref{item:ttoriclift}), 
                      by  constructing a natural  map from an $a$-side  log morphism to a} \\
                       \underline{\em  log morphism associated with the initial smoothing.}

                   Recall the fan $\cF_{a, \roottree}^-$ and the torus-translated toric morphism 
                   $\Phi_a\colon \tv_{\cF_{a, \roottree}^-} \to \tv_{\cF}$ 
                    introduced in Step (\ref{item:ttoriclift}). 
                 The relation \eqref{eq:pullbacka} ensures that the following  triangle of 
                   torus-translated toric morphisms commutes:
                         \begin{equation*} \label{eq:commutrianga} 
                                \xymatrix{
                                      \tv_{\cF_{a, \roottree}^-}
                                       \ar@{->}[rr]^{\Phi_a}
                                  \ar[dr]_{\tilde{x}_0 } & 
                                         &  \tv_{\cF}   \ar[dl]^{\tilde{z}_0}  \\
                            &    \CC & }  .
                        \end{equation*}  
                         In turn, \textbf{we obtain the following commutative triangle in the 
                        logarithmic category}
                           \begin{equation} \label{eq:commutriangloga} 
                                \xymatrix{
                                      (\tv_{\cF_{a, \roottree}^-}, \cO^{\star}_{\tv_{\cF_{a, \roottree}^-}} ( - Z(\tilde{x}_0)))
                                       \ar@{->}[rr]^{\Phi_a^{\dagger}}
                                        \ar[dr]_{\tilde{x}_0^{\dagger} } & 
                                       & ( \tv_{\cF}, \cO^{\star}_{\tv_{\cF}}( - Z(\tilde{z_0})) )   
                                            \ar[dl]^{\tilde{z}_0^{\dagger}}  \\
                                     &   ( \CC,  \cO^{\star}_{\CC}( - \{ 0 \}) )& } 
                             \end{equation} 
                        \textbf{in which the log enhancement 
                        $\boxed{\Phi_a^{\dagger}}$ of $\Phi_a$ associated to 
                        the divisors $Z(\tilde{x}_0 )$ and $Z(\tilde{z}_0)$  is strict in the sense 
                        of~\autoref{def:strict}.}

                     \medskip
              \item   \label{item:isomtoriclog}
                      \underline{\em We compare the objects associated to the starting
                     singularity $X$ and to the $a$-side  singularity  $X_a$, by} \\  
                     \underline{\em  establishing an isomorphism of log   morphisms.}

                      Denote by $\boxed{\partial_0^-\tilde{Y}_a}$ the subdivisor of $\partial_0 \tilde{Y}_a$ 
                      obtained by removing the irreducible toric divisor corresponding to the ray  
                      $l_{a, \roottree}$ from Step (\ref{item:subfanemb}). By construction, 
                      $\partial_0^-\tilde{Y}_a$ equals the sum of  all 
                      components  of $\partial_0\tilde{Y}_a$ which are contained in the open set 
                      $\tv_{\cF_{a, \roottree}^-}$ of $\tv_{\cF_{a, \roottree}}$.
                       We prove that \textbf{the embedding $\Phi_a$ identifies 
                        $\partial_0^- \tilde{Y}_a$ with $\partial_a \tilde{Y}$.}  

                       By restricting the commutative triangle 
                        \eqref{eq:commutriangloga} to those compact subspaces  of 
                        the source and target of the embedding $\Phi_a$, we get the following 
                        commutative triangle  in the logarithmic category:
                       \begin{equation} \label{eq:triangisom} 
                                \xymatrix{
                (\partial_0^- \tilde{Y}_a, \cO^{\star}_{\tv_{\cF_{a, \roottree}^-}| \partial_0^- \tilde{Y}_a}
                      ( - Z(\tilde{x}_0 )) ) 
                                       \ar@{->}[rr]
                                  \ar[dr] & 
              & (\partial_a \tilde{Y}, \cO^{\star}_{\tv_{\cF}| \partial_a \tilde{Y}}( - Z(\tilde{z_0})) )  \ar[dl]  \\
                            &    0^{\dagger}. & } 
                        \end{equation}   
             
                   As $\Phi_a^{\dagger}$ is strict by Step (\ref{item:commuttrianga}), 
                   the horizontal arrow from~\eqref{eq:triangisom} is an isomorphism. 
                     Therefore, \textbf{this diagram allows us to factor the first log morphism in     
                     \eqref{eq:decompBrYtoric} through the log morphism from
                   \eqref{eq:decompBrYtorica}.}

                     \medskip
              \item  \label{item:cutfibinside}
                  \underline{\em We continue comparing objects associated to $X$ and $X_a$, 
                      by looking  at the rounding of the previous} \\  
                      \underline{\em  commutative diagram of log morphisms.}

                   Consider the rounding   of the  diagram \eqref{eq:triangisom}:
                      \begin{equation} \label{eq:triangisomBr} 
                                \xymatrix{
           (\partial_0^- \tilde{Y}_a, \cO^{\star}_{\tv_{\cF_{a, \roottree}}| \partial_0^- \tilde{Y}_a}
                 ( - Z(\tilde{x}_0 )) )_{\log} 
                                       \ar@{->}[rr]
                                  \ar[dr] &   &
               (\partial_a \tilde{Y}, \cO^{\star}_{\tv_{\cF}| \partial_a \tilde{Y}}
                     ( - Z(\tilde{z_0})) )_{\log}  \ar[dl]  \\
                            &    0^{\dagger}_{\log}.  }  
                        \end{equation}       
                   By Step  (\ref{item:combprevsteps}), 
                   the fibers of the rightmost arrow of \eqref{eq:triangisomBr}   
                   are homeomorphic to the Milnor fibers of $X$. 
                  By  Step (\ref{item:combprevstepsa}), the fibers of the topological morphism 
                        \begin{equation}\label{eq:topMorph}    
                           (\partial_0 \tilde{Y}_a, \cO^{\star}_{\tv_{\cF_{a, \roottree}}| 
                         \partial_0 \tilde{Y}_a}( - Z(\tilde{x}_0 )) )_{\log} 
                          \to  0^{\dagger}_{\log}
                        \end{equation}
                        are homeomorphic to the Milnor fibers of $X_a$. 

                    Note that the previous topological morphism is not the leftmost arrow 
                    of~\eqref{eq:triangisomBr}. In fact, by its definition in Step (\ref{item:isomtoriclog}), 
                    $\partial_0^- \tilde{Y}_a$ is a subdivisor of $\partial_0 \tilde{Y}_a$.  Thus, the 
                    map~\eqref{eq:topMorph} is the composition of  the leftmost map on~\eqref{eq:triangisomBr} 
                    with the canonical strict log morphism
                        \[   (\partial_0^- \tilde{Y}_a, \cO^{\star}_{\tv_{\cF_{a, \roottree}}| \partial_0^- \tilde{Y}_a}
                 ( - Z(\tilde{x}_0 )) )
                              \to
                        (\partial_0 \tilde{Y}_a, \cO^{\star}_{\tv_{\cF_{a, \roottree}}| 
                         \partial_0 \tilde{Y}_a}( - Z(\tilde{x}_0 )) )    
                                  \] 
                       obtained by restriction.

                   \medskip
            \item     \label{item:cutmiln}
                \underline{\em We give a rounding presentation of the $a$-side cut Milnor fibers.}
            
                   We prove that \textbf{the fibers of the leftmost arrow from~\eqref{eq:triangisomBr} 
                   are homeomorphic to the Milnor fibers of $X_a$ cut by the variable corresponding 
                   to the root of $\Gamma_a$.} We have an analogous fact concerning the $b$-side.

                 \medskip
             \item      \label{item:prodstruct}
                 \underline{\em We prove the product structure of the central pieces.}
             
                We prove that \textbf{the fibers of 
                 $ (\tilde{f}_0)^{\dagger}_{\log}  \colon Z(\tilde{f})^{\dagger}_{\log} \to 0^{\dagger}_{\log} $ 
                 contained in $\tau_{Z(\tilde{f})^{\dagger}}^{-1}(\partial_{\roottree} \tilde{Y})$ have the desired 
                 product structure}. This comes from the fact that the divisor 
                 $\partial_{\roottree} \tilde{Y}$ is the 
                 cartesian product of two smooth projective curves which are one-point compactifications 
                 of affine curves diffeomorphic to the Milnor fibers of $g_a\colon Y_a \to \CC$ and 
                 $g_b \colon Y_b \to \CC$. 
                 
               \medskip
            \item    \label{item:finstep}
                 \underline{\em We prove that the gluing agrees with the prediction 
                of~\autoref{conj:MFC} done by Neumann and Wahl}.  
             
                Combining the results of the last two steps (\ref{item:cutmiln}) and 
                  (\ref{item:prodstruct}), we get a decomposition of the Milnor fibers of $f$ into 
                  three pieces which have the expected structure described  in the Milnor fiber conjecture.   
                  Moreover, we prove that  they are glued together as predicted by Neumann and Wahl. 
                  This establishes the conjecture.
    \end{enumerate}

    \section*{Acknowledgments}
Maria Angelica Cueto was supported by an NSF
postdoctoral fellowship DMS-1103857 and NSF Standard Grants DMS-1700194 and DMS-1954163 (USA). 
Patrick Popescu-Pampu was supported by French grants ANR-12-JS01-0002-01 SUSI, 
ANR-17-CE40-0023-02 LISA and Labex CEMPI (ANR-11-LABX-0007-01). 
Labex CEMPI also financed a one month research stay of the first author in Lille 
in 2018. Part of this project was carried out during two Research in triples programs, 
one at the Centre International de Rencontres Math\'ematiques 
(CIRM, Marseille, France, 2014, Award number: 1173/2014)
 and one at the Center International Bernoulli (EPFL, Lausanne, Switzerland, 2017).  
 We would like to thank both institutes for their hospitality, 
 and for providing excellent working conditions. We are also grateful 
 to Norbert A'Campo, Luc Illusie, Eric Katz, Johannes Nicaise, 
 Arthur Ogus, Bernard Teissier and Jonathan Wahl for answering our questions. 
 We are particularly grateful 
 to Dan Abramovich for directing us to Nakayama and Ogus' paper \cite{NO 10} 
 as the best reference in the current literature for understanding how log structures 
 allow to cut a complex variety along a divisor, a fact the second author 
 had learnt about a few years before, in a talk by Kawamata. The second author 
 is also grateful to the organizers 
 of the \emph{Workshop on Singularities: topology, valuations and semigroups} (held in 2019 at the Complutense University of Madrid, Spain), of the summer school \emph{Milnor fibrations, 
 degenerations and deformations from modern perspective} (held in 2021 at CIRM, France) 
 and of the \emph{Annual meeting of the GDR singularity} (held in 2022 at Aussois, France)  
 for giving him the opportunity to present parts of this work during research mini-courses. 
 Finally, we warmly thank Anne Pichon and the referee for their pertinent remarks 
 on a previous version of this paper.


\medskip
\vspace{3ex}

 
\noindent
\textbf{\small{Authors' addresses:}}
\smallskip
\

\noindent
\small{M.A.\ Cueto,  Mathematics Department, The Ohio State University, 231 W 18th Ave, Columbus, OH 43210, USA.
\\
\noindent \emph{Email address:} \url{cueto.5@osu.edu}}
\vspace{2ex}

\noindent
\small{P.\ Popescu-Pampu,
  Univ.~Lille, CNRS, UMR 8524 - Laboratoire Paul Painlev{\'e}, F-59000 Lille, France.
  \\
\noindent \emph{Email address:} \url{patrick.popescu-pampu@univ-lille.fr}}
\vspace{2ex}

\noindent
\small{D.\ Stepanov, Laboratory of Algebraic Geometry and Homological Algebra, Department of Higher Mathematics, Moscow Institute of Physics and Technology,
9 Institutskiy per., Dolgoprudny, Moscow, 141701, Russia.
  \\
\noindent \emph{Email address:} \url{stepanov.da@phystech.edu}}

\end{document}